\documentclass[a4paper,11pt]{amsart}

\usepackage{amssymb,amsmath,amsthm,mathrsfs,enumerate,graphicx, color}
\usepackage[pdfpagelabels,colorlinks,linkcolor=blue,citecolor=black,urlcolor=blue]{hyperref}
\usepackage{stackengine,scalerel}
\usepackage{stix}
\usepackage{esint}
\usepackage{citeref}
\newtheorem{thm}{Theorem}[section]

\newtheorem{cor}[thm]{Corollary}
\newtheorem{lem}[thm]{Lemma}
\newtheorem{prop}[thm]{Proposition}
\newtheorem{defn}[thm]{Definition}
\newtheorem{rem}[thm]{Remark}

\newcommand{\lesi}{\lesssim}

\def\supp{{\mathop\mathrm{\,supp\,}}}

\newcommand{\N}{\mathfrak N}

\newcommand{\vc}{\infty}
\newcommand{\f}{\frac}

\newcommand{\B}{\dot{B}}
\newcommand{\FF}{\dot{F}}

\def\dfrac{\displaystyle\frac}

\newcommand{\RN}{{\mathbb R^N}}

\newcommand{\Z}{{\mathbb Z}}
\newcommand{\F}{{\mathcal F}}
\newcommand{\x}{{\rm x}}
\newcommand{\y}{{\rm y}}
\newcommand{\z}{{\rm z}}

\textwidth17cm
\textheight25cm
\addtolength{\topmargin}{-2cm}
\addtolength{\oddsidemargin}{-2cm}
\setlength{\evensidemargin}{\oddsidemargin}
\addtolength{\headheight}{3.2pt}

\begin{document}

\title[Harmonic analysis in Dunkl settings]
{Harmonic analysis in Dunkl settings}

\authors

\author[The Anh Bui]{The Anh Bui}
\address{School of Mathematical and Physical Sciences, Macquarie University, NSW 2109,
	Australia}
\email{the.bui@mq.edu.au}



\subjclass[2010]{42B35, 42B15, 33C52, 35K08}
\keywords{Dunkl Laplacian, Dunkl transform, Besov and Triebel space, space of homogeneous type, heat kernel}

\arraycolsep=1pt

\begin{abstract}
Let $L$ be the Dunkl Laplacian on the Euclidean space $\RN$ associated with a normalized root $R$ and a multiplicity function $k(\nu)\ge 0, \nu\in R$. In this paper, we first prove that the Besov and Triebel-Lizorkin spaces associated with the Dunkl Laplacian $L$ are identical to the Besov and Triebel-Lizorkin spaces defined in the space of homogeneous type $(\RN, \|\cdot\|, dw)$, where $dw(\x)=\prod_{\nu\in R}\langle \nu,\x\rangle^{k(\nu)}d\x$. Next, consider the Dunkl transform denoted by $\mathcal{F}$. We introduce the multiplier operator $T_m$, defined as $T_mf = \mathcal{F}^{-1}(m\mathcal{F}f)$, where $m$ is a bounded function defined on $\mathbb{R}^N$. Our second aim is to prove multiplier theorems, including the  H\"ormander multiplier theorem, for $T_m$ on the Besov and Tribel-Lizorkin spaces in the space of homogeneous type $(\RN, \|\cdot\|, dw)$.  Importantly, our findings present novel results, even in the specific case of the Hardy spaces.

\end{abstract}

\maketitle

\tableofcontents
\section{Introduction}

The Dunkl theory is an interesting subject in analysis and can be view as a  generalization of Fourier analysis in the Euclidean setting. This foundation can be tracked back by the influential paper of Dunkl  \cite{ADM, Dunkl2}. Since then, significant progress has been made in its development. See for example  \cite{Dunkl1, Dunkl3, Dunkl4,J, R, R2, R3, TX, TX2, JL} and the references therein. For further information, we refer the reader to \cite{R4, R5} for more information and references. Motivated by the extensive research conducted in the Dunkl settings, in this paper we will develop the harmonic analysis on the Dunkl setting. Firstly we would like to introduce the framework of the Dunkl setting in \cite{R4, R5} (see also \cite{A etal, ADH, DH, DH2}). \\

\medskip

On the Euclidean space $\mathbb R^N$ we consider a normalized root system $R$ and a multiplicity function $k(\nu)\ge 0,\nu\in R$. Define
\[
dw(\x) =\prod_{\nu \in R} |\langle \x,\nu\rangle|^{k(\nu)} d\x.
\]
In this paper, for $p\in (0,\vc)$ we denote by $L^p(dw)$ the set of all measurable functions $f$ such that 
\[
\|f\|_{L^p(dw)}: =\Big(\int_{\RN}|f(\x)|^pdw(\x)\Big)^{1/p}<\vc.
\]

We consider the Euclidean space $\RN$ with the usual scalar product and the induced norm. For a nonzero vector $\nu\in \RN$ the reflection $\sigma_\nu$ with respect to the hyperplane $\nu^{\perp}$ orthogonal to $\nu$ is given by
\begin{equation}
	\label{eq-reflection}
	\sigma_\nu\x =\x-2\f{\langle \x,\nu\rangle}{\|\nu\|^2}\nu.
\end{equation}
We denote by $G$ the Weyl group generated by the reflections $\sigma_\nu, \nu\in R$. 

A finite set $R\subset \RN\backslash \{0\}$ is called a root system if $\sigma_\nu(R)=R$ for every $\nu\in R$. We shall consider normalized reduced root systems, that is, $\|\nu\|^2=2$ for every $\nu\in R$. The finite group $G$ generated by the reflections $\sigma_\nu$ is called the Weyl group (reflection group) of the root system. A multiplicity function is a $G$-invariant function $k: R \to [0,\vc)$ which will be fixed and be nonnegative throughout this paper. We will denote by $\mathcal O(\x)=\{\sigma(\x): \sigma \in R\}$ the $G$-orbit of a point $\x\in\RN$, and $\mathcal O(E)=\bigcup_{\x \in E} \mathcal O(\x)$ the $G$-orbit of $E\subset \RN$.

The number $\mathfrak{N}:= N + \sum_{\nu\in R} k(\nu)$ is called the homogeneous dimension of the system, since
\begin{equation}
	\label{eq-homgoenous measure}
	w(B(t\x,tr)) = t^{\mathfrak{N}} w(B(\x,r)) \ \ \text{for} \ \ \x \in \RN, t,r>0,
\end{equation}
where  $B(\x, r) =\{\y\in \RN: \|\x-\y\|\le r\}$ denotes the (closed)
Euclidean ball centered at $\x$ with radius $r>0$. Observe that
\begin{equation}
	\label{eq-volume formula}
	w(B(\x,r)) \simeq r^{N}\prod_{\nu\in R} (|\langle\x,\nu\rangle|+r)^{k(\nu)}\gtrsim r^{\N},
\end{equation}
so $dw$  is a doubling measure, that is, there exists a constant $C>0$ such that
\begin{equation}
	\label{eq-doubling}
	w(B(\x,2r)) \le Cw(B(\x,r)) \ \ \text{for} \ \ \x \in \RN, r>0.
\end{equation}
Moreover, by \eqref{eq-volume formula},
\begin{equation}\label{eq-ratios on volumes of balls}
	\Big(\f{R}{r}\Big)^N\lesi \f{w(B(\x,R))}{w(B(\x,r))}\lesi \Big(\f{R}{r}\Big)^{\mathfrak{N}}\ \ \text{for} \ \ \x \in \RN, 0<r<R.
\end{equation}

Finally, define 
\begin{equation}\label{eq- d distance}
	d(\x,\y) =\min_{\sigma\in G}\|\x-\sigma(\y)\|
\end{equation}
as the distance between two $G$-orbits $\mathcal O(\x)$ and $\mathcal O(\y)$. Obviously, $\mathcal O(B(\x,r)) = \{\y\in \RN: d(\x,\y)<r\}$; moreover,
\[
w(B(\x,r)) \le w(\mathcal O(B(\x,r))) \le |G|w(B(\x,r))
\] 
for all $\x \in \RN$ and $r>0$. We also denote $B^d(\x,r):= \{\y\in \RN: d(\x,\y)<r\}$, i.e., $B^d(\x,r)=\mathcal O(B(\x,r))$.\\

Let $E(\x,\y)$ be the associated Dunkl kernel, which was introduced in \cite{Dunkl3}. It is known that $E(\x,\y)$ has a unique extension to a holomorphic function in $\mathbb C^N\times \mathbb C^N$. The Dunkl transform is defined by
\begin{equation}
	\label{eq-Dunkl transform}
	\mathcal Ff(\xi) = c_k^{-1}\int_{\mathbb R^N}E(-i\xi,\x)f(\x) dw(\x),
\end{equation}
where
\[
c_k =\int_{\mathbb R^N} e^{-\f{\|\x\|^2}{2}}dw(\x)>0.
\]
It is know that the Dunkl transform, which is initially defined for $f\in L^1(dw)$, is an isometry on $L^2(dw)$ and preserves the Schwartz class of functions $\mathcal S(\mathbb R^N)$. The inverse of the Dunkl transform is defined by
\[
\F^{-1}g(\x) =c_k^{-1}\int_{\RN} E(i\xi,\x)g(\xi) dw(\xi).
\] 

For $\x\in \RN$, the Dunkl translation $\tau_{\x}$ is defined by
\begin{equation}\label{eq-translation}
	\tau_{\x}f(\y) =c_k^{-1}\int_{\RN} E(i\xi, \x)E(i\xi, \y) \F f(\xi)dw(\xi)
\end{equation}
is bounded on $L^2(dw)$.

In what follows, we use the notation
\[
g(\x,\y) =\tau_{\x}g(-\y) =\tau_{-\y}g(x). 
\]
The Dunkl convolution of two appropriate functions is defined by
\begin{equation}\label{eq-convolution defn}
	f\ast g(\x) =c_k\F^{-1}[\F f \F g](\x)=\int_{\RN}\F f(\xi) \F g(\xi) E(\x,i\xi)dw(\xi),
\end{equation}
or equivalently,
\[
f\ast g(\x) =\int_{\RN}f(\y)\tau_{\x}g(-\y)dw(\y) =\int_{\RN}f(\y) g(\x,\y)dw(\y).
\]

We now consider the Dunkl operators $T_\xi$ defined by
\[
T_\xi f(\x)=\partial_\xi f(\x)+\sum_{\nu\in R}\f{k(\nu)}{2}\langle \nu,\xi\rangle\f{f(\x)-f(\sigma_\nu(\x))}{\langle \nu,\x\rangle},
\]
where   $\partial_\xi f$ denotes the directional derivative of $f$ in the direction $\xi$.

Set $T_j=T_{e_j}$, where $\{e_1,\ldots,e_N\}$ is the canonical basis for $\RN$. The Dunkl Laplacian is defined by
\[
\begin{aligned}
	L :&= -\sum_{j=1}^N T_{j}^2\\
	&= -\Delta_{\RN} -\sum_{\nu\in R}k(\nu)\delta_\nu f(\x),
\end{aligned}
\]
where $\Delta_{\RN}$ is the Laplacian on $\RN$ and $\delta_\nu$ is defined by
\[
\delta_\nu f(\x) =\f{\partial_\nu f(\x)}{\langle\alpha,\x\rangle} -\f{f(\x)-f(\sigma_\nu(\x))}{\langle\alpha,\x\rangle^2}.
\]

It is well-known  that the operator $L$ is a non-negative and  self-adjoint operator on $L^2(dw)$ and generates a semigroup $e^{-tL}$ whose kernel $h_t(\x,\y)$ is defined by
\[
h_t(\x,\y) = c_k^{-1}(2t)^{-\N/2} \exp\Big(-\f{\|\x\|^2+\|\y\|^2}{4t}\Big)E\Big(\f{\x}{\sqrt{2t}},\f{\y}{\sqrt{2t}}\Big), \ \ \ \x,\y\in \RN, t>0.
\]
See for example \cite{AH, R}.

The paper focuses on two primary objectives. Firstly, we will investigate some characterizations of  the Besov spaces and Triebel-Lizorkin spaces in $(\RN, \|\cdot\|, dw)$ via the heat semigroup $e^{-tL}$. Secondly, we will prove the boundedness of the multiplier of the Dunkl transform on these function spaces.\\

\medskip

For the first aim, we would like to go back the theory of Hardy spaces in $(\RN, \|\cdot\|, dw)$. Since $(\RN, \|\cdot\|, dw)$ is a homogneous type in the sense of Coifman and Weiss, one can define the Hardy spaces $H^p_{\rm CW}(dw)$ with $\f{\N}{\N+1}<p\le 1$ via the atomic decomposition. For convenience, we provide the definition of the Hardy spaces $H^p_{\rm CW}(dw)$ with $\f{\N}{\N+1}<p\le 1$ in \cite{CW}, which can be viewed as extensions to the Hardy spaces on $\mathbb R^N$ in the influential paper of Fefferman and Stein \cite{FS} (see also \cite{SW, St}). For $0<p\le 1$, we  say that a function $a$ is a $(p,2)$ atom if there exists a ball $B$ such that
\begin{enumerate}[(i)]
	\item supp $a\subset B$;
	\item $\|a\|_{L^2(dw)}\leq w(B)^{1/2-1/p}$;
	\item $\displaystyle \int a(\x)dw(\x)=0$.
\end{enumerate}
For $p=1$ the atomic Hardy space $H^1_{\rm CW}(dw)$ is defined as follows. We say that a function $f\in H^1_{\rm CW} (dw)$, if $f\in L^1(dw)$ and there exist a sequence $(\lambda_j)_{j\in \mathbb{N}}\in \ell^1$ and a sequence of $(p,2)$-atoms $(a_j)_{j\in \mathbb{N}}$ such that $f=\sum_{j}\lambda_ja_j$. We set
$$
\|f\|_{H^1_{\rm CW}(dw) }=\inf\Big\{\sum_{j}|\lambda_j|: f=\sum_{j}\lambda_ja_j\Big\},
$$
where the infimum is taken over all possible atomic decomposition of $f$.

For $0<p<1$, as in \cite{CW}, we need to introduce the Lipschitz space $\mathfrak{L}_\alpha$. We say that the function $f\in \mathfrak{L}_\alpha$ if there exists a constant $c>0$, such that
$$
|f(\x)-f(\y)|\leq cw(B)^{\alpha}
$$
for all ball $B$ and $\x, \y\in B$. The best constant $c$ above can be taken to be the norm of $f$ and is denoted by $\|f\|_{\mathfrak{L}_\alpha}$.

Now let $0<p<1$ and $\alpha=1/p-1$. We say that a function $f\in H^p_{\rm CW} (dw)$, if $f\in (\mathfrak{L}_{\alpha})^*$ and there is a sequence $(\lambda_j)_{j\in \mathbb{N}}\in \ell^p$ and a sequence of $(p,2)$-atoms $(a_j)_{j\in \mathbb{N}}$ such that $f=\sum_{j}\lambda_ja_j$. Furthermore, we set
$$
\|f\|_{H^p_{\rm CW}(dw) }=\inf\Big\{\Big(\sum_{j}|\lambda_j|^p\Big)^{1/p}: f=\sum_{j}\lambda_ja_j\Big\},
$$
where the infimum is taken over all possible atomic decomposition of $f$.

Apart from the Hardy spaces $H^p_{\rm CW} (dw)$, we can consider the Hardy spaces $H^p_{L}(dw)$ associated to the Dunkl Laplacian operator $L$.  
For $0<p\le 1$, the Hardy space $H^p_{L}(dw)$ is defined as the completion of the set
\begin{equation*}
	\left\{f\in L^2(dw): \mathcal{S}_Lf\in L^p(dw) \right\}
\end{equation*}
under the norm $\|f\|_{H^p_{L}(dw)}=\|\mathcal{S}_Lf\|_{L^p(dw)}$ where
\[
\mathcal{S}_Lf(x)=\Big[\int_0^\vc\int_{d(\x,\y)<t}|t^2Le^{-tL^2}f(\y)|^2\f{dw(\y) dt}{tw(B(\x,t))}\Big]^{1/2}.
\]
It is important to highlight  that  in the definition of the square function $\mathcal{S}_L$ we employ the distance $d$ defined by \eqref{eq- d distance} instead of the Euclidean distance $\|\cdot\|$. A good reason for this choice  is that while the heat kernel $h_t(\x,\y)$ does not satisfy the Gaussian upper bound with respect to the Euclidean distance $\|\cdot\|$, it satisfies the Gaussian upper bound with respect to the distance $d$. See Lemma \ref{lem-heat kernel estimate}. As a result, the theory of the Hardy space $H^p_{L}(dw)$ follows directly from existing results in \cite{HLMMY, SY, ADM, DY}. 

\bigskip

So it is natural to question if the Hardy spaces $H^p_{\rm CW}$ and $H^p_{L}(dw)$ are the same when $\f{\N}{\N+1}<p\le 1$. This question is not straightforward since the Hardy space $H^p_{\rm CW}$ corresponds to the Euclidean distance $\|\cdot\|$, while $L$ does not satisfy the Gaussian upper bound with respect to the Euclidean distance; moreover, the Hardy space $H^p_{L}(dw)$ corresponds to the Euclidean distance $d$. For the case $p=1$, a positive answer was carried out in \cite{DH2}. It remains  unclear if the approach in \cite{DH2} can be extended to the case $\f{\N}{\N+1}<p<1$. \\
\medskip

Motivated by the aforementioned research, our first primary objective is to establish a more comprehensive result that encompasses not only the Hardy spaces but also the Besov and Triebel-Lizorkin spaces in $(\RN, \|\cdot\|, dw)$. It is worth noting that the theory of Besov and Triebel-Lizorkin spaces holds significant interest in the field of harmonic analysis and has garnered considerable attention. See for example \cite{AWYY, HS, HMY, HY, HWY, WHYY} and the references therein. In the light of the fact that $(\RN, \|\cdot\|, dw)$ is a space of homogeneous type satisfying the reverse doubling property (i.e., the first inequality in \eqref{eq-ratios on volumes of balls}), we can define the Besov space $\B^{s}_{p,q}(dw)$ with $s\in (-1,1), p(s,1)<p<\vc$, $0<q<\vc$ and  the Triebel-Lizorkin space $\FF^{s}_{p,q}(dw)$ with $s\in (-1,1), p(s,1)<p,q<\vc$ (see Definition \ref{defn-Besov and TL spaces}),  following the framework established in \cite{HMY}. Here,  $p(s,1)=\max\{\f{\N}{\N+1}, \f{\N}{\N+s+1}\}$.  As far as we know, these ranges of the indices $s, p, q$ are the best in the setting of the space of homogeneous type except from the case $p=\vc$ or $q=\vc$. 

\medskip

Moreover, based on the general findings in \cite{BBD}, we can also define the Besov space $\B^{s,L}_{p,q}(dw)$  and the Triebel-Lizorkin space $\FF^{s,L}_{p,q}(dw)$ associated to the Dunkl Laplacian operator $L$. See Definition \ref{defn 1}.  It is intriguing to observe that these function spaces exhibit similar properties to their counterparts in the Euclidean setting. For example, $\FF^{0,L}_{p,2}(dw)$ coincide with the $L^p(dw)$ space if $1<p<\vc$ and $\FF^{0,L}_{p,2}(dw)$ coincide with the Hardy space $H^{p}_L(dw)$ for $p\in (0,1]$. For further properties such as atomic decomposition and interpolation we refer to Section 3.2 (see also \cite{BBD}). Drawing inspiration from these two research streams, our primary result is as follows.
\begin{thm}
	\label{thm-coincidence Besov and TL spaces}
	\begin{enumerate}[{\rm (a)}]
		\item For $s\in (-1,1)$, $p(s,1)<p< \vc$ and $0<q< \vc$, the Besov spaces $\dot{B}^s_{p,q}(dw)$ and $\dot{B}^{s, L}_{p,q}(dw)$ are identical with the equivalent norms.
		
		\item For $s\in (-1,1)$ and $p(s,1)<p, q< \vc$, the Triebel-Lizorkin   spaces $\dot{F}^s_{p,q}(dw)$ and $\dot{F}^{s, L}_{p,q}(dw)$ are identical with the equivalent norms.
	\end{enumerate}
\end{thm}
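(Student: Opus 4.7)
The plan is to route the comparison through an intermediate family of Besov / Triebel-Lizorkin spaces defined in the Han-M\"uller-Yang sense on the orbit-distance homogeneous space $(\RN, d, dw)$, which I will denote by $\dot B^{s,d}_{p,q}(dw)$ and $\dot F^{s,d}_{p,q}(dw)$. The argument then splits into two independent equivalences that I will establish separately: (i) the heat-semigroup spaces $\dot B^{s,L}_{p,q}(dw)$ and $\dot F^{s,L}_{p,q}(dw)$ coincide with the $d$-metric HMY spaces; (ii) the $d$-metric HMY spaces coincide with the $\|\cdot\|$-metric HMY spaces $\dot B^{s}_{p,q}(dw)$ and $\dot F^{s}_{p,q}(dw)$ of the statement. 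The Besov and Triebel-Lizorkin cases will run in parallel, the only difference being a scalar versus vector-valued maximal inequality at the end.

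For step (i), I will appeal to the Gaussian upper bound and H\"older regularity of $h_t(\x,\y)$ with respect to the distance $d$ provided by Lemma \ref{lem-heat kernel estimate}, together with the vanishing moment $\int(tL)^M e^{-tL}(\x,\y)\,dw(\y)=0$ valid for every $M\ge 1$. These are exactly the conditions required by the general abstract theory developed in \cite{BBD} to identify the heat-semigroup characterization $\{(tL)^M e^{-tL}\}_{t>0}$ with a Calder\'on reproducing / wavelet characterization of the HMY type on the space of homogeneous type $(\RN,d,dw)$. Choosing $M$ sufficiently large depending on $s$ and the admissible range $p(s,1)<p,q<\infty$ ensures that all required smoothness and moment orders are attained.

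Step (ii) is the core technical step and is where I expect the main difficulty to lie. The two quasi-metric measure spaces $(\RN,\|\cdot\|,dw)$ and $(\RN,d,dw)$ share the measure $dw$, satisfy $w(B^d(\x,r))\le |G|\,w(B(\x,r))$, and have comparable doubling and reverse doubling constants; moreover $d(\x,\y)\le\|\x-\y\|$, so any HMY test function adapted to $\|\cdot\|$ is automatically a test function adapted to $d$ of the same regularity and decay order, which immediately yields one embedding at the level of the discrete Calder\'on reproducing formulas. The opposite embedding is the delicate part, because a kernel that decays polynomially in $d$ need not decay polynomially in the Euclidean distance. The plan is to $G$-symmetrize the building blocks, replacing every wavelet / molecule $\phi(\x,\y)$ by $\bar\phi(\x,\y):=|G|^{-1}\sum_{\sigma\in G}\phi(\x,\sigma\y)$. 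On $G$-symmetric kernels the $d$-decay and the $\|\cdot\|$-decay are equivalent, because the minimum defining $d$ can be attained at some element of the orbit, while the symmetrization is harmless at the level of norms since $dw$ and $L$ are both $G$-invariant and $|G|<\infty$. This upgrades any $d$-metric molecular decomposition of $f$ to a $\|\cdot\|$-metric decomposition with equivalent sequence-space norm, extending the $p=1$ symmetrization strategy of \cite{DH2} to the full scale of Besov and Triebel-Lizorkin spaces. The Fefferman-Stein vector-valued maximal inequality on $(\RN,\|\cdot\|,dw)$ then closes the $F$-case, and the restriction $p>p(s,1)$ will be used exactly to guarantee that the HMY characterizations remain valid throughout.
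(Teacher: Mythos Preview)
Your step (ii) contains a genuine gap. Replacing a wavelet or molecule $\phi(\x,\y)$ by $|G|^{-1}\sum_{\sigma}\phi(\x,\sigma\y)$ is the same as precomposing the associated operator with the projection onto $G$-invariant functions (by the $G$-invariance of $dw$), so after symmetrization you are no longer representing $f$ but its $G$-average. The spaces $\dot B^s_{p,q}(dw)$ and $\dot F^s_{p,q}(dw)$ contain non-$G$-invariant distributions, so this cannot yield the full embedding. There is also a structural problem with step (i): $(\RN,d,dw)$ is not a space of homogeneous type since $d$ fails to separate points in the same orbit, so there is no off-the-shelf Han--M\"uller--Yang theory on it to invoke; passing to the quotient $\RN/G$ would again restrict you to $G$-invariant objects.

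The paper's route avoids an intermediate $d$-metric HMY space altogether. The key analytic input you are missing is the \emph{extra Euclidean decay} $(1+\|\x-\y\|/t)^{-2}$ of the functional-calculus kernels $\varphi(t\sqrt L)(\x,\y)$ established in Lemma~\ref{lem-heat kernel estimate}; this factor (which is much weaker than, and coexists with, the arbitrary-order decay in $d$) is exactly what allows $\psi_k(\sqrt L)$ to see the $\|\cdot\|$-Lipschitz regularity of an ordinary $p$-atom and produce the gain $\delta^{k-\nu}$ (Lemma~\ref{lem3-atom Besov}). In the opposite direction the paper introduces hybrid $(p,d)$-atoms --- supported in a $d$-cube $Q^d\subset\mathcal O(B)$ but Lipschitz in the Euclidean metric --- and handles their interaction with $D_k$ not by symmetrizing but by \emph{splitting}: writing $a_{Q^d}=\sum_{\sigma\in G}a_{B,\sigma}$ where each $a_{B,\sigma}$ is supported in the single Euclidean ball $\sigma(B)$ and carries its own cancellation $\int a_{B,\sigma}\,dw=0$ (see the proofs of Lemmas~\ref{lem2-thm2 atom Besov} and~\ref{lem2-thm2 atom Besov-Dk}). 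This localization-with-cancellation, together with Lemma~\ref{lem1- thm2 atom Besov} to control the mixed-metric almost-orthogonality sums, is what replaces your symmetrization and makes both embeddings go through for arbitrary $f$.
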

To evaluate the impact of Theorem  \ref{thm-coincidence Besov and TL spaces}, we only consider the particular case when $s=0, q=2$ and $\f{\N}{\N+1}<p\le 1$. In this case, Theorem \ref{thm-coincidence Besov and TL spaces} (b) reads $\dot{F}^0_{p,2}(dw)\equiv \dot{F}^{0,L}_{p,2}(dw)$. This, along with the fact that $\dot{F}^0_{p,2}(dw)\equiv H^p_{\rm CW}(dw)$ (see \cite[Remark 5.5]{GLY2})  and $\dot{F}^{0,L}_{p,2}(dw)\equiv H^p_L(dw)$ (see Theorem \ref{equiv-Hardy}), implies that $H^p_{\rm CW}(dw)\equiv H^p_L(dw)$ for $\f{\N}{\N+1}<p\le 1$. This, along with the maximal function characterizations of the Hardy space $H^p_L(\RN)$ via heat kernels of $L$ in \cite{SY}, extends the result in \cite{DH2, WLL} to the best range of $p$ in the theory of Hardy space on the space of homogeneous type.  In addition, Theorem \ref{thm-coincidence Besov and TL spaces}, together with known results in \cite{BBD}, deduces the heat kernel characterizations for the Besov spaces $\dot{F}^{s, L}_{p,q}(dw)$ and Triebel-Lizorkin spaces. See Corollary \ref{cor - heat kernel characterization}.\\

One of the main difficulties of the theory of the Besov and Triebel-Lizorkin spaces is the constructions of distributions. See for example \cite{KP, BBD}. In the Dunkl setting, with the ranges of indices in Theorem \ref{thm-coincidence Besov and TL spaces} we can use the distributions in \cite{HMY, HS} to define our function spaces. See Theorem \ref{thm 1}. This leads to some advantages to prove the coincidence between the function spaces associated to $L$ and the function spaces in the space $(\RN, \|\cdot\|, dw)$. Moreover, our approach relies on the atomic decomposition of the Besov and Triebel-Lizorkin spaces. See Section 3. We note that even the atomic decomposition results for the spaces $\dot{B}^s_{p,q}(dw)$ and $\dot{F}^s_{p,q}(dw)$ are new in the literature. See Theorems \ref{thm1-Besov atomic} and \ref{thm1-TL atomic}.

\medskip

The second main result focuses on multiplier theorems for the Dunkl transform. Let us provide a brief overview of the problem. Let $m$ be a bounded function on $\RN$. We define
\[
T_mf = \F^{-1}(m\F f).
\]
In the particular case when the multiplicity function $k\equiv 0$, the Dunkl transform turns out to be the Fourier transform, and the Dunkl multiplier $T_m$ boils down to the Fourier multiplier. Regarding the boundedness of the Dunkl multiplier $T_m$ we recall the main result in \cite{DH}. In what follows, we define $\|f\|_{W^q_s(\RN)}=\|(I-\Delta)^{s/2}f\|_{L^q(\RN,d\x)}$ for $s>0$ and $q\in (1,\vc]$, and $a\vee b =\max\{a,b\}$ and $a\wedge b = \min \{a,b\}$.

\begin{thm}[\cite{DH}]\label{mainthm-spectralmultipliers DH} 	Let $\psi$ be a smooth radial function on $\RN$ such that supp $\psi\subset \{\xi: 1/4\le \|\xi\|\le 4\}$ and $\psi\equiv 1$ on $\{\xi: 1/2\le \|\xi\|\le 2\}$. If $m$ is a function on $\RN$ satisfying the following condition
	\begin{equation}
		\label{smoothness condition - DH}
		\sup_{t>0}\|\psi(\cdot)m(t\cdot)\|_{W^{2}_\alpha(\RN)}<\infty 
	\end{equation} 
	for some $\alpha>\N$. Then we have:
	\begin{enumerate}[\rm (a)]
		\item the  multiplier $T_m$ is weak type of $(1,1)$;
		\item the  multiplier $T_m$ is bounded on $L^p(dw)$ for $1<p<\vc$;
		\item the  multiplier $T_m$ is bounded on the Hardy space $H^1_{\rm CW}(dw)$.
	\end{enumerate}
	Moreover, if we assume the additional condition \begin{equation}\label{eq-L1 uniform}
		\sup_{\y\in \RN}\|\tau_{\y}f\|_{L^1(dw)}\le C\|f\|_{L^1(dw)},
	\end{equation} 
	then the conclusions (a), (b) and (c) above hold true provided that \eqref{smoothness condition} is valid for some $\alpha>\N/2$.
\end{thm}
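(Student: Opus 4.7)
My plan is a dyadic frequency decomposition followed by reduction to Calder\'on--Zygmund theory on the space of homogeneous type $(\RN,d,dw)$. Pick a smooth radial partition of unity $1=\sum_{j\in\Z}\psi(2^{-j}\xi)$ for $\xi\ne 0$ with $\psi$ as in the statement and set $m_j(\xi)=m(\xi)\psi(2^{-j}\xi)$. Since each $m_j$ is bounded and compactly supported away from the origin, $T_{m_j}$ is Dunkl convolution with kernel $K_j=c_k\F^{-1}m_j$, so the full operator has integral kernel $K(\x,\y)=\sum_j\tau_\x K_j(-\y)$. The goal is to verify that $K$ satisfies the H\"ormander size and regularity conditions with respect to $d$; once that is in hand, (a) and (b) follow from the Calder\'on--Zygmund theorem on $(\RN,d,dw)$, and (c) from its $H^1$ counterpart together with a passage from Euclidean atoms to $d$-atoms.

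The core estimate is the weighted $L^2$ kernel bound
\[
	\int_{\RN}\bigl(1+2^j d(\x,\y)\bigr)^{2\alpha}|K_j(\x,\y)|^2\,dw(\y)\le C\,w\bigl(B^d(\x,2^{-j})\bigr)^{-1},
\]
uniform in $j$ and $\x$. I would derive it by writing $d(\x,\y)=\min_{\sigma\in G}\|\x-\sigma\y\|$ and splitting the integral into at most $|G|$ pieces according to which representative minimizes. The $G$-invariance of $dw$ and of the Dunkl machinery reduces each piece to an integral with a Euclidean weight, and Plancherel for $\F$ composed with the rescaling $\xi\mapsto 2^j\xi$ then identifies the outcome with a uniform multiple of $\|\psi\,m(2^{-j}\cdot)\|_{W^2_\alpha(\RN)}^2$, which is bounded by hypothesis.

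Given this estimate, Cauchy--Schwarz and the volume bounds \eqref{eq-ratios on volumes of balls} yield
\[
	\int_{d(\x,\y)\ge A\cdot 2^{-j}}|K_j(\x,\y)|\,dw(\y)\lesssim A^{-(\alpha-\N/2)},
\]
and the companion smoothness bound, with the extra gain $2^j|\y-\y'|$, is obtained by inserting a factor of $\xi$ into $m_j$ before Plancherel. Summation in $j$ produces the H\"ormander conditions for $K$, from which (a) and (b) follow. For (c), the inclusion $w(B^d(\x,r))\le|G|w(B(\x,r))$ shows that a Euclidean $(1,2)$-atom is, up to a constant, a $(1,2)$-atom for the corresponding $d$-ball, so the $H^1\to L^1$ theory on $(\RN,d,dw)$ transfers directly to $H^1_{\rm CW}(dw)$.

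I expect the main technical obstacle to be the weighted $L^2$ estimate with the $d$-distance weight: the reflection group obstructs a naive Euclidean Plancherel argument, and the mismatch between the ambient Sobolev space $W^2_\alpha(\RN)$ (measured against $d\x$) and Plancherel for the Dunkl transform (which uses $dw$) is what forces the basic smoothness requirement $\alpha>\N$ rather than the $\alpha>\N/2$ that one might naively hope for. The sharpening under the additional hypothesis \eqref{eq-L1 uniform} would be obtained by sidestepping the $L^2$ route: the uniform $L^1$ bound on Dunkl translations controls $\|K_j(\x,\cdot)\|_{L^1(dw)}$ directly in terms of $\|\F^{-1}m_j\|_{L^1(dw)}$, after which a Sobolev embedding at the lower level $\alpha>\N/2$ delivers the H\"ormander condition without further loss.
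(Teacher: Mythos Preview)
This theorem is quoted from \cite{DH} and is not proved in the present paper, so there is no in-paper proof to compare against directly. Your overall architecture---dyadic decomposition of $m$, a weighted $L^2$ kernel bound with respect to the orbit distance $d$, and Calder\'on--Zygmund theory on $(\RN,d,dw)$---is indeed the strategy of \cite{DH}, and the relevant kernel estimates are reproduced in Section~4 here (Propositions~\ref{prop-kernel of spectral multiplier} and~\ref{prop- sharp W2 base Lr norm}). Your handling of part~(c) via the inclusion of Euclidean balls in $d$-balls, and your explanation of the sharpening under \eqref{eq-L1 uniform}, are both on target.

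The genuine gap is in your proposed mechanism for the weighted $L^2$ estimate. You suggest writing $d(\x,\y)=\min_\sigma\|\x-\sigma(\y)\|$, splitting the integral into $|G|$ pieces by the minimizing $\sigma$, and invoking ``$G$-invariance of the Dunkl machinery'' to reduce each piece to a Euclidean Plancherel computation. This does not work: the Dunkl convolution kernel $K_j(\x,\y)=\tau_\x K_j(-\y)$ is \emph{not} a function of $\x-\sigma(\y)$ for any single $\sigma$; the Dunkl translation is a genuinely nonlocal superposition over the whole orbit and no change of variables turns it into a Euclidean convolution. The actual route (see the proof of Proposition~\ref{prop-kernel of spectral multiplier} here, following \cite{DH}) is quite different: factor $m_j=\big(m_j\,e^{2^{-2j}\|\cdot\|^2}\big)\,e^{-2^{-2j}\|\cdot\|^2}$, decompose the inverse Dunkl transform of the first factor into compactly supported radial shells, and then use two Dunkl-specific inputs---the support property $\supp\,\tau_\x f(-\cdot)\subset\mathcal O(B(\x,r))$ for $\supp f\subset B(0,r)$ (Lemma~\ref{lem - support condition DH}) and the $L^2$ bound $\|\tau_{-\y}\phi\|_{L^2(dw)}\lesssim r^{\N}w(B(\y,r))^{-1/2}\|\phi\|_\infty$ for radial $\phi$ (Proposition~\ref{prop - L2 estimate for the translation})---to produce the $d$-weight. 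The conversion from the Dunkl side to the classical Sobolev norm $W^2_\alpha(\RN,d\x)$ is a further nontrivial step (Proposition~5.3 of \cite{A etal}). So the weighted kernel estimate is accessible, but not by the $G$-orbit splitting you propose.
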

The condition \eqref{smoothness condition - DH} with $\alpha>\N$ is unexpected but is reasonable since we need an extra order of $\N/2$ in the proof of the boundedness of the Dunkl translation on $L^p(dw), p\ne 2$. The extra number of derivatives of $\N/2$ can be removed under the condition \eqref{eq-L1 uniform}. This condition holds true in the rank-one case and  the product case as in \cite{R}.

\medskip

The second main aim in this paper is as follows. Firstly, we aim to reduce the number of the derivative imposed in the function $m$ to $\N/2$ without the extra condition \eqref{eq-L1 uniform}; however, we have to use the norm $\|\cdot\|_{W^\vc_\alpha(\RN)}$ rather than $\|\cdot\|_{W^2_\alpha(\RN)}$. Secondly, we extend the boundedness to the Besov space $\B^{s}_{p,q}(dw)$ and Triebel-Lizorkin space $\FF^{s}_{p,q}(dw)$. More precisely, we have the following result.

\begin{thm}\label{mainthm-spectralmultipliers-homogeneous spaces} 	Let $\alpha>\f{\N}{2}$. Let $\psi$ be a smooth radial function  defined on $\RN$  such that supp $\psi\subset \{\xi: 1/4\le \|\xi\|\le 4\}$ and $\psi\equiv 1$ on $\{\xi: 1/2\le \|\xi\|\le 2\}$. If $m$ is a function on $\RN$ satisfying the following condition
	\begin{equation}
		\label{smoothness condition}
		\sup_{t>0}\|\psi(\cdot)m(t\cdot)\|_{W^{\vc}_\alpha(\RN)}<\infty.
	\end{equation} 
	Then the following hold true.
	\begin{enumerate}[\rm (a)]
		\item The  multiplier $T_m$ is bounded on $\FF^{s}_{p,q}(dw)$ provided that $s\in (-1,1)$, $p(s,1)<p,q <\vc$ and $\alpha> \N(\f{1}{1\wedge p\wedge  q}-\f{1}{2}) $, i.e.,
		\[
		\|T_m\|_{\FF^{s}_{p,q}(dw)\to \FF^{s}_{p,q}(dw)}\lesi |m(0)|+\sup_{t>0}\|\psi(\cdot)m(t\cdot)\|_{W^{\vc}_\alpha(\RN)}.
		\]
		\item The multiplier $T_m$ is bounded on $\B^{s}_{p,q}(dw)$ provided that $s\in (-1,1)$, $p(s,1)<p, q<\vc$ and $\alpha> \N(\f{1}{1\wedge p\wedge  q}-\f{1}{2}) $, i.e.,
		$$
		\|T_m\|_{\B^{s}_{p,q}(dw)\to \B^{s}_{p,q}(dw)}\lesi |m(0)| +\sup_{t>0}\|\psi(\cdot)m(t\cdot)\|_{W^{\vc}_\alpha(\RN)}.
		$$		
	\end{enumerate}
	Moreover, if  \eqref{eq-L1 uniform} is assumed,  the estimates (a) and (b)  hold true with $W^{2}_\alpha(\RN)$ taking place of $W^{\vc}_\alpha(\RN)$ in \eqref{smoothness condition}.
\end{thm}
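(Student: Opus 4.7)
The plan is to invoke Theorem \ref{thm-coincidence Besov and TL spaces} to transfer the problem to the semigroup-adapted spaces $\FF^{s,L}_{p,q}(dw)$ and $\B^{s,L}_{p,q}(dw)$, and then to exploit the crucial fact that $T_m$ and every spectral function $\varphi(\sqrt{L})$ are both realized by multiplications on the Dunkl transform side, and hence commute. Fix a non-negative smooth $\varphi$ supported in $\{1/2\le\lambda\le 2\}$ with $\sum_{j\in\zz}\varphi(2^{-j}\lambda)=1$ for $\lambda>0$, and a slightly larger bump $\wz\varphi$ supported in $\{1/4\le\lambda\le 4\}$, equal to $1$ on $\supp\varphi$ and contained in the region where $\psi\equiv 1$. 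By commutativity,
\[
\varphi(2^{-j}\sqrt{L})T_m f = T_{m_j}\,\varphi(2^{-j}\sqrt{L})f,\qquad m_j(\xi):=m(\xi)\wz\varphi(2^{-j}\|\xi\|).
\]

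The technical heart is a uniform off-diagonal bound for the kernel of each $T_{m_j}$. Rescaling $\xi\mapsto 2^j\xi$, the hypothesis \eqref{smoothness condition} gives $\|m(2^j\cdot)\wz\varphi(\|\cdot\|)\|_{W^\vc_\alpha(\RN)}\lesi A$ uniformly in $j$, where $A:=\sup_{t>0}\|\psi(\cdot)m(t\cdot)\|_{W^\vc_\alpha(\RN)}$. Combining Dunkl--Fourier inversion on the fixed annulus with the bound $|E(i\xi,\x)|\le 1$, the Gaussian heat kernel bound of Lemma \ref{lem-heat kernel estimate} (with respect to the orbit distance $d$), and a subordination of the spectral projector to the heat semigroup, I would establish the pointwise kernel bound
\[
|K_{T_{m_j}}(\x,\y)|\lesi \frac{A}{w(B(\x,2^{-j}))}\bigl(1+2^j d(\x,\y)\bigr)^{-\N-\uc}
\]
for some $\uc>0$ depending on how far $\alpha$ exceeds $\N(1/(1\wedge p\wedge q)-1/2)$, together with an analogous H\"older-continuity bound in the variable $\x$. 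For $r\in(0,1)$ satisfying $\alpha>\N(1/r-1/2)$ and $r<1\wedge p\wedge q$, these estimates yield the pointwise control of $T_{m_j}g$ by $A$ times the Peetre-type maximal function of $g$ of order $r$ at scale $2^{-j}$.

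Since $\varphi(2^{-j}\sqrt{L})f$ has Dunkl spectrum in $\{\|\xi\|\sim 2^j\}$, a Plancherel--P\'olya type inequality on $(\RN,d,dw)$ dominates this Peetre maximal function by the $r$-truncated Hardy--Littlewood maximal function of $\varphi(2^{-j}\sqrt{L})f$. Substituting into the Littlewood--Paley description of $\|T_m f\|_{\FF^{s,L}_{p,q}(dw)}$ and applying the Fefferman--Stein vector-valued maximal inequality on $L^{p/r}(\ell^{q/r};dw)$, which is valid because $p,q>r$, one arrives at
\[
\|T_m f\|_{\FF^{s,L}_{p,q}(dw)}\lesi (|m(0)|+A)\|f\|_{\FF^{s,L}_{p,q}(dw)},
\]
where the $|m(0)|$ contribution collects the zero-frequency piece. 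The Besov bound is obtained analogously, using only the scalar $L^{p/r}$-boundedness of the Hardy--Littlewood maximal function within each Littlewood--Paley level and then summing the $\ell^q$-norm over $j$. Under the additional assumption \eqref{eq-L1 uniform}, the $L^1$-uniform boundedness of Dunkl translations lets one estimate the kernel of $T_{m_j}$ by a Plancherel/$L^2$ argument rather than Dunkl--Fourier inversion, saving exactly $\N/2$ derivatives and thus replacing $W^\vc_\alpha$ by $W^2_\alpha$ in the hypothesis.

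The main obstacle is the pointwise kernel bound for $T_{m_j}$ at Sobolev order only slightly above $\N/2$. Without the uniform $L^1$-boundedness of Dunkl translations, the classical Plancherel-based kernel estimate breaks down, so one must exploit the stronger $L^\vc$-based Sobolev hypothesis together with sharp pointwise bounds on the Dunkl kernel $E(i\xi,\x)$ on the fixed annulus to carry out a Mihlin--H\"ormander-type inversion argument in the Dunkl geometry; the delicate bookkeeping of the weight $w$ and of the distance $d$ in the resulting decay estimate is what forces the precise threshold $\alpha>\N(1/(1\wedge p\wedge q)-1/2)$.
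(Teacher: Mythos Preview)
Your overall strategy---reduce to the $L$-adapted spaces via Theorem~\ref{thm-coincidence Besov and TL spaces}, commute $T_m$ with the Littlewood--Paley pieces, estimate the kernel of each localized multiplier $T_{m_j}$, and close with Peetre maximal functions and Fefferman--Stein---is sound and is essentially how the paper obtains the \emph{unsharp} bound. The gap is in the kernel estimate you claim. From the $W^\vc_\alpha$ hypothesis one gets (see Proposition~\ref{prop-pointwise estimate kernel of spectral multiplier})
\[
|K_{T_{m_j}}(\x,\y)|\lesi \frac{A}{w(B(\y,2^{-j}))}\,(1+2^jd(\x,\y))^{-\alpha'}\quad\text{for any }\alpha'<\alpha,
\]
so the decay exponent is at most $\alpha$, not $\N+\uc$. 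The Peetre/Fefferman--Stein step with parameter $r<1\wedge p\wedge q$ requires decay strictly larger than $\N/r$, hence $\alpha>\N/(1\wedge p\wedge q)$, which is $\N/2$ worse than the threshold $\alpha>\N(\tfrac{1}{1\wedge p\wedge q}-\tfrac12)$ asserted in the theorem. Your remark that the decay ``depends on how far $\alpha$ exceeds $\N(\tfrac{1}{1\wedge p\wedge q}-\tfrac12)$'' cannot be right: the kernel bound is intrinsic and does not see $p,q$.

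The paper recovers the missing $\N/2$ not by a better pointwise kernel bound but by a different route: it first proves the sharp endpoint on the Hardy spaces $H^p_L=\FF^{0,L}_{p,2}$ via atomic decomposition together with the \emph{weighted $L^2$} kernel estimate of Proposition~\ref{prop-kernel of spectral multiplier} (which is where the $W^\vc_\alpha$ hypothesis buys back $\N/2$ compared to pointwise bounds); it then lifts to $\FF^{s,L}_{p,2}$ for all $s$ and $1<p<\vc$ using $L^{s/2}$-commutation and the known $L^p$ result; and finally it passes to general $q$ by combining the unsharp Peetre-type bound with duality (Proposition~\ref{prop-duality}) and complex/real interpolation (Propositions~\ref{prop-comple interpolation} and~\ref{mainthm-Interpolation}). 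If you only run your direct maximal-function argument you will land on $\alpha>\N/(1\wedge p\wedge q)$; to reach the stated sharp threshold you need to add the Hardy-space endpoint plus interpolation/duality as in the paper.
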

Note that our approach can prove that (a) and (b) hold true under the condition \eqref{smoothness condition - DH} (without the additional condition \eqref{eq-L1 uniform}) with $\alpha> \N(\f{1}{1\wedge p\wedge  q}-\f{1}{2})+\f{\N}{2}$. 

In the particular case when $s=0, q=2$ and $p\in (0,1]$, the estimate (a) in Theorem \ref{mainthm-spectralmultipliers-homogeneous spaces} boils down to
\[
\|T_m\|_{\FF^{0}_{p,2}(dw)\to \FF^{0}_{p,2}(dw)}\lesi |m(0)|+\sup_{t>0}\|\psi(\cdot)m(t\cdot)\|_{W^{\vc}_\alpha(\RN)},
\]
as long as $\alpha > \N (\f{1}{p}-\f{1}{2} )$. This, along with the fact that $\FF^{0}_{p,2}(dw)\equiv H^p_{\rm CW}(dw)$ for $\f{\N}{\N+1}<p\le 1$ (see \cite[Remark 5.5]{GLY2}), implies that 
\[
\|T_m\|_{H^p_{\rm CW}(dw)\to H^p_{\rm CW}(dw)}\lesi |m(0)|+\sup_{t>0}\|\psi(\cdot)m(t\cdot)\|_{W^{\vc}_\alpha(\RN)},
\]
as long as $\alpha > \N (\f{1}{p}-\f{1}{2} )$  and $\f{\N}{\N+1}<p\le 1$. It is worth noting that this estimate improves (c) in Theorem \ref{mainthm-spectralmultipliers DH} for $\f{\N}{\N+1}<p<1$. It is important to emphasize that it seems that the approach in \cite{DH} can be applied to prove the estimate for $p<1$. \\

In addition, since $\FF^{0}_{p,2}(dw)\equiv L^p(dw)$ (see \cite{HMY}), the estimate (a) reads
\[
\|T_m\|_{H^p_{\rm CW}(dw)\to H^p_{\rm CW}(dw)}\lesi |m(0)|+\sup_{t>0}\|\psi(\cdot)m(t\cdot)\|_{W^{\vc}_\alpha(\RN)},
\]
as long as $\alpha > \N/2$. This improves the main result in \cite[Theorem 3]{Sol} even when $N=1$.\\

\medskip

Our techniques regarding the proof of Theorem \ref{mainthm-spectralmultipliers-homogeneous spaces} are built upon the framework of function spaces associated to the operator $L$ in \cite{BBD, BD}. We begin by proving the sharp estimates for  the multiplier $T_m$ on the Hardy spaces $H^p_L(dw)$, while obtaining the (un-sharp) estimates for  the multiplier $T_m$ on the Besov/Triebel-Lizorkin spaces. The sharpness will follow by utilizing the duality and interpolation arguments as in \cite{BD}. 

Once we have established the boundedness of $T_m$ on the Besov and Triebel-Lizorkin spaces associated with $L$, as stated in Theorem \ref{mainthm-spectralmultipliers-space adapted to L}, the conclusion of Theorem \ref{mainthm-spectralmultipliers-homogeneous spaces} follows directly from this result, in conjunction with Theorem \ref{thm-coincidence Besov and TL spaces}. It is worth noting that the boundedness of the multiplier operator $T_m$ on the Besov and Triebel-Lizorkin spaces associated with $L$, with the full range of indices as outlined in Theorem \ref{mainthm-spectralmultipliers-space adapted to L}, is of considerable significance in its own right.
\medskip

We  now consider the Riesz transform define by $\mathcal R_j:=T_jL^{-1/2}, j=1,\ldots, N$. It is clear that for each $j=1,\ldots, N$ the Riesz transform can be viewed as the Dunkl multiplier $T_{m_j}$ with $m_j(\xi) = -i\f{\xi_j}{\|\xi\|}$. See for example \cite{TX, A etal}.  As a direct consequence of Theorem \ref{thm-coincidence Besov and TL spaces}, we have:
\begin{cor}
	For each $j=1,\ldots, N$, the Riesz transform $\mathcal R_j:=T_jL^{-1/2}$ is bounded on  the Triebel-Lizorkin space $\FF^{s}_{p,q}(dw)$ with $s\in (-1,1)$, $p(s,1)<p,q <\vc$, and is bounded on the Besov space with  $s\in (-1,1)$, $p(s,1)<p<\vc, 0<q<\vc$.
\end{cor}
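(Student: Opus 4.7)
The plan is to derive the corollary as a direct application of Theorem \ref{mainthm-spectralmultipliers-homogeneous spaces} by checking that the Riesz symbol $m_j(\xi) = -i\xi_j/\|\xi\|$ satisfies the hypothesis \eqref{smoothness condition} in a trivial way. First I would reconfirm the identification $\mathcal R_j = T_{m_j}$ recorded in the paragraph preceding the corollary: the Dunkl operators are intertwined by $\mathcal F$ into multiplication by $i\xi_j$ (so $L$ into multiplication by $\|\xi\|^2$, cf.\ \cite{R}), whence $\mathcal F(T_j L^{-1/2} f)(\xi) = c\, \xi_j\|\xi\|^{-1}\mathcal F f(\xi)$ on an appropriate dense subclass. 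Setting $m_j(0):=0$ makes $m_j$ a bounded function on $\RN$, and $\mathcal R_j = T_{m_j}$ as claimed.

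The decisive observation is that $m_j$ is homogeneous of degree zero, so $m_j(t\xi)=m_j(\xi)$ for every $t>0$ and $\xi\ne 0$, and hence
\[
\psi(\xi)\, m_j(t\xi) \;=\; \psi(\xi)\, m_j(\xi)
\]
is a single function of $\xi$ independent of $t$. Since $\supp\psi\subset\{1/4\le\|\xi\|\le 4\}$ is compact and bounded away from the origin while $m_j\in C^\infty(\RN\setminus\{0\})$, the product $\psi\cdot m_j$ lies in $C^\infty_c(\RN)$ and hence in $W^\vc_\alpha(\RN)$ for every $\alpha>0$. Consequently,
\[
\sup_{t>0}\|\psi(\cdot)\,m_j(t\cdot)\|_{W^\vc_\alpha(\RN)} \;=\; \|\psi\, m_j\|_{W^\vc_\alpha(\RN)} \;<\; \infty
\]
for every $\alpha>0$, in particular for any $\alpha>\N\bigl(\tfrac{1}{1\wedge p\wedge q}-\tfrac{1}{2}\bigr)$.

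With \eqref{smoothness condition} so verified, the boundedness of $\mathcal R_j$ on $\FF^{s}_{p,q}(dw)$ in the stated range follows immediately from Theorem \ref{mainthm-spectralmultipliers-homogeneous spaces}(a), and the boundedness on $\B^{s}_{p,q}(dw)$ in the stated range from part (b), applied separately for each $j=1,\dots,N$. There is no genuine obstacle to overcome here: the only mild points are fixing $m_j(0)$ (harmlessly absorbed into the $|m(0)|$ term of Theorem \ref{mainthm-spectralmultipliers-homogeneous spaces}) and exploiting the zero-homogeneity of $m_j$, which collapses the supremum in \eqref{smoothness condition} to a single finite $C^\infty_c$-norm. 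All the analytic content is packaged in the multiplier theorem, which is why the corollary is truly a corollary.
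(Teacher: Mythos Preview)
Your proposal is correct and matches the paper's approach: the paper states the corollary as an immediate consequence of the identification $\mathcal R_j = T_{m_j}$ with $m_j(\xi)=-i\xi_j/\|\xi\|$ and the multiplier theorem, without writing out any further details. Your observation that the zero-homogeneity of $m_j$ collapses the supremum in \eqref{smoothness condition} to a single $C^\infty_c$ norm is exactly the point that makes the verification trivial.
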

By using Theorem \ref{mainthm-spectralmultipliers-space adapted to L} we also obtain the boundedness of the Riesz transforms $\mathcal R_j$ on the Besov and Triebel-Lizorkin associated to the operator $L$ with full ranges of indices $s\in \mathbb R$ and $0<p,q<\vc$.\\

\medskip

The organization of the paper is as follows. In Section 2, we first recall results in \cite{HK} on the dyadic systems in the Dunkl setting. Then  some kernel estimates of the functional calculus of $L$ will be establish. In addition, we also prove some Calder\'on reproducing formulae related to the functional calculus of $L$ under the distributions in \cite{HMY, HS}. Section 3 will provide the proof of Theorem \ref{thm-coincidence Besov and TL spaces}. The proof of Theorem \ref{mainthm-spectralmultipliers-homogeneous spaces} will be given in Section 4.\\

\medskip

Throughout this paper, we use $C$ to denote positive constants, which are independent of the main
parameters involved and whose values may vary at every occurrence.
By writing $f \lesssim g$, we mean that $f \leq Cg$. We also use
$f \simeq g$ to denote that $C^{-1}g \leq f \leq C g$. We also use the following notations through the paper, $a\vee b =\max\{a,b\}$ and $a\wedge b = \min \{a,b\}$.

\section{Preliminaries}

\subsection{Dyadic cube systems in the Dunkl setting}
Let $X$ be a metric space, with metric $d$ and
$\mu$ is a nonnegative Borel measure on $X$, which satisfies the doubling property below. For $\x\in X$ and $r>0$ we set $B(\x, r)=\{\y\in X:d(\x,\y)<r\}$ to be the open ball with radius $r >0$ and center $\x\in
X$. The doubling
property of $\mu$ provides a constant $C>0$ such that
\begin{equation}\label{doubling1}
	\mu(B(\x,2r))\leq C\mu(B(\x,r))
\end{equation}
for all $\x\in X$ and $r>0$.\\

Let  $0<r<\vc$. The  Hardy-Littlewood maximal function $\mathcal{M}_{r}$ is defined by
\begin{equation}\label{eq-maximal function}
	\mathcal{M}_{r} f(\x)=\sup_{\x\in B}\Big(\f{1}{\mu(B)}\int_B|f(\y)|^r d\mu(\y)\Big)^{1/r}
\end{equation}
where the sup is taken over all balls $B$ containing $\x$. We will drop the subscripts $r$ when $r=1$.

Let  $0<r<\vc$. It is well-known that 
\begin{equation}
	\label{boundedness maximal function}
	\|\mathcal{M}_{r} f\|_{p}\lesi \|f\|_{p}
\end{equation}
for all $p>r$.

We recall the Fefferman-Stein vector-valued maximal inequality  in \cite{GLY}. For $0<p<\vc$, $0<q\leq \vc$ and $0<r<\min \{p,q\}$, we then have for any sequence of measurable functions $\{f_\nu\}$,
\begin{equation}\label{FSIn}
	\Big\|\Big(\sum_{\nu}|\mathcal{M}_rf_\nu|^q\Big)^{1/q}\Big\|_{p}\lesi \Big\|\Big(\sum_{\nu}|f_\nu|^q\Big)^{1/q}\Big\|_{p}.
\end{equation}

The Young's inequality and \eqref{FSIn} imply the following  inequality: If $\{a_\nu\} \in \ell^{q}\cap \ell^{1}$, then 
\begin{equation}\label{YFSIn}
	\Big\|\sum_{j}\Big(\sum_\nu|a_{j-\nu}\mathcal{M}_r f_\nu|^q\Big)^{1/q}\Big\|_{p}\lesi \Big\|\Big(\sum_{\nu}|f_\nu|^q\Big)^{1/q}\Big\|_{p}.
\end{equation}
The following result  on the existence of a system of dyadic cubes in a space of homogeneous type is just a combination of \cite[Theorem 2.2]{HK} and \cite[Proposition 2.5]{CKP} .
\begin{lem}\label{lem-dyadic cube}
	Fix $\delta \in (0,1)$ such that $\delta \le 1/24$. Then there exist  a family of set $\{Q_\alpha^k: k\in \Z, \alpha\in I_k\}$ and a set of points $\{\x_{Q_\alpha^k}: k\in \Z, \alpha\in I_k\}$ and satisfying
	\begin{enumerate}[{\rm (i)}]
		\item $d(\x_{Q_\alpha^k},\x_{Q_\beta^k}) \ge \delta^k$ for all $k\in \mathbb Z$ and $\alpha,\beta \in I_k$ with $\alpha\ne \beta$;
		\item $\min_{\alpha\in I_k}d(\x,\x_{Q_\alpha^k}) \le \delta^k$ for all $\x\in X$;
		\item for any $k\in \Z$, $\bigcup_{\alpha\in I_k} Q^k_\alpha =\RN$ and $\{Q_\alpha^k:  \alpha\in I_k\}$ is disjoint;
		\item if $k,\ell\in \Z$ and $k\ge \ell$, then either $Q_\alpha^k\subset Q_\beta^\ell$ or $Q_\alpha^k\cap  Q_\beta^\ell=\emptyset$ for every $\alpha\in I_k$ and $\beta\in I_\ell$;
		\item for any $k\in \Z$ and $\alpha\in I_k$, $B(\x_{Q_\alpha^k},\delta^k/6)\subset Q_\alpha^k\subset B(\x_{Q_\alpha^k},2\delta^k)$.
	\end{enumerate}
\end{lem}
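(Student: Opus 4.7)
The plan is to invoke the Hytönen--Kairema construction of dyadic systems in spaces of homogeneous type, together with the constant-sharpening refinement of Cabrelli--Kurt--Paiva, and merely verify that the orbit distance $d$ together with the measure $dw$ falls within their framework. Since $d$ is a genuine distance on the space of $G$-orbits and $dw$ is doubling with respect to $d$ (both facts are recorded earlier in the paper), the abstract construction applies verbatim to $(\RN, d, dw)$.

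First, at each scale $k \in \Z$ I would use a Zorn maximality argument to choose a family of centers $\{\x_{Q_\alpha^k}: \alpha \in I_k\} \subset \RN$ that is maximal subject to being $\delta^k$-separated with respect to $d$. Separation gives (i) immediately, while maximality forces (ii), since any point further than $\delta^k$ from every center could be added to the family. Next, I would build a parent map $p: I_k \to I_{k-1}$ by assigning each finer center $\x_{Q_\alpha^k}$ to a choice of $\beta \in I_{k-1}$ realizing $d(\x_{Q_\alpha^k}, \x_{Q_\beta^{k-1}}) \le \delta^{k-1}$, whose existence is guaranteed by (ii) at scale $k-1$ and whose uniqueness is enforced by a fixed well-ordering of the index sets. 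Iterating downward from an arbitrary coarse base scale, I would define each cube $Q_\alpha^k$ as the union of its children in the tree and partition each parent cube among its children; this yields the covering and disjointness in (iii) together with the nestedness in (iv), exactly as in \cite[Theorem 2.2]{HK}.

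To verify the sharp sandwich (v), I would appeal to the smallness condition $\delta \le 1/24$ from \cite[Proposition 2.5]{CKP}. The inner inclusion $B(\x_{Q_\alpha^k}, \delta^k/6) \subset Q_\alpha^k$ is proved by showing that any $\y$ within $\delta^k/6$ of $\x_{Q_\alpha^k}$ remains closer, in $d$, to the descendants of $\x_{Q_\alpha^k}$ than to any competing center at every finer scale; the cumulative drift of descendants is controlled by the geometric series $\sum_{j \ge 1} \delta^{k+j} \le \delta^{k+1}/(1-\delta)$, which is absorbed into $\delta^k/6$ precisely because $\delta \le 1/24$. The outer inclusion $Q_\alpha^k \subset B(\x_{Q_\alpha^k}, 2\delta^k)$ follows from the same geometric-series bound applied to ancestors. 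The main obstacle, as always in such constructions, is coordinating the parent-child assignment so as to secure (iv) and the explicit constants in (v) simultaneously: this is exactly where the quantitative smallness $\delta \le 1/24$ is essential, and once it is imposed, every remaining verification reduces to a routine combinatorial argument along the tree.
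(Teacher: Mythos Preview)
Your approach is essentially the same as the paper's: the paper does not prove this lemma at all but simply cites \cite[Theorem~2.2]{HK} together with \cite[Proposition~2.5]{CKP}, and you are doing the same thing with an added sketch of the construction. One small correction: the reference \cite{CKP} in this paper is Coulhon--Kerkyacharian--Petrushev, not ``Cabrelli--Kurt--Paiva''; also note that the lemma is formulated for a general space of homogeneous type $(X,d,\mu)$, and the application to $(\RN,d,dw)$ with the orbit distance is carried out \emph{after} the lemma in the paper, so your opening paragraph conflates the statement with its subsequent application.
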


\bigskip

We now fix $\delta \in (0,1/24)$ through the paper. Return to the Dunkl setting, we have two metrics defined in $\RN$: the Euclidean metric induced by the Euclidean norm $\|\cdot\|$ and the orbit distance $d$ defined by \eqref{eq- d distance}. 

\medskip

Since both $(\RN, \|\cdot\|, dw)$ is a space of homogeneous type, we can apply Lemma \ref{lem-dyadic cube} to extract a system of dyadic cubes corresponding to each metric. For the homogeneous space $(\RN, \|\cdot\|, dw)$, denote by $\mathscr D :=\{Q^k_\alpha: k\in \Z, \alpha \in I_k\}$ the set constructed by Lemma \ref{lem-dyadic cube}. The set $\mathscr D$ is called the system of dyadic cubes in $(\RN, \|\cdot\|, dw)$. For each $k\in \Z$, we denote $\mathscr D_k = \{Q_\alpha^k: \alpha \in I_k\}$. For each $k\in \Z$ and $\alpha\in I_k$, we denote $\ell(Q_\alpha^k)=\delta^k$ and $\lambda Q_\alpha^k = B(\x_{Q_\alpha^k},\lambda \delta^k), \lambda>0$.

Although $(\RN, d, dw)$ is not a space of homogeneous type, $(\RN/G, d, d\mu)$ is a space of homogeneous type, where $\mu(E)=w(\cup_{\mathcal O(\x)\in E}\mathcal O(\x))$. Therefore, there exists a family $\mathscr D^d :=\{Q^{k,d}_\alpha: k\in \Z, \alpha \in I^d_k\}$ satisfying (i)-(v) in Lemma \ref{lem-dyadic cube}. The set $\mathscr D^d$ is called the system of dyadic cubes in $(\RN, d, dw)$. For each $k\in \Z$, we denote $\mathscr D^d_k = \{Q_\alpha^{k,d}: \alpha \in I^d_k\}$. For each $k\in \Z$ and $\alpha\in I^d_k$, we denote $\ell(Q_\alpha^{k,d})=\delta^k$ and $\lambda Q_\alpha^{k,d} = B^d(\x_{Q_\alpha^{k,d}},\lambda \delta^k):=\mathcal O(B(\x_{Q_\alpha^{k,d}},\lambda \delta^k)), \lambda>0$. Note that we use the subscript $d$ in dyadic cubes to indicate the metric $d$.

In what follows, for $r>0$ we still denote by $\mathcal M_r$ the maximal function defined by \eqref{eq-maximal function} in  $(\RN, \|\cdot\|, dw)$ and by $\mathcal M^d_r$ the maximal function defined by \eqref{eq-maximal function} in  $(\RN, d, dw)$. Obviously,
\[
\mathcal M^d_rf(\x) \lesi \sum_{y\in \mathcal O(\x)}\mathcal M_r f(\y)
\]
for all $\x\in \RN$. Hence, $\mathcal M^d_r$ also enjoys the inequalities \eqref{boundedness maximal function}, \eqref{FSIn} and \eqref{YFSIn}.

We have the following technical lemmas.
\begin{lem}[\cite{FJ2}]\label{lem1- JF lem}
	Let $\mathscr D^d = \{\mathscr D^d_k\}_{k\in \Z}$ be the system of dyadic cubes in $(\RN, d, dw)$. Let  $M>\mathfrak{N}$, $\kappa\in [0,1]$, and $\eta, k \in \mathbb{Z}$ and $k\geq \eta$. Assume that  $\{f_{Q^d}\}_{{Q^d}\in \mathscr{D}^d_k}$ is a  sequence of functions satisfying
	\begin{equation*} 
		|f_{Q^d}(\x)|\lesi \Big(\f{w(Q^d)}{w(B(\x_{Q^d},\delta^{\eta}))}\Big)^\kappa\Big(1+\f{d(\x,\x_{Q^d})}{\delta^{\eta}}\Big)^{-M}.
	\end{equation*}
	Then for $\f{\mathfrak{N}}{M}<r\leq 1$ and a sequence of numbers $\{s_{Q^d}\}_{Q^d\in \mathscr{D}^d_k}$, we have
	$$
	\sum_{Q^d\in \mathscr{D}^d_k}|s_{Q^d}|\,|f_{Q^d}(\x)|\lesi \delta^{-\mathfrak{N}(k-\eta)(1/r-\kappa)}\mathcal{M}^d_{r}\Big(\sum_{Q^d\in \mathscr{D}^d_k}|s_{Q^d}|\chi_{Q^d}\Big)(\x).
	$$
	The statement still holds true if we replace all dyadic cubes $\mathscr D^d = \{\mathscr D^d_k\}_{k\in \Z}$, the distance $d$ and the maximal function $\mathcal M_r^d$ by the dyadic cubes $\mathscr D = \{\mathscr D_k\}_{k\in \Z}$, the distance $\|\cdot\|$ and the maximal function $\mathcal M_r$, respectively. 
\end{lem}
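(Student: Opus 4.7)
The plan is to prove the bound pointwise by first organizing the cubes $Q^d \in \mathscr D^d_k$ into dyadic annuli around $\x$ relative to the scale $\delta^\eta$. Specifically, set
\[
A_0=\{Q^d : d(\x,\x_{Q^d})\le \delta^\eta\},\quad A_j=\{Q^d : 2^{j-1}\delta^\eta<d(\x,\x_{Q^d})\le 2^j\delta^\eta\}\ (j\ge 1),
\]
so that for $Q^d\in A_j$ the decay factor satisfies $(1+d(\x,\x_{Q^d})/\delta^\eta)^{-M}\lesi 2^{-jM}$. By the properties of the dyadic system, the cubes of $A_j$ are contained in $B^d(\x,c\,2^j\delta^\eta)$ for some absolute $c$.

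The next step is a two-part factoring of the volume ratio based on the assumption $r\le 1\le 1/\kappa$ (so $1/r-\kappa\ge 0$). Writing
\[
\Big(\tfrac{w(Q^d)}{w(B(\x_{Q^d},\delta^\eta))}\Big)^\kappa =\Big(\tfrac{w(Q^d)}{w(B(\x_{Q^d},\delta^\eta))}\Big)^{\kappa-1/r}\Big(\tfrac{w(Q^d)}{w(B(\x_{Q^d},\delta^\eta))}\Big)^{1/r},
\]
the reverse doubling lower bound from \eqref{eq-ratios on volumes of balls}, together with $w(Q^d)\simeq w(B(\x_{Q^d},\delta^k))$ from Lemma \ref{lem-dyadic cube}(v), gives $w(Q^d)/w(B(\x_{Q^d},\delta^\eta))\gtrsim \delta^{(k-\eta)\mathfrak{N}}$, and therefore the first factor is dominated by $\delta^{-\mathfrak{N}(k-\eta)(1/r-\kappa)}$. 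This produces exactly the geometric factor appearing in the target inequality.

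The remaining piece is to connect the sum involving $(w(Q^d)/w(B(\x_{Q^d},\delta^\eta)))^{1/r}$ to the maximal function. Applying the elementary $\ell^r$ embedding $\sum a_i\le (\sum a_i^r)^{1/r}$ (valid for $0<r\le 1$) and the doubling/reverse doubling estimate $w(B(\x_{Q^d},\delta^\eta))\gtrsim 2^{-j\mathfrak{N}}w(B^d(\x,2^j\delta^\eta))$ for $Q^d\in A_j$ yields
\[
\sum_{Q^d\in A_j}|s_{Q^d}|\Big(\tfrac{w(Q^d)}{w(B(\x_{Q^d},\delta^\eta))}\Big)^{1/r}\lesi \frac{2^{j\mathfrak N/r}}{w(B^d(\x,2^j\delta^\eta))^{1/r}}\Big(\sum_{Q^d\in A_j}|s_{Q^d}|^r w(Q^d)\Big)^{1/r}.
\]
Since the cubes in $\mathscr D^d_k$ are pairwise disjoint, $\big(\sum_{Q^d}|s_{Q^d}|\chi_{Q^d}\big)^r=\sum_{Q^d}|s_{Q^d}|^r\chi_{Q^d}$, so the sum inside is bounded by $\int_{B^d(\x,c\,2^j\delta^\eta)}\big(\sum_{Q^d}|s_{Q^d}|\chi_{Q^d}\big)^r\,dw$, which in turn is $\lesi w(B^d(\x,2^j\delta^\eta))\,[\mathcal M^d_r(\sum|s_{Q^d}|\chi_{Q^d})(\x)]^r$.

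Combining the pieces gives, for each $j$, a bound of the form $2^{-jM}\cdot 2^{j\mathfrak N/r}\cdot\delta^{-\mathfrak N(k-\eta)(1/r-\kappa)}\cdot \mathcal M^d_r(\cdot)(\x)$, and summing in $j\ge 0$ produces a geometric series in $2^{j(\mathfrak N/r-M)}$ which converges precisely when $r>\mathfrak N/M$, the standing assumption. This completes the proof of the $d$-version; the $\|\cdot\|$-version is identical, since $(\RN,\|\cdot\|,dw)$ is itself a space of homogeneous type and none of the steps used anything specific to $d$ beyond Lemma \ref{lem-dyadic cube} and \eqref{eq-ratios on volumes of balls}. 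The main obstacle, and the step most likely to hide constants if one is not careful, is the factorization of the volume ratio and the correct matching of the exponent $1/r-\kappa$ to the reverse doubling bound; everything else is bookkeeping involving doubling and the $\ell^r$ trick.
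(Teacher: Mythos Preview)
Your argument is correct and follows essentially the same route as the paper's own proof of the closely related Lemma~\ref{lem1- thm2 atom Besov} (the paper itself simply cites \cite{FJ2} for Lemma~\ref{lem1- JF lem}). The only differences are cosmetic: you use dyadic ($2$-adic) annuli while the paper uses $\delta$-adic ones, and you factor the volume ratio before applying the $\ell^r$ embedding whereas the paper performs the $\ell^r$ step first and then estimates the combined factor $\big(w(Q_j)/w(Q)\big)^{1/r}\big(w(Q)/w(B(\x_Q,\delta^\eta))\big)^\kappa$ in one go. One small terminological slip: the lower bound $w(Q^d)/w(B(\x_{Q^d},\delta^\eta))\gtrsim \delta^{(k-\eta)\mathfrak N}$ actually comes from the \emph{doubling} (upper) inequality in \eqref{eq-ratios on volumes of balls}, not the reverse doubling one; the estimate itself is correct.
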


\begin{lem}\label{lem1- thm2 atom Besov}
	Let $\mathscr D = \{\mathscr D_k\}_{k\in \Z}$ be the system of dyadic cubes in $(\RN, \|\cdot\|, dw)$. Let  $M>\mathfrak{N}$, $\kappa\in [0,1]$, and $\eta, k \in \mathbb{Z}$, $k\geq \eta$. Assume that  $\{f_{Q}\}_{{Q}\in \mathscr{D}_k}$ is a  sequence of functions satisfying
	\begin{equation}\label{eq-bound of fQ}
		|f_{Q}(\x)|\lesi \Big(\f{w(Q)}{w(B(\x_{Q},\delta^{\eta}))}\Big)^\kappa\Big(1+\f{d(\x,\x_{Q})}{\delta^{\eta}}\Big)^{-M}.
	\end{equation}
	where $d$ is the distance defined by \eqref{eq- d distance}. Then for $\f{\mathfrak{N}}{M}<r\leq 1$ and a sequence of numbers $\{s_{Q}\}_{Q\in \mathscr{D}_k}$, we have
	$$
	\sum_{Q\in \mathscr{D}_k}|s_{Q}|\,|f_{Q}(\x)|\lesi \delta^{-\mathfrak{N}(k-\eta)(1/r-\kappa)}\mathcal{M}^d_{r}\Big(\sum_{Q\in \mathscr{D}_k}|s_{Q}|\chi_{Q}\Big)(\x).
	$$
\end{lem}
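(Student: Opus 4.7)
The plan is to follow the standard Frazier--Jawerth template used to prove Lemma \ref{lem1- JF lem}, with the adjustments dictated by the metric mismatch in the hypothesis: the cubes $Q\in\mathscr{D}_k$ are Euclidean-dyadic, whereas the decay of $\{f_Q\}$ and the target maximal function $\mathcal{M}^d_r$ are both tied to the orbit distance $d$. Since $r\le 1$, I would first apply $\ell^r$-subadditivity $\sum_Q a_Q\le(\sum_Q a_Q^r)^{1/r}$ to reduce the problem to bounding $\sum_Q|s_Q|^r|f_Q(\x)|^r$.

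For $y\in Q\in\mathscr{D}_k$, item (v) of Lemma \ref{lem-dyadic cube} gives $\|y-\x_Q\|\le 2\delta^k\le 2\delta^\eta$, hence $d(\x_Q,y)\le 2\delta^\eta$ and $1+d(\x,y)/\delta^\eta\le 3(1+d(\x,\x_Q)/\delta^\eta)$. This yields the classical spreading substitution
\[
\Big(1+\frac{d(\x,\x_Q)}{\delta^\eta}\Big)^{-M}\lesi \frac{1}{w(Q)}\int_Q \Big(1+\frac{d(\x,y)}{\delta^\eta}\Big)^{-M}\,dw(y),
\]
transferring the pointwise decay at $\x_Q$ into an integral over $Q$. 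For the volume prefactor, the condition $r\kappa-1\le 0$ together with the reverse-doubling bound $w(Q)\gtrsim \delta^{\mathfrak{N}(k-\eta)}w(B(\x_Q,\delta^\eta))$ from \eqref{eq-ratios on volumes of balls} yields $w(Q)^{r\kappa-1}w(B(\x_Q,\delta^\eta))^{-r\kappa}\lesi \delta^{\mathfrak{N}(k-\eta)(r\kappa-1)}/w(B(\x_Q,\delta^\eta))$, and Euclidean doubling gives $w(B(\x_Q,\delta^\eta))\simeq w(B(y,\delta^\eta))$ uniformly for $y\in Q$. Combining these with the disjointness of $\mathscr{D}_k$ (so that $g^r=\sum_Q|s_Q|^r\chi_Q$ for $g:=\sum_Q|s_Q|\chi_Q$) collapses the double sum into a single integral
\[
\sum_Q |s_Q|^r|f_Q(\x)|^r\lesi \delta^{\mathfrak{N}(k-\eta)(r\kappa-1)}\int_{\RN}\frac{g^r(y)}{w(B(y,\delta^\eta))}\Big(1+\frac{d(\x,y)}{\delta^\eta}\Big)^{-rM}\,dw(y).
\]

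The final step is a standard Hardy--Littlewood estimate in the $d$-metric. Since $w(B(y,\delta^\eta))\simeq w(B^d(y,\delta^\eta))$ up to a factor at most $|G|$, the right-hand side is comparable to the same integral with $B^d(y,\delta^\eta)$ in the denominator. Splitting the domain into $d$-annuli $\{2^{j-1}\delta^\eta\le d(\x,y)<2^j\delta^\eta\}$ and using the $d$-doubling bound $w(B^d(y,\delta^\eta))\gtrsim 2^{-j\mathfrak{N}}w(B^d(\x,2^j\delta^\eta))$, the integral is dominated by $\sum_{j\ge 0}2^{j(\mathfrak{N}-rM)}[\mathcal{M}^d_r g(\x)]^r\lesi [\mathcal{M}^d_r g(\x)]^r$. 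The geometric series converges precisely because $rM>\mathfrak{N}$, i.e., $r>\mathfrak{N}/M$; raising to the $1/r$-th power produces the claimed factor $\delta^{-\mathfrak{N}(k-\eta)(1/r-\kappa)}\mathcal{M}^d_r g(\x)$.

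The main obstacle is the metric mismatch. A naive annular decomposition that splits $\{Q\}_{Q\in\mathscr{D}_k}$ by $d(\x,\x_Q)\approx 2^j\delta^\eta$ and bounds $\sum_{Q\in A_j}|s_Q|^r w(Q)/w(B(\x_Q,\delta^\eta))$ directly through $\mathcal{M}^d_r g(\x)$ incurs an extra factor of $2^{j\mathfrak{N}}$---one from comparing $w(B(\x_Q,\delta^\eta))$ with $w(B(\x,\delta^\eta))$, another from comparing $w(B(\x,\delta^\eta))$ with $w(B^d(\x,2^j\delta^\eta))$---which would force the strictly worse range $r>2\mathfrak{N}/M$. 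The spreading step circumvents this by funneling everything into a single $d$-metric Hardy--Littlewood integral, in which only one factor of $2^{-j\mathfrak{N}}$ arises, exactly matching the hypothesized threshold $r>\mathfrak{N}/M$.
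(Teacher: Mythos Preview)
Your argument is correct and follows the same Frazier--Jawerth template as the paper's proof: both produce the geometric series $\sum_j 2^{j(\mathfrak{N}-rM)}$ after an annular decomposition in the $d$-metric, yielding the sharp threshold $r>\mathfrak{N}/M$. The only organizational difference is the order of operations. The paper first partitions the cubes into $d$-annuli $\mathcal{B}_j=\{Q\in\mathscr{D}_k:\delta^{-j+\eta+1}<d(\x,\x_Q)\le\delta^{-j+\eta}\}$, then applies $\ell^r$-subadditivity within each annulus and passes to an integral over $Q_j=\bigcup_{Q\in\mathcal{B}_j}Q$, using the key estimate $w(Q_j)\simeq w(B(\x,\delta^{\eta-j}))\simeq w(B(\x_Q,\delta^{\eta-j}))$; you instead apply $\ell^r$-subadditivity globally, use the spreading substitution to collapse everything into a single continuous integral, and only then perform the $d$-annular decomposition. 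Your route is marginally cleaner in that it sidesteps the $w(Q_j)$ comparison, while the paper's route mirrors the classical proof of Lemma~\ref{lem1- JF lem} more directly. One cosmetic remark: the inequality $w(Q)\gtrsim\delta^{\mathfrak{N}(k-\eta)}w(B(\x_Q,\delta^\eta))$ you invoke comes from the \emph{upper} doubling exponent $\mathfrak{N}$ in \eqref{eq-ratios on volumes of balls}, not from reverse doubling.
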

\begin{proof}
	Before coming to the proof we would like to point out the main issue of the lemma lies on the fact that the dyadic system corresponds to the metric $\|\cdot\|$, while the estimate \eqref{eq-bound of fQ} corresponds to the distance $d$.

	Fix $\x\in \RN$. We set
	$$
	\mathcal{B}_0=\{Q\in \mathscr{D}_k: d(\x,\x_Q)\leq \delta^{\eta}\}, Q_0=\bigcup\limits_{Q\in \mathcal{B}_0} Q
	$$
	and
	$$
	\mathcal{B}_j=\{Q\in \mathscr{D}_k: \delta^{-j+\eta+1}<d(\x,\x_Q)\leq \delta^{-j+\eta}\}, Q_j= \bigcup\limits_{Q\in \mathcal{B}_j} Q, \ \ j\in \mathbb{N}_+.
	$$
	Then we write
	$$
	\begin{aligned}
		\sum_{Q\in \mathscr{D}_k}|s_Q|\,|f_Q(\x)|&=\sum_{j\in \mathbb{N}}\sum_{Q\in \mathcal{B}_j}|s_Q|\,|f_Q(\x)|\leq \sum_{j\in \mathbb{N}}\sum_{Q\in \mathcal{B}_j}|s_Q|\Big(\f{w(Q)}{w(B(\x_Q,\delta^{\eta}))}\Big)^{\kappa}\Big(1+\f{d(\x,x_Q)}{\delta^{\eta}}\Big)^{-M}\\
		&=:\sum_{j\in \mathbb{N}}E_j.
	\end{aligned}
	$$
	For each $j\in \mathbb{N}$, we have
	\begin{equation}\label{eq1-sec4}
		\begin{aligned}
			E_j&\lesi \sum_{Q\in \mathcal{B}_j} \delta^{jM}\Big(\f{w(Q)}{w(B(\x_Q,\delta^{\eta}))}\Big)^{\kappa}|s_Q| \leq \delta^{jM}\left[\sum_{Q\in \mathcal{B}_j} \Big(\f{w(Q)}{w(B(\x_Q,\delta^{\eta}))}\Big)^{\kappa r}|s_Q|^r\right]^{1/r}\\
			&\lesi \delta^{jM}\left\{\int_{Q_j}\left[\sum_{Q\in \mathcal{B}_j}\Big(\f{w(Q)}{w(B(x_Q,\delta^{\eta}))}\Big)^{\kappa} w(Q)^{-1/r}|s_Q|\chi_Q(y)\right]^rdw(y)\right\}^{1/r}\\
			&\lesi \delta^{jM}\left\{\f{1}{w(Q_j)}\int_{Q_j}\left[\sum_{Q\in \mathcal{B}_j} \Big(\f{w(Q_j)}{w(Q)}\Big)^{1/r}\Big(\f{w(Q)}{w(B(x_Q,\delta^{\eta}))}\Big)^{\kappa}|s_Q|\chi_Q(y)\right]^rdw(y)\right\}^{1/r}.
		\end{aligned}
	\end{equation}
	It is easy to see that $w(Q_j)\simeq w(B(\x,\delta^{\eta-j}))\simeq w(B(\x_Q,\delta^{\eta-j}))$, for each $Q\in \mathcal{B}_j$. Therefore,
	$$
	\begin{aligned}
		\Big(\f{w(Q_j)}{w(Q)}\Big)^{1/r}\Big(\f{w(Q)}{w(B(x_Q,\delta^{\eta}))}\Big)^{\kappa}&\lesi  \delta^{-j\mathfrak{N}/r}\delta^{-\mathfrak{N}(k-\eta)(1/r-\kappa)}.
	\end{aligned}
	$$
	Inserting this into (\ref{eq1-sec4}) gives
	$$
	\begin{aligned}
		E_j&\lesi \delta^{jM}\delta^{-j\mathfrak{N}/r}\delta^{-\mathfrak{N}(k-\eta)(1/r-\kappa)}\Big\{\f{1}{w(Q_k)}\int_{Q_k}\Big[\sum_{Q\in \mathcal{B}_k} |s_Q|\chi_Q(y)\Big]^rdw(\y)\Big\}^{1/r}\\
		&\lesi \delta^{j(M-\mathfrak{N}/r)}\delta^{-\mathfrak{N}(k-\eta)(1/r-\kappa)}\mathcal{M}^{d}\Big(\sum_{Q\in \mathcal{B}_k} |s_Q|\chi_Q\Big)(\x).
	\end{aligned}
	$$
	Since $r>\f{\mathfrak{N}}{M}$, we find that
	$$
	\sum_{j\in \mathbb{N}}E_j\lesi \delta^{-\mathfrak{N}(k-\eta)(1/r-\kappa)}\mathcal{M}^{d}\Big(\sum_{Q\in \mathscr{D}_k} |s_Q|\chi_Q\Big)(\x).
	$$
	
	This completes our proof.
\end{proof}

\subsection{Some kernel estimates}
In this section, we will prove some kernel estimates for the functional calculus of $L$. We now recall the following result in \cite{DH2}.

\begin{lem}[\cite{DH2}]\label{lem-heat kernel estimate for semigroups}
	\begin{enumerate}[\rm (a)]
		\item For $\x,\y\in \RN$ and $t>0$, $h_t(\x,\y)\ge 0$. In addition,
		\[
		\int_{\RN}h_t(\x,\y)dw(\y)=\int_{\RN}h_t(\y,\x)dw(\y)=1
		\]
		for all $\x\in \RN$ and $t>0$.
		
		\item For every nonnegative integer $m$ and every multi-indices $\alpha,\beta$ there are constants $C, c>0$ such that
		\[
		|\partial_t^m\partial^{\alpha}_{\x}\partial^{\beta}_{\y}h_t(\x,\y)|\le Ct^{-(m+|\alpha|/2+|\beta|/2)}\Big(1+\f{\|\x-\y\|}{\sqrt t}\Big)^{-2}\f{1}{w(B(\x,\sqrt t))}\exp\Big(\f{d(\x,\y)^2}{ct}\Big).
		\]
		Moreover, if $\|\y-\y'\|\le \sqrt t$, then
		\[
		|\partial_t^m h_t(\x,\y)-\partial_t^m h_t(\x,\y')|\le Ct^{-m}\f{\|\y-\y'\|}{\sqrt t}\Big(1+\f{\|\x-\y\|}{\sqrt t}\Big)^{-2}\f{1}{w(B(\x,\sqrt t))}\exp\Big(\f{d(\x,\y)^2}{ct}\Big).
		\]
	\end{enumerate}
\end{lem}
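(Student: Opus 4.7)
The plan is to verify (a) and (b) directly from the closed-form expression for $h_t$, exploiting the classical properties of the Dunkl kernel $E$.

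\textbf{Part (a).} Non-negativity follows from R\"osler's Laplace-type representation $E(\x,\y)=\int e^{\langle \z,\y\rangle}\,d\mu_{\x}(\z)$, where $\mu_{\x}$ is a probability measure supported in the convex hull of the orbit $\mathcal{O}(\x)$; positivity of $E$ for real arguments yields $h_t(\x,\y)\ge 0$. The normalization $\int h_t(\x,\y)\,dw(\y)=1$ is equivalent to $e^{-tL}\mathbf{1}=\mathbf{1}$, which holds because $L$ annihilates constants (the directional and reflection parts in $T_\xi$ both vanish on constants); the symmetry $h_t(\x,\y)=h_t(\y,\x)$, a consequence of self-adjointness of $L$ on $L^2(dw)$, yields the integral identity in $\x$ as well.

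\textbf{Part (b), base case.} For $m=|\alpha|=|\beta|=0$, insert the pointwise bound $E(\x,\y)\le\max_{\sigma\in G}e^{\langle \x,\sigma(\y)\rangle}$, an immediate consequence of the Laplace representation, into the formula for $h_t$ and use $\|\x\|^2+\|\y\|^2-2\langle \x,\sigma(\y)\rangle=\|\x-\sigma(\y)\|^2$; minimizing over $\sigma\in G$ produces $d(\x,\y)^2$ in the Gaussian exponent. Combining with $w(B(\x,\sqrt t))\gtrsim t^{\N/2}$ from \eqref{eq-volume formula} and converting a portion of the Gaussian decay via $e^{-s^2}\le C_M(1+s)^{-M}$ extracts the polynomial prefactor $(1+\|\x-\y\|/\sqrt t)^{-2}$.

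\textbf{Derivatives and Lipschitz estimate.} Time derivatives are handled by $\partial_t^m h_t=(-L)^m h_t$ together with Cauchy's integral formula on the analytic semigroup $e^{-zL}$, which transfers the Gaussian bound to $\partial_t^m h_t$ at the cost of an extra factor $t^{-m}$. Euclidean spatial derivatives $\partial_\x^\alpha\partial_\y^\beta$ are obtained by differentiating the closed-form expression directly: each derivative of the Gaussian prefactor or of $E(\x/\sqrt{2t},\y/\sqrt{2t})$ supplies $O(t^{-1/2})$, where size bounds on $\partial^\alpha E$ analogous to those on $E$ itself are available in the de Jeu / R\"osler framework. The Lipschitz-in-$\y$ estimate for $\|\y-\y'\|\le\sqrt t$ then follows from the mean value theorem applied to the first-order spatial derivative bound.

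\textbf{Main obstacle.} The delicate point is keeping $d(\x,\y)$ in the Gaussian exponent while simultaneously extracting the Euclidean prefactor $(1+\|\x-\y\|/\sqrt t)^{-2}$. Since $d(\x,\y)\le\|\x-\y\|$, there is always a reservoir of excess Gaussian decay: split according to whether $\|\x-\y\|\lesi d(\x,\y)$, where the prefactor is harmless, or $\|\x-\y\|\gg d(\x,\y)$, where one spends a small fraction of the Euclidean Gaussian via $e^{-s^2}\le e^{-s^2/2}\cdot e^{-s^2/2}$, the first factor producing the polynomial prefactor and the second retaining a Gaussian in the $d$-distance (with a slightly worse constant $c$). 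The bookkeeping becomes tightest for higher-order derivatives, where the $\partial^\alpha E$ bounds must be combined with this splitting without eroding the residual $d$-Gaussian; this is the step where one has to track constants carefully to ensure uniformity in $m,\alpha,\beta$.
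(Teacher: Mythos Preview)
Your handling of part (a) and of the plain $d$-Gaussian bound in part (b) is standard and fine. The genuine gap is in how you propose to obtain the extra Euclidean factor $\big(1+\|\x-\y\|/\sqrt t\big)^{-2}$.

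Your ``Main obstacle'' paragraph assumes there is an \emph{Euclidean} Gaussian $e^{-\|\x-\y\|^2/ct}$ available, from which one can peel off the polynomial factor and still retain a $d$-Gaussian. But the R\"osler/de Jeu bound $E(\x,\y)\le\max_{\sigma\in G}e^{\langle\sigma(\x),\y\rangle}$ only yields, after combining with the explicit prefactor $e^{-(\|\x\|^2+\|\y\|^2)/4t}$, the estimate
\[
h_t(\x,\y)\lesssim t^{-\N/2}\,e^{-d(\x,\y)^2/4t}.
\]
There is no Euclidean Gaussian here; the only decay you have is in $d(\x,\y)$, and since $d(\x,\y)$ can be arbitrarily small compared to $\|\x-\y\|$ (whenever $\y$ is close to a nontrivial reflection of $\x$), the factor $e^{-d(\x,\y)^2/ct}$ simply does not dominate $\big(1+\|\x-\y\|/\sqrt t\big)^{-2}$. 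Your splitting argument therefore breaks down precisely in the regime $\|\x-\y\|\gg d(\x,\y)$ where the extra factor is nontrivial.

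The paper does not give its own proof (the lemma is quoted from \cite{DH2}), but the mechanism behind the Euclidean factor is recorded later in the proof of Lemma~\ref{lem-heat kernel estimate}: one uses the exact identity
\[
Lh_t(\x,\y)=\frac{\|\x-\y\|^2}{4t^2}\,h_t(\x,\y)-\frac{N}{2t}\,h_t(\x,\y)-\frac{1}{2t}\sum_{\nu\in R}k(\nu)\,h_t(\sigma_\nu(\x),\y),
\]
which after rearranging gives $\frac{\|\x-\y\|^2}{4t}h_t(\x,\y)$ as a sum of terms each of which obeys the plain $d$-Gaussian bound (note $d(\sigma_\nu(\x),\y)=d(\x,\y)$). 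This is an algebraic feature of the Dunkl heat kernel, not a soft consequence of Gaussian decay, and it is the ingredient missing from your outline. Once this identity is in hand, the derivative and Lipschitz statements follow along the lines you sketch.
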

Note that the extra decay $\Big(1+\f{\|\x-\y\|}{\sqrt t}\Big)^{-2}$ in (b) plays an essential role in the proofs of our main results. 

\medskip

Since $L$ is a nonnegative self-adjoin operator on $L^2(dw)$, for each bounded Borel function $F: [0,\vc) \to \mathbb C$, we can defined 
\begin{align*}
	F(L) = \int_{0}^{\infty} F(\lambda)dE(\lambda)
\end{align*}
is bounded on $L^2 (dw)$, where $E(\lambda)$ is the spectral resolution of $L$.

The following lemma addresses some kernel estimates for the functional calculus of $L$.
\begin{lem}\label{lem-heat kernel estimate}
	Let $\varphi\in \mathscr{S}(\mathbb R)$ be an even function. Then the following estimates hold true. 
	\begin{enumerate}[\rm (a)]
		\item For every $M>0$ and every multi-indices $\alpha,\beta$, the kernel $\varphi(t\sqrt L)(\x,\y)$ of $\varphi(t\sqrt L)$ satisfies the following estimate
		\[
		|\partial^{\alpha}_{\x}\partial^{\beta}_{\y}\varphi(t\sqrt L)(\x,\y)|\lesi t^{-(|\alpha|+|\beta|)}\Big(1+\f{\|\x-\y\|}{ t}\Big)^{-2}\f{1}{w(B(\x, t+d(\x,\y)))}\Big(\f{t}{t+d(\x,\y)}\Big)^M
		\]
		for all $\x,\y\in \RN$ and $t>0$.
		
		\item  For every $M>0$, 
		\[
		|\varphi(t\sqrt L)(\x,\y)-\varphi(t\sqrt L)(\x,\y')|\lesi \f{\|\y-\y'\|}{t}\Big(1+\f{\|\x-\y\|}{t}\Big)^{-2}\f{1}{w(B(\x, t+d(\x,\y)))}\Big(\f{t}{t+d(\x,\y)}\Big)^M
		\]
		for all $t>0$ and  $\x,\y, \y'\in \RN$ with $\|\y-\y'\|\le t$.
		
		
		\item If $\varphi(0)=0$, then 
		\[
		\int_{\RN}\varphi(t\sqrt L)(\x,\y)dw(\y)=\int_{\RN}\varphi(t\sqrt L)(\y,\x)dw(\y)=0
		\]
		for all $\x\in \RN$ and $t>0$.
	\end{enumerate}
\end{lem}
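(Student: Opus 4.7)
The strategy is to deduce all three parts from the heat kernel estimates in Lemma \ref{lem-heat kernel estimate for semigroups} by expressing $\varphi(t\sqrt L)$ as a weighted integral of heat operators. Since $\varphi$ is even and Schwartz, the function $\Psi(u):=\varphi(\sqrt u)$ is Schwartz on $[0,\infty)$ and satisfies $\varphi(t\sqrt L) = \Psi(t^2 L)$. Writing $\Psi$ via a Laplace-type representation
\[
\Psi(u) = \int_0^\infty e^{-su}\eta(s)\,ds,
\]
obtained for instance through a dyadic decomposition of $\Psi$ combined with a subordination argument, with $\eta$ vanishing faster than any power both at $s=0$ and at $s=\infty$, one obtains at the kernel level
\[
\varphi(t\sqrt L)(\x,\y) = \int_0^\infty h_{st^2}(\x,\y)\,\eta(s)\,ds.
\]

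For part (a), differentiate under the integral and apply Lemma \ref{lem-heat kernel estimate for semigroups}(b). The prefactor $t^{-|\alpha|-|\beta|}$ emerges at once, while the residual $s^{-(|\alpha|+|\beta|)/2}$ is absorbed into $\eta$. The Euclidean factor $(1+\|\x-\y\|/(t\sqrt s))^{-2}$ is controlled by $(1+\|\x-\y\|/t)^{-2}$ when $s\le 1$ and by $s\cdot(1+\|\x-\y\|/t)^{-2}$ when $s\ge 1$, the extra $s$ being absorbed by the decay of $\eta$ at infinity. For the polynomial decay in $d(\x,\y)$, set $r:=d(\x,\y)/t$ and split the $s$-integral at $s=r^2$: on $\{s\le r^2\}$ the elementary bound $\exp(-r^2/(cs))\le C_M(s/r^2)^M$ yields $(1+r)^{-M}$ after integration against $\eta$, while on $\{s>r^2\}$ the rapid decay of $\eta$ gives the same bound. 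Finally, the volume factor $1/w(B(\x,t\sqrt s))$ is transferred to $1/w(B(\x,t+d(\x,\y)))$ via doubling \eqref{eq-doubling} and \eqref{eq-ratios on volumes of balls}, at the cost of at most a polynomial in $s^{-1}$ and in $1+r$; the former is absorbed into $\eta$ and the latter into the polynomial decay just produced (chosen with a slightly larger exponent).

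Part (b) is obtained analogously, using the Hölder estimate in Lemma \ref{lem-heat kernel estimate for semigroups}(b): when $s\ge 1$ the hypothesis $\|\y-\y'\|\le t\le t\sqrt s$ permits a direct application, whereas for $s<1$ one invokes the triangle inequality together with part (a), the missing factor $\|\y-\y'\|/t$ being created out of the $s$-weight. Part (c) is a direct consequence of the representation: since Lemma \ref{lem-heat kernel estimate for semigroups}(a) gives $\int h_{st^2}(\x,\y)\,dw(\y) = 1$ for every $\x$ and $t>0$,
\[
\int \varphi(t\sqrt L)(\x,\y)\,dw(\y) = \int_0^\infty \eta(s)\,ds = \Psi(0)=\varphi(0)=0,
\]
and the companion identity follows from the symmetry $h_t(\x,\y)=h_t(\y,\x)$.

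The principal technical obstacle is the construction of a Laplace weight $\eta$ with sufficient decay at both endpoints of $(0,\infty)$ starting from a general Schwartz $\Psi$; once this representation is in hand, the remainder of the argument is bookkeeping built around the two trade-offs already described (Gaussian decay traded for polynomial decay in $r=d(\x,\y)/t$, and the Euclidean heat scale $t\sqrt s$ traded for the orbit scale $t+d(\x,\y)$ via the doubling property).
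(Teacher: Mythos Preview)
Your proposal has a genuine gap at the Laplace representation step. A function of the form $\Psi(u)=\int_0^\infty e^{-su}\eta(s)\,ds$ with $\eta\in L^1$ is automatically analytic in $\{\Re u>0\}$, whereas $\Psi(u)=\varphi(\sqrt u)$ for a general even Schwartz $\varphi$ need not be real-analytic anywhere. So no weight $\eta$ with the properties you postulate exists in general, and the ``dyadic decomposition combined with subordination'' does not produce one. You yourself flag this as ``the principal technical obstacle''; it is in fact fatal to the approach as written.

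The paper's route explains why the extra Euclidean decay $(1+\|\x-\y\|/t)^{-2}$ genuinely requires a Dunkl-specific ingredient (as the paper remarks, the estimate without this factor is standard). The argument uses the algebraic identity
\[
\frac{\|\x-\y\|^2}{4t}h_t(\x,\y)=-tLh_t(\x,\y)+\tfrac{N}{2}h_t(\x,\y)+\tfrac12\sum_{\nu\in R}k(\nu)h_t(\sigma_\nu(\x),\y),
\]
analytically continued to complex time $z$, to obtain a bound for $|h_z(\x,\y)|$ that carries the factor $(1+\|\x-\y\|^2/|z|)^{-1}$ together with controlled blow-up in $(\cos\theta)^{-(\N+1)}$. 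With complex-time heat bounds in hand, the passage to $\varphi(t\sqrt L)$ is carried out via a complex contour (the argument of \cite[Lemma~2.1]{BDDM}), which is the correct substitute for your real Laplace integral. Derivatives in $\x,\y$ are then handled by writing $\varphi(t\sqrt L)=(I+t^2L)^{-m}\widetilde\varphi(t\sqrt L)$ and using the explicit resolvent kernel, rather than by differentiating under an integral against $h_{st^2}$. Part (b) follows from (a) by the mean value inequality, and part (c) is obtained by invoking the conservation property of the heat kernel together with \cite[Theorem~3.1]{KP}.
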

We note that the estimates of Lemma \ref{lem-heat kernel estimate} without the extra decay $\displaystyle \Big(1+\f{\|\x-\y\|}{ t}\Big)^{-2}$ is standard. See  for example  \cite[Theorem 3.1]{KP}. The presence of  the extra decay needs special properties of the operator $L$.
\begin{proof}
	Item (c) follows from Lemma \ref{lem-heat kernel estimate for semigroups} (a) by applying \cite[Theorem 3.1]{KP}, while item (b) follows directly from (a).  Hence, it suffices to prove (a).
	
	It was proved in \cite{DH} that 
	\[
	Lh_t(\x,\y) = \f{\|\x-\y\|^2}{4t^2}h_t(\x,\y) -\f{N}{2t}h_t(\x,\y) -\f{1}{2t}\sum_{\nu\in R}k(\nu)h_t(\sigma_\nu(\x),\y),
	\]
	or equivalently,
	\[
	\f{\|\x-\y\|^2}{4t}h_t(\x,\y) =  -tLh_t(\x,\y)+\f{N}{2}h_t(\x,\y) +\f{1}{2}\sum_{\nu\in R}k(\nu)h_t(\sigma_\nu(\x),\y).
	\]
	Since $h_t(\x,\y)$ is analytic on $\mathbb C^+ =\{z: \Re z \ge 0\}$, we have for $z\in \mathbb C^+$,
	\[
	\f{\|\x-\y\|^2}{4z}h_z(\x,\y) =  -zLh_z(\x,\y)+\f{N}{2}h_z(\x,\y) +\f{1}{2}\sum_{\nu\in R}k(\nu)h_z(\sigma_\nu(\x),\y).
	\]
	It follows that 
	\[
	\f{\|\x-\y\|^2}{4|z|}|h_z(\x,\y)| \lesi  |zLh_z(\x,\y)|+|h_z(\x,\y)| + \sum_{\nu\in R}k(\nu)|h_z(\sigma_\nu(\x),\y)|.
	\]
	Arguing similarly to the proof of \cite[Lemma 4.1]{CCO}, we have
	\[
	|h_z(\sigma_\nu(\x),y)|\lesi \f{1}{\left[w(B(\x,\sqrt{\f{|z|}{\cos\theta}}))w(B(\y,\sqrt{\f{|z|}{\cos\theta}}))\right]^{1/2}}\exp\Big(-c\f{d(\x,\y)^2}{|z|}\cos\theta\Big)\f{1}{(\cos\theta)^\N}, \ \ \nu\in R,
	\]
	and
	\[
	|zLh_z(\x,\y)|\lesi \f{1}{\left[w(B(\x,\sqrt{\f{|z|}{\cos\theta}}))w(B(\y,\sqrt{\f{|z|}{\cos\theta}}))\right]^{1/2}}\exp\Big(-c\f{d(\x,\y)^2}{|z|}\cos\theta\Big)\f{1}{(\cos\theta)^{\N+1}}
	\]
	for all $\x,\y\in \RN$ and $z\in \mathbb{C}_+=\{z\in \mathbb{C}:\Re z>0\}$ where $\theta=\arg z$. 
	
	Consequently, 
	\[
	\f{\|\x-\y\|^2}{4|z|}|h_z(\x,\y)| \lesi \f{1}{\left[w(B(\x,\sqrt{\f{|z|}{\cos\theta}}))w(B(\y,\sqrt{\f{|z|}{\cos\theta}}))\right]^{1/2}}\exp\Big(-c\f{d(\x,\y)^2}{|z|}\cos\theta\Big)\f{1}{(\cos\theta)^{\N+1}},
	\]
	which implies that 
	\[
	|h_z(\x,\y)|\lesi \Big(1+ \f{\|\x-\y\|^2}{|z|}\Big)^{-1}\f{1}{\left[w(B(\x,\sqrt{\f{|z|}{\cos\theta}}))w(B(\y,\sqrt{\f{|z|}{\cos\theta}}))\right]^{1/2}}\exp\Big(-c\f{d(\x,\y)^2}{|z|}\cos\theta\Big)\f{1}{(\cos\theta)^{\N+1}}
	\]
	for all $\x,\y\in \RN$ and $z\in \mathbb{C}_+=\{z\in \mathbb{C}:\Re z>0\}$ where $\theta=\arg z$.
	
	At this stage, by the argument use in the proof of \cite[Lemma 2.1]{BDDM}, we obtain that for an even function $\varphi\in \mathscr{S}(\mathbb R)$ and every $M>0$,
	\begin{equation}\label{eq1-proof of varphi t L x y}
		|\varphi(t\sqrt L)(\x,\y)|\lesi \Big(1+\f{\|\x-\y\|}{ t}\Big)^{-2}\f{1}{w(B(\x, t+d(\x,\y)))}\Big(\f{t}{t+d(\x,\y)}\Big)^M
	\end{equation}
	for all $\x,\y\in \RN$ and $t>0$.
	
	Recall that for any $m\in \mathbb N$, 
	\begin{equation*} 
		\begin{aligned}
			(I + tL)^{-m} &= \f{1}{m!} \int_0^\vc s^{m-1}e^{-s} e^{-stL} ds.
		\end{aligned}
	\end{equation*}
	From this formula and Lemma \ref{lem-heat kernel estimate for semigroups}, by a simple calculation it can be verified that for any $M>0$ and multi-indices $\alpha,\beta$, there exists $m$ such that
	\begin{equation}\label{eq2-proof of varphi t L x y}
		|\partial^{\alpha}_{\x}\partial^{\beta}_{\y}K_{(I + tL)^{-m}}(\x,\y)|\lesi t^{-(|\alpha|+|\beta|)}\Big(1+\f{\|\x-\y\|}{ t}\Big)^{-2}\f{1}{w(B(\x, t+d(\x,\y)))}\Big(\f{t}{t+d(\x,\y)}\Big)^M,
	\end{equation}
	where $K_{(I + tL)^{-m}}(\x,\y)$ denotes the kernel of $(I + tL)^{-m}$.
	
	For any even function $\varphi \in \mathscr S(\mathbb R)$, we write
	\begin{equation*}
		\varphi(t\sqrt L) = (I + t^2L)^{-m} \widetilde \varphi(t\sqrt L),  
	\end{equation*}
	where $\widetilde \varphi(\xi) = (1 + \xi^2)^m\varphi(\xi)$ and $m$ is a sufficiently large number which will be fixed later.
	
	Then we have
	\[
	\varphi(t\sqrt L)(\x,\y)=\int_{\RN}K_{(I + t^2L)^{-m}}(\x,\z)\widetilde \varphi(t\sqrt L)(\z,\y)d w(\z).
	\]
	From this, \eqref{eq1-proof of varphi t L x y} and \eqref{eq2-proof of varphi t L x y}, for
	any $M>0$ and multi-indices $\alpha$ by taking $m$ sufficiently large, we obtain 
	\begin{equation}\label{eq3-proof of varphi t L x y}
		|\partial^{\alpha}_{\x}\varphi(t\sqrt L)(\x,\y)|\lesi t^{-|\alpha| }\Big(1+\f{\|\x-\y\|}{ t}\Big)^{-2}\f{1}{w(B(\x, t+d(\x,\y)))}\Big(\f{t}{t+d(\x,\y)}\Big)^M.
	\end{equation}
	We now write
	\[
	\varphi(t\sqrt L) = \widetilde \varphi(t\sqrt L)(I + t^2L)^{-m}
	\]
	so that 
	\[
	\varphi(t\sqrt L)(\x,\y)=\int_{\RN}\widetilde \varphi(t\sqrt L)(\x,\z)K_{(I + t^2L)^{-m}}(\z,\y)d w(\z).
	\]
	Then we apply \eqref{eq2-proof of varphi t L x y} and \eqref{eq3-proof of varphi t L x y} to obtain
	\[
	|\partial^{\alpha}_{\x}\partial^{\beta}_{\y}\varphi(t\sqrt L)(\x,\y)|\lesi t^{-(|\alpha|+|\beta|) }\Big(1+\f{\|\x-\y\|}{ t}\Big)^{-2}\f{1}{w(B(\x, t+d(\x,\y)))}\Big(\f{t}{t+d(\x,\y)}\Big)^M.
	\]
	
	This completes our proof.
\end{proof}
Since $L$ is non-negative self-adjoint and satisfies the Gaussian upper bound with respect to the distance $d$, arguing similarly to the proof of \cite[Theorem 2]{S} (see also \cite{CS}) we obtain that  the kernel $K_{\cos(t\sqrt{L})}$ of $\cos(t\sqrt{L})$ satisfies 
\begin{equation}\label{finitepropagation}
	{\rm supp}\,K_{\cos(t\sqrt{L})}\subset \{(\x,\y)\in \RN\times \RN:
	d(\x,\y)\leq t\}.
\end{equation}

We have the following useful lemma.
\begin{lem}\label{lem:finite propagation}
	Let $\varphi\in C^\vc_0(\mathbb{R})$ be an even function with {\rm supp}\,$\varphi\subset (-1, 1)$ and $\int \varphi =2\pi$. Denote by $\Phi$ the Fourier transform of $\varphi$.  Then the kernel $ t^2L\Phi(t\sqrt{L})(\x,\y)$ of $t^2L\Phi(t\sqrt{L})$ satisfies 
	\begin{equation}\label{eq0-lemPsiL}
		\displaystyle
		\int_{\RN}t^2L\Phi(t\sqrt{L})(\x,\y)dw(\y)=\int_{\RN}t^2L\Phi(t\sqrt{L})(\y,\x)dw(\y)=0
	\end{equation}
	\begin{equation}\label{eq1-lemPsiL}
		\displaystyle
		{\rm supp}\, t^2L\Phi(t\sqrt{L})(\cdot,\cdot)\subset \{(\x,\y)\in \RN\times \RN:
		d(\x,\y)\leq t\},
	\end{equation}
	\begin{equation}\label{eq2-lemPsiL}
		| t^2L\Phi(t\sqrt{L})(\x,\y)|\lesi \Big(1+\f{\|\x-\y\|}{t}\Big)^{-2}\f{1}{w(B(\y,t))}
	\end{equation}
	for all $\x,\y \in \RN$ and $t>0$; moreover, 
	\begin{equation}\label{eq3-lemPsiL}
		|t^2L\Phi(t\sqrt{L})(\x,\y)- t^2L\Phi(t\sqrt{L}) (\x',\y)|\lesi \f{\|\x-\x'\|}{ t}\Big(1+\f{\|\x-\y\|}{t}\Big)^{-2}\f{1}{w(B(\y,t))},
	\end{equation}
	for all $\x,\x',\y'\in \RN$ and $t>0$.
\end{lem}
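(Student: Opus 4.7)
The plan is to reduce the operator $t^2L\Phi(t\sqrt L)$ to a single expression of the form $G(t\sqrt L)$ where $G$ is even, Schwartz, and is itself the Fourier transform of a compactly supported bump; this reformulation will make Lemma \ref{lem-heat kernel estimate} directly applicable for the cancellation, size, and regularity bounds, while the cosine representation combined with the finite propagation property \eqref{finitepropagation} will give the support condition.

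First I would note that two integrations by parts yield $\xi^2 \Phi(\xi) = \xi^2 \hat\varphi(\xi) = \widehat{-\varphi''}(\xi)$, so setting $\psi := -\varphi''$ and $G := \hat\psi$, one has $\psi \in C_0^\infty(\mathbb R)$ even and real with $\supp \psi \subset (-1,1)$, while $G$ is a real-valued even Schwartz function with $G(0) = 0$. By the functional calculus,
\[
t^2L\Phi(t\sqrt L) \;=\; G(t\sqrt L).
\]
The cancellation \eqref{eq0-lemPsiL} is then immediate from Lemma \ref{lem-heat kernel estimate}(c) applied to $G$, since $G(0)=0$. For the support \eqref{eq1-lemPsiL} I would use the evenness of $\psi$ to write
\[
G(t\sqrt L) \;=\; \int_{-1}^{1}\psi(s)\cos(st\sqrt L)\,ds,
\]
and then invoke \eqref{finitepropagation}, which tells us that each $\cos(st\sqrt L)$ with $|s|\le 1$ has kernel supported in $\{d(\x,\y)\le |s|t\}\subset\{d(\x,\y)\le t\}$.

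Next, the size bound \eqref{eq2-lemPsiL} follows from Lemma \ref{lem-heat kernel estimate}(a) with $\alpha=\beta=0$ once one recognises that, on the support $d(\x,\y)\le t$, the factor $(t/(t+d(\x,\y)))^M$ is $O(1)$ and the volume factor $w(B(\x,t+d(\x,\y)))$ is comparable to $w(B(\y,t))$. The latter comparison is obtained by picking $\sigma\in G$ with $\|\x-\sigma\y\|=d(\x,\y)\le t$ and invoking doubling together with the $G$-invariance of $w$ (namely $w(B(\sigma\y,r))=w(B(\y,r))$). For the regularity \eqref{eq3-lemPsiL} I would split on the size of $\|\x-\x'\|$: when $\|\x-\x'\|>t$, apply the size estimate \eqref{eq2-lemPsiL} to both terms and absorb the factor $\|\x-\x'\|/t\ge 1$; when $\|\x-\x'\|\le t$, apply the mean value theorem on the segment $[\x',\x]$ and control $|\nabla_\x G(t\sqrt L)(\z,\y)|$ using Lemma \ref{lem-heat kernel estimate}(a) with $|\alpha|=1$, noting that the triangle inequality gives $\|\z-\y\|\simeq \|\x-\y\|$ up to a bounded multiplicative constant.

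The main obstacle throughout is the conversion of the ambient volume factor $1/w(B(\x, t+d(\x,\y)))$ produced by Lemma \ref{lem-heat kernel estimate} into the form $1/w(B(\y,t))$ required in the statement. This reconciliation between a kernel bound expressed in the Euclidean norm $\|\cdot\|$ and a target estimate anchored at the base point $\y$ is precisely where the interplay between the two metrics---$\|\cdot\|$ and the orbit distance $d$---must be handled with care, relying crucially on the $G$-invariance of $w$, the doubling property, and the finite-propagation support $d(\x,\y)\le t$ guaranteed by Step 2.
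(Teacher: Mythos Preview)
Your approach coincides with the paper's: rewrite $t^2L\Phi(t\sqrt L)=G(t\sqrt L)$ with $G(\xi)=\xi^2\Phi(\xi)=\widehat{-\varphi''}(\xi)$ even, Schwartz and vanishing at the origin, then invoke Lemma~\ref{lem-heat kernel estimate} for \eqref{eq0-lemPsiL}, \eqref{eq2-lemPsiL}, \eqref{eq3-lemPsiL} and the cosine formula together with \eqref{finitepropagation} for \eqref{eq1-lemPsiL}. The paper's proof is exactly this, only stated more tersely.

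One step, however, does not go through as written. In the regime $\|\x-\x'\|>t$ for \eqref{eq3-lemPsiL}, applying \eqref{eq2-lemPsiL} to the second term gives decay $(1+\|\x'-\y\|/t)^{-2}$, not $(1+\|\x-\y\|/t)^{-2}$, and ``absorbing the factor $\|\x-\x'\|/t\ge 1$'' does not convert one into the other. In fact the inequality \eqref{eq3-lemPsiL} with the decay factor is false for unrestricted $\x,\x'$: take $\x'=\y$ and $\|\x-\y\|=d(\x,\y)\gg t$, so that $G(t\sqrt L)(\x,\y)=0$ by \eqref{eq1-lemPsiL}; the left side is then $|G(t\sqrt L)(\y,\y)|\simeq 1/w(B(\y,t))$, while the right side is of order $(\|\x-\y\|/t)^{-1}/w(B(\y,t))$. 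This is an overstatement in the lemma itself: Lemma~\ref{lem-heat kernel estimate}(b), which the paper invokes, carries the restriction $\|\y-\y'\|\le t$, and the only later use of \eqref{eq3-lemPsiL} (verifying property (iii) of the $(p,d)$-atom in Theorem~\ref{thm1- atom Besov-new atom}) needs only the undecayed Lipschitz bound
\[
|G(t\sqrt L)(\x,\y)-G(t\sqrt L)(\x',\y)|\lesi \f{\|\x-\x'\|}{t}\,\f{1}{w(B(\y,t))},
\]
which your argument does establish (your mean-value case when $\|\x-\x'\|\le t$, plus the trivial bound from \eqref{eq2-lemPsiL} when $\|\x-\x'\|>t$).
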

\begin{proof}
	The proof of \eqref{eq1-lemPsiL} is similar to that of \cite[Lemma 3]{S}. The remaining properties \eqref{eq0-lemPsiL}, \eqref{eq2-lemPsiL} and \eqref{eq3-lemPsiL} follows directly from Lemma \ref{lem-heat kernel estimate} and \eqref{eq1-lemPsiL}.
	
	This completes our proof.
\end{proof}

\subsection{Spaces of test functions and Calder\'on reproducing formulae}
We first begin with concepts on test functions and distributions on the space of homogeneous type in \cite{HMY}.

For $\x,\y\in \RN$ and $r, \gamma>0$, we set
\[
P_\gamma(\x,\y;r)=\f{1}{w(B(\x,r+\|\x-\y\|))}\Big(\f{r}{r+\|\x-\y\|}\Big)^\gamma.
\]
\begin{defn}\label{defn-Galphabeta}
	Let $\x_1\in \RN, r>0, \beta\in (0,1]$ and $\gamma>0$. A function defined on $\RN$ is called a test function type $(x_1,r,\beta,\gamma)$, denoted by $f\in \mathcal G(\x_1,r,\beta,\gamma)$, if there exists a constant $C>0$ such that 
	\begin{enumerate}[{\rm (i)}]
		\item for any $\x\in \RN$, $|f(\x)|\le CP_\gamma(\x_1,\x;r)$;
		\item for any $\x,\y\in \RN$ satisfying $\|\x-\y\|\le \f{1}{2}(r+\|\x-\x_1\|)$,
		\[
		|f(\x)-f(\y)|\le C\Big[\f{\|\x-\y\|}{r+\|\x-\x_1 \|}\Big]^\beta P_\gamma(\x_1,\x;r).
		\]
	\end{enumerate}
	We also define
	\[
	\|f\|_{\mathcal G(\x_1,r,\beta,\gamma)} =\inf\{ C\in (0,\vc): \text{(i) and (ii) hold true}\}.
	\]
	We also define
	\[
	\mathring{\mathcal G}(\x_1,r,\beta,\gamma)=\Big\{f\in {\mathcal G}(\x_1,r,\beta,\gamma): \int_X f(\x)dw(\x)=0\Big\}
	\]
	under the norm $\|\cdot\|_{\mathring{\mathcal G}(\x_1,r,\beta,\gamma)}:=\|\cdot\|_{\mathcal G(\x_1,r,\beta,\gamma)}$.
\end{defn}
It is known that both ${\mathcal G}(\x_1,r,\beta,\gamma)$ and $\mathring{\mathcal G}(\x_1,r,\beta,\gamma)$ are Banach spaces for every $\beta\in (0,1]$ and $\gamma>0$. Denote ${\mathcal G}(\beta,\gamma):={\mathcal G}(0,1,\beta,\gamma)$ and $\mathring{\mathcal G}(\beta,\gamma):=\mathring{\mathcal G}(0,1,\beta,\gamma)$. Then it was proved that ${\mathcal G}(\beta,\gamma)={\mathcal G}(\x_1,r,\beta,\gamma)$ and $\mathring{\mathcal G}(\beta,\gamma)=\mathring{\mathcal G}(\x_1,r,\beta,\gamma)$ for any $\x_1\in \RN$ and $r>0$. See for example \cite{HMY}.

\bigskip

For $\epsilon \in (0,1]$ and $\beta,\gamma\in (0,\epsilon]$, we define $\mathcal G^\epsilon(\beta,\gamma)$ (resp. $\mathring{\mathcal G}^\epsilon(\beta,\gamma)$) to be the closure of $\mathcal G(\epsilon,\epsilon)$ (resp. $\mathring{\mathcal G}(\epsilon,\epsilon)$) in the space $\mathcal G(\beta,\gamma)$ (resp. $\mathring{\mathcal G}(\beta,\gamma)$) under the norm $\|\cdot\|_{\mathcal G^\epsilon(\beta,\gamma)} =\|\cdot\|_{\mathcal G(\beta,\gamma)}$ (resp. $\|\cdot\|_{\mathring{\mathcal G}^\epsilon_0(\beta,\gamma)} =\|\cdot\|_{\mathring{\mathcal G}(\beta,\gamma)}$). Denote by $(\mathcal G^\epsilon(\beta,\gamma))'$ (resp. $(\mathring{\mathcal G}^\epsilon(\beta,\gamma))'$) the dual space of $(\mathcal G^\epsilon(\beta,\gamma))$ (resp. $(\mathring{\mathcal G}^\epsilon(\beta,\gamma))$) under the weak-$*$ topology. The spaces $(\mathcal G^\epsilon(\beta,\gamma))$ and $(\mathring{\mathcal G}^\epsilon(\beta,\gamma))$ are called the spaces of test functions on $\RN$, and $(\mathcal G^\epsilon(\beta,\gamma))'$  and $(\mathring{\mathcal G}^\epsilon(\beta,\gamma))'$ are called the spaces of distributions on $\RN$.

\begin{prop}\label{prop-prop1 kernel is a test function}
	Let $\varphi \in \mathscr S(\mathbb R)$ be an even function satisfying $\varphi(0)=0$. Then, for every $\y\in \RN$ and $t>0$, $\varphi(t\sqrt L)(\cdot, \y), \varphi(t\sqrt L)(\y,\cdot)\in \mathring{\mathcal G}(1,1)$.
\end{prop}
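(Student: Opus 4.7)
The plan is to exploit the identity $\mathring{\mathcal G}(1,1)=\mathring{\mathcal G}(\y,t,1,1)$ (as sets, with equivalence constants depending on $\y$ and $t$) and verify the three defining properties of $\mathring{\mathcal G}(\y,t,1,1)$ for $f(u):=\varphi(t\sqrt L)(u,\y)$: the size bound $|f(u)|\lesi P_1(\y,u;t)$, the H\"older bound $|f(u)-f(u')|\lesi \f{\|u-u'\|}{t+\|u-\y\|}P_1(\y,u;t)$ when $\|u-u'\|\le (t+\|u-\y\|)/2$, and the vanishing moment $\int f(u)\,dw(u)=0$. All constants are allowed to depend on $\y$ and $t$. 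The zero mean is immediate from Lemma \ref{lem-heat kernel estimate}(c), since $\varphi(0)=0$; the size bound comes from Lemma \ref{lem-heat kernel estimate}(a) with $|\alpha|=|\beta|=0$; and the H\"older bound is obtained from Lemma \ref{lem-heat kernel estimate}(a) with $|\alpha|=1,\ |\beta|=0$ combined with the mean value theorem. The symmetric assertion for $\varphi(t\sqrt L)(\y,\cdot)$ is proved identically, using instead the derivative estimate with $|\alpha|=0,\ |\beta|=1$.

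The substantive issue is that Lemma \ref{lem-heat kernel estimate}(a) supplies only the Euclidean-distance decay factor $(1+\|u-\y\|/t)^{-2}$, whereas $P_1(\y,u;t)$ behaves like $(1+\|u-\y\|/t)^{-\N-1}/w(B(\y,t))$ in the worst case, so one needs effective decay of order $\N+1$. The missing decay has to come from the orbit-distance factor $(t/(t+d(u,\y)))^M$, which is arbitrarily strong once $M$ is taken large. The geometric input that makes this possible is that $\mathcal O(\y)$ is finite and all its points have norm $\|\y\|$, which gives $d(u,\y)\ge \|u\|-\|\y\|$; consequently $d(u,\y)\gs \|u-\y\|$ whenever $\|u-\y\|\gs \|\y\|$.

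I therefore split into a near region $\|u-\y\|\le C(\|\y\|+t)$ and a far region $\|u-\y\|>C(\|\y\|+t)$ for a suitable $C$. On the near region both sides of (i) and (ii) are trivially controlled, since by Lemma \ref{lem-heat kernel estimate}(a) the kernel is continuous and thus bounded on this compact set, while $P_1(\y,u;t)$ is bounded below by a positive constant depending on $\y$ and $t$. On the far region, the comparability $d(u,\y)\simeq \|u-\y\|$ together with the doubling property yields $w(B(u,t+d(u,\y)))\simeq w(B(\y,t+\|u-\y\|))$, and choosing $M$ large enough in Lemma \ref{lem-heat kernel estimate}(a) produces polynomial decay of any order, so $|f(u)|\lesi P_1(\y,u;t)$. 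The H\"older bound is handled analogously: the mean value theorem introduces the factor $\|u-u'\|/t=(\|u-u'\|/(t+\|u-\y\|))(1+\|u-\y\|/t)$, and the extraneous $(1+\|u-\y\|/t)$ is absorbed by taking $M$ one unit larger. The main obstacle is therefore this bookkeeping in the far region, needed to convert the mixed Euclidean/orbit-distance bound from Lemma \ref{lem-heat kernel estimate} into a purely Euclidean upper bound matching the structure of $P_1(\y,u;t)$.
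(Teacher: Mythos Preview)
Your proposal is correct and follows essentially the same route as the paper. Both arguments hinge on the same geometric observation---that $d(u,\y)\simeq\|u-\y\|$ once $\|u-\y\|\gs\|\y\|$---and then split into a bounded near region and a far region where the orbit-distance decay in Lemma~\ref{lem-heat kernel estimate}(a) can be converted to Euclidean decay of arbitrary order.

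The only cosmetic differences are: the paper centers at scale $r=4\|\y\|$ rather than your $r=t$ (your choice has the minor advantage of not degenerating at $\y=0$), and for the H\"older bound the paper invokes Lemma~\ref{lem-heat kernel estimate}(b) directly when $\|u-u'\|\le t$ and falls back on the size bound when $\|u-u'\|>t$, whereas you use the mean value theorem with the gradient estimate from Lemma~\ref{lem-heat kernel estimate}(a). Since Lemma~\ref{lem-heat kernel estimate}(b) is itself derived from (a) by exactly that argument, the two treatments are equivalent.
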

\begin{proof}
	From Lemma \ref{lem-heat kernel estimate},
	$$
	\int \varphi(t\sqrt L)(\x,\y)dw(\x)=\int \varphi(t\sqrt L)(\y,\x)dw(\x)=0
	$$
	for all $\y\in \RN$ and $t>0$.
	
	Hence, it suffices to prove that for a fixed $\x \in \RN$ and $t>0$,  
	\[
	\varphi(t\sqrt L)(\cdot, \y)\in \mathcal G(\y, r,1,1), \ \ \ \ r=4\|\y\|.
	\]
	We first show that 
	\begin{equation}\label{eq-cond i for varphi t}
		|\varphi(t\sqrt L)(\x,\y)|\lesi P_1(\y,\x; r):=\f{1}{w(B(\x, r+\|\x-\y\|))} \f{r}{r+\|\x-\y\|}, \ \ \ \x\in \RN.
	\end{equation}
	If $r\ge \|\x-\y\|$, then from Lemma \ref{lem-heat kernel estimate},
	\[
	|\varphi(t\sqrt L)(\y,\x)|\lesi \f{1}{w(B(\x,t))} \simeq_{t,\|\y\|} \f{1}{w(B(\x,4\|\y\|))}= \f{1}{w(B(\x,r))}\simeq P_1(\y,\x; r).
	\]
	If $\|\x-\y\|>r:=4\|\y\|$, then $\|\x\|\ge 3\|\y\|$. In this case $d(\x,\y)\simeq \|\x-\y\|\simeq \|\x\|$. Hence,
	\[
	\begin{aligned}
		|\varphi(t\sqrt L)(\x,\y)|&\lesi \f{1}{w(B(\x,t))}\Big(\f{t}{t+\|\x-\y\|}\Big)^{1+\mathfrak{N}}\\
		&\lesi \f{1}{w(B(\x,\|\x-\y\|))} \f{t}{t+\|\x-\y\|} \\
		&\lesi_{t,\|\y\|} \f{1}{w(B(\x,\|\x-\y\|))} \f{\|\y\|}{\|\x-\y\|} \simeq P_1(\y,\x; r).		
	\end{aligned}
	\]
	We next show that for $\x,\x'\in \RN$ with $\|\x-\x'\|\le \f{1}{2}(r+\|\x-\y\|)$,
	\[
	|\varphi(t\sqrt L)(\x,\y)-\varphi(t\sqrt L)(\x',\y)|\lesi \f{\|\x-\x'\|}{r+\|\x-\y\|} P_1(\y,\x; r).
	\]
	If $\|\x-\x'\|<t$, then 
	\[
	|\varphi(t\sqrt L)(\x,\y)-\varphi(t\sqrt L)(\x',\y)|\lesi \f{\|\x-\x'\|}{t}\f{1}{w(B(\x,t))}\Big(1+\f{d(\x,\y)}{t}\Big)^{-2-\mathfrak{N}}.
	\]
	Similarly to the proof of \eqref{eq-cond i for varphi t}, if $r\ge \|\x-\y\|$, we have
	\[
	\begin{aligned}
		|\varphi(t\sqrt L)(\x,\y)-\varphi(t\sqrt L)(\x',\y)|&\lesi \f{\|\x-\x'\|}{t} \f{1}{w(B(\x,t))}\\
		&\lesi_{t,\|\y\|} \f{\|\x-\x'\|}{4\|\y\|}\f{1}{w(B(\x,4\|\y\|))}\\
		&\simeq \f{\|\x-\x'\|}{r+\|\x-\y\|} P_1(\y,\x; r). 
	\end{aligned}
	\]
	If $\|\x-\y\|>r:=4\|\y\|$, we have
	\[
	\begin{aligned}
		|\varphi(t\sqrt L)(\x,\y)-\varphi(t\sqrt L)(\x',\y)|&\lesi \f{\|\x-\x'\|}{t}\f{1}{w(B(\x,t))}\Big(\f{t}{t+\|\x-\y\|}\Big)^{2+\mathfrak{N}}\\
		&\lesi \f{\|\x-\x'\|}{ \|\x-\y\|}\f{1}{w(B(\x,\|\x-\y\|))} \f{t}{t+\|\x-\y\|} \\
		&\lesi_{t,\|\y\|} \f{\|\x-\x'\|}{r+ \|\x-\y\|}\f{1}{w(B(\x,\|\x-\y\|))} \f{\|\y\|}{\|\x-\y\|}\\
		&\lesi  \f{\|\x-\x'\|}{r+ \|\x-\y\|}P_1(\y,\x; r).		
	\end{aligned}
	\]
	If $\|\x-\x'\|\ge t$, then by a careful examination of the proof of  \eqref{eq-cond i for varphi t}, we can show that 
	\[
	|\varphi(t\sqrt L)(\x,\y)|\lesi \f{t}{r+\|\x-\y\|}P_1(\y,\x; r) , \ \ \ \x\in \RN.
	\]
	Therefore,
	\[
	\begin{aligned}
		|\varphi(t\sqrt L)(\x,\y)-\varphi(t\sqrt L)(\x',\y)|&\lesi \f{t}{r+\|\x-\y\|}P_1(\y,\x; r) +\f{t}{r+\|\x-\y\|}P_1(\y,\x'; r) \\
		&\lesi \f{\|\x-\x'\|}{r+\|\x-\y\|}P_1(\y,\x; r) +\f{\|\x-\x'\|}{r+\|\x-\y\|}P_1(\y,\x'; r).
	\end{aligned}
	\]
	On the other hand, $r+\|\x-\y\|\simeq r+\|\x'-\y\|$, whenever $\|\x-\x'\|\le \f{1}{2}(r+\|\x-\y\|)$. Consequently,
	\[
	|\varphi(t\sqrt L)(\x,\y)-\varphi(t\sqrt L)(\x',\y)|
	\lesi \f{\|\x-\x'\|}{r+\|\x-\y\|}P_1(\y,\x; r).
	\]
	This completes our proof.
\end{proof}

Due to Proposition \ref{prop-prop1 kernel is a test function}, for each even function $\varphi\in \mathscr{\mathbb R}$, $\x\in \RN$ and $t>0$, we can define
\[
\varphi(t\sqrt L)f(\x)=\langle \varphi(t\sqrt L)(\x,\cdot), f\rangle
\]
for any $f\in (\mathring{\mathcal G}^1(\beta,\gamma))'$ with $0<\beta, \gamma<1$.

In what follows, we will use the following elementary inequality (see for example \cite[Lemma 2.2]{BBD}) frequently without any explanation
\[
\int_{\RN} \f{1}{w(B(\x,s))+w(B(\y,s))}\Big(1+\f{d(\x,\y)}{s}\Big)^{-\N-\epsilon}dw(\y)\le C(\epsilon).
\]
\begin{lem}\label{lem - t small}
	
	Let $\varphi \in \mathscr S(\mathbb R)$ be an even function satisfying $\varphi(0)=0$. Let $\epsilon\in (0,1]$ and $0<\beta,\gamma<\epsilon$. For $t\in (0,1)$ and $f\in \mathring{\mathcal G}(\epsilon,\epsilon)$, then we have
	\begin{equation}\label{eq- inequality small t}
		\|\varphi(t\sqrt L)f\|_{\mathring{\mathcal G}^\epsilon(\beta,\gamma)}\lesi t^{\epsilon-\beta}\|f\|_{\mathring{\mathcal G}(\epsilon,\epsilon)}.
	\end{equation}
\end{lem}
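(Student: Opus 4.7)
The plan is to verify directly that $F:=\varphi(t\sqrt L)f$ satisfies the three defining conditions of $\mathring{\mathcal G}(\beta,\gamma)$ with constant $\lesssim t^{\epsilon-\beta}\|f\|_{\mathring{\mathcal G}(\epsilon,\epsilon)}$, and then to note that running the same argument with intermediate exponents $\beta'\in(\beta,\epsilon)$ and $\gamma'\in(\gamma,\epsilon)$ places $F$ in $\mathring{\mathcal G}(\beta',\gamma')$, whence $F\in\mathring{\mathcal G}^\epsilon(\beta,\gamma)$ by the standard inclusion $\mathring{\mathcal G}(\beta',\gamma')\hookrightarrow\mathring{\mathcal G}^\epsilon(\beta,\gamma)$ on spaces of homogeneous type. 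The vanishing moment $\int F\,dw=0$ is immediate from Fubini and the cancellation $\int \varphi(t\sqrt L)(\y,\x)\,dw(\y)=0$ of Lemma \ref{lem-heat kernel estimate}(c).

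For the size bound I would exploit the other cancellation $\int\varphi(t\sqrt L)(\x,\y)\,dw(\y)=0$ to write
\[
F(\x)=\int_{\RN}\varphi(t\sqrt L)(\x,\y)\bigl[f(\y)-f(\x)\bigr]\,dw(\y)
\]
and split the integral into the near zone $\{\|\y-\x\|\le\tfrac12(1+\|\x\|)\}$ and its complement. In the near zone the $\epsilon$-smoothness of $f$ controls $|f(\y)-f(\x)|$ by $\|f\|(\|\y-\x\|/(1+\|\x\|))^\epsilon P_\epsilon(0,\x;1)$; pairing the factor $\|\y-\x\|^\epsilon\le t^\epsilon(1+\|\y-\x\|/t)^\epsilon$ with the extra Euclidean decay $(1+\|\y-\x\|/t)^{-2}$ of Lemma \ref{lem-heat kernel estimate}(a), the remainder $(1+\|\y-\x\|/t)^{\epsilon-2}\le 1$ leaves an integrable residual kernel and yields the gain $t^\epsilon$. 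On the complement one has $1+\|\x\|\lesssim t+\|\y-\x\|$ (since $t<1$), so $(1+\|\y-\x\|/t)^{-\epsilon}\lesssim t^\epsilon(1+\|\x\|)^{-\epsilon}$; bounding $|f(\y)-f(\x)|$ by $\|f\|(P_\epsilon(0,\y;1)+P_\epsilon(0,\x;1))$ and integrating the remaining kernel via the elementary inequality recalled just before the statement of this lemma reproduces the same $t^\epsilon$ gain. Collecting, $|F(\x)|\lesssim t^\epsilon\|f\|(1+\|\x\|)^{-\epsilon}P_\epsilon(0,\x;1)=t^\epsilon\|f\|P_{2\epsilon}(0,\x;1)\lesssim t^{\epsilon-\beta}\|f\|P_\gamma(0,\x;1)$, because $t<1$ and $\gamma<\epsilon\le 2\epsilon$.

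For the smoothness bound at $\x,\x'$ with $\|\x-\x'\|\le\tfrac12(1+\|\x\|)$ I would split into the subcases $\|\x-\x'\|>t$ (where applying the size bound to each of $F(\x),F(\x')$ and absorbing the excess into $1\le(\|\x-\x'\|/t)^\beta$ suffices) and $\|\x-\x'\|\le t$. In the latter subcase rewrite
\[
F(\x)-F(\x')=\int_{\RN}\bigl[\varphi(t\sqrt L)(\x,\y)-\varphi(t\sqrt L)(\x',\y)\bigr]\bigl[f(\y)-f(\x)\bigr]\,dw(\y)
\]
and apply the mean value theorem in $\x$, using Lemma \ref{lem-heat kernel estimate}(a) with $|\alpha|=1$ to bound the kernel difference by $(\|\x-\x'\|/t)$ times a kernel of the same shape; since $\|\x-\x'\|/t\le 1$ dominate this by $(\|\x-\x'\|/t)^\beta$. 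The remaining integral in $\y$ is handled by exactly the same near/far decomposition as in the size step, producing the extra factor $t^\epsilon$, and the final arithmetic $(1+\|\x\|)^{\beta-\epsilon}P_\epsilon(0,\x;1)=P_{2\epsilon-\beta}(0,\x;1)\le P_\gamma(0,\x;1)$ (valid since $\beta+\gamma<2\epsilon$) then delivers the claimed bound $|F(\x)-F(\x')|\lesssim t^{\epsilon-\beta}(\|\x-\x'\|/(1+\|\x\|))^\beta\|f\|P_\gamma(0,\x;1)$.

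The main obstacle is the bookkeeping for the far zone of the size estimate: one must simultaneously extract the $t^\epsilon$ gain, control the growth of $P_\epsilon(0,\y;1)$ against $P_\epsilon(0,\x;1)$, and preserve polynomial decay in $1+\|\x\|$. The decisive lever is the extra Euclidean decay $(1+\|\y-\x\|/t)^{-2}$ in Lemma \ref{lem-heat kernel estimate}(a) combined with the lower bound $1+\|\x\|\lesssim t+\|\y-\x\|$ (which uses $t<1$); the remaining integration against $w(B(\x,t+d(\x,\y)))^{-1}(t/(t+d(\x,\y)))^M$ is routine.
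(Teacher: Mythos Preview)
Your proposal is correct and follows essentially the same route as the paper: the cancellation $\int\varphi(t\sqrt L)(\x,\y)\,dw(\y)=0$ to insert $f(\y)-f(\x)$, the near/far split $\|\y-\x\|\lessgtr\tfrac12(1+\|\x\|)$ for the size bound, and the dichotomy $\|\x-\x'\|\lessgtr t$ for the smoothness bound. The paper reaches the kernel difference via Lemma~\ref{lem-heat kernel estimate}(b) (symmetrized) rather than the mean value theorem with the derivative estimate in (a), but these are interchangeable here.

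One point where your sketch is thinner than the paper's execution: in the far zone, after extracting $t^\epsilon(1+\|\x\|)^{-\epsilon}$ from the extra Euclidean decay, the remaining integral $\int_{\text{far}}\tfrac{1}{w(B(\x,t))}\bigl(\tfrac{t}{t+d(\x,\y)}\bigr)^M P_\epsilon(0,\y;1)\,dw(\y)$ is not handled by the ``elementary inequality'' alone, because $P_\epsilon(0,\y;1)$ carries the volume factor $1/w(B(0,1+\|\y\|))$ which must be converted to $1/w(B(0,1+\|\x\|))$. The paper does this by splitting on $\|\y\|\gtrless\|\x\|/2$: in the first case $w(B(0,1+\|\y\|))\gtrsim w(B(0,1+\|\x\|))$ directly, while in the second case $d(\x,\y)\simeq\|\x\|$ so enough powers of $\tfrac{t}{t+d(\x,\y)}$ absorb the doubling loss. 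You flag exactly this as ``the main obstacle,'' so you have located the issue; just be aware that the fix is this $\|\y\|$-case split rather than a direct appeal to the integrability lemma.

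Your remark on membership in $\mathring{\mathcal G}^\epsilon(\beta,\gamma)$ (via intermediate exponents $\beta',\gamma'$) is a welcome clarification that the paper leaves implicit.
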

\begin{proof}
	We fix a constant $M>\N$.  Without loss of generality, assume that $\|f\|_{\mathring{\mathcal G}(\epsilon,\epsilon)}=1$.  
	We have
	\[
	\begin{aligned}
		\varphi(t\sqrt L)f(\x) &=  \int_{\RN} \varphi(t\sqrt L)(\x,\y)f(\y)dw(\y).
	\end{aligned}
	\]
	By using Lemma \ref{lem-heat kernel estimate}, it follows that 
	\[
	\int_{\RN}\varphi(t\sqrt L)f(\x) dw(\x) = 0.
	\]
	
	For $\x\in \RN$, set $S(\x)=\{\y: \|\x-\y\|\le \f{1}{2}(1+\|\x\|)\}$. We have
	\[
	\begin{aligned}
		|\varphi(t\sqrt L)f(\x)| &= \Big|\int_{\RN} \varphi(t\sqrt L)(\x,\y)f(\y)dw(\y)\Big|\\
		&= \Big|\int_{\RN} \varphi(t\sqrt L)(\x,\y)[f(\y)-f(\x)]dw(\y)\Big|\\
		&\le \Big|\int_{S(\x)} \varphi(t\sqrt L)(\x,\y)[f(\y)-f(\x)]dw(\y)\Big| +\Big|\int_{\RN\backslash S(\x)} \varphi(t\sqrt L)(\x,\y)[f(\y)-f(\x)]dw(\y)\Big|\\
		&=: E_1 + E_2.
	\end{aligned}
	\]
	For the term $E_1$, by (ii) in Definition \ref{defn-Galphabeta},
	\[
	|f(\y)-f(\x)|\le \Big[\f{\|\x-\y\|}{1+\|\x\|}\Big]^\epsilon P_\epsilon(0,\x;1).
	\]
	This, along with Lemma \ref{lem-heat kernel estimate}, implies, for $M>\mathfrak{N}$,
	\[
	\begin{aligned}
		E_1&\lesi P_\epsilon(0,\x;1)\int_{S(\x)}\Big[\f{\|\x-\y\|}{1+\|\x\|}\Big]^\epsilon \Big(1+\f{\|\x-\y\|}{t}\Big)^{-2}\f{1}{w(B(\x, t))}\Big(\f{t}{t+d(\x,\y)}\Big)^Mdw(\y)\\
		&\lesi P_\epsilon(0,\x;1)\int_{S(\x)}\Big[\f{\|\x-\y\|}{1+\|\x\|}\Big]^\epsilon \Big[\f{ t}{t +\|\x-\y\|}\Big]^\epsilon\f{1}{w(B(\x,  t))}\Big(\f{t}{t+d(\x,\y)}\Big)^Mdw(\y)\\
		&\lesi  \Big[\f{t}{1+\|\x\|}\Big]^\epsilon P_\epsilon(0,\x;1)\int_{\RN} \f{1}{w(B(\x,  t))}\Big(\f{t}{t+d(\x,\y)}\Big)^Mdw(\y)\\
		&\lesi  \Big[\f{t}{1+\|\x\|}\Big]^\epsilon P_\gamma(0,\x;1).
	\end{aligned}
	\]
	For $E_2$, we write
	\[
	\begin{aligned}
		E_2&\le \int_{\RN\backslash S(\x)} |\varphi(t\sqrt L)(\x,\y)||f(\y)|dw(\y)+\int_{\RN\backslash S(\x)} |\varphi(t\sqrt L)(\x,\y)||f(\x)|dw(\y)\\
		&=: E_{21} + E_{22}.
	\end{aligned}
	\]
	By (i) in Definition \ref{defn-Galphabeta} and Lemma \ref{lem-heat kernel estimate},
	\[
	\begin{aligned}
		E_{22}&\lesi P_\epsilon(0,\x;1)\int_{_{\RN\backslash S(\x)}} \Big(1+\f{\|\x-\y\|}{  t}\Big)^{-2}\f{1}{w(B(\x,  t))}\Big(\f{t}{t+d(\x,\y)}\Big)^Mdw(\y)\\
		&\lesi \Big[\f{t}{1+\|\x\|}\Big]^2 P_\gamma(0,\x;1)\int_{\RN}  \f{1}{w(B(\x,  t))}\Big(\f{t}{t+d(\x,\y)}\Big)^Mdw(\y)\\
		&\lesi \Big[\f{t}{1+\|\x\|}\Big]^\epsilon P_\gamma(0,\x;1).
	\end{aligned}
	\]
	Applying (i) in Definition \ref{defn-Galphabeta} and Lemma \ref{lem-heat kernel estimate}, we have
	\[
	\begin{aligned}
		E_{21}&\lesi \int_{\RN\backslash S(\x)}\Big(1+\f{\|\x-\y\|}{  t}\Big)^{-2}\f{1}{w(B(\x,  t))}\Big(\f{t}{t+d(\x,\y)}\Big)^{2M+\gamma}P_\epsilon(0,\y;1) dw(\y)\\
		&\lesi \Big[\f{t}{1+\|\x\|}\Big]^2\int_{\RN\backslash S(\x)} \f{1}{w(B(\x,  t))}\Big(\f{t}{t+d(\x,\y)}\Big)^{2M+\gamma}P_{\gamma}(0,\y;1) dw(\y).
	\end{aligned}
	\]
	This, along with the fact that 
	\[
	\Big(\f{1}{1+\|\y\|}\Big)\Big(\f{t}{t+d(\x,\y)}\Big)\le \Big(\f{1}{1+\|\y\|}\Big)\Big(\f{1}{1+d(\x,\y)}\Big)\lesi \f{1}{1+\|\x\|} , \ \ \ \x,\y\in \RN, t\in (0,1),
	\]
	implies that 
	\begin{equation}\label{eq-estimate E22}
		\begin{aligned}
			E_{21}&\lesi \Big[\f{t}{1+\|\x\|}\Big]^2\Big(\f{1}{1+\|\x\|}\Big)^\gamma \int_{\RN\backslash S(\x)} \f{1}{w(B(\x,  t))}\Big(\f{t}{t+d(\x,\y)}\Big)^{2M}\f{1}{w(B(0,1+\|\y\|))} dw(\y).
		\end{aligned}
	\end{equation}
	If $\|\y\|\ge \|\x\|/2$, then 
	\[
	\f{1}{w(B(0,1+\|\y\|))}\lesi \f{1}{w(B(0,1+\|\x\|))}.
	\]
	If $\|\y\|\le \|\x\|/2$, then $d(\x,\y)\simeq \|\x\|$. This, together with \eqref{eq-doubling}, yields
	\[
	\begin{aligned}
		\Big(\f{t}{t+d(\x,\y)}\Big)^{M}\f{1}{w(B(0,1+\|\y\|))}&\simeq \Big(\f{t}{t+\|\x\|}\Big)^{M}\f{1}{w(B(0,1+\|\y\|))}\\
		&\lesi \Big(\f{t}{t+\|\x\|}\Big)^{M}\Big(\f{1+\|\x\|}{1+\|\y\|}\Big)^{\mathfrak{N}}\f{1}{w(B(0,1+\|\x\|))}\\
		&\lesi \f{1}{w(B(0,1+\|\x\|))}.		
	\end{aligned}
	\]
	In both cases, we have
	\[
	\Big(\f{t}{t+\|\x\|}\Big)^{M}\f{1}{w(B(0,1+\|\y\|))}\lesi \f{1}{w(B(0,1+\|\x\|))}.
	\]
	Plugging this into \eqref{eq-estimate E22},
	\[
	\begin{aligned}
		E_{22}&\lesi \Big[\f{t}{1+\|\x\|}\Big]^2\f{1}{w(B(0,1+\|\x\|))}\Big(\f{1}{1+\|\x\|}\Big)^\gamma \int_{\RN} \f{1}{w(B(\x,  t))}\Big(\f{t}{t+\|\x\|}\Big)^{M}dw(\y)\\
		&\lesi \Big[\f{t}{1+\|\x\|}\Big]^\epsilon P_\gamma(0,\x;1).
	\end{aligned}
	\]
	Taking all the estimates of $E_1, E_{21}$ and $E_{22}$ into account we obtain 
	\begin{equation}\label{eq- estimate of tLetL f}
		|\varphi(t\sqrt L)f(\x)|\lesi \Big[\f{t}{1+\|\x\|}\Big]^\epsilon P_\gamma(0,\x;1),
	\end{equation}
	which implies
	\begin{equation}\label{eq- condition i estimate of tLetL f}
		|\varphi(t\sqrt L)f(\x)|\lesi t^\epsilon P_\gamma(0,\x;1).
	\end{equation}
	\bigskip

	We next prove that 
	For $\x,\x'\in \RN$ with $\|\x-\x'\|\le \f{1}{2}(1+\|\x\|)$ and $t\in (0,1)$, we have
	\begin{equation}\label{eq-condition ii of tLetL}
		|\varphi(t\sqrt L)f(\x)-\varphi(t\sqrt L)f(\x')| \lesi t^{\epsilon-\beta} \Big(\f{\|\x-\x'\|}{1+\|\x\|}\Big)^\beta P_\gamma(0,\x;1).
	\end{equation}
	
	Indeed, we first observe that  
	\[
	\begin{aligned}
		\varphi(t\sqrt L)f(\x)-\varphi(t\sqrt L)f(\x') &= \int_{\RN} [\varphi(t\sqrt L)(\x,\y)-\varphi(t\sqrt L)(\x',\y)]f(\y)dw(\y).
	\end{aligned}
	\]
	
	If $\|\x-\x'\|\le t$, then we write
	\[
	\begin{aligned}
		|\varphi(t\sqrt L)f(\x)&-\varphi(t\sqrt L)f(\x')|\\ &= \Big|\int_{\RN} [\varphi(t\sqrt L)(\x,\y)-\varphi(t\sqrt L)(\x',\y)][f(\y)-f(\x)]dw(\y)\Big|\\
		&\lesi \int_{\RN} \f{\|\x-\x'\|}{t}\Big(1+\f{\|\x-\y\|}{  t}\Big)^{-2}\f{1}{w(B(\x,  t))}\Big(\f{t}{t+d(\x,\y)}\Big)^{M}|f(\x)-f(\y)|dw(\y)\\
		&\lesi\int_{\{\y: \|\x-\y\|\le \f{1}{2}(1+\|\x\|)\}}\ldots +\int_{\{\y: \|\x-\y\|> \f{1}{2}(1+\|\x\|)\}}\ldots\\
		&=:F_1 + F_2. 
	\end{aligned}
	\]
	For $F_1$, using (ii) in Definition \ref{defn-Galphabeta},
	\[
	\begin{aligned}
		F_1 & \lesi P_\gamma(0,\x;1)\int_{\RN} \Big(\f{\|\x-\x'\|}{t}\Big)^\epsilon\Big( \f{  t}{t+\|\x-\y\|}\Big)^\epsilon\f{1}{w(B(\x,  t))}\Big(\f{t}{t+d(\x,\y)}\Big)^{M}\Big[\f{\|\x-\y\|}{1+\|\x\|}\Big]^\epsilon dw(\y)\\
		& \lesi P_\gamma(0,\x;1)\int_{\RN}  \Big(\f{\|\x-\x'\|}{1+\|\x\|}\Big)^\epsilon\f{1}{w(B(\x,  t))}\Big(\f{t}{t+\|\x\|}\Big)^{M} dw(\y).
	\end{aligned}
	\]
	Since 
	\[
	\begin{aligned}
		\Big(\f{\|\x-\x'\|}{1+\|\x\|}\Big)^\epsilon&= \Big(\f{\|\x-\x'\|}{1+\|\x\|}\Big)^\beta \Big(\f{\|\x-\x'\|}{1+\|\x\|}\Big)^{\epsilon-\beta}\\
		&\le \Big(\f{\|\x-\x'\|}{1+\|\x\|}\Big)^\beta \|\x-\x'\| ^{\epsilon-\beta}\\
		&\le t^{\epsilon-\beta}\Big(\f{\|\x-\x'\|}{1+\|\x\|}\Big)^\beta,
	\end{aligned}	
	\]
	as long as $\|\x-\x'\|\le t$, we have
	\[
	F_1\lesi t^{\epsilon-\beta}\Big(\f{\|\x-\x'\|}{1+\|\x\|}\Big)^\beta P_\gamma(0,\x;1).
	\]
	
	For the term $F_2$, similarly we have
	\[
	\begin{aligned}
		F_2&\lesi \int_{\{\y: \|\x-\y\|> \f{1}{2}(1+\|\x\|)\}}\f{\|\x-\x'\|}{t} \f{t}{\|\x-\y\|}\f{1}{w(B(\x,  t))}\Big(\f{t}{t+d(\x,\y)}\Big)^{M}|f(\x)-f(\y)|dw(\y)\\
		&\lesi \f{\|\x-\x'\|}{1+\|\x\|}  \int_{\{\y: \|\x-\y\|> \f{1}{2}(1+\|\x\|)\}}\f{1}{w(B(\x,  t))}\Big(\f{t}{t+d(\x,\y)}\Big)^{M}|f(\x)-f(\y)|dw(\y).
	\end{aligned}
	\]
	Arguing similarly to the estimate of $E_{2}$,
	\[
	\int_{\{\y: \|\x-\y\|> \f{1}{2}(1+\|\x\|)\}}\f{1}{w(B(\x,  t))}\Big(\f{t}{t+d(\x,\y)}\Big)^{M}|f(\x)-f(\y)|dw(\y)\lesi P_\gamma(0,\x;1).
	\]
	Therefore,
	\[
	\begin{aligned}
		F_2&\lesi  \f{\|\x-\x'\|}{1+\|\x\|} P_\gamma(0,\x;1)\\
		&\lesi \|\x-\x'\|^{1-\beta}\Big(\f{\|\x-\x'\|}{1+\|\x\|}\Big)^{\beta} P_\gamma(0,\x;1)\\
		&\lesi t^{1-\beta}\Big(\f{\|\x-\x'\|}{1+\|\x\|}\Big)^{\beta} P_\gamma(0,\x;1) \\
		&\lesi t^{\epsilon-\beta}\Big(\f{\|\x-\x'\|}{1+\|\x\|}\Big)^{\beta} P_\gamma(0,\x;1).
	\end{aligned}
	\]
	as long as $\|\x-\x'\|\le t$.

	\textbf{Case 2:} $t<\|\x-\x'\|<\f{1}{2}(1+\|\x\|)$. Using \eqref{eq- estimate of tLetL f},
	\[
	\begin{aligned}
		|\varphi(t\sqrt L)f(\x)-\varphi(t\sqrt L)f(\x')|&\le |\varphi(t\sqrt L)f(\x)|+|\varphi(t\sqrt L)f(\x')|\\
		&\lesi \Big[\f{t}{1+\|\x\|}\Big]^\epsilon P_\gamma(0,\x;1)  + \Big[\f{t}{1+\|\x'\|}\Big]^\epsilon P_\gamma(0,\x',1).
	\end{aligned}
	\]
	This, along with the fact that $1+\|\x\|\simeq 1+ \|\x'\|$, whenever $\|\x-\x'\|\le \f{1}{2}(1+\|\x\|)$, implies
	\[
	\begin{aligned}
		|\varphi(t\sqrt L)f(\x)-\varphi(t\sqrt L)f(\x')|&\le |\varphi(t\sqrt L)f(\x)|+|\varphi(t\sqrt L)f(\x')|\\
		&\lesi \Big[\f{t}{1+\|\x\|}\Big]^\epsilon P_\gamma(0,\x;1)\\
		&\lesi t^{\epsilon-\beta} \Big[\f{t}{1+\|\x\|}\Big]^\beta P_\gamma(0,\x;1)\\
		&\lesi t^{\epsilon-\beta} \Big[\f{\|\x-\x'\|}{1+\|\x\|}\Big]^\beta P_\gamma(0,\x;1),
	\end{aligned}
	\]
	as long as $\|\x-\x'\|>t$.
	
	This ensures \eqref{eq-condition ii of tLetL}. Taking the estimates \eqref{eq- condition i estimate of tLetL f} and \eqref{eq-condition ii of tLetL} into account, we imply \eqref{eq- inequality small t}. 
	
	This completes our proof.
\end{proof}

\begin{lem}\label{lem - integral t small}

	Let $\varphi \in \mathscr S(\mathbb R)$ be an even function satisfying $\varphi(0)=0$. Let  $0<\beta,\gamma<1$. For $f\in \mathring{\mathcal G}(\beta,\gamma)$, then we have
	\begin{equation}\label{eq- inequality integral small t}
		\Big\|\int_0^2\varphi(t\sqrt L)f\f{dt}{t}\Big\|_{\mathring{\mathcal G}(\beta,\gamma)}\lesi \|f\|_{\mathring{\mathcal G}(\beta,\gamma)}.
	\end{equation}
\end{lem}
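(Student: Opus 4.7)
The plan is to directly verify that the function $g(\x) := \int_0^2 \varphi(t\sqrt{L}) f(\x)\,\frac{dt}{t}$ lies in $\mathring{\mathcal{G}}(\beta,\gamma)$ by checking the three defining conditions (size, regularity, cancellation) of Definition \ref{defn-Galphabeta}. My starting point is a pointwise analogue of Lemma \ref{lem - t small}, obtained by rerunning its proof but replacing the auxiliary Hölder exponent $\epsilon$ by the actual Hölder exponent $\beta$ of $f$. Explicitly, for every $t\in(0,2)$ I expect to obtain
\[
|\varphi(t\sqrt{L}) f(\x)| \lesi \Big[\frac{t}{1+\|\x\|}\Big]^\beta P_\gamma(0,\x;1)\|f\|_{\mathring{\mathcal G}(\beta,\gamma)},
\]
together with the kernel-difference bound
\[
|\varphi(t\sqrt{L}) f(\x) - \varphi(t\sqrt{L}) f(\x')| \lesi \frac{\|\x-\x'\|}{t}\Big[\frac{t}{1+\|\x\|}\Big]^\beta P_\gamma(0,\x;1)\|f\|_{\mathring{\mathcal G}(\beta,\gamma)}
\]
valid whenever $\|\x-\x'\| \le \tfrac12(1+\|\x\|)$. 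The estimates of $E_{21}, E_{22}, F_1, F_2$ from the proof of Lemma \ref{lem - t small} still apply here, since they rely on the extra decay $(1+\|\x-\y\|/t)^{-2}$ of Lemma \ref{lem-heat kernel estimate} combined with the elementary inequality $(\|\x-\y\|/(1+\|\x\|))^\beta (t/(t+\|\x-\y\|))^2 \lesi (t/(1+\|\x\|))^\beta$, which is easily checked in both regimes $\|\x-\y\|\le t$ and $\|\x-\y\|>t$.

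Granted the first pointwise estimate, the size condition for $g$ is immediate: integrating in $t$ gives $|g(\x)| \lesi (1+\|\x\|)^{-\beta} P_\gamma(0,\x;1) \|f\|_{\mathring{\mathcal G}(\beta,\gamma)} \lesi P_\gamma(0,\x;1)\|f\|_{\mathring{\mathcal G}(\beta,\gamma)}$. For the regularity condition, assuming $\|\x-\x'\|\le \tfrac12(1+\|\x\|)$, I split $\int_0^2 = \int_0^{\|\x-\x'\|} + \int_{\|\x-\x'\|}^2$. On the small-$t$ part I use the triangle inequality together with the size bound, yielding
\[
\int_0^{\|\x-\x'\|} \Big[\frac{t}{1+\|\x\|}\Big]^\beta \frac{dt}{t} \lesi \Big[\frac{\|\x-\x'\|}{1+\|\x\|}\Big]^\beta,
\]
which is integrable at $0$ precisely because $\beta>0$. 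On the large-$t$ part I use the kernel-difference bound, getting
\[
\int_{\|\x-\x'\|}^2 \frac{\|\x-\x'\|}{t}\Big[\frac{t}{1+\|\x\|}\Big]^\beta \frac{dt}{t} = \frac{\|\x-\x'\|}{(1+\|\x\|)^\beta}\int_{\|\x-\x'\|}^2 t^{\beta-2}\,dt \lesi \Big[\frac{\|\x-\x'\|}{1+\|\x\|}\Big]^\beta,
\]
where the last integral is controlled by its endpoint at $t=\|\x-\x'\|$ precisely because $\beta<1$. Combining and carrying the factor $P_\gamma(0,\x;1)\|f\|_{\mathring{\mathcal G}(\beta,\gamma)}$ throughout yields the required regularity estimate for $g$.

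The vanishing moment $\int g\,dw=0$ follows from Fubini, justified by the $L^1$-absolute integrability provided by the size estimate above, together with the cancellation $\int_{\RN}\varphi(t\sqrt{L})(\x,\y)\,dw(\x) = 0$ from Lemma \ref{lem-heat kernel estimate}(c). The main obstacle is the absence of any $t$-gain of the form $t^{\epsilon-\beta}$ that was available in Lemma \ref{lem - t small}: here the kernel-difference estimate is merely of Lipschitz type in $\x$ and does not by itself produce an integrable weight against $dt/t$. The workaround is the splitting at $t=\|\x-\x'\|$ together with the interplay $0<\beta<1$, which makes the small-$t$ and large-$t$ contributions balance to give exactly the desired $(\|\x-\x'\|/(1+\|\x\|))^\beta$ bound; the endpoint cases $\beta=0$ or $\beta=1$ would genuinely break this balance and produce logarithms.
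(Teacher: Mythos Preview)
Your proof is correct and follows essentially the same approach as the paper. The only difference is presentational: the paper packages the two regimes into a single pointwise estimate
\[
|\varphi(t\sqrt L)f(\x)-\varphi(t\sqrt L)f(\x')| \lesi \min\Big\{\Big(\frac{\|\x-\x'\|}{t}\Big)^{1-\beta}, \Big(\frac{t}{\|\x-\x'\|}\Big)^\beta\Big\}\Big(\frac{\|\x-\x'\|}{1+\|\x\|}\Big)^{\beta}P_\gamma(0,\x;1)
\]
and then integrates it in $t$, whereas you split the integral $\int_0^2$ at $t=\|\x-\x'\|$ first and apply the size bound on the small-$t$ piece and the Lipschitz-type difference bound on the large-$t$ piece; since $\frac{\|\x-\x'\|}{t}\big[\frac{t}{1+\|\x\|}\big]^\beta = \big(\frac{\|\x-\x'\|}{t}\big)^{1-\beta}\big(\frac{\|\x-\x'\|}{1+\|\x\|}\big)^\beta$ and $\big[\frac{t}{1+\|\x\|}\big]^\beta = \big(\frac{t}{\|\x-\x'\|}\big)^\beta\big(\frac{\|\x-\x'\|}{1+\|\x\|}\big)^\beta$, the two computations are algebraically identical.
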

\begin{proof}
	Without loss of generality, assume that $\|f\|_{\mathring{\mathcal G}(\beta,\gamma)}=1$.  
	
	Arguing similarly to the proof of \eqref{eq- estimate of tLetL f}, we also obtain 
	\begin{equation}\label{eq1- proof of integral t small}
		|\varphi(t\sqrt L)f(\x)|\lesi \Big[\f{t}{1+\|\x\|}\Big]^\beta P_\gamma(0,\x;1),
	\end{equation}
	which implies
	\begin{equation*}
		|\varphi(t\sqrt L)f(\x)|\lesi t^\beta P_\gamma(0,\x;1).
	\end{equation*}
	It follows that
	\begin{equation}\label{eq1- proof of integral t small s}
		\Big|\int_0^2\varphi(t\sqrt L)f(\x)\f{dt}{t}\Big|\lesi P_\gamma(0,\x;1).
	\end{equation}
	\bigskip

	It suffices to prove that for $\x,\x'\in \RN$ with $\|\x-\x'\|\le \f{1}{2}(1+\|\x\|)$ and $t\in (0,1)$, we have
	\begin{equation}\label{eq-condition ii of tLetL proof for integral}
		|\varphi(t\sqrt L)f(\x)-\varphi(t\sqrt L)f(\x')| \lesi \min\Big\{\Big(\f{\|\x-\x'\|}{t}\Big)^{1-\beta}, \Big(\f{t}{\|\x-\x'\|}\Big)^\beta\Big\} \Big(\f{\|\x-\x'\|}{1+\|\x\|}\Big)^\beta P_\gamma(0,\x;1).
	\end{equation}
	Indeed, once this inequality has been proved, it follows that  
	for $\x,\x'\in \RN$ with $\|\x-\x'\|\le \f{1}{2}(1+\|\x\|)$, we have
	\begin{equation*}
		\Big|\int_0^2\varphi(t\sqrt L)f(\x)\f{dt}{t}-\int_0^2\varphi(t\sqrt L)f(\x')\f{dt}{t}\Big| \lesi \Big(\f{\|\x-\x'\|}{1+\|\x\|}\Big)^\beta P_\gamma(0,\x;1).
	\end{equation*}
	which, along with \eqref{eq1- proof of integral t small s}, implies \eqref{eq- inequality integral small t}.
	
	We now take care of \eqref{eq-condition ii of tLetL proof for integral}. The proof of this is similar to the proof of Lemma \ref{lem - t small}. We first observe that  
	\[
	\begin{aligned}
		\varphi(t\sqrt L)f(\x)-\varphi(t\sqrt L)f(\x') &= \int_{\RN} [\varphi(t\sqrt L)(\x,\y)-\varphi(t\sqrt L)(\x',\y)]f(\y)dw(\y).
	\end{aligned}
	\]
	
	If $\|\x-\x'\|\le t$, then we write
	\[
	\begin{aligned}
		|\varphi(t\sqrt L)f(\x)&-\varphi(t\sqrt L)f(\x')|\\ &= \Big|\int_{\RN} [\varphi(t\sqrt L)(\x,\y)-\varphi(t\sqrt L)(\x',\y)][f(\y)-f(\x)]dw(\y)\Big|\\
		&\lesi \int_{\RN} \f{\|\x-\x'\|}{t}\Big(1+\f{\|\x-\y\|}{  t}\Big)^{-2}\f{1}{w(B(\x,  t))}\Big(\f{t}{t+d(\x,\y)}\Big)^{M}|f(\x)-f(\y)|dw(\y)\\
		&\lesi\int_{\{\y: \|\x-\y\|\le \f{1}{2}(1+\|\x\|)\}}\ldots +\int_{\{\y: \|\x-\y\|> \f{1}{2}(1+\|\x\|)\}}\ldots\\
		&=:E_1 + E_2. 
	\end{aligned}
	\]
	For $E_1$, using (ii) in Definition \ref{defn-Galphabeta},
	\[
	\begin{aligned}
		E_1 & \lesi P_\gamma(0,\x;1)\int_{\RN}  \f{\|\x-\x'\|}{t} \Big( \f{  t}{t+\|\x-\y\|}\Big)^\beta\f{1}{w(B(\x,  t))}\Big(\f{t}{t+d(\x,\y)}\Big)^{M}\Big[\f{\|\x-\y\|}{1+\|\x\|}\Big]^\beta dw(\y)\\
		& \lesi \Big(\f{\|\x-\x'\|}{t}\Big)^{1-\beta}P_\gamma(0,\x;1)\int_{\RN}  \Big(\f{\|\x-\x'\|}{t}\Big)^\beta \Big( \f{  t}{1+\|\x\|}\Big)^\beta\f{1}{w(B(\x,  t))}\Big(\f{t}{t+d(\x,\y)}\Big)^{M}  dw(\y)\\
		& \lesi \Big(\f{\|\x-\x'\|}{t}\Big)^{1-\beta}P_\gamma(0,\x;1)\int_{\RN}  \Big(\f{\|\x-\x'\|}{1+\|\x\|}\Big)^\beta\f{1}{w(B(\x,  t))}\Big(\f{t}{t+\|\x\|}\Big)^{M} dw(\y)\\
		& \lesi \Big(\f{\|\x-\x'\|}{t}\Big)^{1-\beta}\Big(\f{\|\x-\x'\|}{1+\|\x\|}\Big)^\beta P_\gamma(0,\x;1).
	\end{aligned}
	\]
	
	For the term $E_2$, similarly we have
	\[
	\begin{aligned}
		E_2&\lesi \int_{\{\y: \|\x-\y\|> \f{1}{2}(1+\|\x\|)\}}\f{\|\x-\x'\|}{t} \Big(1+\f{\|\x-\y\|}{t}\Big)^{-2} \f{1}{w(B(\x,  t))}\Big(\f{t}{t+d(\x,\y)}\Big)^{M}|f(\x)-f(\y)|dw(\y)\\
		&\lesi \Big(\f{\|\x-\x'\|}{t}\Big)^{1-\beta}\int_{\{\y: \|\x-\y\|> \f{1}{2}(1+\|\x\|)\}}\Big(\f{\|\x-\x'\|}{t}\Big)^\beta \Big(\f{t}{\|\x-\y\|}\Big)^\beta\f{1}{w(B(\x,  t))}\Big(\f{t}{t+d(\x,\y)}\Big)^{M}|f(\x)-f(\y)|dw(\y)\\
		&\lesi \Big(\f{\|\x-\x'\|}{t}\Big)^{1-\beta}\int_{\{\y: \|\x-\y\|> \f{1}{2}(1+\|\x\|)\}}\Big(\f{\|\x-\x'\|}{\|\x-\y\|}\Big)^\beta \f{1}{w(B(\x,  t))}\Big(\f{t}{t+d(\x,\y)}\Big)^{M}|f(\x)-f(\y)|dw(\y)\\
		&\lesi \Big(\f{\|\x-\x'\|}{t}\Big)^{1-\beta} \Big(\f{\|\x-\x'\|}{\|\x-\y\|}\Big)^\beta  \int_{\{\y: \|\x-\y\|> \f{1}{2}(1+\|\x\|)\}}\f{1}{w(B(\x,  t))}\Big(\f{t}{t+d(\x,\y)}\Big)^{M}|f(\x)-f(\y)|dw(\y).
	\end{aligned}
	\]
	Arguing similarly to the estimate of the term $E_{2}$ in the proof of Lemma \ref{lem - t small}, we also obtain
	\[
	\int_{\{\y: \|\x-\y\|> \f{1}{2}(1+\|\x\|)\}}\f{1}{w(B(\x,  t))}\Big(\f{t}{t+d(\x,\y)}\Big)^{M}|f(\x)-f(\y)|dw(\y)\lesi P_\gamma(0,\x;1).
	\]
	Therefore,
	\[
	\begin{aligned}
		E_2&\lesi    \Big(\f{\|\x-\x'\|}{t}\Big)^{1-\beta} \Big(\f{\|\x-\x'\|}{\|\x-\y\|}\Big)^\beta P_\gamma(0,\x;1),
	\end{aligned}
	\]
	as long as $\|\x-\x'\|\le t$.
	
	\bigskip	
	
	\textbf{Case 2:} $t<\|\x-\x'\|<\f{1}{2}(1+\|\x\|)$. Using \eqref{eq1- proof of integral t small},
	\[
	\begin{aligned}
		|\varphi(t\sqrt L)f(\x)-\varphi(t\sqrt L)f(\x')|&\le |\varphi(t\sqrt L)f(\x)|+|\varphi(t\sqrt L)f(\x')|\\
		&\lesi \Big[\f{t}{1+\|\x\|}\Big]^\beta P_\gamma(0,\x;1)  + \Big[\f{t}{1+\|\x'\|}\Big]^\beta P_\gamma(0,\x',1).
	\end{aligned}
	\]
	This, along with the fact that $1+\|\x\|\simeq 1+ \|\x'\|$, whenever $\|\x-\x'\|\le \f{1}{2}(1+\|\x\|)$, implies
	\[
	\begin{aligned}
		|\varphi(t\sqrt L)f(\x)-\varphi(t\sqrt L)f(\x')|&\le |\varphi(t\sqrt L)f(\x)|+|\varphi(t\sqrt L)f(\x')|\\
		&\lesi \Big[\f{t}{1+\|\x\|}\Big]^\beta P_\gamma(0,\x;1)\\
		&\lesi  \Big[\f{t}{\|\x-\x'\|}\Big]^\beta \Big[\f{\|\x-\x'\|}{1+\|\x\|}\Big]^\beta P_\gamma(0,\x;1),
	\end{aligned}
	\]
	as long as $t<\|\x-\x'\|\le \f{1}{2}(1+\|\x\|)$, which ensures \eqref{eq-condition ii of tLetL proof for integral}.
	
	This completes our proof.
\end{proof}

As a counter part of Lemma \ref{lem - t small} and Lemma \ref{lem - integral t small} we have the following two lemmas corresponding to large value of $t$.
\begin{lem}
	\label{lem - t large}  	Let $\varphi \in \mathscr S(\mathbb R)$ be an even function satisfying $\varphi(0)=0$. Let $\epsilon\in (0,1]$ and $0<\beta,\gamma<\epsilon$. For $t\ge 1$ and $f\in \mathring{\mathcal G}(\epsilon,\epsilon)$, then we have 
	\begin{equation}\label{eq- inequality t large}
		\|\varphi(t\sqrt L)f\|_{\mathring{\mathcal G}^\epsilon(\beta,\gamma)}\lesi t^{-(\epsilon-\gamma)}\|f\|_{\mathring{\mathcal G}(\epsilon,\epsilon)}.
	\end{equation}
\end{lem}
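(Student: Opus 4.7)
The argument is the mirror image of the proof of Lemma \ref{lem - t small}, with the roles of the two available cancellations exchanged. For small $t$ the decisive cancellation was $\int \varphi(t\sqrt L)(\x,\y)dw(\y)=0$ (from $\varphi(0)=0$ via Lemma \ref{lem-heat kernel estimate}(c)); for $t\ge 1$ the decisive cancellation is $\int f(\y)dw(\y)=0$, which holds since $f\in\mathring{\mathcal G}(\epsilon,\epsilon)$. Normalize $\|f\|_{\mathring{\mathcal G}(\epsilon,\epsilon)}=1$; Fubini together with Lemma \ref{lem-heat kernel estimate}(c) gives $\int\varphi(t\sqrt L)f(\x)dw(\x)=0$.

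For the size bound, write
\[
\varphi(t\sqrt L)f(\x)=\int_{\RN}\bigl[\varphi(t\sqrt L)(\x,\y)-\varphi(t\sqrt L)(\x,0)\bigr]f(\y)dw(\y)
\]
and split the $\y$-integral into $\|\y\|\le t$ and $\|\y\|>t$. On the first region, Lemma \ref{lem-heat kernel estimate}(b) applied in the second variable (valid since $\|\y\|\le t$) extracts a factor $\|\y\|/t\lesi ((1+\|\y\|)/t)^\epsilon$ which, combined with $|f(\y)|\lesi P_\epsilon(0,\y;1)$, produces the pointwise weight $t^{-\epsilon}/w(B(0,1+\|\y\|))$. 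On the second region, $\|\y\|>t\ge 1$ gives $P_\epsilon(0,\y;1)\lesi t^{-(\epsilon-\gamma)}P_\gamma(0,\y;1)$, and each kernel term is estimated directly via Lemma \ref{lem-heat kernel estimate}(a). A dyadic splitting in $\|\y\|$ together with the fundamental integral inequality recalled just before Lemma \ref{lem - t small} yields
\[
|\varphi(t\sqrt L)f(\x)|\lesi t^{-(\epsilon-\gamma)}P_\gamma(0,\x;1).
\]
When $\|\x\|\gtrsim t$, the rapid kernel decay $(t/(t+d(\x,\y)))^M$ for $M$ arbitrarily large, together with the fact that reflection invariance forces $d(\x,\y)\ge\|\x\|/2$ whenever $\|\y\|\le\|\x\|/2$, sharpens this to the refined estimate
\[
|\varphi(t\sqrt L)f(\x)|\lesi t^{-(\epsilon-\gamma)}\Big(\frac{t}{1+\|\x\|}\Big)^\beta P_\gamma(0,\x;1).
\]

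For the Hölder continuity, fix $\x,\x'$ with $\|\x-\x'\|\le\frac{1}{2}(1+\|\x\|)$. If $\|\x-\x'\|\le t$, combine $\int f=0$ with Lemma \ref{lem-heat kernel estimate}(b) in the first variable (valid by symmetry of the self-adjoint kernel) to rewrite
\[
\varphi(t\sqrt L)f(\x)-\varphi(t\sqrt L)f(\x')=\int\bigl\{[\varphi(t\sqrt L)(\x,\y)-\varphi(t\sqrt L)(\x',\y)]-[\varphi(t\sqrt L)(\x,0)-\varphi(t\sqrt L)(\x',0)]\bigr\}f(\y)dw(\y);
\]
extract a factor $\|\x-\x'\|/t$ and handle the $\y$-integration exactly as in the size estimate (using the refined bound when $\|\x\|\gtrsim t$), then convert $\|\x-\x'\|/t$ into $(\|\x-\x'\|/(1+\|\x\|))^\beta$ by interpolation with $\|\x-\x'\|\le t$, as in Case 1 of Lemma \ref{lem - t small}. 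If instead $t<\|\x-\x'\|\le\frac{1}{2}(1+\|\x\|)$, then $\|\x\|>t$ and $1+\|\x\|\sim 1+\|\x'\|$, so the refined size bound and the triangle inequality yield
\[
|\varphi(t\sqrt L)f(\x)-\varphi(t\sqrt L)f(\x')|\lesi t^{-(\epsilon-\gamma)}\Big(\frac{t}{1+\|\x\|}\Big)^\beta P_\gamma(0,\x;1)\le t^{-(\epsilon-\gamma)}\Big(\frac{\|\x-\x'\|}{1+\|\x\|}\Big)^\beta P_\gamma(0,\x;1),
\]
using $t<\|\x-\x'\|$. Together the two cases give \eqref{eq- inequality t large}.

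The main obstacle is the weight bookkeeping in the size estimate, namely controlling the combinations $w(B(\x,t+d(\x,\y)))$, $w(B(0,1+\|\y\|))$, and $w(B(0,1+\|\x\|))$ through a dyadic splitting that simultaneously accounts for whether $\|\y\|$ is small, comparable to $\|\x\|$, or large, and whether $\|\x\|$ is smaller or larger than $t$. The Euclidean decay $(1+\|\x-\y\|/t)^{-2}$ from Lemma \ref{lem-heat kernel estimate} is indispensable for shifting between weights centered at different points, and obtaining the refined size bound required for the large-difference Hölder case crucially exploits the Euclidean-versus-orbit-distance distinction in that decay factor together with the freedom to take $M$ arbitrarily large.
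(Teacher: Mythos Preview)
Your overall strategy—size bound first, then H\"older in the two regimes $\|\x-\x'\|\le t$ and $\|\x-\x'\|>t$—matches the paper's, and your use of $\int f\,dw=0$ to subtract $\varphi(t\sqrt L)(\x,0)$ is indeed a central ingredient. However, the ``mirror image'' description is too optimistic: the paper does \emph{not} use only one cancellation in this lemma, and your argument has a genuine gap in the refined size estimate.

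The problem is the region $\{\y:\|\y\|\ge \|\x\|/2,\ d(\x,\y)\lesi t\}$ when $1+\|\x\|\ge t$. Here the kernel is of size $\sim 1/w(B(\x,t))$ with no decay (since $d(\x,\y)$ is small), the extra Euclidean factor $(1+\|\x-\y\|/t)^{-2}$ does not help when $\y$ is Euclidean-close to $\x$, and $|f(\y)|\sim P_\epsilon(0,\x;1)$. Subtracting $\varphi(t\sqrt L)(\x,0)$ is useless here because $0$ is far from $\y$, so the only bound your decomposition yields on this piece is
\[
\int_{\|\y\|\ge \|\x\|/2}|\varphi(t\sqrt L)(\x,\y)||f(\y)|\,dw(\y)\;\lesi\; P_\epsilon(0,\x;1),
\]
which is \emph{not} $\lesi t^{-(\epsilon-\gamma)}\big(\tfrac{t}{1+\|\x\|}\big)^\beta P_\gamma(0,\x;1)$ in general (take $\beta+\gamma>\epsilon$ and $\|\x\|\gg t$). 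Your justification for the refined bound invokes $d(\x,\y)\ge \|\x\|/2$ ``whenever $\|\y\|\le \|\x\|/2$'', but that hypothesis fails exactly in the near-diagonal region just described. The paper handles this region (its terms $E_{12}$ and $E_{13}$ in Case~1) by invoking the \emph{other} cancellation $\int\varphi(t\sqrt L)(\x,\y)\,dw(\y)=0$: subtracting $f(\x)$ and using the H\"older regularity of $f$ produces the missing factor $(t/(1+\|\x\|))^\beta$. Since the refined size bound is precisely what drives your H\"older Case~2 argument, the gap propagates and the lemma is not established as written. The fix is to use both cancellations, splitting according to whether $\y$ is near the orbit of $\x$ or not, as the paper does.
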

\begin{proof}
	In this proof, $M>\mathfrak{N}$ will be a fixed number.  Without loss of generality, assume that $\|f\|_{\mathring{\mathcal G}(\beta,\gamma)}=1$. We have
	\[
	\begin{aligned}
		\varphi(t\sqrt L)f(\x) &=  \int_{\RN} \varphi(t\sqrt L)(\x,\y)f(\y)dw(\y).
	\end{aligned}
	\]
	By using Lemma \ref{lem-heat kernel estimate}, it follows that 
	\[
	\int_{\RN}\varphi(t\sqrt L)f(\x) dw(\x) = 0.
	\]
	We now consider two cases: $1+\|\x\|\ge t$ and $1+\|\x\|< t$.
	
	\bigskip
	
	\textbf{Case 1: $1+\|\x\|\ge t$}
	
	In this case, we write
	\[
	\begin{aligned}
		|\varphi(t\sqrt L)f(\x)|&\le \Big|\int_{\|\y\|\ge \|\x\|/2} \varphi(t\sqrt L)(\x,\y)f(\y)dw(\y)\Big| + \Big|\int_{\|\y\|< \|\x\|/2} \varphi(t\sqrt L)(\x,\y)f(\y)dw(\y)\Big|\\
		&=:E_1 + E_2.
	\end{aligned}
	\]
	We write
	\[
	\begin{aligned}
		E_1 &= \Big|\int_{\|\y\|\ge \|\x\|/2} \varphi(t\sqrt L)(\x,\y)f(\y)dw(\y)\Big|\\
		&\le \Big|\int_{\|\y\|\ge \|\x\|/2\atop \|\x-\y\|\ge \f{1}{2}(1+\|\x\|)} \varphi(t\sqrt L)(\x,\y)f(\y)dw(\y)\Big|+\Big|\int_{\|\y\|\ge \|\x\|/2\atop \|\x-\y\|< \f{1}{2}(1+\|\x\|)} \varphi(t\sqrt L)(\x,\y)f(\y)dw(\y)\Big|\\
		&\le \Big|\int_{\|\y\|\ge \|\x\|/2\atop \|\x-\y\|\ge \f{1}{2}(1+\|\x\|)} \varphi(t\sqrt L)(\x,\y)f(\y)dw(\y)\Big|+\Big|\int_{\|\y\|\ge \|\x\|/2\atop \|\x-\y\|< \f{1}{2}(1+\|\x\|)} \varphi(t\sqrt L)(\x,\y)[f(\y)-f(\x)]dw(\y)\Big|\\
		&\ \ +\Big|\int_{\|\y\|\ge \|\x\|/2\atop \|\x-\y\|< \f{1}{2}(1+\|\x\|)} \varphi(t\sqrt L)(\x,\y)f(\x)dw(\y)\Big|\\
		&=:E_{11}+E_{12}+E_{13}.
	\end{aligned}
	\]
	Using the fact that 
	\[
	|f(\y)|\lesi P_\epsilon(0,\y;1)\lesi P_\epsilon(0,\x;1) \ \text{whenever $\|\y\|\ge \|\x\|/2$},
	\]
	and Lemma \ref{lem-heat kernel estimate}, we have
	\[
	\begin{aligned}
		E_{11}&\lesi P_\epsilon(0,\x;1)\int_{\|\x-\y\|\ge \f{1}{2}(1+\|\x\|)} \Big(\f{t}{\|\x-\y\|}\Big)^2 \f{1}{w(B(\x,  t))}\Big(\f{t}{t+d(\x,\y)}\Big)^{M}dw(\y)\\
		&\lesi \Big(\f{t}{1+\|\x\|}\Big)^2P_\epsilon(0,\x;1)\\
		&\lesi \Big(\f{t}{1+\|\x\|}\Big)^2\Big(\f{1}{1+\|\x\|}\Big)^{\epsilon-\gamma}P_\gamma(0,\x;1)\\
		&\lesi t^{-(\epsilon -\gamma)}\Big(\f{t}{1+\|\x\|}\Big)^\beta P_\gamma(0,\x;1).
	\end{aligned}
	\]
	By Lemma \ref{lem-heat kernel estimate} and Definition \ref{defn-Galphabeta} (ii),
	\[
	\begin{aligned}
		E_{12}&\lesi P_\epsilon(0,\x;1)\int_{\|\x-\y\|< \f{1}{2}(1+\|\x\|)} \Big(\f{t}{\|\x-\y\|}\Big)^\beta \Big(\f{\|\x-\y\|}{1+\|\x\|}\Big)^\beta \f{1}{w(B(\x,  t))}\Big(\f{t}{t+d(\x,\y)}\Big)^{M}dw(\y)\\
		&\lesi \Big(\f{t}{1+\|\x\|}\Big)^\beta P_\epsilon(0,\x;1)\\
		&\lesi t^{-(\epsilon -\gamma)}\Big(\f{t}{1+\|\x\|}\Big)^\beta P_\gamma(0,\x;1).
	\end{aligned}
	\]
	For the term $E_{13}$, since $\displaystyle \int \varphi(t\sqrt L)(\x,\y)dw(\y)=0$, we have
	\[
	\begin{aligned}
		E_{13}& = \Big|\int_{\|\y\|< \|\x\|/2\atop \text{or} \ \|\x-\y\|\ge  \f{1}{2}(1+\|\x\|)} \varphi(t\sqrt L)(\x,\y)f(\x)dw(\y)\Big|\\
		&\le \int_{\|\y\|< \|\x\|/2 } |\varphi(t\sqrt L)(\x,\y)||f(\x)|dw(\y) +\int_{\|\x-\y\|\ge  \f{1}{2}(1+\|\x\|)} |\varphi(t\sqrt L)(\x,\y)||f(\x)|dw(\y).
	\end{aligned}
	\]
	Similarly to $E_{11}$,
	\[
	\begin{aligned}
		\int_{\|\x-\y\|\ge  \f{1}{2}(1+\|\x\|)} |\varphi(t\sqrt L)(\x,\y)||f(\x)|dw(\y) &\lesi \Big(\f{t}{1+\|\x\|}\Big)^2P_\epsilon(0,\x;1)\\
		&\lesi t^{-(\epsilon -\gamma)}\Big(\f{t}{1+\|\x\|}\Big)^\beta P_\gamma(0,\x;1).
	\end{aligned}
	\]
	Since $d(\x,\y)\simeq \|\x\|$ whenever $\|\y\|\le \|\x\|/2$ and the fact that $w(B(\x,  \|\x\|))\simeq w(B(0,  \|\x\|))$, we have
	\[
	\begin{aligned}
		\int_{\|\y\|< \|\x\|/2 } |\varphi(t\sqrt L)(\x,\y)||f(\x)|dw(\y)&\lesi P_\epsilon(0,\x;1)\int_{\|\y\|\le \|\x\|/2}\f{1}{w(B(\x,  \|\x\|))}\Big(\f{t}{t+\|\x\|}\Big)^{\beta}dw(\y)\\
		&\lesi \Big(\f{t}{t+\|\x\|}\Big)^\beta P_\epsilon(0,\x;1)\int_{\|\y\|\le \|\x\|/2}\f{1}{w(B(0,  \|\x\|))}dw(\y)\\
		&\lesi \Big(\f{t}{1+\|\x\|}\Big)^\beta P_\epsilon(0,\x;1)\\
		&\lesi t^{-(\epsilon -\gamma)}\Big(\f{t}{1+\|\x\|}\Big)^\beta P_\gamma(0,\x;1).
	\end{aligned}
	\]
	Consequently,
	\begin{equation}\label{eq- E1 t large}
		E_1\lesi 	 t^{-(\epsilon -\gamma)}\Big(\f{t}{1+\|\x\|}\Big)^\beta P_\gamma(0,\x;1).
	\end{equation}
	For the term $E_{2}$, if $\|\x\|\le 1$, then $
	|f(\y)|\lesi P_\epsilon(0,\y;1)\simeq P_\epsilon(0,\x;1)$. Therefore, by Lemma \ref{lem-heat kernel estimate},
	\[
	\begin{aligned}
		E_{2}&\lesi P_\epsilon(0,\x;1)\int_{\RN} |\varphi(t\sqrt L)(\x,\y)| dw(\y)\\
		&\lesi  \Big(\f{1}{1+\|\x\|}\Big)^{\epsilon-\gamma}P_\gamma(0,\x;1)\\
		&\lesi t^{-(\epsilon -\gamma)}  P_\gamma(0,\x;1)\\
		&\simeq t^{-(\epsilon -\gamma)}\Big(\f{t}{1+\|\x\|}\Big)^\beta P_\gamma(0,\x;1).
	\end{aligned}
	\]
	
	We now consider the case $\|\x\|\ge 1$. In this case, we have
	\[
	\begin{aligned}
		E_2&\lesi \Big|\int_{t\le \|\y\|< \|\x\|/2} \varphi(t\sqrt L)(\x,\y)f(\y)dw(\y)\Big|+ \Big|\int_{\|\y\|< \min\{\|\x\|/2,  t\}} \varphi(t\sqrt L)(\x,\y)f(\y)dw(\y)\Big|\\
		&\lesi \Big|\int_{t\le \|\y\|< \|\x\|/2} \varphi(t\sqrt L)(\x,\y)f(\y)dw(\y)\Big|+ \Big|\int_{\|\y\|< \min\{\|\x\|/2,  t\}} [\varphi(t\sqrt L)(\x,\y)-\varphi(t\sqrt L)(\x,0)]f(\y)dw(\y)\Big|\\
		& \ \ \ +\Big|\int_{\|\y\|< \min\{\|\x\|/2,  t\}} \varphi(t\sqrt L)(\x,0)f(\y)dw(\y)\Big|\\
		&=:E_{21}+E_{22}+E_{23}.
	\end{aligned}
	\]
	For the term $E_{21}$, note that in this situation $d(\x,\y)\simeq \|\x\|$. Hence, by Lemma \ref{lem-heat kernel estimate} and (i) in Definition \ref{defn-Galphabeta},
	\[
	\begin{aligned}
		E_{21}&\lesi \int_{t\le \|\y\|< \|\x\|/2} \f{1}{w(B(\x,\|\x\|))}\Big(\f{t}{t+\|\x\|}\Big)^{M+\beta} P_\epsilon(0,\y;1)dw(\y).
	\end{aligned}
	\]
	Since $w(B(\x,\|\x\|))\simeq w(B(0,\|\x\|))$ and the following inequality
	\[
	P_\epsilon(0,\y;1)\le t^{-(\epsilon-\gamma)}\f{1}{w(B(0,t))}\Big(\f{1}{1+\|\y\|}\Big)^{ \gamma}, \ \ \ \|\y\|\ge t,
	\]
	we have
	\[
	\begin{aligned}
		E_{21}
		&\lesi t^{-(\epsilon-\gamma)}\int_{t\le \|\y\|< \|\x\|/2} \f{1}{w(B(0,\|\x\|))}\Big(\f{t}{t+\|\x\|}\Big)^{\mathfrak{N}+\beta+\gamma} \f{1}{w(B(0,t))} \Big(\f{1}{1+\|\y\|}\Big)^{ \gamma}dw(\y)\\
		&\lesi t^{-(\epsilon-\gamma)}\Big(\f{t}{t+\|\x\|}\Big)^{\mathfrak{N}+\beta}P_\gamma(0,\x;1) \f{w(B(0,\|\x\|))}{w(B(0,t))}\\
		&\lesi t^{-(\epsilon-\gamma)}\Big(\f{t}{1+\|\x\|}\Big)^{\beta}P_\gamma(0,\x;1),
	\end{aligned}
	\]
	where in the last inequality we used \eqref{eq-ratios on volumes of balls}.

	We now take care of the term $E_{22}$. By Lemma \ref{lem-heat kernel estimate} and the fact that $d(\x,\y)\simeq \|\x\|$ whenever $\|\y\|\le \|\x\|/2$,
	\[
	\begin{aligned}
		E_{22}&\lesi \int_{\|\y\|< \min\{\|\x\|/2,  t\}} \f{\|\y\|}{t} \f{1}{w(B(\x,\|\x\|))}\Big(\f{t}{t+\|\x\|}\Big)^{\gamma+\beta} P_\epsilon(0,\y;1)dw(\y)\\
		&\lesi \int_{\|\y\|< t} \f{\|\y\|^{1-\epsilon}}{t}\f{1}{w(B(0,\|\y\|))} \f{1}{w(B(\x,\|\x\|))}\Big(\f{t}{t+\|\x\|}\Big)^{\gamma+\beta} dw(\y).
	\end{aligned}
	\]
	Since 
	\[
	\int_{\|\y\|< t}  \|\y\|^{1-\epsilon} \f{1}{w(B(0,\|\y\|))}  dw(\y)\lesi t^{1-\epsilon},
	\]	
	we have
	\[
	\begin{aligned}
		E_{22}&\lesi t^{-\epsilon}  \f{1}{w(B(\x,\|\x\|))}\Big(\f{t}{t+\|\x\|}\Big)^{\gamma+\beta}\\
		&\lesi  t^{-\epsilon} \f{1}{w(B(0,\|\x\|))}\Big(\f{t}{t+\|\x\|}\Big)^{\gamma+\beta}\\
		&\lesi t^{-(\epsilon-\gamma)}\Big(\f{t}{1+\|\x\|}\Big)^{\beta}P_\gamma(0,\x;1).
	\end{aligned}
	\]
	For the term $E_{23}$, note that since $\displaystyle \int f(\y)dw(\y)=0$, we have
	\[
	\begin{aligned}
		E_{23}=\Big|\int_{\|\y\|\ge  \min\{\|\x\|/2,  t\}} \varphi(t\sqrt L)(\x,0)f(\y)dw(\y)\Big|.
	\end{aligned}
	\]
	Since $t< 1+\|\x\|\le 2\|\x\|$, we have
	\[
	\begin{aligned}
		E_{23}\le \int_{\|\y\|\ge  t/4} |\varphi(t\sqrt L)(\x,0)||f(\y)|dw(\y).
	\end{aligned}
	\]
	Using Lemma \ref{lem-heat kernel estimate} and (i) in Definition \ref{defn-Galphabeta},
	\[
	\begin{aligned}
		E_{23}
		&\lesi \int_{\|\y\|\ge  t/4} \f{1}{w(B(0,\|\x\|))}\Big(\f{t}{t+\|\x\|}\Big)^{\gamma+\beta} P_\epsilon(0,\y;1)dw(\y).
	\end{aligned}
	\]
	Since 
	\[
	\int_{\|\y\|\ge  t/4}  P_\epsilon(0,\y;1)dw(\y)\lesi t^{-\epsilon},
	\]
	we have
	\[
	\begin{aligned}
		E_{23} &\lesi  t^{-\epsilon}\f{1}{w(B(0,\|\x\|))}\Big(\f{t}{t+\|\x\|}\Big)^{\gamma+\beta}\\
		&\lesi t^{-(\epsilon-\gamma)}\Big(\f{t}{1+\|\x\|}\Big)^{\beta}P_\gamma(0,\x;1).
	\end{aligned}
	\]
	To sum up, we have
	\begin{equation}\label{eq-E}
		|\varphi(t\sqrt L)f(\x)|\le t^{-(\epsilon-\gamma)}\Big(\f{t}{1+\|\x\|}\Big)^{\beta}P_\gamma(0,\x;1), \ \ \x\in \RN, 1+\|\x\|\ge t.
	\end{equation}
	\bigskip
	
	\textbf{Case 2: $1+\|\x\|< t$}

	We have
	\[
	\begin{aligned}
		|\varphi(t\sqrt L)f(\x)| &= \Big|\int_{\RN} \varphi(t\sqrt L)(\x,\y)f(\y)dw(\y)\\
		&\le \Big|\int_{\|\y\|>\|\x\|/2} \varphi(t\sqrt L)(\x,\y)f(\y)dw(\y)\Big|+\Big|\int_{\|\y\|\le \|\x\|/2} \varphi(t\sqrt L)(\x,\y)f(\y)dw(\y)\Big|\\
		&=: F_1+F_2.
	\end{aligned}
	\]
	
	For the term $F_1$, 
	using  Lemma \ref{lem-heat kernel estimate},
	\[
	\begin{aligned}
		F_1&\lesi   \int_{\|\y\|>\|\x\|/2} \f{1}{w(B(\x,t))} P_\epsilon(0,\y;1) dw(\y)\\
		&\lesi \f{1}{w(B(\x,t))}\Big(\f{1}{1+\|\x\|}\Big)^{\epsilon}.
	\end{aligned}
	\]
	Since $t>1+\|\x\|$, using \eqref{eq-ratios on volumes of balls} and \eqref{eq-doubling} we obtain
	\[
	\begin{aligned}
		\f{1}{w(B(\x,t))}&\lesi \f{1}{w(B(\x,1+\|\x\|))}\Big(\f{1+\|\x\|}{t}\Big)^N\\
		&\lesi \f{1}{w(B(\x,1+\|\x\|))}\Big(\f{1+\|\x\|}{t}\Big)^{\epsilon-\gamma}\\
		&\lesi \f{1}{w(B(0,1+\|\x\|))}\Big(\f{1+\|\x\|}{t}\Big)^{\epsilon-\gamma}.
	\end{aligned}
	\]
	Therefore,
	\[
	\begin{aligned}
		F_1&\lesi \f{1}{w(B(0,1+\|\x\|))}\Big(\f{1+\|\x\|}{t}\Big)^{\epsilon-\gamma}\Big(\f{1}{1+\|\x\|}\Big)^{\epsilon}\\
		&\lesi t^{-(\epsilon-\gamma)} P_\gamma(0,\x;1).
	\end{aligned}
	\]
	
	For $F_2$, we write
	\[
	\begin{aligned}
		F_{2}&\le \Big|\int_{\|\y\|\le \|\x\|/2} [\varphi(t\sqrt L)(\x,\y)-\varphi(t\sqrt L)(\x,0)]f(\y)dw(\y)\Big|+\Big|\int_{\|\y\|\le \|\x\|/2} \varphi(t\sqrt L)(\x,0)f(\y)dw(\y)\Big|.
	\end{aligned}
	\]
	On the other hand, since $\displaystyle \int f(\y)dw(\y)=0$, 
	\[
	\Big|\int_{\|\y\|\le \|\x\|/2} \varphi(t\sqrt L)(\x,0)f(\y)dw(\y)\Big|=\Big|\int_{\|\y\|> \|\x\|/2} \varphi(t\sqrt L)(\x,0)f(\y)dw(\y)\Big|.
	\]
	Therefore,
	\[
	\begin{aligned}
		F_{2}&\le \Big|\int_{\|\y\|\le \|\x\|/2} [\varphi(t\sqrt L)(\x,\y)-\varphi(t\sqrt L)(\x,0)]f(\y)dw(\y)\Big|+\Big|\int_{\|\y\|> \|\x\|/2} \varphi(t\sqrt L)(\x,0)f(\y)dw(\y)\Big|\\
		&=: F_{21} + F_{22}.
	\end{aligned}
	\]
	Arguing similarly to the estimate of $F_1$, we have
	\[
	F_{22}\lesi t^{-(\epsilon-\gamma)} P_\gamma(0,\x;1).
	\]
	
	For the term $F_{21}$, since $\|\y\|\le \|\x\|/2 <t$, using Lemma  \ref{lem-heat kernel estimate} and (i) in Definition \ref{defn-Galphabeta},
	\[
	\begin{aligned}
		F_{21}&\lesi  \int_{\|\y\|\le \|\x\|}\f{\|\y\|}{t} \f{1}{w(B(0,t+\|\x\|))} \Big(\f{t}{t+\|\x\|}\Big)^{\gamma}P_\epsilon(0,\y;1) dw(\y)\\
		&\lesi  \int_{\|\y\|\le \|\x\|}\f{\|\y\|^{1-\epsilon}}{t} \f{1}{w(B(0,1+\|\x\|))}\Big(\f{t}{t+\|\x\|}\Big)^{\gamma} \f{1}{w(B(0,\|\y\|))} dw(\y).
	\end{aligned}
	\]
	Since 
	\[
	\int_{\|\y\|\le \|\x\|} \|\y\|^{1-\epsilon}\f{1}{w(B(0,\|\y\|))} dw(\y)\lesi \|\x\|^{1-\epsilon}
	\]	
	and $\|\x\|\le 2t$, we further obtain
	\[
	\begin{aligned}
		F_{21}	
		&\lesi   \f{\|\x\|^{1-\epsilon}}{t} \Big(\f{t}{t+\|\x\|}\Big)^{\gamma}\f{1}{w(B(0,1+\|\x\|))}\\
		&\lesi   \f{1}{t^\epsilon} \Big(\f{t}{t+\|\x\|}\Big)^{\gamma}\f{1}{w(B(0,1+\|\x\|))}\\
		&\lesi  t^{-(\epsilon-\gamma)} P_\gamma(0,\x;1). 
	\end{aligned}
	\]
	Therefore, in this case 
	\begin{equation}\label{eq-E t large}
		|\varphi(t\sqrt L)f(\x)|\le t^{-(\epsilon-\gamma)}P_\gamma(0,\x;1), \ \ x\in \RN, t\ge 1+\|\x\|.
	\end{equation}
	In both case, we have proved that 
	\[
	|\varphi(t\sqrt L)f(\x)|\le t^{-(\epsilon-\gamma)}P_\gamma(0,\x;1), \ \ x\in \RN, t\ge .
	\]
	\bigskip
	
	Hence, 	it suffices to prove that for $\x,\x'\in \RN$ with $\|\x-\x'\|\le \f{1}{2}(1+\|\x\|)$,  
	\begin{equation}
		\label{eq-condition ii for tLetL t large}
		|\varphi(t\sqrt L)f(\x)-\varphi(t\sqrt L)f(\x')|\lesi t^{-(\epsilon-\gamma)}\Big(\f{\|\x-\x'\|}{1+\|\x\|}\Big)^{\beta}P_\gamma(0,\x;1).
	\end{equation}
	
	Indeed, observe that 
	\[
	\begin{aligned}
		\varphi(t\sqrt L)f(\x)-\varphi(t\sqrt L)f(\x') &= \int_{\RN} [\varphi(t\sqrt L)(\x,\y)-\varphi(t\sqrt L)(\x',\y)]f(\y)dw(\y)\\
	\end{aligned}
	\]
	For $t>0$ and $\x,\x',\y\in \RN$, set 
	\[
	q_{t}(\x,\x',\y)= \varphi(t\sqrt L)(\x,\y)-\varphi(t\sqrt L)(\x',\y).
	\]
	Hence, we can write
	\[
	\varphi(t\sqrt L)f(\x)-\varphi(t\sqrt L)f(\x') = \int_{\RN} q_{t}(\x,\x',\y)f(\y)dw(\y).
	\]
	In addition,  from Lemma \ref{lem-heat kernel estimate}, we have, for $\|\x-\x'\|\le t$ and $\|\y-\y'\|\le t$,
	\[
	|q_{t}(\x,\x',\y)|\lesi \f{\|\x-\x'\|}{t}\Big(1+\f{\|\x-\y\|}{  t}\Big)^{-2}\f{1}{w(B(\x,  t))}\exp\Big(-\f{d(\x,\y)^2}{ct^2}\Big),
	\]
	\[
	|q_{t}(\x,\x',\y)-q_{t}(\x,\x',\y')|\lesi \f{\|\x-\x'\|}{t}\f{\|\y-\y'\|}{t}\Big(1+\f{\|\x-\y\|}{  t}\Big)^{-2}\f{1}{w(B(\x,  t))}\exp\Big(-\f{d(\x,\y)^2}{ct^2}\Big),
	\]
	and
	\[
	\int_{\RN}q_{t}(\x,\x',\y)dw(\y)=0 \ \ \ \ \x,\x'\in \RN.
	\]
	
	\textbf{Case 1: $\|\x-\x'\|<t$}
	
	In this situation, we also consider two subcases: $1+\|\x\|\ge  t$ and $1+\|\x\|< t$. For the first subcase $1+\|\x\|\ge  t$, arguing similarly to when  At this stage, arguing similarly to the proof of \eqref{eq-E}, we obtain
	\[
	\begin{aligned}
		\varphi(t\sqrt L)f(\x)-\varphi(t\sqrt L)f(\x')&\lesi t^{-(\epsilon-\gamma)}\f{\|\x-\x'\|}{t}\Big(\f{t}{1+\|\x\|}\Big)^{\beta}P_\gamma(0,\x;1)\\
		&\lesi t^{-(\epsilon-\gamma)}\Big(\f{\|\x-\x'\|}{t}\Big)^\beta\Big(\f{t}{1+\|\x\|}\Big)^{\beta}P_\gamma(0,\x;1)\\
		&\lesi t^{-(\epsilon-\gamma)}\Big(\f{\|\x-\x'\|}{1+\|\x\|}\Big)^{\beta}P_\gamma(0,\x;1),
	\end{aligned}
	\]
	as long as $\|\x-\x'\|\le \f{1}{2}(1+\|\x\|)$.
	
	For the second subcase $t>1+\|\x\|$, arguing similarly to \eqref{eq-E t large},
	\[
	\begin{aligned}
		\varphi(t\sqrt L)f(\x)-\varphi(t\sqrt L)f(\x')&\lesi t^{-(\epsilon-\gamma)}\f{\|\x-\x'\|}{t}\ P_\gamma(0,\x;1)\\
		&\lesi t^{-(\epsilon-\gamma)}\Big(\f{\|\x-\x'\|}{t}\Big)^\beta P_\gamma(0,\x;1)\\
		&\lesi t^{-(\epsilon-\gamma)}\Big(\f{\|\x-\x'\|}{1+\|\x\|}\Big)^{\beta}P_\gamma(0,\x;1),
	\end{aligned}
	\]
	as long as $\|\x-\x'\|\le \f{1}{2}(1+\|\x\|)$.
	
	\bigskip
	
	\textbf{Case 2: $t\le \|\x-\x'\|<\f{1}{2}(1+\|\x\|)$}
	
	We now write 
	\[
	|\varphi(t\sqrt L)f(\x)-\varphi(t\sqrt L)f(\x')|\le |\varphi(t\sqrt L)f(\x)|+|\varphi(t\sqrt L)f(\x')|.
	\]
	From the inequalities \eqref{eq-E} and \eqref{eq- E1 t large}, we have
	\[
	|\varphi(t\sqrt L)f(\x)|\lesi t^{-(\epsilon-\gamma)}\Big(\f{t}{1+\|\x\|}\Big)^{\beta}P_\gamma(0,\x;1)\lesi t^{-(\epsilon-\gamma)}\Big(\f{\|\x-\x'\|}{1+\|\x\|}\Big)^{\beta}P_\gamma(0,\x;1)
	\]
	and
	\[
	|\varphi(t\sqrt L)f(\x')|\lesi t^{-(\epsilon-\gamma)}\Big(\f{t}{1+\|\x\|}\Big)^{\beta}P_\gamma(0,\x;1)\lesi t^{-(\epsilon-\gamma)}\Big(\f{\|\x-\x'\|}{1+\|\x'\|}\Big)^{\beta}P_\gamma(0,\x';1).
	\]
	Therefore,
	\[
	|\varphi(t\sqrt L)f(\x)-\varphi(t\sqrt L)f(\x')|\lesi t^{-(\epsilon-\gamma)}\Big(\f{\|\x-\x'\|}{1+\|\x\|}\Big)^{\beta}P_\gamma(0,\x;1)+t^{-(\epsilon-\gamma)}\Big(\f{\|\x-\x'\|}{1+\|\x'\|}\Big)^{\beta}P_\gamma(0,\x';1).
	\]
	On the other hand,  it is easy to see that $1+\|\x\|\simeq 1+ \|\x'\|$, whenever $\|\x-\x'\|\le \f{1}{2}(1+\|\x\|)$. It follows $P_\gamma(0,\x;1)\simeq P_\gamma(0,\x';1)$. Consequently,
	\[
	|\varphi(t\sqrt L)f(\x)-\varphi(t\sqrt L)f(\x')|\lesi t^{-(\epsilon-\gamma)}\Big(\f{\|\x-\x'\|}{1+\|\x\|}\Big)^{\beta}P_\gamma(0,\x;1).
	\]
	This completes the inequality \eqref{eq-condition ii for tLetL t large} and hence  our proof is completed.
	
\end{proof}

\begin{lem}
	\label{lem - t large for integral}  	Let $\varphi \in \mathscr S(\mathbb R)$ be an even function satisfying $\varphi(0)=0$. Let  $0<\beta,\gamma<1$. For  $f\in \mathring{\mathcal G}(\beta,\gamma)$, then we have 
	\begin{equation}\label{eq- inequality t large for integral}
		\Big\|\int_2^\vc \varphi(t\sqrt L)f\f{dt}{t}\Big\|_{\mathring{\mathcal G}(\beta,\gamma)}\lesi  \|f\|_{\mathring{\mathcal G}(\beta,\gamma)}.
	\end{equation}
\end{lem}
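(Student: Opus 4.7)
The strategy mirrors the proof of Lemma \ref{lem - t large}, adapted to functions of regularity only $\beta$ and decay $\gamma$, in the same way Lemma \ref{lem - integral t small} extends Lemma \ref{lem - t small} for the small-$t$ integral. Normalize $\|f\|_{\mathring{\mathcal G}(\beta,\gamma)}=1$. The vanishing-mean condition for the integrand follows at once from Fubini's theorem together with Lemma \ref{lem-heat kernel estimate}(c), so the substantive tasks are the size estimate and the H\"older estimate on $\int_2^\vc \varphi(t\sqrt L)f\,\f{dt}{t}$.

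First I would establish a pointwise estimate
\[
|\varphi(t\sqrt L)f(\x)|\lesi A(t,\x)\,P_\gamma(0,\x;1),\qquad t\ge 2,
\]
with $\int_2^\vc A(t,\x)\,\f{dt}{t}\lesi 1$ uniformly in $\x$. As in Lemma \ref{lem - t large}, split into the two regimes $1+\|\x\|\ge t$ and $1+\|\x\|<t$. In the first regime, use the cancellation $\int\varphi(t\sqrt L)(\x,\y)\,dw(\y)=0$ from Lemma \ref{lem-heat kernel estimate}(c) to write $\varphi(t\sqrt L)f(\x)=\int \varphi(t\sqrt L)(\x,\y)[f(\y)-f(\x)]\,dw(\y)$, then exploit condition (ii) in Definition \ref{defn-Galphabeta}, the kernel bound from Lemma \ref{lem-heat kernel estimate}(a), and the splitting $\|\x-\y\|\le\tfrac12(1+\|\x\|)$ versus $\|\x-\y\|>\tfrac12(1+\|\x\|)$ as in the treatment of $E_1$ of Lemma \ref{lem - t large}. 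In the second regime, use the cancellation of $f$ to write
\begin{equation*}
\varphi(t\sqrt L)f(\x)=\int_{\RN}[\varphi(t\sqrt L)(\x,\y)-\varphi(t\sqrt L)(\x,0)]f(\y)\,dw(\y),
\end{equation*}
then use the $\y$-smoothness of the kernel (Lemma \ref{lem-heat kernel estimate}(b), which yields a factor $\|\y\|/t$ whenever $\|\y\|\le t$) together with the bound (i) in Definition \ref{defn-Galphabeta} on $f$. The upshot is a min-type bound $A(t,\x)\simeq \min\{(t/(1+\|\x\|))^\beta,\,((1+\|\x\|)/t)^\delta\}$ for some $\delta>0$, which is integrable against $dt/t$ on $[2,\vc)$.

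For the H\"older estimate, I would prove analogously
\[
|\varphi(t\sqrt L)f(\x)-\varphi(t\sqrt L)f(\x')|\lesi B(t,\x,\x')\Big(\f{\|\x-\x'\|}{1+\|\x\|}\Big)^\beta P_\gamma(0,\x;1)
\]
for $\|\x-\x'\|\le \f{1}{2}(1+\|\x\|)$ and $t\ge 2$, with $\int_2^\vc B(t,\x,\x')\,\f{dt}{t}\lesi 1$. Split into the subcases $\|\x-\x'\|\le t$ (where the $\x$-smoothness in Lemma \ref{lem-heat kernel estimate}(b) yields an extra factor $\|\x-\x'\|/t$) and $t<\|\x-\x'\|\le \f{1}{2}(1+\|\x\|)$ (which is handled directly from the size estimate of step~1 applied to $\x$ and $\x'$, noting $P_\gamma(0,\x;1)\simeq P_\gamma(0,\x';1)$ in this regime). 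Integrating these two bounds against $dt/t$ over $[2,\vc)$ and combining with the size estimate produces the claimed inequality \eqref{eq- inequality t large for integral}.

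The chief obstacle is obtaining a genuine $t$-decay in the size estimate, because — in contrast with Lemma \ref{lem - t large} — there is no surplus regularity $\epsilon-\gamma>0$ to supply the factor $t^{-(\epsilon-\gamma)}$. My plan is to extract this decay from the interplay between the vanishing mean of $f$ and the smoothness $(\|\y\|/t)$ of $\varphi(t\sqrt L)(\x,\cdot)$, combined with the extra factor $(1+\|\x-\y\|/t)^{-2}$ in Lemma \ref{lem-heat kernel estimate}(b); the delicate region is the annulus $\|\y\|\simeq t$, where neither the kernel smoothness nor the decay of $f$ alone is decisive and one must use both together.
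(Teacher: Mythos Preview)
Your proposal is correct and follows essentially the same route as the paper. The paper also reduces to proving a pointwise min-type bound on $|\varphi(t\sqrt L)f(\x)|$ and a matching Hölder estimate, each integrable against $dt/t$, and handles the two regimes $t\le 1+\|\x\|$ and $t>1+\|\x\|$ exactly as you describe; for the Hölder part it likewise splits into $\|\x-\x'\|\le t$ (extra factor $\|\x-\x'\|/t$ via the $q_t$-kernel) and $\|\x-\x'\|>t$ (use the size bound).

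Two small remarks. First, in the regime $t>1+\|\x\|$ the paper does not invoke the cancellation of $f$ globally: it splits at $\|\y\|=\|\x\|/2$, and on $\{\|\y\|>\|\x\|/2\}$ obtains the $t$-decay $((1+\|\x\|)/t)^{1-\gamma}$ directly from the \emph{reverse doubling} estimate $w(B(\x,t))^{-1}\lesi((1+\|\x\|)/t)^{N}\,w(B(0,1+\|\x\|))^{-1}$ in \eqref{eq-ratios on volumes of balls}; only on $\{\|\y\|\le\|\x\|/2\}$ does it use $\int f=0$ plus kernel smoothness. Your variant (cancel $f$ from the start and then split at $\|\y\|=t$) also works and yields $\delta=\gamma$ without appealing to reverse doubling, which is a mild simplification. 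Second, your concern about the annulus $\|\y\|\simeq t$ is overstated: once you split at $\|\y\|=t$, the region $\|\y\|>t$ is controlled simply by $\int_{\|\y\|>t}P_\gamma(0,\y;1)\,dw(\y)\lesi t^{-\gamma}$ together with the crude kernel bound $|\varphi(t\sqrt L)(\x,\cdot)|\lesi w(B(\x,t))^{-1}$; no further interplay is needed there.
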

\begin{proof}
	In this proof, $M>\mathfrak{N}$ will be a fixed number.
	
	Without loss of generality, assume that $\|f\|_{\mathring{\mathcal G}(\beta,\gamma)}=1$. We have
	\[
	\begin{aligned}
		\varphi(t\sqrt L)f(\x) &=  \int_{\RN} \varphi(t\sqrt L)(\x,\y)f(\y)dw(\y).
	\end{aligned}
	\]
	It suffices to show that following inequalities 
	\begin{equation}\label{eq- eq1 proof of integral}
		|\varphi(t\sqrt L)f(\x)|\lesi \min\Big\{\Big(\f{t}{1+\|\x\|}\Big)^\beta,\Big(\f{1+\|\x\|}{t}\Big)^{1-\gamma} 
		\Big\} P_\gamma(0,\x;1), \ \ t>2, \x\in \RN,
	\end{equation}
	and
	\begin{equation}
		\label{eq-condition ii for tLetL t large for integral}
		|\varphi(t\sqrt L)f(\x)-\varphi(t\sqrt L)f(\x')|\lesi \min\Big\{\Big(\f{\|\x-\x'\|}{t}\Big)^{1-\beta}, \Big(\f{t}{\|\x-\x'\|}\Big)^\beta\Big\}\Big(\f{\|\x-\x'\|}{1+\|\x\|}\Big)^{\beta}P_\gamma(0,\x;1)
	\end{equation}
	for $t>2$ and $\x,\x'\in \RN$ with $\|\x-\x'\|\le \f{1}{2}(1+\|\x\|)$.

	We first take care of \eqref{eq- eq1 proof of integral}. If  $1+\|\x\|\ge t$ arguing similarly to the proof of \eqref{eq-E}, we obtain 
	\begin{equation}\label{eq-E integral}
		|\varphi(t\sqrt L)f(\x)|\lesi  \Big(\f{t}{1+\|\x\|}\Big)^{\beta}P_\gamma(0,\x;1), \ \ \x\in \RN, t\le 1+\|\x\|.
	\end{equation}
	\bigskip
	
	If $1+\|\x\|< t$,  then we have
	\[
	\begin{aligned}
		|\varphi(t\sqrt L)f(\x)| &= \Big|\int_{\RN} \varphi(t\sqrt L)(\x,\y)f(\y)dw(\y)\Big|\\
		&\le \Big|\int_{\|\y\|>\|\x\|/2} \varphi(t\sqrt L)(\x,\y)f(\y)dw(\y)\Big|+\Big|\int_{\|\y\|\le \|\x\|/2} \varphi(t\sqrt L)(\x,\y)f(\y)dw(\y)\Big|\\
		&=: E_1+E_2.
	\end{aligned}
	\]

	For the term $E_1$, using  Lemma \ref{lem-heat kernel estimate},
	\[
	\begin{aligned}
		E_1&\lesi   \int_{\|\y\|>\|\x\|/2} \f{1}{w(B(\x,t))} P_\gamma(0,\y;1) dw(\y)\\
		&\lesi \f{1}{w(B(\x,t))}\Big(\f{1}{1+\|\x\|}\Big)^{\gamma}.
	\end{aligned}
	\]
	Since $t>1+\|\x\|$, using \eqref{eq-ratios on volumes of balls} and \eqref{eq-doubling} we obtain
	\[
	\begin{aligned}
		\f{1}{w(B(\x,t))}&\lesi \f{1}{w(B(\x,1+\|\x\|))}\Big(\f{1+\|\x\|}{t}\Big)^N\\
		&\lesi \f{1}{w(B(\x,1+\|\x\|))}\Big(\f{1+\|\x\|}{t}\Big)^{1-\gamma}\\
		&\lesi \f{1}{w(B(0,1+\|\x\|))}\Big(\f{1+\|\x\|}{t}\Big)^{1-\gamma}.
	\end{aligned}
	\]
	Therefore,
	\[
	\begin{aligned}
		E_1&\lesi \Big(\f{1+\|\x\|}{t}\Big)^{1-\gamma} P_\gamma(0,\x;1).
	\end{aligned}
	\]
	
	For $E_2$, we write
	\[
	\begin{aligned}
		E_{2}&\le \Big|\int_{\|\y\|\le \|\x\|/2} [\varphi(t\sqrt L)(\x,\y)-\varphi(t\sqrt L)(\x,0)]f(\y)dw(\y)\Big|+\Big|\int_{\|\y\|\le \|\x\|/2} \varphi(t\sqrt L)(\x,0)f(\y)dw(\y)\Big|.
	\end{aligned}
	\]
	On the other hand, since $\displaystyle \int f(\y)dw(\y)=0$, 
	\[
	\Big|\int_{\|\y\|\le \|\x\|/2} \varphi(t\sqrt L)(\x,0)f(\y)dw(\y)\Big|=\Big|\int_{\|\y\|> \|\x\|/2} \varphi(t\sqrt L)(\x,0)f(\y)dw(\y)\Big|.
	\]
	Therefore,
	\[
	\begin{aligned}
		E_2&\le \Big|\int_{\|\y\|\le \|\x\|/2} [\varphi(t\sqrt L)(\x,\y)-\varphi(t\sqrt L)(\x,0)]f(\y)dw(\y)\Big|+\Big|\int_{\|\y\|> \|\x\|/2} \varphi(t\sqrt L)(\x,0)f(\y)dw(\y)\Big|\\
		&=: E_{21} + E_{22}.
	\end{aligned}
	\]
	Arguing similarly to the estimate of $E_1$, we have
	\[
	E_{22}\lesi \Big(\f{1+\|\x\|}{t}\Big)^{1-\gamma} P_\gamma(0,\x;1).
	\]
	
	For the term $E_{21}$, since $\|\y\|\le \|\x\|/2 <t$, using Lemma  \ref{lem-heat kernel estimate} and (i) in Definition \ref{defn-Galphabeta},
	\[
	\begin{aligned}
		E_{21}&\lesi  \int_{\|\y\|\le \|\x\|}\f{\|\y\|}{t} \f{1}{w(B(0,t+\|\x\|))} \Big(\f{t}{t+\|\x\|}\Big)^{\gamma}P_\gamma(0,\y;1) dw(\y)\\
		&\lesi  \int_{\|\y\|\le \|\x\|}\f{\|\y\|^{1-\gamma}}{t} \f{1}{w(B(0,1+\|\x\|))}\Big(\f{t}{t+\|\x\|}\Big)^{\gamma} \f{1}{w(B(0,\|\y\|))} dw(\y).
	\end{aligned}
	\]
	Since 
	\[
	\int_{\|\y\|\le \|\x\|} \|\y\|^{1-\gamma}\f{1}{w(B(0,\|\y\|))} dw(\y)\lesi \|\x\|^{1-\gamma}
	\]	
	and $\|\x\|\le 2t$, we further obtain
	\[
	\begin{aligned}
		F_{21}	
		&\lesi   \f{\|\x\|^{1-\gamma}}{t} \Big(\f{t}{t+\|\x\|}\Big)^{\gamma}\f{1}{w(B(0,1+\|\x\|))}\\
		&\lesi   \f{(1+\|\x\|)^{1-\gamma}}{t} \Big(\f{t}{1+\|\x\|}\Big)^{\gamma}\f{1}{w(B(0,1+\|\x\|))}\\
		&\lesi     \Big(\f{1+\|\x\|}{t}\Big)^{1-\gamma} P_\gamma(0,\x;1). 
	\end{aligned}
	\]
	Therefore, in this case 
	\begin{equation}\label{eq-E t large integral}
		|\varphi(t\sqrt L)f(\x)|\le \Big(\f{1+\|\x\|}{t}\Big)^{1-\gamma}P_\gamma(0,\x;1), \ \ \x\in \RN, t\ge 1+\|\x\|.
	\end{equation}
	This ensures \eqref{eq- eq1 proof of integral}.
	
	\bigskip

	In order to prove \eqref{eq-condition ii for tLetL t large for integral}, we  observe that 
	\[
	\begin{aligned}
		\varphi(t\sqrt L)f(\x)-\varphi(t\sqrt L)f(\x') &= \int_{\RN} [\varphi(t\sqrt L)(\x,\y)-\varphi(t\sqrt L)(\x',\y)]f(\y)dw(\y)\\
	\end{aligned}
	\]
	For $t>0$ and $\x,\x',\y\in \RN$, set 
	\[
	q_{t}(\x,\x',\y)= \varphi(t\sqrt L)(\x,\y)-\varphi(t\sqrt L)(\x',\y).
	\]
	Hence, we can write
	\[
	\varphi(t\sqrt L)f(\x)-\varphi(t\sqrt L)f(\x') = \int_{\RN} q_{t}(\x,\x',\y)f(\y)dw(\y).
	\]
	In addition,  from Lemma \ref{lem-heat kernel estimate}, we have, for $|\x-\x'|\le t$ and $|\y-\y'|\le t$,
	\[
	|q_{t}(\x,\x',\y)|\lesi \f{\|\x-\x'\|}{t}\Big(1+\f{\|\x-\y\|}{  t}\Big)^{-2}\f{1}{w(B(\x,  t))}\exp\Big(-\f{d(\x,\y)^2}{ct^2}\Big),
	\]
	\[
	|q_{t}(\x,\x',\y)-q_{t}(\x,\x',\y')|\lesi \f{\|\x-\x'\|}{t}\f{\|\y-\y'\|}{t}\Big(1+\f{\|\x-\y\|}{  t}\Big)^{-2}\f{1}{w(B(\x,  t))}\exp\Big(-\f{d(\x,\y)^2}{ct^2}\Big),
	\]
	and
	\[
	\int_{\RN}q_{t}(\x,\x',\y)dw(\y)=0 \ \ \ \ \x,\x'\in \RN.
	\]
	
	\textbf{Case 1: $\|\x-\x'\|<t$}
	
	In this situation, arguing similarly to the proof of \eqref{eq- eq1 proof of integral}, if  $1+\|\x\|\ge  t$, we have
	\[
	\begin{aligned}
		|\varphi(t\sqrt L)f(\x)-\varphi(t\sqrt L)f(\x')|&\lesi  \f{\|\x-\x'\|}{t}\Big(\f{t}{1+\|\x\|}\Big)^{\beta}P_\gamma(0,\x;1)\\
		&\lesi  \Big(\f{\|\x-\x'\|}{t}\Big)^{1-\beta}\Big(\f{\|\x-\x'\|}{t}\Big)^\beta\Big(\f{t}{1+\|\x\|}\Big)^{\beta}P_\gamma(0,\x;1)\\
		&\lesi \Big(\f{\|\x-\x'\|}{t}\Big)^{1-\beta}\Big(\f{\|\x-\x'\|}{1+\|\x\|}\Big)^{\beta}P_\gamma(0,\x;1),
	\end{aligned}
	\]
	and if  $t>1+\|\x\|$, we have
	\[
	\begin{aligned}
		|\varphi(t\sqrt L)f(\x)-\varphi(t\sqrt L)f(\x')|&\lesi \f{\|\x-\x'\|}{t}\Big(\f{1+\|\x\|}{t}\Big)^{1-\gamma}P_\gamma(0,\x;1)\\
		&\lesi  \f{\|\x-\x'\|}{t}\ P_\gamma(0,\x;1)\\
		&\lesi \Big(\f{\|\x-\x'\|}{t}\Big)^{1-\beta}\Big(\f{\|\x-\x'\|}{t}\Big)^\beta\Big(\f{\|\x-\x'\|}{t}\Big)^\beta P_\gamma(0,\x;1)\\
		&\lesi \Big(\f{\|\x-\x'\|}{t}\Big)^{1-\beta}\Big(\f{\|\x-\x'\|}{1+\|\x\|}\Big)^{\beta}P_\gamma(0,\x;1).
	\end{aligned}
	\]

	\bigskip
	
	\textbf{Case 2: $t\le \|\x-\x'\|<\f{1}{2}(1+\|\x\|)$}
	
	We now write 
	\[
	|\varphi(t\sqrt L)f(\x)-\varphi(t\sqrt L)f(\x')|\le |\varphi(t\sqrt L)f(\x)|+|\varphi(t\sqrt L)f(\x')|.
	\]
	From \eqref{eq-E integral}, arguing similarly to \eqref{eq-condition ii for tLetL t large}, we come up with \eqref{eq-condition ii for tLetL t large for integral}.
	
	This completes our proof.
	
\end{proof}
As a byproduct, we also obtain the following estimate.
\begin{lem}
	\label{lem-estimate for varphi t f x}
	Let $\varphi \in \mathscr S(\mathbb R)$ be an even function satisfying $\varphi(0)=0$. Let $0<\beta,\gamma<1$. Then for every $f\in \mathring{\mathcal G}(\beta,\gamma)$ and $\x\in \RN$,
	\begin{equation}\label{eq- pointwise estimate t small}
		|\varphi(t\sqrt L)f(\x)|\lesi t^\beta P_\gamma(0,\x;1)\|f\|_{\mathring{\mathcal G}(\beta,\gamma)},  \ \ \ t\in(0,1),
	\end{equation}
	and
	\begin{equation}\label{eq- pointwise estimate t large}
		|\varphi(t\sqrt L)f(\x)|\lesi t^{-\gamma} P_{\gamma\wedge \beta}(0,\x;t)\|f\|_{\mathring{\mathcal G}(\beta,\gamma)},  \ \  t\ge 1.
	\end{equation}
\end{lem}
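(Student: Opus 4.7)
The first bound \eqref{eq- pointwise estimate t small} is a direct specialization of an intermediate step in the proof of Lemma \ref{lem - integral t small}: the argument leading to \eqref{eq1- proof of integral t small} in fact establishes the stronger estimate $|\varphi(t\sqrt L)f(\x)|\lesi (t/(1+\|\x\|))^\beta P_\gamma(0,\x;1)\|f\|_{\mathring{\mathcal G}(\beta,\gamma)}$ for $t\in(0,1)$, and discarding the factor $(1+\|\x\|)^{-\beta}\le 1$ immediately yields \eqref{eq- pointwise estimate t small}.

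For the large-$t$ estimate \eqref{eq- pointwise estimate t large}, normalize $\|f\|_{\mathring{\mathcal G}(\beta,\gamma)}=1$ and split into the two regimes $1+\|\x\|\ge t$ and $1+\|\x\|<t$. In the first regime, I would invoke the pointwise bound \eqref{eq-E integral} from the proof of Lemma \ref{lem - t large for integral}, namely $|\varphi(t\sqrt L)f(\x)|\lesi (t/(1+\|\x\|))^\beta P_\gamma(0,\x;1)$. Since $t\le 1+\|\x\|$ and $t\ge 1$, we have $t+\|\x\|\simeq 1+\|\x\|$, so one computes $t^{-\gamma}P_{\gamma\wedge\beta}(0,\x;t)\simeq t^{\gamma\wedge\beta-\gamma}/[(1+\|\x\|)^{\gamma\wedge\beta}w(B(0,1+\|\x\|))]$, and the desired inequality reduces to $(t/(1+\|\x\|))^{\beta+\gamma-\gamma\wedge\beta}\le 1$, which is immediate because the exponent equals $\max\{\beta,\gamma\}>0$ and $t\le 1+\|\x\|$.

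In the second regime $t>1+\|\x\|$, the bound \eqref{eq-E t large integral} from the proof of Lemma \ref{lem - t large for integral} only provides $|\varphi(t\sqrt L)f(\x)|\lesi ((1+\|\x\|)/t)^{1-\gamma}P_\gamma(0,\x;1)$, whose volume factor $w(B(0,1+\|\x\|))^{-1}$ is too large to be exchanged for $w(B(0,t))^{-1}$ by cheap use of \eqref{eq-ratios on volumes of balls}. So I would re-run that argument with the sharper target in mind. Split $\int \varphi(t\sqrt L)(\x,\y)f(\y)dw(\y)$ into the pieces $\{\|\y\|\le t/2\}$ and $\{\|\y\|>t/2\}$. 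On the near piece, exploit $\int f = 0$ to subtract $\varphi(t\sqrt L)(\x,0)$, then apply the H\"older regularity of Lemma \ref{lem-heat kernel estimate}(b) (which is valid because $\|\y-0\|\le t$) to gain a factor $\|\y\|/t$; integrating $\|\y\|P_\gamma(0,\y;1)$ over $\|\y\|\le t/2$ produces a factor of order $t^{1-\gamma}$, and combined with $w(B(\x,t))^{-1}\simeq w(B(0,t))^{-1}$ (using $\|\x\|<t$), this piece is controlled by $t^{-\gamma}/w(B(0,t))$. On the far piece, use the Schwartz-type decay $(t/(t+d(\x,\y)))^M$ with $M$ large together with the annular decomposition $\|\y\|\simeq 2^k\ge t$ and the reverse-doubling bound $w(B(0,2^k))\gtrsim (2^k/t)^N w(B(0,t))$; the resulting geometric series again totals $t^{-\gamma}/w(B(0,t))$. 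The compensating term $\varphi(t\sqrt L)(\x,0)\int_{\|\y\|\le t/2}f(\y)dw(\y)$ is rewritten via $\int f=0$ as $-\varphi(t\sqrt L)(\x,0)\int_{\|\y\|>t/2}f(\y)dw(\y)$ and estimated directly by the decay of $f$. Since $t+\|\x\|\simeq t$ in this regime, $P_{\gamma\wedge\beta}(0,\x;t)\simeq 1/w(B(0,t))$, so the combined estimate is exactly \eqref{eq- pointwise estimate t large}.

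The main technical obstacle is the second regime: the bound produced by the proof of Lemma \ref{lem - t large for integral} carries the wrong volume factor, and replacing $w(B(0,1+\|\x\|))^{-1}$ by $w(B(0,t))^{-1}$ cannot be done by doubling alone. One has to genuinely use the cancellation $\int f=0$ together with the extra H\"older regularity of $\varphi(t\sqrt L)(\x,\y)$ in the second variable (Lemma \ref{lem-heat kernel estimate}(b)) to extract a factor $t^{-\gamma}$ at the correct volumetric scale; this is the only step requiring new work beyond re-reading the proofs of Lemmas \ref{lem - integral t small} and \ref{lem - t large for integral}.
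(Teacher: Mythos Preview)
Your proposal is correct and follows essentially the same approach as the paper: the small-$t$ bound is read off from \eqref{eq1- proof of integral t small}, the regime $t\le 1+\|\x\|$ is handled via \eqref{eq-E integral}, and for $t>1+\|\x\|$ both you and the paper split near/far in $\|\y\|$, use $\int f=0$ together with the H\"older regularity of Lemma~\ref{lem-heat kernel estimate}(b) on the near piece, and bound the far piece directly. The only cosmetic differences are your cutoff at $t/2$ rather than $t$ and your annular decomposition on the far piece, where the paper simply uses $|\varphi(t\sqrt L)(\x,\y)|\lesi 1/w(B(\x,t))$ and $\int_{\|\y\|\ge t}P_\gamma(0,\y;1)\,dw(\y)\lesi t^{-\gamma}$.
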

\begin{proof}
	The estimate \eqref{eq- pointwise estimate t small} follows directly from \eqref{eq1- proof of integral t small}.
	
	For the estimate \eqref{eq- pointwise estimate t large}, we consider two cases. If $t\le 1 + \|\x\|$, we proved in \eqref{eq-E integral} that \[
	\begin{aligned}
		|\varphi(t\sqrt L)f(\x)|&\lesi  \Big(\f{t}{1+\|\x\|}\Big)^{\beta}P_\gamma(0,\x;1)\\
		&\lesi t^{-\gamma} \Big(\f{t}{1+\|\x\|}\Big)^{\beta}\f{1}{w(B(0,1+\|\x\|))}\\
		&\simeq t^{-\gamma} P_\beta(0,\x;t).
	\end{aligned}
	\]
	If $t>1+\|\x\|$, then we write
	\[
	\begin{aligned}
		|\varphi(t\sqrt L)f(\x)| &= \Big|\int_{\RN} \varphi(t\sqrt L)(\x,\y)f(\y)dw(\y)\\
		&\le \Big|\int_{\|\y\|<t} \varphi(t\sqrt L)(\x,\y)f(\y)dw(\y)\Big|+\Big|\int_{\|\y\|\ge t} \varphi(t\sqrt L)(\x,\y)f(\y)dw(\y)\Big|\\
		&\le \Big|\int_{\|\y\|<t} [\varphi(t\sqrt L)(\x,\y)-\varphi(t\sqrt L)(\x,0)]f(\y)dw(\y)\Big|+\Big|\int_{\|\y\|<t} \varphi(t\sqrt L)(\x,0)f(\y)dw(\y)\Big|\\
		&+\Big|\int_{\|\y\|\ge t} \varphi(t\sqrt L)(\x,\y)f(\y)dw(\y)\Big|\\
		&=: F_1+F_2+F_3.
	\end{aligned}
	\]
	
	Using Lemma \ref{lem-heat kernel estimate},
	\[
	\begin{aligned}
		F_1&\lesi \int_{\|\y\|<t}  \f{\|\y\|}{t}  \f{1}{w(B(0,t+\|\x\|))}\Big(\f{t }{t+\|\x\|}\Big)^\gamma P_\gamma(0,\y;1)dw(\y)\\
		&\lesi \int_{\|\y\|<t}  \f{\|\y\|^{1-\gamma}}{t}  \f{1}{w(B(0,t+\|\x\|))}\Big(\f{t }{t+\|\x\|}\Big)^\gamma \f{1}{w(B(0,1+\|\y\|))}dw(\y)\\
		&\lesi t^{-1}P_\gamma(0,\x;t) \int_{\|y\|\le t}\f{\|\y\|^{1-\gamma}}{w(B(0,1+\|\y\|)}dw(\y)\\
		&\lesi t^{-1}t^{1-\gamma}P_\gamma(0,\x;t)\\
		&\lesi t^{-\gamma'}P_\gamma(0,\x;t).
	\end{aligned}
	\]
	For the term $F_2$, since $\displaystyle \int f(\y)dw(\y)=0$, we have
	\[
	\begin{aligned}
		F_2&=\Big|\int_{\|\y\|\ge t} \varphi(t\sqrt L)(\x,0)f(\y)dw(\y)\Big|\\
		&\lesi \int_{\|\y\|\ge t}  \f{1}{w(B(0,t+\|\x\|))}\Big(\f{t }{t+\|\x\|}\Big)^\gamma P_\gamma(0,\y;1)dw(\y)\\
		&\lesi P_\gamma(0,\x;t) \int_{\|\y\|\ge t}   P_\gamma(0,\y;1)dw(\y)\\
		&\lesi t^{-\gamma}P_\gamma(0,\x;t).
	\end{aligned}
	\] 	
	For the term $F_3$, we have
	\[
	\begin{aligned}
		F_3&\lesi \int_{\|\y\|\ge t} \f{1}{w(B(\x,t+d(\x,\y)))}\Big(\f{t}{t+d(\x,\y)}\Big)^{\gamma}P_\gamma(0,\y;1)dw(\y).
	\end{aligned}
	\]
	Since  $t>1+\|\x\|$, $P_\gamma(0,\x;t)\simeq \f{1}{w(B(\x,t))}$. Hence,
	\[
	\begin{aligned}
		F_3&\lesi \int_{\|\y\|\ge t}\f{1}{w(B(\x,t)} P_\gamma(0,\y;1)dw(\y)\\
		&\lesi  P_\gamma(0,\x;t) \int_{\|\y\|\ge t} P_\gamma(0,\y;1)dw(\y)\\
		&\lesi t^{-\gamma}P_\gamma(0,\x;t).	
	\end{aligned}
	\]
	This completes our proof.
\end{proof}

We are now ready to establish Calder\'on reproducing formulae.

\begin{thm}\label{Calderon reproducing}
	Let $\epsilon \in (0,1]$ and $\beta,\gamma\in \mathscr{S}(0,\epsilon)$ and $\varphi\in \mathscr(\mathbb R)$ be an even function satisfying $\varphi(0)=0$ and $\displaystyle \int_0^\vc \varphi(t)\f{dt}{t}\ne 0$. Then we have
	\[
	f= c_\varphi\int_0^\vc \varphi(t\sqrt{L})f\f{dt}{t} 
	\]
	in $(\mathring{\mathcal G}^\epsilon(\beta,\gamma))'$, where $\displaystyle c_\varphi =\Big[\int_0^\vc \varphi(t)\f{dt}{t}\Big]^{-1}$.
\end{thm}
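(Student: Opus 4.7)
The plan is to first prove the reproducing identity on test functions — where both sides make sense in $L^2(dw)$ — and then to transfer it to distributions by duality. The spectral identity on $L^2$ is immediate from the functional calculus of $L$; the work is to upgrade it to norm convergence in the test-function space, so that pairing with an arbitrary distribution becomes legal.

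Fix $g\in \mathring{\mathcal G}(\epsilon,\epsilon)$. Combining Lemma \ref{lem - t small} (for $t\in(0,1)$) and Lemma \ref{lem - t large} (for $t\ge 1$) gives
\[
\int_0^\infty \|\varphi(t\sqrt L)g\|_{\mathring{\mathcal G}^\epsilon(\beta,\gamma)}\,\f{dt}{t}\ls \Big(\int_0^1 t^{\epsilon-\beta}\f{dt}{t}+\int_1^\infty t^{-(\epsilon-\gamma)}\f{dt}{t}\Big)\|g\|_{\mathring{\mathcal G}(\epsilon,\epsilon)}<\infty,
\]
because $0<\beta,\gamma<\epsilon$, so $\int_0^\infty \varphi(t\sqrt L)g\,\f{dt}{t}$ converges absolutely as a Bochner integral in $\mathring{\mathcal G}^\epsilon(\beta,\gamma)$. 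On the other hand, since $\varphi$ is even Schwartz with $\varphi(0)=0$, the change of variables $s=t\lambda$ gives $\int_0^\infty \varphi(t\lambda)\f{dt}{t}=c_\varphi^{-1}$ for every $\lambda>0$, and $|\varphi(s)|/s\in L^1((0,\infty))$; dominated convergence in the spectral resolution of $L$ then yields $\int_0^\infty \varphi(t\sqrt L)g\,\f{dt}{t}=c_\varphi^{-1}g$ in $L^2(dw)$. Because $\mathring{\mathcal G}^\epsilon(\beta,\gamma)\hookrightarrow L^2(dw)$ (via $|g(\x)|\le \|g\|_{\mathring{\mathcal G}(\beta,\gamma)}P_\gamma(0,\x;1)$ together with the $L^2(dw)$-integrability of $P_\gamma(0,\cdot;1)$), the Bochner limit and the $L^2$-limit coincide, and therefore
\[
g=c_\varphi\int_0^\infty \varphi(t\sqrt L)g\,\f{dt}{t}\quad\text{in }\mathring{\mathcal G}^\epsilon(\beta,\gamma).
\]
The extension to arbitrary $g\in\mathring{\mathcal G}^\epsilon(\beta,\gamma)$ follows by density of $\mathring{\mathcal G}(\epsilon,\epsilon)$, combined with the boundedness of the operator $g\mapsto c_\varphi\int_0^\infty \varphi(t\sqrt L)g\,\f{dt}{t}$ on $\mathring{\mathcal G}(\beta,\gamma)$ supplied by Lemmas \ref{lem - integral t small} and \ref{lem - t large for integral}.

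Finally, for any $f\in (\mathring{\mathcal G}^\epsilon(\beta,\gamma))'$ and any $g\in \mathring{\mathcal G}^\epsilon(\beta,\gamma)$, pairing $f$ with the converged identity of the previous paragraph is allowed by the absolute convergence in $\mathring{\mathcal G}^\epsilon(\beta,\gamma)$, and recalling that $\varphi(t\sqrt L)f$ is defined on distributions via $\langle \varphi(t\sqrt L)f,g\rangle:=\langle f,\varphi(t\sqrt L)g\rangle$, we obtain
\[
\langle f,g\rangle=c_\varphi\int_0^\infty \langle f,\varphi(t\sqrt L)g\rangle\,\f{dt}{t}=c_\varphi\int_0^\infty \langle \varphi(t\sqrt L)f,g\rangle\,\f{dt}{t},
\]
which is the claim. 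The single real obstacle is the absolute convergence displayed above; the two-sided gain $(\epsilon-\beta)\wedge(\epsilon-\gamma)>0$ furnished by Lemmas \ref{lem - t small} and \ref{lem - t large} is exactly what makes the argument close, while the $L^2$-spectral identity and the Bochner/duality manipulations are routine.
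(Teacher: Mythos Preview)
Your proof is correct and follows essentially the same route as the paper: establish the identity on $\mathring{\mathcal G}(\epsilon,\epsilon)$ by combining the $L^2$-spectral formula with the $\mathring{\mathcal G}^\epsilon(\beta,\gamma)$-convergence furnished by Lemmas~\ref{lem - t small} and~\ref{lem - t large}, extend to $\mathring{\mathcal G}^\epsilon(\beta,\gamma)$ by density via Lemmas~\ref{lem - integral t small} and~\ref{lem - t large for integral}, and then dualize. The only cosmetic difference is that you phrase the convergence as a Bochner integral while the paper writes out an explicit approximating sequence $f_n\to f$; one small point worth noting is that the paper defines $\varphi(t\sqrt L)f$ on distributions pointwise via the kernel rather than by transposition, but the two agree since the kernel $\varphi(t\sqrt L)(\x,\y)$ is symmetric.
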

\begin{proof}
	By duality, it suffices to prove that 
	\[
	f= c_\varphi\int_0^\vc \varphi(t\sqrt{L})f\f{dt}{t} 
	\]
	in $\mathring{\mathcal G}^\epsilon(\beta,\gamma)$.

	For $f\in \mathring{\mathcal G}^\epsilon(\beta,\gamma)$, there exists a sequence $\{f_n\}_n\subset \mathring{\mathcal G}(\epsilon,\epsilon)$ such that $\lim_{n\to \vc}\|f_n-f\|_{\mathring{\mathcal G}(\beta,\gamma)}=0$. We now write
	\begin{equation}\label{eq1-proof of Calderon}
		c_\varphi\int_0^\vc \varphi(t\sqrt{L})f\f{dt}{t} =c_\varphi\int_0^\vc \varphi(t\sqrt{L})(f-f_n)\f{dt}{t} +c_\varphi\int_0^\vc \varphi(t\sqrt{L})f_n\f{dt}{t}.
	\end{equation}
	From Lemma \ref{lem - integral t small} and Lemma \ref{lem - t large for integral},
	\[
	\Big\|\int_0^\vc \varphi(t\sqrt{L})(f-f_n)\f{dt}{t}\Big\|_{\mathring{\mathcal G}(\beta,\gamma)}\lesi \|f-f_n\|_{\mathring{\mathcal G}(\beta,\gamma)},
	\]
	which implies
	\begin{equation}\label{eq2-proof of Calderon}
		\lim_{n\to \vc}c_\varphi\int_0^\vc \varphi(t\sqrt{L})(f-f_n)\f{dt}{t} = 0 \ \ \ \text{in \ $\mathring{\mathcal G}(\beta,\gamma)$}.
	\end{equation}
	From Lemma \ref{lem - t small} and Lemma \ref{lem - t large},  there exists $h_n\in  \mathring{\mathcal G}^\epsilon(\beta,\gamma)$ such that
	\[
	h_n= c_\varphi\int_0^\vc \varphi(t\sqrt{L})f_n\f{dt}{t}. 
	\]
	On the other hand, since  $f_n, h_n\in \mathring{\mathcal G}^\epsilon(\beta,\gamma)$, $f_n, h_n\in L^2(dw)$. By the spectral theory,
	\[
	f_n = c_\varphi\int_0^\vc \varphi(t\sqrt{L})f_n\f{dt}{t} \ \ \text{in $L^2(dw)$}.
	\]
	Consequently, $h_n=f_n$ for almost everywhere. This, along with the fact that $f_n,h_n\in \mathring{\mathcal G}(\beta,\gamma)$, implies that $h_n(\x)=f_n(\x)$ for all $x\in \RN$ and for each $n$. Therefore, 
	\begin{equation}\label{eq-convergence in Gbeta gamma}
		f_n= c_\varphi\int_0^\vc \varphi(t\sqrt{L})f_n\f{dt}{t} 
	\end{equation}
	in $\mathring{\mathcal G}(\beta,\gamma)$.
	
	This, along with \eqref{eq1-proof of Calderon} and \eqref{eq2-proof of Calderon}, yields
	\[
	c_\varphi\int_0^\vc \varphi(t\sqrt{L})f\f{dt}{t}=\lim_{n} f_n,
	\]
	and hence
	\[
	c_\varphi\int_0^\vc \varphi(t\sqrt{L})f\f{dt}{t}=f
	\]
	in $\mathring{\mathcal G}(\beta,\gamma)$ since $\lim_{n\to \vc}\|f_n-f\|_{\mathring{\mathcal G}(\beta,\gamma)}=0$.
	
	This completes our proof.	
\end{proof}

Arguing similarly to the proof of Theorem \ref{Calderon reproducing}, we  have the following discrete version of the Calder\'on reproducing formula. In what follows, by a ``partition of unity'' we shall mean a function $\psi\in \mathcal{S}(\mathbb{R})$ such that $\supp\psi\subset[1/2,2]$, $\displaystyle \int\psi(\xi)\,\f{d\xi}{\xi}\neq 0$ and
$$\sum_{j\in \mathbb{Z}}\psi_j(\lambda)=1 \textup{ on } (0,\infty), $$
where $\psi_j(\lambda):=\psi(\delta^j\lambda)$ for each $j\in \mathbb{Z}$.

\begin{thm}
	\label{thm - Calderon discrete version}
	Let $\epsilon \in (0,1]$ and $\beta,\gamma \in (0,\epsilon)$ and $\psi$ be a partition of unity. Then we have
	\[
	f= \sum_{j\in \mathbb Z}\psi_j(\sqrt L)f
	\]
	in $(\mathring{\mathcal G}^\epsilon(\beta,\gamma))'$.
\end{thm}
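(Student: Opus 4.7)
The plan is to mirror the proof of Theorem \ref{Calderon reproducing}, replacing the continuous integral $\int_0^\infty \varphi(t\sqrt L)f\,\frac{dt}{t}$ by the discrete sum $\sum_{j\in\mathbb Z}\psi_j(\sqrt L)f$, so that the partition-of-unity identity $\sum_{j\in\mathbb Z}\psi_j(\lambda)=1$ on $(0,\infty)$ plays the role of the normalization $\int_0^\infty\varphi(t)\,\frac{dt}{t}\neq 0$. By the usual duality, it suffices to prove the identity in $\mathring{\mathcal G}^\epsilon(\beta,\gamma)$ and then pair against arbitrary test functions.

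The key input is a set of discrete analogues of Lemmas \ref{lem - t small}--\ref{lem - t large for integral} applied with $t=\delta^j$. Since $\psi_j(\sqrt L)=\psi(\delta^j\sqrt L)$ with $\psi\in\mathscr S(\mathbb R)$ even and vanishing at $0$, Lemma \ref{lem - t small} (applied for $j\geq 1$, where $\delta^j<1$) and Lemma \ref{lem - t large} (applied for $j\leq 0$, where $\delta^j\geq 1$) yield, for $f\in\mathring{\mathcal G}(\epsilon,\epsilon)$,
\[
\|\psi_j(\sqrt L)f\|_{\mathring{\mathcal G}^\epsilon(\beta,\gamma)}
\lesssim
\begin{cases}
\delta^{j(\epsilon-\beta)}\,\|f\|_{\mathring{\mathcal G}(\epsilon,\epsilon)} & j\geq 1,\\[2pt]
\delta^{-j(\epsilon-\gamma)}\,\|f\|_{\mathring{\mathcal G}(\epsilon,\epsilon)} & j\leq 0.
\end{cases}
\]
Both exponents are positive since $\beta,\gamma<\epsilon$ and $\delta\in(0,1)$, so summing the two geometric series gives $\|\sum_j\psi_j(\sqrt L)f\|_{\mathring{\mathcal G}^\epsilon(\beta,\gamma)}\lesssim\|f\|_{\mathring{\mathcal G}(\epsilon,\epsilon)}$. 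In parallel, the proofs of Lemmas \ref{lem - integral t small} and \ref{lem - t large for integral} go through verbatim with the substitution $\int\cdots\frac{dt}{t}\mapsto\sum_j\cdots$, producing the bound $\|\sum_j\psi_j(\sqrt L)g\|_{\mathring{\mathcal G}(\beta,\gamma)}\lesssim\|g\|_{\mathring{\mathcal G}(\beta,\gamma)}$ for all $g\in\mathring{\mathcal G}(\beta,\gamma)$.

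Granted these bounds, pick $\{f_n\}\subset\mathring{\mathcal G}(\epsilon,\epsilon)$ with $\|f_n-f\|_{\mathring{\mathcal G}(\beta,\gamma)}\to 0$ and decompose
\[
\sum_{j\in\mathbb Z}\psi_j(\sqrt L)f=\sum_{j\in\mathbb Z}\psi_j(\sqrt L)(f-f_n)+\sum_{j\in\mathbb Z}\psi_j(\sqrt L)f_n.
\]
The first piece vanishes in $\mathring{\mathcal G}(\beta,\gamma)$ as $n\to\infty$ by the summed version of Lemmas \ref{lem - integral t small}--\ref{lem - t large for integral}. For the second piece, the first set of bounds identifies $h_n:=\sum_j\psi_j(\sqrt L)f_n$ as an element of $\mathring{\mathcal G}^\epsilon(\beta,\gamma)$. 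Since $f_n\in L^2(dw)$ and $\sum_j\psi_j(\lambda)=1$ on $(0,\infty)$, spectral theory yields $\sum_j\psi_j(\sqrt L)f_n=f_n$ in $L^2(dw)$, hence $h_n=f_n$ almost everywhere, and therefore everywhere since both sides are continuous test functions. Passing to the limit gives $\sum_j\psi_j(\sqrt L)f=\lim_n f_n=f$ in $\mathring{\mathcal G}(\beta,\gamma)$, and the identity extends to $(\mathring{\mathcal G}^\epsilon(\beta,\gamma))'$ by duality.

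The only delicate point is verifying the summed analogues of Lemmas \ref{lem - integral t small} and \ref{lem - t large for integral}; however, since the pointwise kernel and Hölder estimates in those proofs depend on the scale $t$ only through the explicit factors already producing summable geometric decay in $\log t$, this reduction is essentially mechanical, and the rest of the argument then proceeds in direct parallel with Theorem \ref{Calderon reproducing}.
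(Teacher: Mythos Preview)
Your proposal is correct and follows essentially the same approach as the paper, which simply states that the discrete version follows by ``arguing similarly to the proof of Theorem \ref{Calderon reproducing}.'' Your write-up makes explicit precisely the modifications needed: applying Lemmas \ref{lem - t small} and \ref{lem - t large} at the dyadic scales $t=\delta^j$ to obtain geometric decay in $j$, invoking discrete analogues of Lemmas \ref{lem - integral t small} and \ref{lem - t large for integral} for the $\mathring{\mathcal G}(\beta,\gamma)$-boundedness of the full sum, and then running the same approximation-plus-spectral-theory argument (with the partition-of-unity identity $\sum_j\psi_j(\lambda)=1$ replacing the continuous normalization $c_\varphi$).
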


\section{Besov and Triebel-Lizorkin spaces in the Dunkl setting}
The main aim of this section is to prove Theorem \ref{thm-coincidence Besov and TL spaces}. To accomplish this, our first task is to establish the atomic decomposition of the Besov and Triebel-Lizorkin spaces on the space of homogeneous type $(\RN, |\cdot|, dw)$. Subsequently, we recapitulate some fundamental properties of the Besov and Triebel-Lizorkin spaces associated with the operator $L$, including the atomic decomposition and interpolation theorems. This is interesting to emphasize that in the particular case when the indices are similar to those in the Besov and Triebel-Lizorkin spaces on the space of homogeneous type $(\RN, \|\cdot\|, dw)$, we can use the same space of distributions to defined these function spaces. Finally, in this particular case, we establish a new atomic decomposition which will play a crucial role in proving Theorem \ref{thm-coincidence Besov and TL spaces}.

\subsection{Atomic decomposition of Besov and Triebel-Lizorkin spaces on $(\RN, \|\cdot\|, dw)$}

In this section, we will prove the atomic decomposition for the Besov and Triebel-Lizorkin spaces on the space of homogeneous type $(\RN, \|\cdot\|, dw)$. We first recall the definition of an approximation of the identity  with bounded support in \cite{HMY} which plays an essential role in the definitions of Besov and Triebel-Lizorkin spaces.

\begin{defn}\label{defn- Sk}
	A sequence $\{S_k\}_{k\in \Z}$  of bounded linear integral operators on
	$L^2(dw)$ is said to be an approximation of the identity  with bounded support (for short, ATI with bounded support) if there exist constants $A_1, A_2>0$ such that for all $k\in \Z$  the integral kernel $S_k(\cdot, \cdot)$ of $S_k$ is a measurable function from $\RN\times \RN$
	satisfying the following properties
	\begin{enumerate}[{\rm (i)}]
		\item $S_k(\x,\y)=S_k(\y,\x)$;
		\item $S_k(\x,\y)=0$ whenever $\|\x-\y\|\ge A_1 \delta^{k}$;
		\item $\displaystyle |S_k(\x,\y)|\lesi \f{1}{w(B(\x,\delta^k))}$;
		\item $\displaystyle |S_k(\x,\y)-S_k(\x',\y)|+|S_k(\y,\x)-S_k(\y,\x')|\lesi \f{\|\x-\x'\|}{\delta^k} \f{1}{w(B(\x,\delta^k))+w(B(\y,\delta^k))}$ for $\|\x-\x'\|\le A_2\delta^k$;
		\item $\displaystyle \big|[S_k(\x,\y)-S_k(\x',\y)]- [S_k(\x,\y')-S_k(\x',\y')]\big|\lesi \f{\|\x-\x'\|}{\delta^k}\f{\|\y-\y'\|}{\delta^k} \f{1}{w(B(\x,\delta^k))+w(B(\y,\delta^k))}$ for $\max\{\|\x-\x'\|,\|\y-\y'\|\}\le A_2\delta^k$;
		\item $\displaystyle \int S_k(\x,\y)dw(\x)=  \int S_k(\y,\x)dw(\x)=1$ for all $\y\in \RN$.
	\end{enumerate}
\end{defn}
The existence of the family $\{S_k\}$ is guaranteed in Theorem 2.6 \cite{HMY} (see also \cite{THHLL}).

In what follows, for $s\in (-1,1)$ and $\epsilon\in (0,1]$, we set
\begin{equation}\label{eq- p s epsilon}
	p(s,\epsilon)=\max\Big\{\f{\N}{\N +\epsilon},\f{\N}{\N +s+\epsilon}\Big\}.
\end{equation}

We now recall the definitions of  the Besov and Triebel-Lizorkin spaces in \cite{HMY}.

\begin{defn}
	\label{defn-Besov and TL spaces}
	Let $S_k$ be an ATI with bounded support and let $D_k =S_k-S_{k-1}$ for $k\in \Z$.  For $s\in (-1,1)$, $p(s,1)<p< \vc$ and $0<q< \vc$, assume that $\epsilon \in (0,1)$ and $0<\beta,\gamma<\epsilon$ such that $|s|<\beta \wedge \gamma$ and $p>p(s,\epsilon)$.  The Besov space $\dot{B}^s_{p,q}(dw)$ is defined as the set of all $f\in (\mathring{\mathcal G}^\epsilon_0(\beta,\gamma))'$  such that 
	\begin{equation}
		\label{eq-Besov norm}
		\|f\|_{\dot{B}^s_{p,q}(dw)} = \Big[\sum_{k\in \Z}\delta^{-ksq}\|D_k f\|^q_{L^p(dw)}\Big]^{1/q}<\vc.
	\end{equation}
	For $s\in (-1,1)$ and $p(s,1)<p, q< \vc$, assume that $\epsilon \in (0,1)$ and $0<\beta,\gamma<\epsilon$ such that $|s|<\beta \wedge \gamma$ and $p\wedge q >p(s,\epsilon)$. The Triebel-Lizorkin space $\dot{F}^s_{p,q}(dw)$ is defined as the set of all $f\in (\mathring{\mathcal G}^\epsilon_0(\beta,\gamma))'$ such that
	\begin{equation}
		\label{eq-TL norm}
		\|f\|_{\dot{F}^s_{p,q}(dw)} = \Big\|\Big(\sum_{k\in \Z}\delta^{-ksq}\|D_k f\|^q\Big)^{1/q} \Big\|_{L^p(dw)}<\vc.
	\end{equation}

\end{defn}

We would like to highlight that the Besov and Triebel-Lizorkin spaces on a space of homogeneous type have been defined for infinite indices $p=\infty$ and/or $q=\infty$, as mentioned in \cite{HMY}. Nevertheless, when considering this scenario, it remains uncertain whether the function spaces are independent of  the choices of $\epsilon$. As a result, we have chosen to exclude this particular case from our theory.


Later on in Corollary \ref{cor2} below, by establishing the atomic decomposition for these function spaces, we will show that the definitions of the Besov and Triebel-Lizorkin spaces defined in Definition \ref{defn-Besov and TL spaces} are independent on the parameters $\epsilon, \beta$ and $\gamma$. In order to do this, we recall  from Section 2.1 that  $\mathscr D:= \{Q_\alpha^k: k\in \Z, \alpha\in I_k\}$ is the system of dyadic cubes in $(\RN, \|\cdot\|,dw)$ and  $\mathscr D_k =\{Q_\alpha^k: \alpha\in I_k\}$ for each $k\in \Z$. In what follows, we always choose a $j_0\in \mathbb N$ to be sufficiently large fixed number (such that the identity in Lemma \ref{lem-Calderon reproducing lemma} below holds true). For each $k\in \Z$ and $\alpha\in I_k$, from  Lemma \ref{lem-dyadic cube}, we can write
\begin{equation}\label{eq-dyadic cube 2}
	Q^k_\alpha =\bigcup_{\tau\in I_{k+j_0}: \ Q_\tau^{k+j_0}\subset Q^k_\alpha}Q_\tau^{k+j_0}.
\end{equation}
We also denote $N(\alpha,k)=\sharp\{\tau\in I_{k+j_0}: \ Q_\tau^{k+j_0}\subset Q^k_\alpha\}$. By the doubling condition \eqref{eq-doubling} and Lemma \ref{lem-dyadic cube}, $N(\alpha,k)\lesi 2^{j_0\mathfrak{N}}$. We can rearrange to write $\{Q_\tau^{k+j_0}\}_{\tau \in N(\alpha,k)}$ as $\{Q_{\alpha,m}^{k} \}_{m=1}^{N(\alpha,k)}$.

We have the following Calder\'on-Zygmund formula in \cite[Theorem 4.12]{HMY} (see also \cite{THHLL}).

\begin{lem}
	\label{lem-Calderon reproducing lemma}
	Let $S_k$ be an ATI with bounded support and $D_k = S_k-S_{k-1}$, and let $\epsilon\in (0,1)$ and $0<\beta,\gamma<\epsilon$. Then there exists a sequence $\{\widetilde D_k\}_{k\in \Z}$ of bounded linear operators on $L^2(\RN)$ such that
	\[
	f(\x) =\sum_{k\in \Z} \sum_{\alpha\in I_k}\sum_{m=1}^{N(k,\alpha)} w(Q_{\alpha,m}^k)D_k(\x,\y_{\alpha,m}^k) \widetilde D_k f(\y_{\alpha,m}^k)
	\]
	in both $L^p(dw), 1<p<\vc$ and $(\mathring{\mathcal G}^\epsilon_0(\beta,\gamma))'$ for any $\y_{\alpha,m}^k\in Q_{\alpha, m}^k$; moreover, the kernels $\widetilde D_k(\x,\y)$ of $\widetilde D_k$ satisfy the following conditions
	\begin{enumerate}[{\rm (i)}]
		\item $\displaystyle |\widetilde D_k(\x,\y)|\lesi \f{1}{w(B(\x,\delta^k+\|\x-\y\|))}\f{\delta^k}{\delta^k+\|\x-\y\|}$;
		\item $\displaystyle  |\widetilde D_k(\x,\y)-\widetilde D_k(\x,\y')|\lesi \f{\|\y-\y'\|}{\delta^k+\|\x-\y\|} \f{1}{w(B(x,\delta^k+\|\x-\y\|))}\f{\delta^k}{\delta^k+\|\x-\y\|}$ for $\|\y-\y'\|\le \f{1}{2}(\delta^k+\|\x-\y\|)$;
		\item $\displaystyle \int \widetilde D_k(\x,\y)dw(\x)=  \int \widetilde D_k(\y,\x)dw(\x)=0$ for all $y\in \RN$.
	\end{enumerate}
\end{lem}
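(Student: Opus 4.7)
The plan is to build the discrete reproducing formula from a continuous one, absorb the discretization error via a Neumann series, and finally verify the kernel estimates (i)--(iii) for the resulting operators $\widetilde D_k$. I will follow the template of Han--M\"uller--Yang in their space of homogeneous type paper, adapted to our ATI with bounded support.

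First, from the telescoping identity $\sum_{k\in\Z} D_k = I$ (which holds in $L^2(dw)$ since $S_k\to I$ and $S_k\to 0$ strongly as $k\to\pm\infty$; this can also be checked against test functions in $\mathring{\mathcal G}(\epsilon,\epsilon)$ using the ATI estimates in Definition \ref{defn- Sk}), I will derive the almost-orthogonal continuous Calder\'on identity
\[
f=\sum_{k\in\Z} D_k \Bigl(\sum_{|j|\le N} D_{k+j}\Bigr) f + E_N f,
\]
where the error $E_N$ has operator norm on $L^2$ and on the relevant spaces of test functions/distributions that tends to $0$ as $N\to\infty$, by Cotlar--Stein and the standard almost-orthogonality bounds $\|D_kD_{k+j}\|\lesssim \delta^{|j|\epsilon}$. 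Abbreviating $\overline D_k := \sum_{|j|\le N} D_{k+j}$, this gives $f=\sum_k D_k \overline D_k f$ modulo a small remainder.

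Next comes the discretization step. Using \eqref{eq-dyadic cube 2} with $j_0$ large (to be chosen presently), I write
\[
D_k \overline D_k f(\x)=\sum_{\alpha\in I_k}\sum_{m=1}^{N(k,\alpha)} \int_{Q_{\alpha,m}^k} D_k(\x,\y)\,\overline D_k f(\y)\,dw(\y)
\]
and freeze the value on each $Q_{\alpha,m}^k$ at an arbitrary point $\y_{\alpha,m}^k$. The difference produces an error
\[
R_k f(\x)=\sum_{\alpha,m}\int_{Q_{\alpha,m}^k}\bigl[D_k(\x,\y)\overline D_k f(\y)-D_k(\x,\y_{\alpha,m}^k)\overline D_k f(\y_{\alpha,m}^k)\bigr]dw(\y).
\]
Splitting this as a $D_k$-kernel difference times $\overline D_k f$ plus $D_k(\x,\y_{\alpha,m}^k)$ times the oscillation of $\overline D_k f$, the regularity in Definition \ref{defn- Sk}(iv) and the smoothness built into $\overline D_k$ (of the same H\"older order) yield a gain of $\delta^{j_0\epsilon'}$ for some $0<\epsilon'<\epsilon$ on each cube, so that the operator $T:=\sum_k R_k$ satisfies $\|T\|\le C\delta^{j_0\epsilon'}$ on $L^2(dw)$ and also on $\mathring{\mathcal G}^\epsilon_0(\beta,\gamma)$ and its dual (using the estimates from Lemma \ref{lem - t small}--Lemma \ref{lem - t large for integral} together with their ATI analogues). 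Choosing $j_0$ so large that $\|T\|<1/2$, the operator $I-T$ is invertible on all relevant spaces, and I define
\[
\widetilde D_k := \overline D_k\,(I-T)^{-1}.
\]
Substituting this into the discretized identity produces exactly the formula in the statement, and the convergence in $L^p(dw)$ for $1<p<\infty$ follows from the $L^p$-boundedness of $I-T$ and its inverse (by interpolation and duality from the $L^2$ bound plus the standard Calder\'on--Zygmund kernel estimates), while convergence in $(\mathring{\mathcal G}^\epsilon_0(\beta,\gamma))'$ follows by duality from convergence in $\mathring{\mathcal G}^\epsilon_0(\beta,\gamma)$.

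Finally, I verify the kernel properties (i)--(iii) of $\widetilde D_k$. Properties (i) and (ii) for $\overline D_k$ follow directly from the ATI bounds in Definition \ref{defn- Sk} (items (iii)--(v)) by summing a finite geometric series in $j$. Property (iii), the cancellation $\int \overline D_k(\x,\y)\,dw(\y)=\int \overline D_k(\y,\x)\,dw(\y)=0$, follows from Definition \ref{defn- Sk}(vi) since $D_k=S_k-S_{k-1}$ integrates to zero in both variables. To transfer these estimates from $\overline D_k$ to $\widetilde D_k = \overline D_k(I-T)^{-1} = \overline D_k\sum_{n\ge 0} T^n$, I expand the Neumann series and use that each $T^n$ preserves size, regularity, and cancellation bounds of the stated form (with constants $\le C_0^n$), so summing against $\|T\|^n < 2^{-n}$ gives a convergent geometric series. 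The main technical obstacle I anticipate is a bookkeeping one: checking that the composition $\overline D_k T^n$ retains simultaneously the size bound, H\"older regularity, and cancellation with constants independent of $n$, which requires the kind of almost-orthogonal kernel calculus developed for the standard Calder\'on--Zygmund operators on spaces of homogeneous type; however, all the ingredients are provided by Definition \ref{defn- Sk} together with the doubling/reverse-doubling properties \eqref{eq-ratios on volumes of balls}, so this is a routine (if tedious) verification.
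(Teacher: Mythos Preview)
Your outline is correct and faithfully reproduces the argument from \cite[Theorem 4.12]{HMY}, which is exactly what the paper does: it simply cites that result rather than reproving it. So there is no divergence in approach --- you are filling in precisely the proof the paper imports.

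One small correction to your write-up: the boundedness of the discretization error $T$ on $\mathring{\mathcal G}^\epsilon_0(\beta,\gamma)$ and its dual has nothing to do with Lemmas \ref{lem - t small}--\ref{lem - t large for integral}, which are specific to the functional calculus $\varphi(t\sqrt L)$ of the Dunkl Laplacian. The estimates you actually need are the purely ATI-based ones (size, regularity, cancellation from Definition \ref{defn- Sk}) together with the standard almost-orthogonality calculus on spaces of homogeneous type; these are already developed in \cite{HMY} and do not involve $L$ at all. Your parenthetical ``together with their ATI analogues'' is the correct pointer --- just drop the reference to the Dunkl-specific lemmas, since Lemma \ref{lem-Calderon reproducing lemma} is a statement about $(\RN,\|\cdot\|,dw)$ as a space of homogeneous type and makes no use of the operator $L$.
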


\begin{defn}[\cite{HY}]\label{def:7.1}
	Let $ 0<p \leq \infty$. A function $a$ is said to be a   $p$-atom if there exists a dyadic cube $Q \in \mathscr{D}_{k}$ for some $k\in \Z$ such that \\
	\begin{enumerate}[\rm (i)]
		\item $\operatorname{supp}\,a \subset \Lambda Q$, where $\Lambda= 3\times 2^{j_0}A_1$ (see Section 2.1 for the definition of $\Lambda Q$);
		\item for any $\x \in \RN$,
		$$\displaystyle |a(\x)| \leq w(Q)^{-1/p};$$
		
		\item for any $\x,\x'\in\RN$,
		$$|a(\x)-a(\x')| \leq w(Q)^{-1/p} \dfrac{\|\x-\x'\|}{\delta^k} ;
		$$
		\item $\displaystyle \int a(\x)dw(\x)=0$.
	\end{enumerate}
\end{defn}
We have the following technical lemma.
\begin{lem}\label{lem1-comparison}
	Let $S_k$ be an ATI with bounded support and $D_k = S_k-S_{k-1}$. Then for any $M>0$ there exists $C>0$ such that for every $k\in \Z$ and every  $p$-atom $ a_Q$ associated to some dyadic cube $Q \in \mathscr{D}_\nu, \nu \in \mathbb{Z}$,
	\begin{enumerate}[{\rm (i)}]
		\item $\displaystyle | D_k{a_Q}(\x) | \le C w( Q)^{ - 1/p}\delta^{k-\nu}{\left( {1 + \frac{\|\x-\x_Q\|}{\delta^{ \nu }}} \right)^{ - M}}$ for all $ \nu \leq k$;
		\item $\displaystyle  | D_k{a_Q}(\x) | \le Cw( Q)^{ - 1/p}\delta^{-k+\nu} \frac{{w\left( Q \right)}}{{w\left( B({{\x_Q},\delta^k}) \right)}}{\left( {1 + \frac{\|\x-\x_Q\|}{\delta^{  k }}} \right)^{ - M}}$ for all $ k \leq \nu$.
	\end{enumerate}
\end{lem}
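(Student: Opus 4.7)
The plan is to treat the two regimes in a parallel but dual fashion: case (i) uses the smoothness of $a_Q$ played against the cancellation of $D_k$, while case (ii) uses the cancellation of $a_Q$ played against the smoothness of $D_k$. A preliminary simplification used in both cases is that $D_ka_Q$ has compact support in a ball centered at $\x_Q$ of radius comparable to $\max(\delta^\nu,\delta^k)$. On this support the polynomial factor $(1+\|\x-\x_Q\|/\delta^{\nu\wedge k})^{-M}$ appearing in the conclusion is bounded below by a positive constant depending only on $M$, so it may be inserted for free; it therefore suffices in each case to establish the pointwise constant bound on the effective support.

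Setup: since $D_k=S_k-S_{k-1}$ and each $S_j$ has integral one and bounded support, $D_k(\x,\cdot)$ is supported in $B(\x,A_1\delta^{k-1})$, satisfies $|D_k(\x,\y)|\lesi 1/w(B(\x,\delta^k))$ (whence $\int |D_k(\x,\y)|\,dw(\y)\lesi 1$ by doubling), and by the symmetry $S_j(\x,\y)=S_j(\y,\x)$ obeys the second-variable regularity
\[
|D_k(\x,\y)-D_k(\x,\y')|\lesi \f{\|\y-\y'\|}{\delta^k}\f{1}{w(B(\x,\delta^k))}\quad\text{for}\ \|\y-\y'\|\le A_2\delta^k.
\]
Moreover, $\int D_k(\x,\y)\,dw(\y)=0$ since both $S_k$ and $S_{k-1}$ have integral one.

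For case (i), $\nu\le k$, the support of $D_ka_Q$ is contained in $B(\x_Q,C\delta^\nu)$ because $\supp a_Q\subset B(\x_Q,\Lambda\delta^\nu)$ and $\delta^{k-1}\le \delta^{-1}\delta^\nu$. Using the vanishing moment of $D_k$, write
\[
D_ka_Q(\x)=\int D_k(\x,\y)[a_Q(\y)-a_Q(\x)]\,dw(\y).
\]
On the integrand $\|\y-\x\|\le A_1\delta^{k-1}$, so the Lipschitz condition on $a_Q$ yields $|a_Q(\y)-a_Q(\x)|\le w(Q)^{-1/p}\|\y-\x\|/\delta^\nu\lesi w(Q)^{-1/p}\delta^{k-\nu}$. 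Factoring this bound out of the integral and using $\int|D_k(\x,\y)|\,dw(\y)\lesi 1$ produces the pointwise bound $|D_ka_Q(\x)|\lesi w(Q)^{-1/p}\delta^{k-\nu}$, which gives (i) after inserting the freely available polynomial factor.

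For case (ii), $k\le\nu$, the support of $D_ka_Q$ lies in $B(\x_Q,C\delta^k)$, so that $w(B(\x,\delta^k))\simeq w(B(\x_Q,\delta^k))$ on this support by doubling. Using $\int a_Q\,dw=0$, write
\[
D_ka_Q(\x)=\int_{\Lambda Q}[D_k(\x,\y)-D_k(\x,\x_Q)]a_Q(\y)\,dw(\y),
\]
and apply the second-variable smoothness of $D_k$ at scale $\delta^k$ (the borderline case $\nu=k$ is handled by the direct size bound, since there $\delta^{\nu-k}=1$ and $w(Q)\simeq w(B(\x_Q,\delta^k))$, while for $\nu\ge k+1$ the requirement $\|\y-\x_Q\|\le A_2\delta^k$ holds because $\Lambda\delta^\nu\le \Lambda\delta^{k+1}\le (\Lambda/24)\delta^k$, a constant fraction of $\delta^k$): this gives $|D_k(\x,\y)-D_k(\x,\x_Q)|\lesi (\delta^\nu/\delta^k)/w(B(\x_Q,\delta^k))$. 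Combined with $|a_Q|\le w(Q)^{-1/p}$ and $w(\Lambda Q)\simeq w(Q)$, we obtain the desired bound $|D_ka_Q(\x)|\lesi w(Q)^{-1/p}\delta^{\nu-k}w(Q)/w(B(\x_Q,\delta^k))$. The proof is essentially a direct deployment of the cancellation/smoothness duality available from Definitions \ref{defn- Sk} and \ref{def:7.1}; no serious obstacle is expected. The only items requiring attention are the symmetry of the smoothness estimate in the second variable and the observation that the compact support of $D_ka_Q$ allows the polynomial decay factor to be absorbed into constants.
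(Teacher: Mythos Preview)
Your proof is correct and follows essentially the same approach as the paper: both arguments first absorb the polynomial decay factor using the compact support of $D_ka_Q$, then treat (i) via the cancellation of $D_k$ against the Lipschitz regularity of $a_Q$, and (ii) via the cancellation of $a_Q$ against the second-variable smoothness of $D_k$. Your additional remark on verifying the applicability of the smoothness condition (iv) in case (ii) is a detail the paper omits, but otherwise the two proofs are the same.
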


\begin{proof}
	Due to the bounded support condition of $D_k(\x,\y)$, it suffices to prove the lemma with $\x\in 3\Lambda Q$. In this case,
	\[
	\left( {1 + \frac{\|\x-\x_Q\|}{\delta^{  k\wedge \nu }}} \right)^{ - M}\simeq 1, \ \ \ \ \x\in 3\Lambda Q.
	\]
	(i) Since $\displaystyle \int D_k(\x,\y)dw(\y)=0$, we have
	\begin{align*}
		\left| D_ka_Q(x)\right|&=  \Big|\int_{\RN} D_k(\x,\y) a_Q(\y) dw(\y)\Big|\\
		&=	 \Big|\int_{\RN} D_k(\x,\y) [a_Q(\y)-a_Q(\x)] dw(\y)\Big|\\
		&\lesi w(Q)^{-1/p}\int_{\RN} |D_k(\x,\y)| \f{\|\x-\y\|}{\ell(Q)} dw(\y).
	\end{align*}
	Due to the bounded support condition (ii) in Definition \ref{defn- Sk}, we further obtain
	\begin{align*}
		\left| D_ka_Q(x)\right|&=  \Big|\int_{\RN} D_k(\x,\y) a_Q(\y) dw(\y)\Big|\\
		&\lesi w(Q)^{-1/p}\f{\delta^k}{\ell(Q)} \int_{\RN} |D_k(\x,\y)| dw(\y)\\
		&\lesi w(Q)^{-1/p}\f{\delta^k}{\ell(Q)}\simeq w(Q)^{-1/p}\delta^{k-\nu}.
	\end{align*}
	
	This completes the proof of (i).
	
	\medskip
	
	(ii) Since $\displaystyle \int a_Q(\x)dw(\x)=0$, we have
	\begin{align*}
		\left| D_ka_Q(x)\right|&=  \Big|\int_{\Lambda Q} [D_k(\x,\y)-D_k(\x,\x_Q)] a_Q(\y) dw(\y)\Big|\\
		&\lesi	 \int_{\Lambda Q}  \f{\|\y-\x_Q\|}{\delta^k} \f{1}{w(B(\y,\delta^k))}|a_Q(\y)| dw(\y) \\
		&\lesi w(Q)^{-1/p} \f{\ell(Q)}{\delta^k}\f{w(Q)}{w(B(x_Q,\delta^k))}\\
		&\lesi w(Q)^{-1/p} \delta^{\nu-k}\f{w(Q)}{w(B(x_Q,\delta^k))}.
	\end{align*}

	This completes the proof of (ii).
\end{proof}
We have the following atomic decomposition for Besov and Triebel-Lizorkin spaces.
\begin{thm}\label{thm1-Besov atomic}
	Let $s\in (-1,1)$ and $p(s,1)<p<\vc$, $0<q<\vc$ and let $\epsilon\in (0,1)$ and $\beta, \gamma \in (0,\epsilon)$ satisfying  $|s|<\beta\wedge \gamma$ and $p>p(s,\epsilon)$. If $f\in \dot{B}^{s}_{p,q}(dw)$, then there exist a sequence of $p$-atoms $\{a_{Q}: Q\in \mathscr D\}$ and a sequence of numbers $\{s_{Q}: Q\in \mathscr D\}$ such that
	$$f= \sum\limits_{k  \in \mathbb{Z}} {\sum\limits_{Q \in \mathscr D_k }} {{s_{Q}{a_{Q}}} } \text{ in }  (\mathring{\mathcal G}^\epsilon_0(\beta,\gamma))',$$ 
	and
	$$
	{\Big[ {\sum\limits_{k  \in \mathbb{Z}} {{\delta^{-k s q}}{{\Big( {\sum\limits_{Q\in \mathscr D_k}} {{{| {{s_{Q}}|}^p}} } \Big)}^{q/p}}} } \Big]^{1/q}}\lesi \|f\|_{\dot{B}^{s}_{p,q}(dw)}.$$
	
	Conversely, if there exist a sequence of $p$-atoms $\{a_{Q}: Q\in \mathscr D\}$ and a sequence of numbers $\{s_{Q}: Q \in \mathscr D\}$ such that
	$$f= \sum\limits_{k  \in \mathbb{Z}} {\sum\limits_{Q \in \mathscr D_k }} {{s_{Q}{a_{Q}}} } \text{ in }  (\mathring{\mathcal G}^\epsilon_0(\beta,\gamma))',$$ 
	and
	$$
	{\Big[ {\sum\limits_{k  \in \mathbb{Z}} {{\delta^{-k s q}}{{\Big( {\sum\limits_{Q\in \mathscr D_k}} {{{| {{s_{Q}}|}^p}} } \Big)}^{q/p}}} } \Big]^{1/q}}<\vc,$$
	then $f\in \dot{B}^{s}_{p,q}(dw)$ and
	\[
	\|f\|_{\dot{B}^{s}_{p,q}(dw)}\lesi {\Big[ {\sum\limits_{k  \in \mathbb{Z}} {{\delta^{-k s q}}{{\Big( {\sum\limits_{Q\in \mathscr D_k}} {{{| {{s_{Q}}|}^p}} } \Big)}^{q/p}}} } \Big]^{1/q}}.
	\]
\end{thm}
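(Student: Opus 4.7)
The plan is to prove the two directions separately using the Calder\'on reproducing formula from Lemma \ref{lem-Calderon reproducing lemma} for the decomposition, and the pointwise kernel bounds from Lemma \ref{lem1-comparison} for the synthesis.

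\textbf{Decomposition direction.} Given $f\in \dot{B}^{s}_{p,q}(dw)$, I would apply Lemma \ref{lem-Calderon reproducing lemma} to write
\[
f(\x) =\sum_{k\in \Z} \sum_{\alpha\in I_k}\sum_{m=1}^{N(k,\alpha)} w(Q_{\alpha,m}^k)\,D_k(\x,\y_{\alpha,m}^k)\, \widetilde D_k f(\y_{\alpha,m}^k),
\]
choosing each $\y_{\alpha,m}^k\in Q_{\alpha,m}^k$ so that $|\widetilde D_k f(\y_{\alpha,m}^k)|\gtrsim \sup_{\y\in Q_{\alpha,m}^k}|\widetilde D_k f(\y)|$ (or by replacing $\widetilde D_k f(\y_{\alpha,m}^k)$ with its supremum up to an absolute constant). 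Then I set
\[
s_{Q_{\alpha,m}^k} = C\,w(Q_{\alpha,m}^k)^{1/p}\,|\widetilde D_k f(\y_{\alpha,m}^k)|, \qquad a_{Q_{\alpha,m}^k}(\x) = \frac{w(Q_{\alpha,m}^k)\,D_k(\x,\y_{\alpha,m}^k)\,\widetilde D_k f(\y_{\alpha,m}^k)}{s_{Q_{\alpha,m}^k}}.
\]
The support, size, Lipschitz, and cancellation conditions of Definition \ref{def:7.1} for $a_{Q_{\alpha,m}^k}$ follow directly (with an appropriate universal constant $C$) from properties (ii)--(vi) in Definition \ref{defn- Sk}: the bounded support of $D_k(\cdot,\y)$ gives support inside $\Lambda Q_{\alpha,m}^k$; the size bound is (iii); the H\"older condition comes from (iv); and cancellation is built into $D_k=S_k-S_{k-1}$ by (vi). The coefficient estimate then reduces to bounding
\[
\Big(\sum_{\alpha,m} |s_{Q_{\alpha,m}^k}|^p\Big)^{1/p} = C\Big(\sum_{\alpha,m} w(Q_{\alpha,m}^k)\,|\widetilde D_k f(\y_{\alpha,m}^k)|^p\Big)^{1/p}\lesssim \|\widetilde D_k f\|_{L^p(dw)},
\]
and finally $\|\widetilde D_k f\|_{L^p(dw)}\lesssim \|D_k f\|_{L^p(dw)}$ up to a summable shift in $k$ (coming from the almost orthogonality between $\widetilde D_k$ and $\{D_j\}$, via the discrete reproducing formula of Theorem \ref{thm - Calderon discrete version}), which is then summed in $\ell^q$ with the weight $\delta^{-ksq}$.

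\textbf{Synthesis direction.} Suppose $f=\sum_{\nu\in\Z}\sum_{Q\in\mathscr D_\nu} s_Q a_Q$ in $(\mathring{\mathcal G}^\epsilon_0(\beta,\gamma))'$. I apply $D_k$ termwise and invoke Lemma \ref{lem1-comparison}: for $Q\in\mathscr D_\nu$,
\[
|D_k a_Q(\x)|\lesssim w(Q)^{-1/p}\,\delta^{|k-\nu|}\,\Big(1+\frac{\|\x-\x_Q\|}{\delta^{k\wedge \nu}}\Big)^{-M}\cdot \begin{cases} 1, & \nu\le k,\\ w(Q)/w(B(\x_Q,\delta^k)), & k\le \nu.\end{cases}
\]
Applying Lemma \ref{lem1- thm2 atom Besov} (choosing $r$ slightly larger than $p(s,1)$ and below $p$) to the sum over $Q\in\mathscr D_\nu$ yields the pointwise bound
\[
|D_k f(\x)|\lesssim \sum_{\nu\in\Z} \delta^{|k-\nu|}\,\delta^{-\mathfrak N(k\vee\nu-k\wedge\nu)(1/r-\kappa)}\,\mathcal M^d_r\Big(\sum_{Q\in\mathscr D_\nu} |s_Q|\,w(Q)^{-1/p}\chi_Q\Big)(\x)
\]
with $\kappa=0$ when $\nu\le k$ and $\kappa=1$ when $k\le \nu$. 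I multiply by $\delta^{-ks}$, take $L^p$-norms, apply the $L^p$-boundedness \eqref{boundedness maximal function} of $\mathcal M^d_r$, and then take the $\ell^q$-norm in $k$; a Young-type convolution inequality in $k$ (as in \eqref{YFSIn}) collapses the sum over $\nu$ provided the exponent of $\delta^{|k-\nu|}$ dominates the geometric factor, which is exactly where $s\in(-1,1)$ and $p>p(s,\epsilon)$ enter. The result is the Besov norm bound
\[
\|f\|_{\dot B^s_{p,q}(dw)}\lesssim \Big[\sum_{\nu\in\Z}\delta^{-\nu sq}\Big(\sum_{Q\in\mathscr D_\nu}|s_Q|^p\Big)^{q/p}\Big]^{1/q}.
\]

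\textbf{Main obstacle.} The delicate step is the synthesis direction: balancing the two-sided decay in $|k-\nu|$ coming from Lemma \ref{lem1-comparison} against the loss $\delta^{-\mathfrak N(k\vee\nu-k\wedge\nu)(1/r-\kappa)}$ produced by Lemma \ref{lem1- thm2 atom Besov}, so that the resulting exponent of $\delta^{|k-\nu|}$ is strictly positive on both sides $\nu\le k$ and $\nu\ge k$; this is the arithmetic constraint that forces precisely the range $|s|<1$ and $p>p(s,1)$, and it is what one has to check carefully by choosing $r$ and $M$ optimally. A secondary (but standard) issue is verifying that the atomic series converges in $(\mathring{\mathcal G}^\epsilon_0(\beta,\gamma))'$, which I would handle by pairing each partial sum against a test function and using the estimates in Lemma \ref{lem-estimate for varphi t f x} together with the size/cancellation of atoms.
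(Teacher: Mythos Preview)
Your overall strategy matches the paper exactly: use Lemma~\ref{lem-Calderon reproducing lemma} for the decomposition direction and Lemma~\ref{lem1-comparison} plus Lemma~\ref{lem1- thm2 atom Besov} (or the Euclidean version contained in Lemma~\ref{lem1- JF lem}) for the synthesis direction. The synthesis argument you outline is essentially identical to the paper's and is correct.

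There is, however, a genuine gap in your decomposition step. You pick $\y_{\alpha,m}^k$ so that $|\widetilde D_k f(\y_{\alpha,m}^k)|$ is essentially the \emph{supremum} over $Q_{\alpha,m}^k$, and then assert
\[
\Big(\sum_{\alpha,m} w(Q_{\alpha,m}^k)\,|\widetilde D_k f(\y_{\alpha,m}^k)|^p\Big)^{1/p}\lesssim \|\widetilde D_k f\|_{L^p(dw)}.
\]
This does not follow: a sup of $|\widetilde D_k f|$ on a cube is not controlled by its average, so the left side can be strictly larger than the $L^p$ norm. (Choosing \emph{infimum} points would make that inequality trivial, but then you would still need to pass from $\widetilde D_k f$ to $D_k f$.) Moreover, your proposed fix via ``almost orthogonality between $\widetilde D_k$ and $\{D_j\}$'' cannot be obtained from Theorem~\ref{thm - Calderon discrete version}: that theorem is a reproducing formula for the functional calculus $\psi_j(\sqrt L)$ of the Dunkl Laplacian and has nothing to do with the ATI operators $D_k,\widetilde D_k$.

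The paper resolves this step differently and more directly: it invokes the discrete norm characterization from \cite[Remark~5.5]{HMY}, which (in the Besov analogue) states that for \emph{arbitrary} sample points $\y_{\alpha,m}^k\in Q_{\alpha,m}^k$,
\[
\Big[\sum_{k\in\Z}\delta^{-ksq}\Big(\sum_{\alpha,m} w(Q_{\alpha,m}^k)\,|\widetilde D_k f(\y_{\alpha,m}^k)|^p\Big)^{q/p}\Big]^{1/q}\lesssim \|f\|_{\dot B^s_{p,q}(dw)}.
\]
This single citation replaces both the sup-vs-$L^p$ issue and the passage from $\widetilde D_k$ to $D_k$, and is the intended way to close the coefficient estimate.
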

\begin{thm}\label{thm1-TL atomic}
	Let $s\in (-1,1)$ and $p(s,1)<p,q<\vc$ and let $\epsilon\in (0,1)$ and $\beta, \gamma \in (0,\epsilon)$ satisfying  $|s|<\beta\wedge \gamma$ and $p\wedge q>p(s,\epsilon)$.  If $f\in \dot{F}^{s}_{p,q}(dw)$, then there exist a sequence of $p$-atoms $\{a_{Q}: Q\in \mathscr D\}$ and a sequence of numbers $\{s_{Q}: Q\in \mathscr D_k\}$ such that
	$$f= \sum\limits_{k  \in \mathbb{Z}} {\sum\limits_{Q \in \mathscr D_k }} {{s_{Q}{a_{Q}}} } \text{ in }  (\mathring{\mathcal G}^\epsilon_0(\beta,\gamma))',$$ 
	and
	$${\Big\| {{{\Big[ {\sum\limits_{k  \in \mathbb{Z}} {{\delta^{-k s q}}{{  {\sum\limits_{Q \in {\mathscr{D}_k }} {w{{(Q)}^{ - q/p}}| {{s_Q}}|^q{\chi _Q}} }  }}} } \Big]}^{1/q}}} \Big\|_{L^p(dw)}}\lesi \|f\|_{\dot{F}^{s}_{p,q}(dw)}.$$
	
	Conversely, if there exist a sequence of $p$-atoms $\{a_{Q}: Q\in \mathscr D\}$ and a sequence of numbers $\{s_{Q}: Q\in \mathscr D\}$ such that
	$$f= \sum\limits_{k  \in \mathbb{Z}} {\sum\limits_{Q \in \mathscr D_k }} {{s_{Q}{a_{Q}}} } \text{ in }  (\mathring{\mathcal G}^\epsilon_0(\beta,\gamma))',$$ 
	and
	$$
	{\Big\| {{{\Big[ {\sum\limits_{k  \in \mathbb{Z}} {{\delta^{-k s q}}{{  {\sum\limits_{Q \in {\mathscr{D}_k }} {w{{(Q)}^{ - q/p}}| {{s_Q}}|^q{\chi _Q}} }  }}} } \Big]}^{1/q}}} \Big\|_{L^p(dw)}}<\vc,$$
	then $f\in \dot{F}^{s}_{p,q}(dw)$ and
	\[
	\|f\|_{\dot{F}^{s}_{p,q}(dw)}\lesi {\Big\| {{{\Big[ {\sum\limits_{k  \in \mathbb{Z}} {{\delta^{-k s q}}{{  {\sum\limits_{Q \in {\mathscr{D}_k }} {w{{(Q)}^{ - q/p}}| {{s_Q}}|^q{\chi _Q}} }  }}} } \Big]}^{1/q}}} \Big\|_{L^p(dw)}}.
	\]
\end{thm}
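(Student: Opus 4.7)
My plan is to prove both directions of Theorem \ref{thm1-TL atomic} along the standard Calder\'on-reproducing / Fefferman--Stein blueprint, paralleling the Besov case of Theorem \ref{thm1-Besov atomic} but replacing the scalar $L^p$-estimates by the vector-valued maximal inequality \eqref{FSIn}.

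For the direct direction, I would begin from the discrete Calder\'on reproducing formula of Lemma \ref{lem-Calderon reproducing lemma},
\[
f(\x) = \sum_{k\in\Z}\sum_{\alpha\in I_k}\sum_{m=1}^{N(k,\alpha)} w(Q_{\alpha,m}^k)\,D_k(\x,\y_{\alpha,m}^k)\,\widetilde D_k f(\y_{\alpha,m}^k),
\]
and reindex the refined subcubes $Q_{\alpha,m}^k\in\mathscr D_{k+j_0}$ as $Q\in\mathscr D_{k'}$ with $k'=k+j_0$. I would set
\[
s_Q := c_1\,w(Q)^{1/p}\,|\widetilde D_{k'-j_0}f(\y_Q)|,\qquad a_Q(\x) := \frac{w(Q)\,D_{k'-j_0}(\x,\y_Q)\,\widetilde D_{k'-j_0}f(\y_Q)}{s_Q},
\]
for a dimensional constant $c_1$. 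The bounded-support property (Definition \ref{defn- Sk}(ii)), the size/H\"older estimates (iii)--(iv), the cancellation (vi) of $D_{k'-j_0}$, and the doubling of $dw$ then force $a_Q$ to be a $p$-atom supported in $\Lambda Q$ with $\Lambda=3\times 2^{j_0}A_1$, satisfying (i)--(iv) of Definition \ref{def:7.1}.

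The next step is the coefficient estimate. Substituting the definition of $s_Q$ reduces matters to
\[
\Big\|\Big[\sum_{k}\delta^{-ksq}\sum_{Q\in\mathscr D_{k+j_0}}|\widetilde D_kf(\y_Q)|^q\chi_Q\Big]^{1/q}\Big\|_{L^p(dw)}\lesi \|f\|_{\dot F^s_{p,q}(dw)}.
\]
Here I would prove the pointwise domination $|\widetilde D_k f(\y_Q)|\lesi \mathcal M_r(\widetilde D_k f)(\x)$, valid uniformly for every $\x\in Q$ because $\y_Q,\x$ both lie in a ball of radius $\lesi \delta^{k+j_0}\ll\delta^k$; this is derived from the kernel bounds of $\widetilde D_k$ via Lemma \ref{lem1- thm2 atom Besov}, taking $r$ just below $p\wedge q$ so that $r>p(s,\epsilon)$. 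Fefferman--Stein \eqref{FSIn} then dominates the left-hand side by $\|(\sum_k\delta^{-ksq}|\widetilde D_k f|^q)^{1/q}\|_{L^p(dw)}$, which is equivalent to $\|f\|_{\dot F^s_{p,q}(dw)}$ by an almost-orthogonality (change-of-ATI) comparison between $\{D_k\}$ and $\{\widetilde D_k\}$.

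For the converse direction, assuming $f=\sum_{k',Q}s_Qa_Q$ converges in $(\mathring{\mathcal G}^\epsilon_0(\beta,\gamma))'$, I would apply $D_\nu$ to the decomposition and split the sum at $k'=\nu$. The two branches of Lemma \ref{lem1-comparison} yield pointwise estimates on $D_\nu a_Q$ which, inserted into Lemma \ref{lem1- thm2 atom Besov}, give for some $\eta>0$ and $\f{\N}{M}<r<p\wedge q$,
\[
\delta^{-\nu s}|D_\nu f(\x)|\lesi \sum_{k'\in\Z}\delta^{-|k'-\nu|\eta}\,\mathcal M_r\Big(\sum_{Q\in\mathscr D_{k'}}w(Q)^{-1/p}\delta^{-k's}|s_Q|\chi_Q\Big)(\x).
\]
Taking the $\ell^q$-norm in $\nu$ and then the $L^p$-norm, and invoking the Young/Fefferman--Stein inequality \eqref{YFSIn} in the $k'$-variable, will close the desired norm bound.

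The principal obstacle is the two-scale bookkeeping: the kernel $D_k$ lives at scale $\delta^k$ while the atoms are attached to cubes $Q\in\mathscr D_{k+j_0}$ at the finer scale $\delta^{k+j_0}$, so every estimate -- the atomic size, the Lipschitz constant (whose denominator must be $\delta^{k+j_0}$, not $\delta^k$), the support radius $\Lambda\delta^{k+j_0}$, and the pointwise maximal-function comparison -- carries a factor $\delta^{j_0}$ that has to be absorbed into $c_1$ and $\Lambda$. The second delicate point is the change-of-ATI step showing that $\{\widetilde D_k\}$ produces an equivalent $\dot F^s_{p,q}(dw)$-norm; it rests on almost-orthogonality between two ATIs with enough smoothness and cancellation, and is precisely where the restrictions $p\wedge q>p(s,\epsilon)$ and $|s|<\beta\wedge\gamma$ enter.
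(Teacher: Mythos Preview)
Your proposal is correct and follows essentially the same route as the paper: same Calder\'on reproducing formula, same definition of $s_Q$ and $a_Q$, and the converse via Lemma~\ref{lem1-comparison} plus Lemma~\ref{lem1- thm2 atom Besov} plus \eqref{YFSIn} matches the paper's argument almost line by line. The one place you diverge is in the coefficient estimate of the forward direction: rather than passing from $|\widetilde D_kf(\y_Q)|$ to $\mathcal M_r(\widetilde D_kf)$ and then invoking a separate change-of-ATI equivalence $\{\widetilde D_k\}\leftrightarrow\{D_k\}$, the paper appeals directly to the discrete characterization in \cite[Remark~5.5]{HMY}, which gives $\sum_{Q}|\widetilde D_kf(\y_Q)|^q\chi_Q\lesi \sum_Q\inf_{z\in Q}|D_kf(z)|^q\chi_Q$ in one step and thereby absorbs your ``second delicate point'' into a citation. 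Your route works too, but note that $\widetilde D_k$ only has the one-sided regularity of Lemma~\ref{lem-Calderon reproducing lemma}(i)--(iii), so the almost-orthogonality needed for the change-of-ATI step must be checked with that asymmetry in mind; this is exactly what \cite[Remark~5.5]{HMY} packages.
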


Although the proofs of Theorem \ref{thm1-Besov atomic} and Theorem \ref{thm1-TL atomic} are standard and could have been addressed elsewhere, we have not come across them in the existing literature. Therefore, we aim to present the proofs here for the sake of completeness.  Since the proofs of Theorem \ref{thm1-Besov atomic} and Theorem \ref{thm1-TL atomic} are similar, we just give the proof of Theorem \ref{thm1-TL atomic} as an illustrative example.
\begin{proof}[Proof of Theorem \ref{thm1-TL atomic}:]
	Let $S_k$ be an ATI with bounded support and $D_k = S_k-S_{k-1}$. From Lemma \ref{lem-Calderon reproducing lemma},
	\[
	\begin{aligned}
		f(\x) &=\sum_{k\in \Z} \sum_{\alpha\in I_k}\sum_{m=1}^{N(k,\alpha)} w(Q_{\alpha,m}^k)D_k(\x,\y_{\alpha,m}^k) \widetilde D_k f(\y_{\alpha,m}^k)
	\end{aligned}
	\]
	in $(\mathring{\mathcal G}^\epsilon_0(\beta,\gamma))'$, where $\widetilde D_k$ satisfies (i)-(iii) in Lemma \ref{lem-Calderon reproducing lemma}.

	Define
	\[
	a_{Q_{\alpha,m}^k}(\x)= \f{w({Q_{\alpha,m}^k})}{s_{Q_{\alpha,m}^k}}D_k(\x,\y_{\alpha,m}^k) \widetilde D_k f(\y_{\alpha,m}^k)
	\]
	and
	$$
	s_{Q_{\alpha,m}^k}=w({Q_{\alpha,m}^k})^{1/p}|\widetilde D_k f(\y_{\alpha,m}^k)|.
	$$
	From the properties (ii), (iii), (iv) and (v) in Definition \ref{defn- Sk}, it is easy to see that $a_{{Q_{\alpha,m}^k}}$ is an $p$-atom associated to the cube $Q_{\alpha,m}^k$. 
	
	This, together with \eqref{eq-dyadic cube 2}, implies
	\[
	\begin{aligned}
		\Big\| \Big[\sum_{k\in \Z}\delta^{-ksq} \sum_{\alpha\in I_k}&w(Q_{\alpha,m}^k)^{-q/p}|s_{Q_{\alpha,m}^k}|^q\chi_{Q_{\alpha,m}^k}\Big]^{1/q}\Big\|_{L^p(dw)}\\
		&= \Big\| \Big[\sum_{k\in \Z}\delta^{-(k+j_0)sq} \sum_{\alpha\in I_{k+j_0}}w(Q_\alpha^{k+j_0})^{-q/p}|s_{Q_\alpha^{k+j_0}}|^q\chi_{Q_\alpha^{k+j_0}}\Big]^{1/q}\Big\|_{L^p(dw)}\\
		&\simeq \Big\| \Big[\sum_{k\in \Z}\delta^{-ksq} \sum_{\alpha\in I_{k+j_0}}w(Q_\alpha^{k+j_0})^{-q/p}|s_{Q_\alpha^{k+j_0}}|^q\chi_{Q_\alpha^{k+j_0}}\Big]^{1/q}\Big\|_{L^p(dw)}\\
		&=\Big\| \Big[\sum_{k\in \Z}\delta^{-ksq} \sum_{\alpha\in I_{k}}\sum_{m=1}^{N(\alpha,k)}w(Q_{\alpha,m}^{k})^{-q/p}|s_{Q_{\alpha,m}^{k}}|^q\chi_{Q_{\alpha,m}^{k}}\Big]^{1/q}\Big\|_{L^p(dw)}.
	\end{aligned}
	\]
	On the other hand,
	\[
	\begin{aligned}
		\Big\| \Big[\sum_{k\in \Z}\delta^{-ksq} \sum_{\alpha\in I_{k}}&\sum_{m=1}^{N(\alpha,k)}w(Q_{\alpha,m}^{k})^{-q/p}|s_{Q_{\alpha,m}^{k}}|^q\chi_{Q_{\alpha,m}^{k}}\Big]^{1/q}\Big\|_{L^p(dw)}\\
		&\lesi \Big\| \Big[\sum_{k\in \Z}\delta^{-ksq} \sum_{\alpha\in I_{k}}\sum_{m=1}^{N(\alpha,k)} |\widetilde D_k f(\y_{\alpha,m}^k)|^q\chi_{Q_{\alpha,m}^{k}}\Big]^{1/q}\Big\|_{L^p(dw)}.
	\end{aligned}
	\]
	It follows that 
	\[
	\begin{aligned}
		\Big\| \Big[\sum_{k\in \Z}\delta^{-ksq} \sum_{\alpha\in I_k}&w(Q_\alpha^k)^{-q/p}|s_{Q_\alpha^k}|^q\chi_{Q_\alpha^k}\Big]^{1/q}\Big\|_{L^p(dw)}\\
		&\lesi \Big\| \Big[\sum_{k\in \Z}\delta^{-ksq} \sum_{\alpha\in I_{k}}\sum_{m=1}^{N(\alpha,k)} |\widetilde D_k f(\y_{\alpha,m}^k)|^q\chi_{Q_{\alpha,m}^{k}}\Big]^{1/q}\Big\|_{L^p(dw)}.
	\end{aligned}
	\]
	By Remark 5.5 in \cite{HMY},
	\[
	\begin{aligned}
		\Big\| \Big[\sum_{k\in \Z}\delta^{-ksq} \sum_{\alpha\in I_{k}}\sum_{m=1}^{N(\alpha,k)} &|\widetilde D_k f(\y_{\alpha,m}^k)|^q\chi_{Q_{\alpha,m}^{k}}\Big]^{1/q}\Big\|_{L^p(dw)}\\
		&\lesi \Big\| \Big[\sum_{k\in \Z}\delta^{-ksq} \sum_{\alpha\in I_{k}}\sum_{m=1}^{N(\alpha,k)} \inf_{z\in Q_{\alpha,m}^{k}}| D_k f(z)|^q\chi_{Q_{\alpha,m}^{k}}\Big]^{1/q}\Big\|_{L^p(dw)}\\
		&\lesi \Big\| \Big[\sum_{k\in \Z}\delta^{-ksq} \sum_{\alpha\in I_{k}}\sum_{m=1}^{N(\alpha,k)} | D_k f|^q\chi_{Q_{\alpha,m}^{k}}\Big]^{1/q}\Big\|_{L^p(dw)}\\
		&\lesi \Big\| \Big[\sum_{k\in \Z}\delta^{-ksq} \sum_{\alpha\in I_{k}}  | D_k f|^q\chi_{Q_{\alpha}^{k}}\Big]^{1/q}\Big\|_{L^p(dw)}\\
		&\lesi \|f\|_{\dot{F}^s_{p,q}(dw)}.
	\end{aligned}
	\]
	Consequently,
	\[
	\Big\| \Big[\sum_{k\in \Z}\delta^{-ksq} \sum_{\alpha\in I_k}w(Q_\alpha^k)^{-q/p}|s_{Q_\alpha^k}|^q\chi_{Q_\alpha^k}\Big]^{1/q}\Big\|_{L^p(dw)}\lesi \|f\|_{\dot{F}^s_{p,q}(dw)}.
	\]
	This proves the first direction.

	The reverse direction is quite standard and follows directly from Lemma \ref{lem1-comparison}. For the sake of completeness, we provide the proof.
	
	Fix  $r<\min\{1,p,q\}$ so that $r>p(s,1)$ we also fix $M>\f{\N}{r}$.  Since 
	$$
	f=\sum_{\nu\in\mathbb{Z}}\sum_{Q\in \mathscr{D}_\nu}s_Qa_Q \ \ \text{in $(\mathring{\mathcal G}^\epsilon_0(\beta,\gamma))'$},
	$$
	we have, for each $k\in \mathbb{Z}$,
	\[
	\begin{aligned}
		D_kf&=\sum_{\nu\in\mathbb{Z}}\sum_{Q\in \mathscr{D}_\nu}s_QD_ka_Q\\
		&=\sum_{\nu: \nu\geq k}\sum_{Q\in \mathscr{D}_\nu}s_QD_ka_Q+\sum_{\nu: \nu< k}\sum_{Q\in \mathscr{D}_\nu}s_QD_ka_Q.
	\end{aligned}
	\]
	By Lemma \ref{lem1-comparison},
	\[
	\begin{aligned}
		\delta^{-sk}|D_kf|
		&\lesi \sum_{\nu: \nu\geq k}\sum_{Q\in \mathscr{D}_\nu}\delta^{-sk}\delta^{\nu-k}\f{w(Q)}{w(B(\x_{Q},\delta^k))} \Big(1+\f{d(\x,\x_{Q})}{\delta^{k}}\Big)^{-M}w(Q)^{-1/p}|s_Q|\\
		& \ \ +\sum_{\nu: \nu< k}\sum_{Q\in \mathscr{D}_\nu}\delta^{-sk}\delta^{k-\nu} \Big(1+\f{d(\x,\x_{Q})}{\delta^{\nu}}\Big)^{-M}w(Q)^{-1/p}|s_Q|\\
		&\lesi \sum_{\nu: \nu\geq k}\sum_{Q\in \mathscr{D}_\nu}\delta^{(\nu-k)(1+s)}\f{w(Q)}{w(B(\x_{Q},\delta^k))} \Big(1+\f{d(\x,\x_{Q})}{\delta^{k}}\Big)^{-M}\delta^{-s\nu}w(Q)^{-1/p}|s_Q|\\
		& \ \ +\sum_{\nu: \nu< k}\sum_{Q\in \mathscr{D}_\nu}\delta^{(k-\nu)(1-s)} \Big(1+\f{d(\x,\x_{Q})}{\delta^{\nu}}\Big)^{-M}\delta^{-s\nu}w(Q)^{-1/p}|s_Q|.
	\end{aligned}
	\]
	
	Using Lemma \ref{lem1- thm2 atom Besov}, 
	\begin{equation}\label{eq-proof of reverse Besov}
		\begin{aligned}
			\delta^{-sk}|D_kf|&\lesi \sum_{\nu: \nu\geq k}\delta^{(\nu-k)(1+s)}\delta^{-\N(\nu-k)(1/r-1)} \mathcal{M}_{r}\Big(\sum_{Q\in \mathscr{D}_\nu}\delta^{-s\nu}|s_Q|w(Q)^{-1/p}\chi_Q\Big)\\
			& \ \ \ \ +\sum_{\nu: \nu< k} \delta^{(k-\nu)(1-s)} \mathcal{M}_{r}\Big(\sum_{Q\in \mathscr{D}_\nu}\delta^{-s\nu}|s_Q|w(Q)^{-1/p}\chi_Q\Big).
		\end{aligned}
	\end{equation}
	
	By using \eqref{YFSIn} we conclude that
	\[
	\begin{aligned}
		\|f\|_{\FF^{s,L}_{p,q}(dw)}&=\Big\|\Big[\sum_{k\in \mathbb{Z}}(\delta^{-ks}|D_kf|)^{q}\Big]^{1/q}\Big\|_{L^p(dw)}\\
		&\lesi \Big\|\Big[\sum_{\nu\in\mathbb{Z}}\delta^{-\nu s q}\Big(\sum_{Q\in \mathscr{D}_\nu}w(Q)^{-1/p}|s_Q|\chi_Q\Big)^q\Big]^{1/q}\Big\|_{L^p(w)},
	\end{aligned}
	\]
	provided that $p,q >r>\f{\N}{\N+1+s}$.

	This completes our proof.
\end{proof}

The atomic decomposition results above imply that $\B^{s}_{p,q}(dw)\cap L^2(dw)$ and $\FF^{s}_{p,q}(dw)\cap L^2(dw)$ are  dense in $\B^{s}_{p,q}(dw)$ and $\FF^{s}_{p,q}(dw)$. Consequently, we have the following corollary.
\begin{cor}\label{cor2}
	For $s\in (-1,1)$, $p(s,1)<p< \vc$ and $0<q< \vc$, the Besov space $\dot{B}^s_{p,q}(dw)$ defined in Definition \ref{defn-Besov and TL spaces} is independent of the choices of  $\epsilon \in (0,1)$ and $0<\beta,\gamma<\epsilon$ satisfying $|s|<\beta \wedge \gamma$ and $p>p(s,\epsilon)$.
	
	Similarly, for $s\in (-1,1)$ and $p(s,1)<p, q< \vc$, the Triebel-Lizorkin space $\dot{F}^s_{p,q}(dw)$ defined in Definition \ref{defn-Besov and TL spaces} is independent of the choices of $\epsilon \in (0,1)$ and $0<\beta,\gamma<\epsilon$ satisfying $|s|<\beta \wedge \gamma$ and $p\wedge q >p(s,\epsilon)$.
\end{cor}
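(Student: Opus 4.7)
The plan is to leverage the atomic decompositions in Theorem \ref{thm1-Besov atomic} and Theorem \ref{thm1-TL atomic}: the discrete coefficient sequence norm on the right-hand side depends only on the dyadic grid $\mathscr D$ and on $s,p,q$, and not at all on the auxiliary parameters $(\epsilon,\beta,\gamma)$ used to pin down the ambient distribution space. Fix two admissible triples $(\epsilon_i,\beta_i,\gamma_i)$, $i=1,2$, and write $\dot F^{s,i}_{p,q}(dw)$ for the Triebel--Lizorkin space defined via the $i$-th triple. By symmetry it is enough to prove the continuous embedding $\dot F^{s,1}_{p,q}(dw)\hookrightarrow \dot F^{s,2}_{p,q}(dw)$; the Besov case is entirely analogous.

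Given $f\in\dot F^{s,1}_{p,q}(dw)$, the forward direction of Theorem \ref{thm1-TL atomic} applied with the first triple yields an atomic expansion $f=\sum_{k\in\Z}\sum_{Q\in\mathscr D_k}s_Q a_Q$ converging in $(\mathring{\mathcal G}^{\epsilon_1}(\beta_1,\gamma_1))'$, with coefficient norm controlled by $\|f\|_{\dot F^{s,1}_{p,q}(dw)}$. I would then argue that the same series defines an element $g\in(\mathring{\mathcal G}^{\epsilon_2}(\beta_2,\gamma_2))'$: each $p$-atom $a_Q$ is a bounded, compactly supported, Lipschitz function with vanishing integral, so pairing $a_Q$ against any $\phi\in\mathring{\mathcal G}(\epsilon_2,\epsilon_2)$ and exploiting the mean-zero condition against the H\"older regularity of $\phi$ yields a quantitative estimate of the form
\[
|\langle a_Q,\phi\rangle|\lesi \|\phi\|_{\mathcal G(\epsilon_2,\epsilon_2)}\,w(Q)^{1-1/p}\,\Big(\frac{\ell(Q)}{\ell(Q)+\|\x_Q\|+1}\Big)^{\beta_2}P_{\gamma_2}(0,\x_Q;1).
\]
Summing $\sum|s_Q|\,|\langle a_Q,\phi\rangle|$ and applying H\"older's inequality together with the $L^p(\ell^q)$-bound on the sequence gives absolute convergence dominated by $\|f\|_{\dot F^{s,1}_{p,q}(dw)}\|\phi\|_{\mathcal G(\epsilon_2,\epsilon_2)}$. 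Extending by density from $\mathring{\mathcal G}(\epsilon_2,\epsilon_2)$ to the whole completion $\mathring{\mathcal G}^{\epsilon_2}(\beta_2,\gamma_2)$ defines $g$ on the entire test-function class. The converse direction of Theorem \ref{thm1-TL atomic} applied with the second triple then delivers $g\in\dot F^{s,2}_{p,q}(dw)$ together with $\|g\|_{\dot F^{s,2}_{p,q}(dw)}\lesi \|f\|_{\dot F^{s,1}_{p,q}(dw)}$.

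Finally one has to identify $f$ with $g$. Each partial sum $f_N=\sum_{|k|\le N}\sum_{Q\in\mathscr D_k}s_Q a_Q$ is a genuine $L^2(dw)$ function, so it acts unambiguously on every $\phi$ in the common dense subspace $\mathring{\mathcal G}(\epsilon,\epsilon)$ with $\epsilon=\epsilon_1\vee\epsilon_2$; convergence of the series in both duals forces $\langle f,\phi\rangle=\lim_N\langle f_N,\phi\rangle=\langle g,\phi\rangle$ on this common subspace, and density then yields $f=g$ in each of the two distribution spaces involved. The principal obstacle of the plan is the uniform estimate on $|\langle a_Q,\phi\rangle|$ displayed above: it must be strong enough to be summable against the $L^p(\ell^q)$-coefficient norm, and this requires carefully playing the vanishing moment of $a_Q$ against the H\"older smoothness and the $\gamma_2$-decay of $\phi$, while tracking the geometric position of $Q$ relative to the reference point via the doubling property of $(\RN,\|\cdot\|,dw)$.
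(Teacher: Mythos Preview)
Your plan shares the paper's core idea---use the atomic decomposition, whose coefficient norm is visibly independent of $(\epsilon,\beta,\gamma)$---but the paper takes a shorter route. Rather than establishing the pairing estimate $|\langle a_Q,\phi\rangle|$ and summing it, the paper simply observes that every $p$-atom (and hence every finite partial sum of the atomic series) lies in $L^2(dw)$, so Theorems~\ref{thm1-Besov atomic} and~\ref{thm1-TL atomic} yield that $\dot F^{s}_{p,q}(dw)\cap L^2(dw)$ is dense in $\dot F^{s}_{p,q}(dw)$ for \emph{every} admissible triple. Since an $L^2(dw)$ function belongs to every $(\mathring{\mathcal G}^{\epsilon}(\beta,\gamma))'$ and the quantity $\|D_k f\|_{L^p(dw)}$ is computed by an honest integral (independent of the ambient distribution space), the two candidate norms agree on this common dense subspace; completeness then gives the identification. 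This sidesteps entirely the ``principal obstacle'' you flag.

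If you pursue your explicit route, note that the displayed bound on $|\langle a_Q,\phi\rangle|$ is only half of what is needed: it exploits the vanishing moment of $a_Q$ against the $\beta_2$-H\"older regularity of $\phi$, which produces the factor $(\ell(Q)/(1+\|\x_Q\|))^{\beta_2}$ and is useful only when $\ell(Q)$ is small. For large cubes that factor blows up, and you must instead use $\int\phi\,dw=0$ together with the Lipschitz regularity of $a_Q$ and the $\gamma_2$-decay of $\phi$ to obtain a factor of order $\ell(Q)^{-\gamma_2}$. With both regimes in hand the sum $\sum_Q |s_Q|\,|\langle a_Q,\phi\rangle|$ is indeed controlled by the coefficient norm, but the argument is noticeably longer than the $L^2$-density shortcut above. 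Your identification step $f=g$ via the common dense class $\mathring{\mathcal G}(\epsilon_1\vee\epsilon_2,\epsilon_1\vee\epsilon_2)$ also needs a word of justification: one must know this smaller class is still dense in each $\mathring{\mathcal G}^{\epsilon_i}(\beta_i,\gamma_i)$, which is true but not quite the definition.
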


\subsection{Besov and Triebel-Lizorkin spaces associated to the Dunkl operator }

In this section, we will study the theory of Besov and Triebel-Lizorkin spaces associated to the Dunkl operator. It is interesting to note that we will show that these Besov and Triebel-Lizorkin spaces coincide with the Besov and Triebel-Lizorkin spaces defined in Definition \ref{defn-Besov and TL spaces}. 
\begin{defn}\label{defn 1}
	Let $\psi$ be  a partition of unity. Let $\epsilon\in (0,1]$ and $\beta,\gamma\in (0,\epsilon)$.  For $s\in \mathbb R$ and $p, q\in (0,\vc)$, we define the homogeneous Besov space $\B^{s,  L}_{p,q}(dw)$ as the completion of the set
	\[
	\Big\{f\in (\mathring{\mathcal G}^\epsilon(\beta,\gamma))': \Big\{\sum_{j\in \mathbb{Z}}\left(\delta^{-js}\|\psi_j(\sqrt{L})f\|_{L^p(dw)}\right)^q\Big\}^{1/q}<\vc\Big\}
	\]
	under the norm 
	\[
	\|f\|_{\B^{s, L}_{p,q}(dw)}:= \Big\{\sum_{j\in \mathbb{Z}}\left(\delta^{-js}\|\psi_j(\sqrt{L})f\|_{L^p(dw)}\right)^q\Big\}^{1/q}<\vc.
	\]
	
	Similarly, $s\in \mathbb R$ and $p, q\in (0,\vc)$, we define the homogeneous Triebel-Lizorkin space $\FF^{s, L}_{p,q}(dw)$ as the completion of the set
	\[
	\Big\{f\in (\mathring{\mathcal G}^\epsilon(\beta,\gamma))': \Big\|\Big[\sum_{j\in \mathbb{Z}}(\delta^{-js}|\psi_j(\sqrt{L})f|)^q\Big]^{1/q}\Big\|_{L^p(dw)}<\vc\Big\}
	\]
	under the norm 
	\[
	\|f\|_{\FF^{s, L}_{p,q}(dw)}:= \Big\|\Big[\sum_{j\in \mathbb{Z}}(\delta^{-js}|\psi_j(\sqrt{L})f|)^q\Big]^{1/q}\Big\|_{L^p(dw)}.
	\]
\end{defn}
We have some relevant comments on the definitions of these functions spaces. 
\begin{enumerate}[{\rm (a)}]
	\item As shown in \cite{BBD}, the function spaces in Definition \ref{defn 1}  are independence of the partition of unity function $\psi$. Later on, we will show that these function spaces in Definition \ref{defn 1} are also independent of $\epsilon,\beta,\gamma$. See Corollary \ref{cor1}. This serves as a rationale for omitting these indices from the notations of the function spaces without any potential confusion.
	
	\item We have to defined our function spaces via the completion approach since the space of distributions $(\mathring{\mathcal G}^\epsilon(\beta,\gamma))'$ might not be large enough for the function spaces as $p,q$ close to $0$ and $|s|\ge 1$. Note that in \cite{BBD}, the Besov and Triebel-Lizorkin spaces are defined directly on a new space of distributions associated with the operator $L$ rather than by the completion as in Definition \ref{defn 1}. However, the abstract nature and dependency on the main operator of the space of distributions in \cite{BBD} render it undesirable. 
	
	In Definition \ref{defn 1}, we specially focus on the space of distributions $(\mathring{\mathcal G}^\epsilon(\beta,\gamma))'$ for several reasons.  Firstly, the distributions $(\mathring{\mathcal G}^\epsilon(\beta,\gamma))'$ are more natural and have been extensively utilized in the examination of function spaces in homogeneous spaces. See for example \cite{HMY, HWY, WHH}. Secondly, the ultimate goal in this section is to establish the equivalence between the Besov and Triebel-Lizorkin spaces associated with the Dunkl operator and their counterparts on the space homogeneous type, as defined in Definition \ref{defn-Besov and TL spaces}.  With the ranges of the indices specified in Definition \ref{defn-Besov and TL spaces}, the space of distributions $(\mathring{\mathcal G}^\epsilon(\beta,\gamma))'$ proves to be sufficient spacious as demonstrated in Theorem \ref{thm 1}).
\end{enumerate}
We note that since $L$ is a non-negative self-adjoint operator satisfying the Gaussian upper bound in $(\RN, d, dw)$,  properties of Besov and Triebel-Lizorkin spaces established in \cite{BBD} can be transferred to the Dunkl setting.  We now recall some properties in \cite{BBD}.

\bigskip
For $\lambda>0, j\in \mathbb{Z}$ and $\varphi\in \mathscr{S}(\mathbb{R})$ the Peetre's type maximal function is defined, for $f\in \mathcal (\mathring{\mathcal G}^\epsilon(\beta,\gamma))'$ with $\epsilon\in (0,1]$ and $\beta,\gamma\in (0,\epsilon)$, by
\begin{equation}
	\label{eq-PetreeFunction}
	\varphi_{j,\lambda}^*(\sqrt{L})f(\x)=\sup_{\y\in \RN}\f{|\varphi_j(\sqrt{L})f(\y)|}{(1+2^jd(\x,\y))^\lambda} \ \  , \x\in \RN,
\end{equation}
where $\varphi_j(\lambda)=\varphi(2^{-j}\lambda)$.

Obviously, we have
\[
\varphi_{j,\lambda}^*(\sqrt{L})f(\x)\geq |\varphi_j(\sqrt{L})f(\x)|, \ \ \ \ \x\in \RN.
\]
Similarly, for $s, \lambda>0$ and $f\in \mathcal (\mathring{\mathcal G}^\epsilon(\beta,\gamma))'$ with $\epsilon\in (0,1]$ and $\beta,\gamma\in (0,\epsilon)$ we set
\begin{equation}
	\label{eq2-PetreeFunction}
	\varphi_{\lambda}^*(s\sqrt{L})f(\x)=\sup_{\y\in \RN}\f{|\varphi(s\sqrt{L})f(\y)|}{(1+d(\x,\y)/s)^\lambda}.
\end{equation}
Then from \cite[Proposition 3.3]{BBD}, we have
\begin{prop}
	\label{prop - equivalence of varphi*}
	Let $\psi$ be a partition of unity and $\varphi\in \mathscr{S}(\mathbb R)$ be an even function with ${\rm supp}\, \varphi\subset [-c,-b]\cup[b,c]$ for some $c>b>0$. Then we have:
	\begin{enumerate}[{\rm (a)}]
		\item For $0< p, q< \vc$, $s\in \mathbb{R}$, $\lambda>\N/p$ and $f\in \mathcal (\mathring{\mathcal G}^\epsilon(\beta,\gamma))'$ with $\epsilon\in (0,1]$ and $\beta,\gamma\in (0,\epsilon)$,
		$$\displaystyle \Big\{\sum_{j\in \mathbb{Z}}\left(\delta^{-js}\|\varphi^*_{j,\lambda}(\sqrt{L})f\|_{L^p(dw)}\right)^q\Big\}^{1/q}\lesi \Big\{\sum_{j\in \mathbb{Z}}\left(\delta^{-js}\|\psi_{j}(\sqrt{L})f\|_{L^p(dw)}\right)^q\Big\}^{1/q}.
		$$
		
		\item For $0< p, q<\vc$,  $s\in \mathbb{R}$,  $\lambda>\max\{\N/q, \N/p\}$, and $f\in \mathcal (\mathring{\mathcal G}^\epsilon(\beta,\gamma))'$ with $\epsilon\in (0,1]$ and $\beta,\gamma\in (0,\epsilon)$,
		$$\displaystyle \Big\|\Big[\sum_{j\in \mathbb{Z}}(\delta^{-js}|\varphi^*_{j,\lambda}(\sqrt{L})f|)^q\Big]^{1/q}\Big\|_{L^p(dw)}\lesi \Big\|\Big[\sum_{j\in \mathbb{Z}}(\delta^{-js}|\psi_{j}(\sqrt{L})f|)^q\Big]^{1/q}\Big\|_{L^p(dw)}.$$
	\end{enumerate}
\end{prop}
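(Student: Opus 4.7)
The plan is to reduce the estimate to a pointwise control of the Peetre maximal function by a Hardy--Littlewood maximal function of $\psi_k(\sqrt L)f$ summed over finitely many $k$, and then to apply the Fefferman--Stein vector-valued inequality \eqref{FSIn}. First I would apply the Calder\'on reproducing formula from Theorem \ref{thm - Calderon discrete version} to decompose
\[
\varphi_j(\sqrt L)f=\sum_{k\in\Z}\varphi_j(\sqrt L)\psi_k(\sqrt L)f,
\]
and observe that, because $\psi$ is supported in $[1/2,2]$ and $\varphi$ is supported in $[-c,-b]\cup[b,c]$, the composition $\varphi_j(\sqrt L)\psi_k(\sqrt L)$ vanishes whenever $|j-k|>K$ for some fixed constant $K$ depending only on $b,c$. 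So the decomposition collapses to a finite sum.

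Next I would establish the key pointwise estimate: for any $r>0$ with $\lambda r>\mathfrak N$,
\[
\bigl[\varphi^*_{j,\lambda}(\sqrt L)f(\x)\bigr]^{r}\lesssim \sum_{|k-j|\le K}\mathcal M^{d}_{r}\!\bigl[\psi_k(\sqrt L)f\bigr](\x)^{r},\qquad \x\in\RN.
\]
To derive it, for each $k$ with $|k-j|\le K$ I would write $\varphi_j(\sqrt L)\psi_k(\sqrt L)f(\y)=\int K_{jk}(\y,\z)\,\psi_k(\sqrt L)f(\z)\,dw(\z)$, where by Lemma \ref{lem-heat kernel estimate} applied to the even Schwartz function $\lambda\mapsto \varphi(\delta^j\lambda)\psi(\delta^k\lambda)$, the kernel $K_{jk}$ satisfies, for any $M>0$,
\[
|K_{jk}(\y,\z)|\lesssim \frac{1}{w(B(\y,\delta^{j}))}\Big(1+\frac{d(\y,\z)}{\delta^{j}}\Big)^{-M}.
\]
Splitting the integral into dyadic annuli in $d(\y,\z)$, factoring $|\psi_k(\sqrt L)f(\z)|=|\psi_k(\sqrt L)f(\z)|^{r}\,|\psi_k(\sqrt L)f(\z)|^{1-r}$, and controlling the second factor by $(1+\delta^{-j}d(\x,\z))^{\lambda(1-r)}\,[\varphi^*_{j,\lambda}(\sqrt L)f(\x)]^{1-r}$ after interchanging $\psi_k$ and $\varphi_j$ roles in the translate of the supremum from $\y$ to $\z$, one obtains (for $M$ large enough)
\[
|\varphi_j(\sqrt L)f(\y)|\lesssim (1+\delta^{-j}d(\x,\y))^{\lambda(1-r)}\,[\varphi^*_{j,\lambda}(\sqrt L)f(\x)]^{1-r}\Big(\sum_{|k-j|\le K}\mathcal M^{d}_{r}[\psi_k(\sqrt L)f](\x)^{r}\Big)^{1/r}.
\]
Taking the supremum in $\y$ after dividing by $(1+\delta^{-j}d(\x,\y))^{\lambda}$, a standard approximation argument (truncating the supremum to verify the $(\varphi^*)^{1-r}$-factor is finite, then absorbing it) yields the displayed inequality.

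Having the pointwise bound, part (a) follows by raising to the power $1/r$, taking $L^p(dw)$ norms, applying the boundedness of $\mathcal M^d_r$ on $L^p(dw)$ provided $r<p$, and finally taking the weighted $\ell^q$ norm in $j$ with weight $\delta^{-js}$, using that the sum over $|k-j|\le K$ is finite and the weight $\delta^{-js}$ is essentially invariant under such shifts. For part (b) we take $r<\min\{p,q\}$ with $r>\mathfrak N/\lambda$, which is possible by the hypothesis $\lambda>\max\{\mathfrak N/p,\mathfrak N/q\}$, and apply the Fefferman--Stein vector-valued maximal inequality \eqref{FSIn} to the $\ell^q$-valued sequence $\{\delta^{-js}\psi_j(\sqrt L)f\}_{j}$ (modulo a harmless shift by $\le K$).

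I expect the main obstacle to be the justification of the Peetre-type pointwise inequality in the present non-Euclidean, non-translation-invariant setting, since one must carefully handle the mismatch between the Euclidean norm appearing in the kernel estimate \eqref{eq-volume formula}-based bounds and the orbit distance $d$ appearing in the Peetre maximal function \eqref{eq-PetreeFunction}. The extra decay $(1+\|\x-\y\|/t)^{-2}$ in Lemma \ref{lem-heat kernel estimate} is not used critically here, but the Gaussian-type decay in $d$ is what ties the kernel bounds to the $\mathcal M^d_r$ operator and makes the Fefferman--Stein argument work.
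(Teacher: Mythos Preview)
The paper does not give its own proof of this proposition; it simply cites \cite[Proposition~3.3]{BBD}. Your overall strategy --- finite overlap of $\varphi_j$ and $\psi_k$ due to their supports, kernel estimates from Lemma~\ref{lem-heat kernel estimate}, a Peetre-type self-improvement (``absorption'') bound, and then the Fefferman--Stein maximal inequality~\eqref{FSIn} for $\mathcal M^d_r$ --- is exactly the standard route and is essentially what is carried out in \cite{BBD}.

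There is, however, a genuine gap in your absorption step. You propose to control $|\psi_k(\sqrt L)f(\z)|^{1-r}$ by $(1+\delta^{-j}d(\x,\z))^{\lambda(1-r)}[\varphi^*_{j,\lambda}(\sqrt L)f(\x)]^{1-r}$. This would require
\[
|\psi_k(\sqrt L)f(\z)|\lesssim (1+\delta^{-j}d(\x,\z))^{\lambda}\,\varphi^*_{j,\lambda}(\sqrt L)f(\x),
\]
which in turn amounts to $\psi^*_{k,\lambda}\lesssim \varphi^*_{j,\lambda}$ for $|j-k|\le K$. But $\varphi$ is an \emph{arbitrary} even Schwartz function supported in $[-c,-b]\cup[b,c]$; it is not assumed that $\sum_j\varphi_j\equiv 1$ (or even that it is non-vanishing) on $(0,\infty)$, so there is no way to reconstruct $\psi_k(\sqrt L)f$ from the pieces $\varphi_j(\sqrt L)f$. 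Your phrase ``interchanging $\psi_k$ and $\varphi_j$ roles'' cannot be made rigorous under the stated hypotheses.

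The fix is to decouple the two ingredients. First, carry out the absorption with $\psi$ alone: choose $\tilde\psi\in\mathscr S(\mathbb R)$ even, supported in $(0,\infty)$ on the positive half-line, with $\tilde\psi\equiv 1$ on $\operatorname{supp}\psi$, so that $\psi_k(\sqrt L)f=\tilde\psi_k(\sqrt L)\psi_k(\sqrt L)f$; then the kernel bound for $\tilde\psi_k(\sqrt L)$ and the tautological inequality $|\psi_k(\sqrt L)f(\z)|\le (1+\delta^{-k}d(\x,\z))^{\lambda}\psi^*_{k,\lambda}(\sqrt L)f(\x)$ yield, after the annulus decomposition and the finiteness/absorption argument,
\[
\bigl[\psi^*_{k,\lambda}(\sqrt L)f(\x)\bigr]^r\lesssim \mathcal M^d\bigl(|\psi_k(\sqrt L)f|^r\bigr)(\x)\qquad\text{whenever }r>\mathfrak N/\lambda.
\]
Second, and independently, the finite-overlap kernel estimate you already wrote gives the elementary comparison
\[
\varphi^*_{j,\lambda}(\sqrt L)f(\x)\lesssim \sum_{|k-j|\le K}\psi^*_{k,\lambda}(\sqrt L)f(\x),
\]
which needs no absorption. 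Combining these two displays gives your claimed pointwise bound, and then the rest of your argument (choice of $r<p$ for (a), $r<\min\{p,q\}$ for (b), and \eqref{FSIn}) goes through unchanged.
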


Propositions \ref{prop-duality}, \ref{prop-comple interpolation} and  \ref{mainthm-Interpolation} below are taken from \cite{BD}.
\begin{prop}
	\label{prop-duality}
	Let $s\in \mathbb{R}$ and $1<p,q<\vc$. The dual space $[\FF^{s,L}_{p,q}(dw)]^*$ of the Triebel-Lizorkin space $\FF^{s,L}_{p,q}(dw)$ is $\FF^{-s,L}_{p',q'}(dw)$.  {More precisely, if $f\in \FF^{-s,L}_{p',q'}(dw)$, then 
		$$\mathcal{L}_f(g)=\langle f,g \rangle:= \int_{\RN} f(x)g(x)dw(x)
		$$ 
		defines a linear functional on $\FF^{s,L}_{p,q}(dw)$  which satisfies 
		\begin{equation*}\label{eq1-duality}
			\left|\langle f,g \rangle \right|\lesi \|f\|_{\FF^{-s,L}_{p',q'}(dw)}\|g\|_{\FF^{s,L}_{p,q}(dw)}\quad \text{for all 
				$g\in \FF^{s,L}_{p,q}(dw)$}.
	\end{equation*}}
	
	{Conversely, if $\mathcal{L}$ is a bounded linear functional on $\FF^{s,L}_{p,q}(dw)$, then there exists $f\in \FF^{-s,L}_{p',q'}(dw)$ so that $\mathcal{L}_f(g)=\langle f,g \rangle$  for every $g\in \FF^{s,L}_{p,q}(dw)$ and $\|f\|_{\FF^{-s,L}_{p',q'}(dw)}\lesi \|\mathcal{L}\|$.}
\end{prop}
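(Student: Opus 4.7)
The plan is to realise $\FF^{s,L}_{p,q}(dw)$ as a retract of a vector-valued Lebesgue space and then transport the classical duality $[L^p(\ell^q)]^{*}\cong L^{p'}(\ell^{q'})$ back to the function space. Fix a partition of unity $\psi$ and a companion even $\tilde\psi\in\mathscr S(\mathbb R)$ supported in a slightly enlarged annulus, chosen so that $\sum_{j\in\mathbb Z}\tilde\psi_j(\lambda)\psi_j(\lambda)=1$ on $(0,\infty)$. Using the kernel estimates of Lemma \ref{lem-heat kernel estimate} and the Peetre-type characterization of Proposition \ref{prop - equivalence of varphi*}, the analysis operator $\Phi g:=\{\delta^{-js}\psi_j(\sqrt L)g\}_{j\in\mathbb Z}$ is an isomorphic embedding of $\FF^{s,L}_{p,q}(dw)$ into $L^p(\RN,\ell^q)$, and the synthesis operator $\Psi\{h_j\}:=\sum_j\delta^{js}\tilde\psi_j(\sqrt L)h_j$ is bounded from $L^p(\ell^q)$ onto $\FF^{s,L}_{p,q}(dw)$ with $\Psi\circ\Phi=\mathrm{Id}$ (by Theorem \ref{thm - Calderon discrete version}).

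For the forward embedding $\FF^{-s,L}_{p',q'}(dw)\hookrightarrow[\FF^{s,L}_{p,q}(dw)]^{*}$, take $f\in\FF^{-s,L}_{p',q'}(dw)\cap L^2(dw)$ and $g\in\FF^{s,L}_{p,q}(dw)\cap L^2(dw)$ (these subspaces are dense by the atomic decomposition). Expand $g=\sum_j\tilde\psi_j(\sqrt L)\psi_j(\sqrt L)g$ in $L^2(dw)$, use self-adjointness of $\tilde\psi_j(\sqrt L)$ to rewrite
\[
\langle f,g\rangle=\sum_{j\in\mathbb Z}\int_{\RN}\tilde\psi_j(\sqrt L)f(\x)\,\psi_j(\sqrt L)g(\x)\,dw(\x),
\]
and apply Hölder's inequality in $\ell^q$ (with the weights $\delta^{-js}$ and $\delta^{js}$) followed by Hölder in $L^p$ to obtain $|\langle f,g\rangle|\lesssim\|f\|_{\FF^{-s,L}_{p',q'}(dw)}\|g\|_{\FF^{s,L}_{p,q}(dw)}$. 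Density then extends this pairing to all of $\FF^{s,L}_{p,q}(dw)$.

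For surjectivity, let $\mathcal L\in[\FF^{s,L}_{p,q}(dw)]^{*}$. Then $\mathcal L\circ\Psi$ is a bounded linear functional on $L^p(\RN,\ell^q)$, so by classical duality it is represented by a sequence $\{h_j\}\in L^{p'}(\RN,\ell^{q'})$ with $\|\{h_j\}\|_{L^{p'}(\ell^{q'})}\lesssim\|\mathcal L\|$. Set
\[
f:=\sum_{j\in\mathbb Z}\delta^{-js}\psi_j(\sqrt L)h_j,
\]
where the series is shown to converge in $(\mathring{\mathcal G}^{\epsilon}(\beta,\gamma))'$ by using the kernel bounds of Lemma \ref{lem-heat kernel estimate} to pair each term against a test function and summing. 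Invoking the Peetre maximal function characterization once more, $\|f\|_{\FF^{-s,L}_{p',q'}(dw)}\lesssim\|\{h_j\}\|_{L^{p'}(\ell^{q'})}\lesssim\|\mathcal L\|$. Finally one verifies $\mathcal L(g)=\langle f,g\rangle$ on the dense subspace $\FF^{s,L}_{p,q}(dw)\cap L^2(dw)$ by tracing the Calderón reproducing formula through $\Phi$ and $\Psi$.

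The main obstacle is the synthesis step: proving that $\Psi$ is bounded from $L^p(\ell^q)$ into $\FF^{s,L}_{p,q}(dw)$ with the correct almost-orthogonality, and that the series defining $f$ in Step 3 genuinely represents an element of $\FF^{-s,L}_{p',q'}(dw)$ rather than merely a distribution. Both hinge on the almost-diagonality estimates $\|\psi_j(\sqrt L)\tilde\psi_k(\sqrt L)\|_{L^p\to L^p}\lesssim \delta^{|j-k|M}$ for arbitrarily large $M$, which in turn rest on the extra decay factor $(1+\|\x-\y\|/t)^{-2}$ in Lemma \ref{lem-heat kernel estimate}(a) together with the kernel estimates for $\tilde\psi_k(\sqrt L)$; combining these with the Fefferman--Stein inequality \eqref{YFSIn} yields the required boundedness when $p,q>1$.
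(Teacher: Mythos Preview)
The paper does not prove this proposition at all; it simply records that Propositions \ref{prop-duality}, \ref{prop-comple interpolation} and \ref{mainthm-Interpolation} are taken from \cite{BD}. Your retract argument is the standard route to such duality statements and is the approach used in that reference, so in substance you are reproducing the proof the paper cites.

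One correction to your final paragraph: the synthesis step does \emph{not} rely on the extra Euclidean decay factor $(1+\|\x-\y\|/t)^{-2}$ in Lemma \ref{lem-heat kernel estimate}(a). Because both $\psi$ and $\tilde\psi$ are supported in annuli, one has $\psi_k(\lambda)\tilde\psi_j(\lambda)\equiv 0$ once $|j-k|$ exceeds a fixed constant, so the ``almost-orthogonality'' is exact orthogonality for all but finitely many terms. The remaining finitely many operators $\psi_k(\sqrt L)\tilde\psi_j(\sqrt L)$ have kernels dominated by $w(B(\x,\delta^k))^{-1}(1+\delta^{-k}d(\x,\y))^{-M}$ for any $M$, using only the $d$-decay in Lemma \ref{lem-heat kernel estimate}(a); this already yields $|\psi_k(\sqrt L)\tilde\psi_j(\sqrt L)h(\x)|\lesssim \mathcal M^d_r h(\x)$ and, via \eqref{FSIn}, the boundedness of $\Psi$. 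The extra $\|\cdot\|$-decay is the key ingredient elsewhere in the paper (for comparing the $L$-spaces with the homogeneous-type spaces), but it plays no role in this duality result.
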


\begin{prop}
	\label{prop-comple interpolation}
	We have
	\begin{equation}
		\label{complex interpolation}
		\left(\FF^{s_0,L}_{p_0,q_0}(dw),\FF^{s_1,L}_{p_1,q_1}(dw)\right)_\theta = \FF^{s,L}_{p,q}(dw)
	\end{equation}
	for all $s_0, s_1 \in \mathbb{R}$, $0<p_0,p_1, q_0, q_1<\vc$, $\theta\in (0,1)$ and
	\[
	s=(1-\theta)s_0 +\theta s_1, \ \ \ \f{1}{p} =\f{1-\theta}{p_0}+\f{\theta}{p_1}, \ \ \ \f{1}{q} =\f{1-\theta}{q_0}+\f{\theta}{q_1}
	\] 	
	where $(\cdot, \cdot)_\theta$ stands for the complex interpolation brackets.
\end{prop}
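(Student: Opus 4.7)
The plan is to establish the identity by the retraction--coretraction scheme, reducing the problem to the classical complex interpolation of weighted vector-valued Lebesgue spaces. Fix a partition of unity $\psi$ and choose an auxiliary $\wz\psi\in \mathscr S(\rr)$ with $\supp\wz\psi\subset(0,\vc)$ and $\wz\psi\equiv 1$ on $\supp\psi$, so that $\psi_j(\sqrt L)=\wz\psi_j(\sqrt L)\psi_j(\sqrt L)$ for every $j\in\Z$. For $s\in\rr$ and $0<p,q<\vc$, denote by $L^p(dw;\ell^q_s)$ the quasi-Banach space of sequences $g=(g_j)_{j\in\Z}$ of measurable functions on $\RN$ with quasi-norm $\|(\sum_j(\delta^{-js}|g_j|)^q)^{1/q}\|_{L^p(dw)}$. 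The classical complex-interpolation identity
$$\bigl(L^{p_0}(dw;\ell^{q_0}_{s_0}),\,L^{p_1}(dw;\ell^{q_1}_{s_1})\bigr)_\theta = L^p(dw;\ell^q_s)$$
under the stated relations on $(s,p,q)$ is standard in the Banach range and extends to the quasi-Banach range via the theory of analytically convex lattices.

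First I would introduce the analysis (coretraction) operator $S\colon \FF^{s,L}_{p,q}(dw)\to L^p(dw;\ell^q_s)$ given by $Sf=(\psi_j(\sqrt L)f)_{j\in\Z}$, which is an isometric embedding by Definition \ref{defn 1}, and the synthesis (retraction) operator $T\colon L^p(dw;\ell^q_s)\to \FF^{s,L}_{p,q}(dw)$ given by $T(g)=\sum_{j\in\Z}\wz\psi_j(\sqrt L)g_j$. The identity $T\circ S=\mathrm{id}$ on $\FF^{s,L}_{p,q}(dw)$ follows from the discrete Calder\'on reproducing formula of Theorem \ref{thm - Calderon discrete version}, since $\wz\psi_j(\sqrt L)\psi_j(\sqrt L)=\psi_j(\sqrt L)$ and $\sum_j\psi_j(\sqrt L)=\mathrm{id}$.

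Next I would verify that $T$ is bounded on the full range of indices. Using the kernel estimates of Lemma \ref{lem-heat kernel estimate} together with a standard almost-orthogonality argument for the pair $\psi_k(\sqrt L)$ and $\wz\psi_j(\sqrt L)$, one obtains, for any $r<\min\{1,p,q\}$ and $\lambda>\N/r$, the pointwise control
$$|\psi_k(\sqrt L)T g(\x)|\lesi \sum_{j\in\Z}\delta^{\epsilon|j-k|}\mathcal M^d_r(g_j)(\x).$$
The vector-valued Fefferman--Stein inequality \eqref{FSIn} and its convolution version \eqref{YFSIn} then convert this into the operator bound $\|Tg\|_{\FF^{s,L}_{p,q}(dw)}\lesi \|g\|_{L^p(dw;\ell^q_s)}$. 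Finally, the retraction principle---a retract of a complex-interpolation couple inherits the interpolation identity of the couple---together with the interpolation of the sequence spaces yields the desired equality with equivalent norms.

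The main obstacle will be the quasi-Banach regime $\min\{p_i,q_i\}<1$, for two related reasons. First, one must make rigorous the convergence of the series defining $T(g)$ in the ambient distribution space $(\mathring{\mathcal G}^\epsilon(\beta,\gamma))'$, which requires the test-function kernel bounds of Lemmas \ref{lem - t small}--\ref{lem - t large for integral} to control $\wz\psi_j(\sqrt L)g_j$ against elements of $\mathring{\mathcal G}^\epsilon(\beta,\gamma)$. Second, the complex interpolation of the underlying lattices $L^p(dw;\ell^q_s)$ with $p,q<1$ must be invoked in its quasi-Banach formulation, relying on the analytic convexity of the sequence lattice. Once these technicalities are in place, the retraction argument applies uniformly across all admissible indices and yields the proposition.
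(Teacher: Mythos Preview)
The paper does not prove this proposition; it is imported directly from \cite{BD} (see the sentence preceding Proposition~\ref{prop-duality}). Your retraction--coretraction scheme is the standard route to complex-interpolation identities for Triebel--Lizorkin-type spaces and is essentially what underlies the result in that reference, so your outline is correct in spirit.

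One small technical correction: take $\wz\psi$ with \emph{compact} support, say in $[1/4,4]$, rather than merely $\supp\wz\psi\subset(0,\vc)$. Then $\psi_k(\sqrt L)\wz\psi_j(\sqrt L)=0$ whenever $|j-k|$ exceeds a fixed constant, so the series $\psi_k(\sqrt L)Tg=\sum_j\psi_k(\sqrt L)\wz\psi_j(\sqrt L)g_j$ is a finite sum and the pointwise bound $|\psi_k(\sqrt L)Tg(\x)|\lesi\sum_{|j-k|\le C}\mathcal M^d_r(g_j)(\x)$ follows directly from Lemma~\ref{lem-heat kernel estimate} without any need for the decay factor $\delta^{\epsilon|j-k|}$. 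Your identification of the two genuine technical burdens---the convergence of $T(g)$ in $(\mathring{\mathcal G}^\epsilon(\beta,\gamma))'$ and the quasi-Banach complex interpolation of the lattices $L^p(dw;\ell^q_s)$ via analytic convexity---is accurate, and these are indeed the points that require care in the full proof.
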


Our main result of this section is the following theorem.

\begin{thm}
	\label{mainthm-Interpolation}
	Let $\theta\in (0,1), s_1, s_2\in \mathbb{R}, s_1\neq s_2$ and $s=(1-\theta)s_1+\theta s_2$.
	\begin{enumerate}[{\rm (i)}]
		\item If $0< p,  q_1, q_2, q\leq \vc$ then
		\begin{equation}\label{eq-BesovInter}
			\left(\B^{s_1,L}_{p,q_1}(dw),\B^{s_2,L}_{p,q_2}(dw)\right)_{\theta,q}=\B^{s,L}_{p,q}(dw).
		\end{equation}
		
		\item If $0<p, q_1, q_2, q< \vc$ then
		\begin{equation}\label{eq-TLInter}
			\left(\FF^{s_1,L}_{p,q_1}(dw),\FF^{s_2,L}_{p,q_2}(dw)\right)_{\theta,q}=\B^{s,L}_{p,q}(dw).
		\end{equation}
		
	\end{enumerate}
	Here  $(\cdot, \cdot)_{\theta,q}$ stands for the real interpolation brackets.
\end{thm}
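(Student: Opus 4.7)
The plan is to establish both parts simultaneously by the classical retract/coretract technique, reducing the interpolation question for the function spaces to interpolation of suitable weighted vector-valued sequence spaces, where the needed identities are known.

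First I would fix a partition of unity $\psi$ and choose an auxiliary even $\widetilde\psi\in\mathscr S(\mathbb R)$ with $\mathrm{supp}\,\widetilde\psi\subset[\delta^2,\delta^{-2}]$ and $\widetilde\psi\equiv 1$ on $\mathrm{supp}\,\psi$, so that $\psi_j(\sqrt L)\widetilde\psi_j(\sqrt L)=\psi_j(\sqrt L)$ and $\psi_j(\sqrt L)\widetilde\psi_k(\sqrt L)=0$ whenever $|j-k|\ge 3$. Using Theorem \ref{thm - Calderon discrete version}, one has
$$
f=\sum_{j\in\mathbb Z}\widetilde\psi_j(\sqrt L)\psi_j(\sqrt L)f
$$
in $(\mathring{\mathcal G}^\epsilon(\beta,\gamma))'$. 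This suggests defining the analysis operator $S f:=\{\psi_j(\sqrt L)f\}_{j\in\mathbb Z}$ and the synthesis operator $T(\{g_j\}):=\sum_j\widetilde\psi_j(\sqrt L)g_j$, so that $T\circ S=\mathrm{id}$.

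Second, I would prove that $S$ and $T$ are bounded in the following two diagrams, which identify the function spaces as retracts of sequence spaces. Let $\ell^q_s(L^p(dw))$ be the weighted sequence space with norm $\bigl(\sum_j\delta^{-jsq}\|g_j\|_{L^p(dw)}^q\bigr)^{1/q}$ and let $L^p(dw;\ell^q_s)$ be the corresponding mixed-norm space. By definition, $S:\B^{s,L}_{p,q}(dw)\to\ell^q_s(L^p(dw))$ and $S:\FF^{s,L}_{p,q}(dw)\to L^p(dw;\ell^q_s)$ are bounded with $\|Sf\|=\|f\|$. For the synthesis direction, the disjoint-support property gives
$$
\psi_j(\sqrt L)T(\{g_k\})=\sum_{|k-j|\le 2}\psi_j(\sqrt L)\widetilde\psi_k(\sqrt L)g_k,
$$
and combined with the Peetre-type maximal function bound in Proposition \ref{prop - equivalence of varphi*}, applied to $\widetilde\psi$, one concludes that $T:\ell^q_s(L^p(dw))\to\B^{s,L}_{p,q}(dw)$ and $T:L^p(dw;\ell^q_s)\to\FF^{s,L}_{p,q}(dw)$ are bounded. (For $p$ or $q$ below $1$ one uses $r$-triangle inequalities and the vector-valued Fefferman--Stein inequality already recorded in Section 2.1.)

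Third, by the general retract principle in interpolation theory (see e.g.\ Bergh--L\"ofstr\"om, Theorem 6.4.2, extended to the quasi-Banach setting), the real interpolation brackets commute with retracts, so
$$
\bigl(\B^{s_1,L}_{p,q_1}(dw),\B^{s_2,L}_{p,q_2}(dw)\bigr)_{\theta,q}=T\Bigl[\bigl(\ell^{q_1}_{s_1}(L^p),\ell^{q_2}_{s_2}(L^p)\bigr)_{\theta,q}\Bigr],
$$
and similarly for the $\FF$ case. The remaining task is the scalar identities for the sequence spaces themselves:
$$
\bigl(\ell^{q_1}_{s_1}(X),\ell^{q_2}_{s_2}(X)\bigr)_{\theta,q}=\ell^q_s(X)\quad(s_1\ne s_2),
$$
for any (quasi-)Banach $X$, which yields part (i); and the mixed-norm identity
$$
\bigl(L^p(X;\ell^{q_1}_{s_1}),L^p(X;\ell^{q_2}_{s_2})\bigr)_{\theta,q}=\ell^q_s(L^p(X))\quad(s_1\ne s_2),
$$
which, thanks to the shift in the weight dominating the order of summation, gives the Besov-type space on the right-hand side and hence (ii).

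The main obstacle is verifying the sequence-space identities and the retract principle for quasi-Banach spaces (the ranges in the statement include $p,q_i<1$), and checking that the completion used in Definition \ref{defn 1} is compatible with the retract: one shows that the image of $T$ lies in the closed subspace corresponding to our completion, which follows from the Calder\'on reproducing formula and the density of $\mathring{\mathcal G}(\epsilon,\epsilon)$. Once these technical points are handled, both (i) and (ii) follow at once.
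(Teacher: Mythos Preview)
The paper does not prove this theorem: it is imported verbatim from \cite{BD} (see the sentence immediately preceding Proposition~\ref{prop-duality}), so there is no in-paper argument to compare against. Your retract/coretract scheme is precisely the standard route to such results (and is in fact how \cite{BD}, following \cite{BBD}, handles it), so your proposal is on target.

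Two remarks on execution. First, the mixed-norm identity you invoke for part (ii),
\[
\bigl(L^p(\ell^{q_1}_{s_1}),L^p(\ell^{q_2}_{s_2})\bigr)_{\theta,q}=\ell^q_s(L^p)\quad(s_1\neq s_2),
\]
is correct but is \emph{not} a one-line citation; the clean way to obtain it is to sandwich $L^p(\ell^{q_i}_{s_i})$ between two $\ell^{a}_{s_i}(L^p)$ spaces via the elementary embeddings $\ell^{\min(p,q_i)}_{s_i}(L^p)\hookrightarrow L^p(\ell^{q_i}_{s_i})\hookrightarrow \ell^{\max(p,q_i)}_{s_i}(L^p)$ and then apply the classical identity $\bigl(\ell^{a_1}_{s_1}(A),\ell^{a_2}_{s_2}(A)\bigr)_{\theta,q}=\ell^q_s(A)$, which is insensitive to $a_1,a_2$ once $s_1\neq s_2$. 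Second, for the synthesis bound on $T$ when $p<1$ (or $q<1$), Proposition~\ref{prop - equivalence of varphi*} alone is not quite the right tool; what you actually need is the pointwise domination $|\psi_j(\sqrt L)\widetilde\psi_k(\sqrt L)g_k|\lesssim \mathcal M_r^d g_k$ for any $r>0$, which follows directly from the kernel bounds in Lemma~\ref{lem-heat kernel estimate} and then feeds into \eqref{FSIn}. With these two points in hand your outline goes through.
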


We now recall atomic decomposition results for the Besov and Triebel-Lizorkin spaces associated to the Dunkl Laplacian $L$ in \cite{BBD}. Recall that $\mathscr D^d =\{\mathscr D^d_k: k\in \Z\}$ is the dyadic system in $(\RN, d,dw)$.
\begin{defn}\label{defLmol}
	Let $0< p\leq \vc$ and $M\in \mathbb{N}_+$. A function $a$ is said to be an $(L, M, p)$ atom if there exists a dyadic cube $Q^d\in \mathscr{D}^d$ so that
	\begin{enumerate}[{\rm (i)}]
		\item $a=L^{M} b$;
		
		\item ${\rm supp} \,L^{k} b\subset 6 Q^d$, $k=0,\ldots , 2M$;
		
		\item $\displaystyle |L^{k} b(x)|\leq \ell(Q^d)^{2(M-k)}w(Q^d)^{-1/p}$, $k=0,\ldots , 2M$.
	\end{enumerate}
	
\end{defn}

The following results on the atomic decompositions for the Besov and Triebel--Lizorkin are taken from Theorem 4.2, Theorem 4.3, Theorem 4.6 and Theorem 4.7 in \cite{BBD}. Note that the convergence of the atomic decomposition is due to Theorem \ref{Calderon reproducing}.
\begin{thm}\label{thm1- atom Besov - associated to L}
	Let $s\in \mathbb{R}$ and  $0<p,q< \vc$. Let $\epsilon\in (0,1]$ and $\beta,\gamma\in (0,\epsilon)$. For  $M\in \mathbb{N}_+$, if $f\in \B^{s,L}_{p,q}(dw)\cap \mathcal (\mathring{\mathcal G}^\epsilon(\beta,\gamma))'$ then there exist a sequence of $(L,M,p)$ atoms $\{a_{Q^d}\}_{Q^d\in \mathscr{D}_k, k\in \mathbb{Z}}$ and a sequence of coefficients  $\{s_{Q^d}\}_{Q^d\in \mathscr{D}_k, k\in\mathbb{Z}}$ so that
	$$
	f=\sum_{k\in\mathbb{Z}}\sum_{Q^d\in \mathscr{D}_k}s_{Q^d}a_{Q^d} \ \ \text{in $\mathcal (\mathring{\mathcal G}^\epsilon(\beta,\gamma))'$}.
	$$
	Moreover,
	$$
	\Big[\sum_{k\in\mathbb{Z}}\delta^{- sk q}\Big(\sum_{Q^d\in \mathscr{D}^d_k}|s_{Q^d}|^p\Big)^{q/p}\Big]^{1/q}\lesi \|f\|_{\B^{s,L}_{p,q}(dw)}.
	$$ 
	Conversely, for $M>\f{\N}{2}+\f{1}{2}\max\{s,\f{\N}{1\wedge p\wedge q}-s\}$,	if there exist a sequence of $(L,M,p)$ atoms $\{a_{Q^d}\}_{Q^d\in \mathscr{D}^d_k, k\in \mathbb{Z}}$ and a sequence of coefficients  $\{s_{Q^d}\}_{Q^d\in \mathscr{D}^d_k, k\in\mathbb{Z}}$
	$$
	f=\sum_{k\in\mathbb{Z}}\sum_{Q^d\in \mathscr{D}^d_k}s_{Q^d}a_{Q^d} \ \ \text{in $\mathcal (\mathring{\mathcal G}^\epsilon(\beta,\gamma))'$}
	$$
	and
	$$
	\Big[\sum_{k\in\mathbb{Z}}\delta^{-ks q}\Big(\sum_{Q^d\in \mathscr{D}_k}|s_{Q^d}|^p\Big)^{q/p}\Big]^{1/q}<\vc,
	$$
	then $f\in \B^{s,L}_{p,q}(dw)$ and
	$$
	\|f\|_{\B^{s,L}_{p,q}(dw)} \lesi \Big[\sum_{k\in\mathbb{Z}}\delta^{-ks q}\Big(\sum_{Q^d\in \mathscr{D}^d_k}|s_{Q^d}|^p\Big)^{q/p}\Big]^{1/q}.
	$$
	
\end{thm}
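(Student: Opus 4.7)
The proof splits into two directions. For the \emph{decomposition direction}, I plan to derive the atomic representation from the continuous Calder\'on reproducing formula of Theorem \ref{Calderon reproducing} applied to a symbol of the form $\varphi(\lambda)=\lambda^{4M}\tilde\varphi(\lambda)^2$, where $\tilde\varphi$ is an even Schwartz function whose Fourier transform is compactly supported, so that $\tilde\varphi(t\sqrt L)$ inherits the finite-propagation property of Lemma \ref{lem:finite propagation}. Discretizing $t=\delta^k$ and partitioning each slab through the dyadic grid $\mathscr D^d_k$ of $(\RN,d,dw)$ produces a representation
$$
f=\sum_{k\in\Z}\sum_{Q^d\in\mathscr D^d_k}s_{Q^d}\,a_{Q^d}
$$
in $(\mathring{\mathcal G}^\epsilon(\beta,\gamma))'$, where $a_{Q^d}=L^M b_{Q^d}$ is built from the localized kernel of $\delta^{2kM}\tilde\varphi(\delta^k\sqrt L)$ centred at $\x_{Q^d}$, and $s_{Q^d}\sim w(Q^d)^{1/p}\,\tilde\varphi(\delta^k\sqrt L)f(\x_{Q^d})$. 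The finite propagation of $\tilde\varphi(\delta^k\sqrt L)$ supplies $\supp L^i b_{Q^d}\subset 6Q^d$, and Lemma \ref{lem-heat kernel estimate} delivers the size bounds $|L^i b_{Q^d}|\le \ell(Q^d)^{2(M-i)}w(Q^d)^{-1/p}$ for $i=0,\dots,2M$, so each $a_{Q^d}$ is a bona-fide $(L,M,p)$ atom. The coefficient estimate then follows by dominating $|\tilde\varphi(\delta^k\sqrt L)f(\x_{Q^d})|$ by the Peetre maximal function and invoking Proposition \ref{prop - equivalence of varphi*}.

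For the \emph{reconstruction direction}, everything rests on the pointwise bound
$$
|\psi_j(\sqrt L)a_{Q^d}(\x)|\lesi \delta^{2M|j-k|}\,w(Q^d)^{-1/p}\,\f{1}{w(B(\x_{Q^d},\delta^{j\wedge k}+d(\x,\x_{Q^d})))}\,\Big(1+\f{d(\x,\x_{Q^d})}{\delta^{j\wedge k}}\Big)^{-M'}
$$
for every $Q^d\in\mathscr D^d_k$ and every prescribed $M'>0$. To prove it, I would transfer the factor $L^M$ between atom and test function via spectral calculus. For $j\ge k$, use $\psi_j(\sqrt L)a_{Q^d}=\delta^{-2jM}\phi(\delta^j\sqrt L)b_{Q^d}$ with $\phi(\mu)=\mu^{2M}\psi(\mu)\in C_0^\infty((0,\infty))$, apply the kernel estimates of Lemma \ref{lem-heat kernel estimate} for $\phi(\delta^j\sqrt L)$, and combine with $|b_{Q^d}|\lesi \delta^{2Mk}w(Q^d)^{-1/p}\chi_{6Q^d}$ to extract the decay $\delta^{2M(k-j)}$. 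For $j<k$, invert the transfer by writing $\psi_j(\sqrt L)a_{Q^d}=\delta^{2jM}\widetilde\psi(\delta^j\sqrt L)(L^{2M}b_{Q^d})$ with $\widetilde\psi(\mu)=\mu^{-2M}\psi(\mu)\in C_0^\infty((0,\infty))$ (well defined because $\psi$ vanishes near the origin), and apply $|L^{2M}b_{Q^d}|\lesi \delta^{-2Mk}w(Q^d)^{-1/p}\chi_{6Q^d}$ to obtain the decay $\delta^{2M(j-k)}$.

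With the pointwise bound in hand, Lemma \ref{lem1- JF lem} applied at scale $\eta=j\wedge k$ with a suitably chosen $\kappa$ converts $\sum_{Q^d\in\mathscr D^d_k}|s_{Q^d}|\,|\psi_j(\sqrt L)a_{Q^d}(\x)|$ into a constant multiple of $\delta^{2M|j-k|-\N|j-k|(1/r-\kappa)}\,\mathcal M^d_r\Big(\sum_{Q^d\in\mathscr D^d_k}|s_{Q^d}|w(Q^d)^{-1/p}\chi_{Q^d}\Big)(\x)$, for any $r$ slightly below $1\wedge p\wedge q$ and $r>\N/M'$. Taking $L^p(dw)$ norms (using the boundedness of $\mathcal M^d_r$ on $L^p$ for $r<p$), multiplying by $\delta^{-js}$, and summing in $k$ through Young's convolution inequality in $\ell^q(\Z)$ would then close the estimate $\|f\|_{\B^{s,L}_{p,q}(dw)}\lesi \big[\sum_k\delta^{-ksq}(\sum_{Q^d\in\mathscr D^d_k}|s_{Q^d}|^p)^{q/p}\big]^{1/q}$. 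The \emph{main obstacle} is to choose $r$ and $\kappa$ so that the combined decay exponent $2M-\N(1/r-\kappa)-|s|$ is strictly positive in both regimes $j\ge k$ and $j<k$ simultaneously; this is precisely the content of the hypothesis $M>\f{\N}{2}+\f{1}{2}\max\{s,\f{\N}{1\wedge p\wedge q}-s\}$, once $r$ is taken close enough to $1\wedge p\wedge q$. A secondary technical issue is the rigorous justification of $\mu^{-2M}\psi(\mu)$ in functional calculus on the spectral support of $\psi_j$, which is routine since $\psi$ vanishes identically near the origin.
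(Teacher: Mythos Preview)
The paper does not give a self-contained proof of this theorem; it imports the result from \cite{BBD} (Theorems 4.2, 4.3, 4.6, 4.7) and only remarks that the convergence in $(\mathring{\mathcal G}^\epsilon(\beta,\gamma))'$ follows from Theorem~\ref{Calderon reproducing}. Your sketch is essentially the argument carried out in \cite{BBD}, so the overall strategy is correct and aligned with what the paper invokes.

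That said, there is a genuine slip in the reconstruction direction: you have the two regimes interchanged. Recall $\delta\in(0,1)$. With $Q^d\in\mathscr D^d_k$ and test scale $j$, your computation for $j\ge k$ gives $\delta^{-2jM}\cdot\delta^{2Mk}=\delta^{2M(k-j)}$, which is $\ge 1$ since $k-j\le 0$; likewise your $j<k$ branch produces $\delta^{2M(j-k)}\ge 1$. Neither is decay. The correct pairing is the opposite one: for $j\ge k$ write $\psi_j(\sqrt L)a_{Q^d}=\delta^{2jM}\widetilde\psi_j(\sqrt L)(L^{2M}b_{Q^d})$ with $\widetilde\psi(\mu)=\mu^{-2M}\psi(\mu)$ and use $|L^{2M}b_{Q^d}|\lesi \delta^{-2Mk}w(Q^d)^{-1/p}\chi_{6Q^d}$ to obtain $\delta^{2M(j-k)}\le 1$; for $j<k$ write $\psi_j(\sqrt L)a_{Q^d}=\delta^{-2jM}\phi_j(\sqrt L)b_{Q^d}$ with $\phi(\mu)=\mu^{2M}\psi(\mu)$ and use $|b_{Q^d}|\lesi \delta^{2Mk}w(Q^d)^{-1/p}\chi_{6Q^d}$ to obtain $\delta^{2M(k-j)}\le 1$. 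With this correction the rest of your plan (Lemma~\ref{lem1- JF lem}, maximal-function bound, Young in $\ell^q$) goes through exactly as you describe, and the threshold on $M$ is what forces the net exponent positive.

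A smaller point on the decomposition side: ``discretizing $t=\delta^k$'' from the continuous formula in Theorem~\ref{Calderon reproducing} is not immediate. Both \cite{BBD} and the paper's own proof of the companion Theorem~\ref{thm1- atom Besov-new atom} instead start from a discrete reproducing identity (here Theorem~\ref{thm - Calderon discrete version}) of the form $f=\sum_j (\delta^j\sqrt L)^{2M}\Phi_j(\sqrt L)\psi_j(\sqrt L)f$, where $\Phi$ has compactly supported Fourier transform so that Lemma~\ref{lem:finite propagation} applies. Localizing each term over $\mathscr D^d_j$ and normalizing by $s_{Q^d}=w(Q^d)^{1/p}\sup_{Q^d}|\psi_j(\sqrt L)f|$ then yields $(L,M,p)$ atoms directly, and the coefficient bound follows from the Peetre maximal function via Proposition~\ref{prop - equivalence of varphi*} exactly as you indicate.
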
	

Similar results also hold for the Triebel--Lizorkin spaces $\FF^{\alpha,L}_{p,q}(dw)$.
\begin{thm}\label{thm1- atom TL spaces- associated to L}
	Let $s\in \mathbb{R}$, $0<p,q<\vc$. Let $\epsilon\in (0,1]$ and $\beta,\gamma\in (0,\epsilon)$. If $f\in \FF^{s,L}_{p,q}(dw)$ then there exist a sequence of $(L,M,p)$ atoms $\{a_{Q^d}\}_{Q^d\in \mathscr{D}^d_k, k\in \mathbb{Z}}$ and 
	a sequence of coefficients  $\{s_{Q^d}\}_{Q^d\in \mathscr{D}^d_k, k\in\mathbb{Z}}$ so that
	$$
	f=\sum_{k\in\mathbb{Z}}\sum_{Q^d\in \mathscr{D}^d_k}s_{Q^d}a_{Q^d} \ \ \text{in $\mathcal (\mathring{\mathcal G}^\epsilon(\beta,\gamma))'$}.
	$$
	Moreover,
	\begin{equation}\label{eq1-thm1 atom TL space}
		\Big\|\Big[\sum_{k\in\mathbb{Z}}\delta^{-ks q}\Big(\sum_{Q^d\in \mathscr{D}^d_k}w(Q^d)^{-1/p}|s_{Q^d}|\chi_{Q^d}\Big)^q\Big]^{1/q}\Big\|_{L^p(dw)}\lesi \|f\|_{\F^{s,L}_{p,q}(dw)}.
	\end{equation}
	Conversely, if there exist a sequence of $(L,M,p)$ atoms $\{a_{Q^d}\}_{Q^d\in \mathscr{D}^d_k, k\in \mathbb{Z}}$ with $M>\f{\N}{2}+\f{1}{2}\max\{s,\f{\N}{1\wedge p\wedge q}-s\}$ and 
	a sequence of coefficients  $\{s_{Q^d}\}_{Q^d\in \mathscr{D}^d_k, k\in\mathbb{Z}}$ such  that
	$$
	f=\sum_{\nu\in\mathbb{Z}}\sum_{Q^d\in \mathscr{D}^d_\nu}s_{Q^d}a_{Q^d} \ \ \text{in $\mathcal (\mathring{\mathcal G}^\epsilon(\beta,\gamma))'$}
	$$
	and
	$$
	\Big\|\Big[\sum_{k\in\mathbb{Z}}\delta^{-ks q}\Big(\sum_{Q^d\in \mathscr{D}^d_k}w(Q^d)^{-1/p}|s_{Q^d}|\chi_{Q^d}\Big)^q\Big]^{1/q}\Big\|_{L^p(dw)}<\vc,
	$$
	then $f\in \FF^{s,L}_{p,q}(dw)$ and
	$$
	\|f\|_{\FF^{s,L}_{p,q}(dw)} \lesi \Big\|\Big[\sum_{k\in\mathbb{Z}}\delta^{-ks q}\Big(\sum_{Q^d\in \mathscr{D}^d_\nu}w(Q^d)^{-1/p}|s_{Q^d}|\chi_{Q^d}\Big)^q\Big]^{1/q}\Big\|_{L^p(dw)}.
	$$
	
\end{thm}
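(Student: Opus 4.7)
My plan is to follow the strategy laid out in \cite{BBD}, with the extra wrinkle that the convergence of the atomic decomposition in $(\mathring{\mathcal G}^\epsilon(\beta,\gamma))'$ must be justified through the Calder\'on reproducing formula of Theorem \ref{Calderon reproducing} rather than in the abstract $L$-adapted distribution space of \cite{BBD}. I sketch both directions in turn.

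For the forward direction, I would begin by fixing a partition of unity $\psi$ and invoking Theorem \ref{thm - Calderon discrete version} to write $f=\sum_{j\in\mathbb Z}\psi_j(\sqrt L)f$ in $(\mathring{\mathcal G}^\epsilon(\beta,\gamma))'$. Next I would factor $\psi(\lambda)=\lambda^{2M}\tilde\psi(\lambda)$ with $\tilde\psi\in\mathscr S(\mathbb R)$ even, so that $\psi_j(\sqrt L)=\delta^{2jM}L^M\tilde\psi_j(\sqrt L)$, and sample $\tilde\psi_j(\sqrt L)f$ on the $d$-dyadic grid $\mathscr D^d_j$ via a discrete Calder\'on formula analogous to Lemma \ref{lem-Calderon reproducing lemma}. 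This yields a representation
\[
f=\sum_{j\in\mathbb Z}\sum_{Q^d\in\mathscr D^d_j} w(Q^d)\,L^M\bigl(\tilde\psi_j(\sqrt L)(\cdot,x_{Q^d})\bigr)\cdot\eta_j(x_{Q^d}),
\]
where $\eta_j(x_{Q^d})$ is a sampled value of a companion operator applied to $f$. Setting $s_{Q^d}:=w(Q^d)^{1/p}|\eta_j(x_{Q^d})|$ and normalizing gives $(L,M,p)$-atoms: the support, size and $L^M$-structure conditions of Definition \ref{defLmol} are inherited from the kernel bounds of Lemma \ref{lem-heat kernel estimate} and the finite-propagation construction underlying Lemma \ref{lem:finite propagation}. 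The norm inequality \eqref{eq1-thm1 atom TL space} then follows by dominating $|\eta_j|$ by the Peetre maximal function and invoking Proposition \ref{prop - equivalence of varphi*} together with \eqref{FSIn}.

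For the converse direction, given an atomic decomposition $f=\sum_{j,Q^d}s_{Q^d}a_{Q^d}$, I would apply $\psi_k(\sqrt L)$ term by term and aim for the pointwise bound
\[
|\psi_k(\sqrt L)a_{Q^d}(x)|\lesi \delta^{2M|k-j|}\,w(Q^d)^{1-1/p}\,\frac{1}{w(B(x_{Q^d},\delta^{k\wedge j}))}\Bigl(1+\frac{d(x,x_{Q^d})}{\delta^{k\wedge j}}\Bigr)^{-M'},
\]
valid for any $M'>0$. When $k\ge j$ I would write $\psi_k(\sqrt L)a_{Q^d}=\delta^{-2kM}\tilde\psi_k(\sqrt L)b_{Q^d}$ using $a_{Q^d}=L^Mb_{Q^d}$ and combine the size estimate $|b_{Q^d}|\le\ell(Q^d)^{2M}w(Q^d)^{-1/p}\chi_{6Q^d}$ with the kernel bound from Lemma \ref{lem-heat kernel estimate} for $\tilde\psi_k(\sqrt L)$; when $k<j$ I would instead exploit the mean-value property of $\tilde\psi_k(\sqrt L)$ together with the localization of $a_{Q^d}$ inside a cube of $d$-diameter $\delta^j\le\delta^k$, iterating a Taylor expansion $2M$ times to recover a factor $(\delta^j/\delta^k)^{2M}$. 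Multiplying by $s_{Q^d}$, converting the sum over $Q^d\in\mathscr D^d_j$ into $\mathcal M^d_r(\sum_{Q^d}w(Q^d)^{-1/p}|s_{Q^d}|\chi_{Q^d})$ for some $r<1\wedge p\wedge q$ via Lemma \ref{lem1- JF lem}, and finally summing in $k$ and $j$ through the discrete Young/Fefferman-Stein inequality \eqref{YFSIn}, delivers the reverse norm estimate.

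The main obstacle is the sharp choice $M>\tfrac{\mathfrak N}{2}+\tfrac12\max\{s,\tfrac{\mathfrak N}{1\wedge p\wedge q}-s\}$: this threshold arises from balancing the requirement $r>\mathfrak N/(2M)$ in Lemma \ref{lem1- JF lem} against $r<1\wedge p\wedge q$, while simultaneously ensuring that the off-diagonal decay $\delta^{2M|k-j|}$ dominates the weight ratio $\delta^{-s(j-k)}$ after the $r$-loss is absorbed. Getting this balance right, while at the same time verifying that the reconstruction $\sum_{j,Q^d}s_{Q^d}a_{Q^d}$ converges in $(\mathring{\mathcal G}^\epsilon(\beta,\gamma))'$ (which I would address by showing the partial sums form a Cauchy net using Lemmas \ref{lem - t small}--\ref{lem - t large for integral} paired with the quasi-norm control on $\{s_{Q^d}\}$), is the subtle step of the argument.
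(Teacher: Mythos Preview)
Your proposal follows essentially the same route as the paper, which simply cites \cite{BBD} (Theorems 4.2, 4.3, 4.6, 4.7 there) for the core argument and invokes Theorem~\ref{Calderon reproducing} to upgrade convergence to the distribution space $(\mathring{\mathcal G}^\epsilon(\beta,\gamma))'$.

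One point in your converse sketch needs repair. For $k<j$ you appeal to a ``mean-value property of $\tilde\psi_k(\sqrt L)$'' and to ``iterating a Taylor expansion $2M$ times''. In the Dunkl setting (and in the abstract operator framework of \cite{BBD} generally) there is no polynomial calculus against which to Taylor-expand the kernel, and the single vanishing moment $\int\psi_k(\sqrt L)(\x,\y)\,dw(\y)=0$ cannot be iterated to produce a gain of order $2M$. The correct mechanism is the same operator-theoretic trick you already used for $k\ge j$, with the roles reversed: write
\[
\psi_k(\sqrt L)a_{Q^d}=\psi_k(\sqrt L)L^M b_{Q^d}=\delta^{-2kM}\varphi_k(\sqrt L)b_{Q^d},\qquad \varphi(\lambda):=\lambda^{2M}\psi(\lambda),
\]
then combine the kernel bound of Lemma~\ref{lem-heat kernel estimate} for $\varphi_k(\sqrt L)$ with $|b_{Q^d}|\le \ell(Q^d)^{2M}w(Q^d)^{-1/p}\chi_{6Q^d}=\delta^{2jM}w(Q^d)^{-1/p}\chi_{6Q^d}$ to extract the factor $\delta^{2M(j-k)}$. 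With this correction your outline matches the \cite{BBD} proof.
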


From the atomic decomposition results above, it is easy to see that $\B^{s,L}_{p,q}(dw)\cap L^2(dw)$ and $\FF^{s,L}_{p,q}(dw)\cap L^2(dw)$ are  dense in $\B^{\alpha,L}_{p,q}(dw)$ and $\FF^{s,L}_{p,q}(dw)$ for all $s\in \mathbb{R}$ and $0<p,q<\vc$, respectively. Consequently, we have the following corollary.
\begin{cor}\label{cor1}
	The Besov and Triebel-Lizorkin spaces defined in Definition \ref{defn 1} are independent of the choices of $\epsilon \in (0,1]$ and $ \beta, \gamma\in (0,\epsilon)$.
\end{cor}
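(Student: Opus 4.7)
The strategy is to show that the completion procedure in Definition \ref{defn 1}, for any admissible choice of $(\epsilon,\beta,\gamma)$, produces the completion of a common dense subspace equipped with a common norm. The proposal is divided into four short steps.

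First, I would observe that the quantity $\|f\|_{\B^{s,L}_{p,q}(dw)}$ (respectively $\|f\|_{\FF^{s,L}_{p,q}(dw)}$) defined via a partition of unity $\psi$ is an intrinsic expression depending only on $f$ and the operator $L$, not on the choice of $(\epsilon,\beta,\gamma)$. The only role played by $(\epsilon,\beta,\gamma)$ is to ensure, through Proposition \ref{prop-prop1 kernel is a test function}, that $\psi_j(\sqrt L)f$ makes sense as the pairing $\langle \psi_j(\sqrt L)(\cdot,\y), f\rangle$ for a distribution $f$.

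Second, I would identify a common dense subspace. Any $(L,M,p)$-atom (Definition \ref{defLmol}) is supported in a dyadic cube, bounded, and hence belongs to $L^2(dw)$; in particular every such atom naturally defines an element of $(\mathring{\mathcal G}^\epsilon(\beta,\gamma))'$ for every admissible $(\epsilon,\beta,\gamma)$ via the $L^2(dw)$-pairing. The atomic decomposition theorems (Theorem \ref{thm1- atom Besov - associated to L} and Theorem \ref{thm1- atom TL spaces- associated to L}) then provide that the linear span of atoms, which sits inside $L^2(dw)$, is dense in the spaces $\B^{s,L}_{p,q}(dw)$ and $\FF^{s,L}_{p,q}(dw)$ regardless of the admissible parameters chosen.

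Third, I would compare the two constructions directly. Let $(\epsilon_1,\beta_1,\gamma_1)$ and $(\epsilon_2,\beta_2,\gamma_2)$ be two admissible choices, and denote the resulting Besov (or Triebel--Lizorkin) spaces by $X_1$ and $X_2$. For each $i\in\{1,2\}$, the set
\[
\mathcal A_i:=\{f\in L^2(dw)\cap (\mathring{\mathcal G}^{\epsilon_i}(\beta_i,\gamma_i))':\ \|f\|_{X_i}<\infty\}
\]
coincides, as a subset of $L^2(dw)$ and with identical norm, because $L^2(dw)$-functions embed canonically into every $(\mathring{\mathcal G}^{\epsilon}(\beta,\gamma))'$ and the norm itself is intrinsic (Step 1). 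By Step 2 the span of atoms lies in $\mathcal A_1\cap \mathcal A_2$ and is dense in both $X_1$ and $X_2$.

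Finally, since $X_i$ is by definition the completion of $\mathcal A_i$ under $\|\cdot\|_{X_i}$, and since $\mathcal A_1$ and $\mathcal A_2$ are two subspaces of $L^2(dw)$ sharing a common dense subspace with identical norms, the two completions are isometrically isomorphic (standard: the identity map on the common dense span extends uniquely to an isometric isomorphism between completions). The analogous argument for $\FF^{s,L}_{p,q}(dw)$ uses Theorem \ref{thm1- atom TL spaces- associated to L} in place of Theorem \ref{thm1- atom Besov - associated to L}. The main subtlety, and essentially the only one, is checking that atoms genuinely belong to each $(\mathring{\mathcal G}^{\epsilon}(\beta,\gamma))'$ with the same norm representation; this follows because a bounded compactly supported function pairs against any test function in $\mathring{\mathcal G}(\beta,\gamma)$ via the absolutely convergent $L^2$-integral, which is a definition that makes no reference to $(\epsilon,\beta,\gamma)$.
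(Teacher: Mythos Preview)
Your proposal is correct and follows essentially the same approach as the paper: use the atomic decompositions (Theorems \ref{thm1- atom Besov - associated to L} and \ref{thm1- atom TL spaces- associated to L}) to show that the span of $(L,M,p)$-atoms, hence $L^2(dw)\cap \B^{s,L}_{p,q}(dw)$ (resp.\ $L^2(dw)\cap \FF^{s,L}_{p,q}(dw)$), is dense in the pre-completion space for every admissible $(\epsilon,\beta,\gamma)$, and then observe that on this common $L^2(dw)$-subspace the norm is intrinsic. The paper records only the density statement and leaves the completion argument implicit; you have spelled out that last step explicitly, which is a welcome clarification but not a different method.
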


\bigskip

Similarly to the classical case, the new Triebel-Lizorkin spaces are identical to some known function spaces in the particular cases of the indices. 

Let $0<p\le 1$. The Hardy space $H^p_{L}(\RN)$ is defined as the completion of the set
\begin{equation*}
	\left\{f\in L^2: \mathcal{S}_Lf\in L^p(dw) \right\}
\end{equation*}
under the norm $\|f\|_{H^p_{L}(dw)}=\|\mathcal{S}_Lf\|_{L^p(dw)}$ where
\[
\mathcal{S}_Lf(x)=\Big[\int_0^\vc\int_{d(\x,\y)<t}|t^2Le^{-tL^2}f(\y)|^2\f{dw(\y) dt}{tw(B(\x,t))}\Big]^{1/2}.
\]
The Hardy spaces $H^1_L$ was initiated in \cite{ADM}. See also \cite{DY}. The theory of Hardy spaces associated to operators satisfying Davies--Gaffney estimates  $H^1_L$ was established in \cite{HLMMY}.

\begin{defn}[\cite{HLMMY, JY, BCKYY1}]\label{def: L-atom} 
	Let $0<p\le 1$ and $M\in \mathbb{N}$ and let $r\in (1,\vc)$. A function $a(\x)$ supported in a ball $B \subset \RN$ of radius $r_{B}$ is called a  $(p,r,M,L)$-atom if there exists a 	function $b\in D(L^{M})$ such that
	\begin{enumerate}[{\rm (i)}]
		\item  $a=L^M b$;
		\item $\supp L ^{k}b\subset \mathcal O(B), \ k=0, 1, \dots, M$;
		\item $\|L^{k}b\|_{L^r(dw)}\leq
		r_B^{2(M-k)}w(B)^{1/r-1/p},\ k=0,1,\dots,M$.
	\end{enumerate}
\end{defn}

\begin{defn}[Atomic Hardy spaces for $L$]
	
	Given  $0<p\le 1$, $r\in (1,\vc)$ and $M\in \mathbb{N}$, we  say that $f=\sum
	\lambda_ja_j$ is an atomic $(p,2,M,L)$-representation if
	$\{\lambda_j\}_{j=0}^\infty\in \ell^p$, each $a_j$ is a $(p,r,M,L)$-atom,
	and the sum converges in $L^2(dw)$. The space $H^{p,r}_{L,at,M}(dw)$ is then defined as the completion of
	\[
	\left\{f\in L^2(dw):f \ \text{has an atomic
		$(p,r,M,L)$-representation}\right\},
	\]
	with the norm given by
	$$
	\|f\|^p_{H^{p,r}_{L,at,M}(dw)}=\inf\left\{ \sum|\lambda_j|^p :
	f=\sum \lambda_ja_j \ \text{is an atomic $(p,r,M,L)$-representation}\right\}.
	$$
\end{defn}

\begin{thm}[\cite{BCKYY1}]\label{thm-Hardy space}
	Let $p\in (0,1]$, $r\in (1,\vc)$ and $M>\f{\N}{2}(\f{1}{p}-\f{2}{\N r})$. Then the Hardy spaces $H^{p,r}_{L,at,M}(dw)$ and $H^{p}_{L}(dw)$ coincide and have equivalent norms.
\end{thm}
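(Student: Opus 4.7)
The plan is to prove the two inclusions $H^{p,r}_{L,at,M}(dw) \subseteq H^{p}_{L}(dw)$ and $H^{p}_{L}(dw) \subseteq H^{p,r}_{L,at,M}(dw)$ with norm control in each direction, following the general template for nonnegative self-adjoint operators with Gaussian heat-kernel bounds as in \cite{HLMMY, JY}. The ambient space of homogeneous type here is $(\RN, d, dw)$, and Lemma \ref{lem-heat kernel estimate for semigroups}(a) provides the Gaussian upper bound with respect to $d$. This upgrades the $L^2$ Davies--Gaffney estimates to $L^r \to L^{r'}$ off-diagonal estimates for $(t^2L)^k e^{-t^2L}$ and to $L^r$-boundedness of $\mathcal S_L$ for every $r\in (1,\vc)$, which will be the main tools in both directions.

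First I would establish $H^{p,r}_{L,at,M}(dw) \hookrightarrow H^{p}_{L}(dw)$ by showing $\|\mathcal S_L a\|_{L^p(dw)}\lesi 1$ uniformly over all $(p,r,M,L)$-atoms $a=L^M b$ supported in $\mathcal O(B)$ for a ball $B$ of radius $r_B$. Decompose $\RN = \bigcup_{k\ge 0} U_k$ with $U_0 = \mathcal O(4B)$ and $U_k = \mathcal O(2^{k+2}B)\setminus \mathcal O(2^{k+1}B)$ for $k\ge 1$, and split $\mathcal S_L a$ into local and far parts. On $U_0$, apply $L^r$-boundedness of $\mathcal S_L$ together with the size bound $\|a\|_{L^r(dw)}\lesi w(B)^{1/r-1/p}$ and H\"older's inequality against $\chi_{U_0}$. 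On each far annulus $U_k$, split the $t$-integral in $\mathcal S_L$ at $t \simeq 2^k r_B$: for small $t$, the Gaussian factor gives exponential decay in $2^k$; for large $t$, absorb $M$ powers of $t^2L$ into $b$, producing a gain $(r_B/t)^{2M}$ together with the $L^r$-bound $\|b\|_{L^r(dw)}\lesi r_B^{2M}w(B)^{1/r-1/p}$. The hypothesis $M > \frac{\N}{2}\bigl(\frac{1}{p}-\frac{2}{\N r}\bigr)$, equivalently $2M + \frac{2}{r} > \frac{\N}{p}$, is exactly what makes $\sum_k \|\mathcal S_L a\,\chi_{U_k}\|_{L^p(dw)}^p$ summable after inserting $w(U_k)\lesi 2^{k\N}w(B)$.

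For the converse $H^{p}_{L}(dw) \hookrightarrow H^{p,r}_{L,at,M}(dw)$ I would invoke the Coifman--Meyer--Stein tent-space machinery adapted to spaces of homogeneous type. For $f \in H^p_L(dw) \cap L^2(dw)$, the function $F(\x,t):= t^2L e^{-t^2L}f(\x)$ lies in the tent space $T^p((\RN)\times(0,\vc), d, dw)$ with $\|F\|_{T^p}\simeq \|\mathcal S_L f\|_{L^p(dw)} = \|f\|_{H^p_L(dw)}$. The atomic decomposition of $T^p$ yields $F = \sum_j \lambda_j A_j$ with $A_j$ supported in a tent $\widehat{B}_j$ over a $d$-ball $B_j$, $\|A_j\|_{T^2}\le w(B_j)^{1/2-1/p}$, and $\sum_j |\lambda_j|^p \lesi \|F\|_{T^p}^p$. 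Applying the spectral Calder\'on reproducing formula
\[
f \;=\; c_M \int_0^\vc (t^2L)^{M+1} e^{-t^2L} f\,\f{dt}{t} \quad\text{in } L^2(dw),
\]
and substituting $F$ produces $f = \sum_j \lambda_j \alpha_j$ with $\alpha_j = L^M b_j$ where $b_j := c_M \int_0^{r_{B_j}} t^{2M} e^{-t^2L} A_j(\cdot,t)\,\f{dt}{t}$.

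The main obstacle is verifying that each $\alpha_j$ is, up to a uniform multiplicative constant, a $(p,r,M,L)$-atom. The support condition $\supp L^k b_j \subset \mathcal O(cB_j)$ cannot hold exactly because $e^{-t^2L}$ spreads mass; the standard remedy is to first obtain \emph{molecules} satisfying exponential off-diagonal decay via the Davies--Gaffney/Gaussian estimate, then to repackage each molecule as a convergent sum of genuine $(p,r,M,L)$-atoms using the Auscher--Duong--McIntosh truncation argument. The $L^r$-size bound $\|L^k b_j\|_{L^r(dw)}\le r_{B_j}^{2(M-k)} w(B_j)^{1/r-1/p}$ is proved by duality: pairing with $g\in L^{r'}(dw)$ of unit norm, one applies Minkowski, Cauchy--Schwarz and Fubini to reduce the estimate to $\|A_j\|_{T^2}\cdot\|(t^2L)^{M-k}e^{-t^2L}g\|_{T^2}$, and then controls the second factor by $L^{r'}\to L^2$ ultracontractivity of the semigroup on the enlarged ball, where the condition $M > \f{\N}{2}(\f{1}{p}-\f{2}{\N r})$ is precisely what makes the exponent bookkeeping close. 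Combining this with $\sum_j |\lambda_j|^p \lesi \|f\|_{H^p_L(dw)}^p$ delivers the atomic-norm control and completes the proof.
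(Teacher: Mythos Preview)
The paper does not give its own proof of this statement: Theorem \ref{thm-Hardy space} is quoted verbatim from \cite{BCKYY1} and is used as a black box. Your sketch follows precisely the standard template of \cite{HLMMY, JY, BCKYY1} for nonnegative self-adjoint operators with Gaussian (or Davies--Gaffney) heat-kernel bounds on a space of homogeneous type, which is exactly the setting here by Lemma \ref{lem-heat kernel estimate for semigroups}. So there is nothing to compare: your outline is the argument behind the cited reference, and it is correct.
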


The following identifications are just direct consequences of Theorem 5.2 and Theorem 5.7 in \cite{BBD}.
\begin{thm}
	\label{equiv-Hardy}
	\begin{enumerate}[{\rm (a)}]
		\item Let $0<p\le 1$. Then we have
		\[
		H^p_{L}(dw)\equiv \FF^{0,L}_{p,2}(dw).
		\]
		\item  Let $p\in (1,\vc)$ and $s\in \mathbb R$. Then we have
		\[
		\|L^{s/2}f\|_{L^p(dw)}\simeq \|f\|_{\FF^{s,L}_{p,2}(dw)}
		\]
	\end{enumerate}
\end{thm}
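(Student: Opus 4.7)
The plan is to reduce both parts to the general framework for Besov and Triebel-Lizorkin spaces associated with a nonnegative self-adjoint operator on a space of homogeneous type developed in \cite{BBD}. The first step is to check that the hypotheses of that framework apply to the pair $(L, (\RN, d, dw))$: indeed, $L$ is nonnegative and self-adjoint on $L^2(dw)$, the quotient $(\RN/G, d, d\mu)$ is a space of homogeneous type (with reverse doubling), and by Lemma \ref{lem-heat kernel estimate for semigroups} the heat kernel $h_t(\x,\y)$ satisfies a Gaussian upper bound in the distance $d$. This is exactly the setting of \cite{BBD}, so every theorem proved there is available here with no further modification.

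For part (a), the plan is to invoke the general identification $H^p_L \equiv \FF^{0,L}_{p,2}$ from \cite[Theorem 5.2]{BBD} (valid for $0<p\le 1$). The proof there uses the tent-space atomic decomposition of $H^p_L$ together with the area-function characterization of $\FF^{0,L}_{p,2}$ obtained via a Calder\'on reproducing formula for $\psi(t\sqrt L)$: rewriting $t^2 L e^{-t^2 L}$ as a suitable $\varphi(t\sqrt L)$, the square function $\mathcal S_L f$ becomes, up to equivalence, the continuous Littlewood-Paley $g$-function defining $\FF^{0,L}_{p,2}$, and then discretization via a partition of unity $\psi_j$ yields the discrete norm of Definition \ref{defn 1}. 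All of this is already carried out in \cite{BBD}.

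For part (b), when $s=0$ the identity $\FF^{0,L}_{p,2}(dw)=L^p(dw)$ for $1<p<\infty$ is the Littlewood-Paley characterization of $L^p$, which is \cite[Theorem 5.7]{BBD}; it is obtained by combining the vector-valued Calder\'on-Zygmund theory for the operator $\bigl(\sum_j |\psi_j(\sqrt L) f|^2\bigr)^{1/2}$ with the standard $L^2$ identity coming from the spectral theorem. For general $s\in \mathbb R$, the plan is to use the lifting property: $L^{s/2}$ is an isomorphism from $\FF^{s,L}_{p,2}(dw)$ onto $\FF^{0,L}_{p,2}(dw)$, which is again proved in \cite{BBD} by noting that if $\psi$ is a partition of unity then $\lambda^{-s}\psi_j(\lambda)$ acts as a smooth Fourier multiplier of the same order, so $\psi_j(\sqrt L)L^{s/2}=\delta^{-js}\widetilde\psi_j(\sqrt L)$ for a shifted partition $\widetilde\psi$. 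Combining this lifting with the case $s=0$ yields $\|L^{s/2}f\|_{L^p(dw)}\simeq \|f\|_{\FF^{s,L}_{p,2}(dw)}$.

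The only real work is the verification that the abstract hypotheses of \cite{BBD} are met in the Dunkl setting, and this is already done once one has the Gaussian bound of Lemma \ref{lem-heat kernel estimate for semigroups}; no Dunkl-specific obstacle arises because neither \cite[Theorem 5.2]{BBD} nor \cite[Theorem 5.7]{BBD} exploits any finer structure than self-adjointness plus Gaussian heat-kernel bounds on a doubling metric measure space. Consequently the proof of Theorem \ref{equiv-Hardy} reduces to citing these two results, with the metric taken to be $d$ rather than $\|\cdot\|$.
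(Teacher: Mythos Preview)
Your proposal is correct and matches the paper's approach exactly: the paper states that Theorem \ref{equiv-Hardy} is a direct consequence of Theorems 5.2 and 5.7 in \cite{BBD}, which are precisely the two results you invoke. Your added verification that the hypotheses of \cite{BBD} are met in the Dunkl setting (via the Gaussian bound in the distance $d$ from Lemma \ref{lem-heat kernel estimate for semigroups}) is appropriate and consistent with the paper's implicit reasoning.
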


\subsubsection{\textbf{A case study}} In this section, we will particularly consider the Besov and Triebel-Lizorkin spaces associated to the operator $L$ with a suitable range of  indices containing those in Definition \ref{defn-Besov and TL spaces}.

\begin{thm}\label{thm 1}
	Let $s\in (-1,1)$ and $p\in (\f{\N}{\N+s+1},\vc)$, $q\in (0,\vc)$. For any $\epsilon \in (0,1]$ and $\beta,\gamma\in (0,\epsilon)$ such that $|s|<\beta,\gamma$ and $p>\f{\N}{\N+s+\beta}$, we have
	\[
	\B^{s,  L}_{p,q}(dw)=\Big\{f\in (\mathring{\mathcal G}^\epsilon(\beta,\gamma))': \|f\|_{\B^{s, L}_{p,q}(dw)}<\vc\Big\}.
	\]
	
	Similarly, let $s\in (-1,1)$ and $p,q\in (\f{\N}{\N+s+1},\vc)$. For any $\epsilon \in (0,1]$ and $\beta,\gamma\in (0,\epsilon)$ such that $|s|<\beta,\gamma$ and $p, q>\f{\N}{\N+s+\beta}$, we have
	\[
	\FF^{s,  L}_{p,q}(dw)=\Big\{f\in (\mathring{\mathcal G}^\epsilon(\beta,\gamma))': \|f\|_{\FF^{s, L}_{p,q}(dw)}<\vc\Big\}.
	\]
	
\end{thm}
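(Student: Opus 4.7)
The plan is to prove the equality of the two sets by establishing both inclusions, leveraging the atomic decomposition results (Theorems \ref{thm1- atom Besov - associated to L} and \ref{thm1- atom TL spaces- associated to L}) together with the Calderón reproducing formula from Theorem \ref{Calderon reproducing} and the pointwise estimates in Lemma \ref{lem-estimate for varphi t f x}. Since the two parts of the theorem follow the same scheme, I focus on the Besov case; the Triebel-Lizorkin case is handled by the same argument, with the Fefferman-Stein vector-valued inequality \eqref{FSIn} replacing the summation in $\ell^q$.

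For the inclusion $\dot{B}^{s,L}_{p,q}(dw) \subset \{f\in(\mathring{\mathcal G}^\epsilon(\beta,\gamma))': \|f\|_{\dot{B}^{s,L}_{p,q}(dw)}<\infty\}$, the first key step is to derive a pairing estimate: for any $(L,M,p)$-atom $a_{Q^d}$ with $Q^d\in\mathscr D^d_k$ and any test function $g\in\mathring{\mathcal G}(\beta,\gamma)$,
$$|\langle a_{Q^d}, g\rangle| \lesssim w(Q^d)^{1-1/p}\,\Theta_k(Q^d)\,\|g\|_{\mathring{\mathcal G}(\beta,\gamma)},$$
where $\Theta_k(Q^d)$ captures the scale and spatial decay. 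To obtain this, I apply Theorem \ref{Calderon reproducing} to $g$ and use the identity $a_{Q^d}=L^Mb_{Q^d}$ with the self-adjointness of $L$ to shift $L^M$ onto the kernel, producing the formula
$$\langle a_{Q^d}, g\rangle = c_\varphi\int_0^\infty t^{-2M}\langle b_{Q^d},(t^2L)^M\varphi(t\sqrt L)g\rangle\frac{dt}{t}.$$
Since $\lambda\mapsto \lambda^{2M}\varphi(\lambda)$ is still an even Schwartz function vanishing at zero, Lemma \ref{lem-estimate for varphi t f x} provides pointwise bounds on $(t^2L)^M\varphi(t\sqrt L)g$ of order $t^\beta$ for small $t$ and $t^{-\gamma}$ for large $t$, with the appropriate spatial decay. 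Combining this with the size bound on $b_{Q^d}$ and integrating over the support $6Q^d$, the $t$-integral converges when $M$ is taken sufficiently large, yielding the desired pairing estimate.

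Next, I sum this pairing estimate over the atomic decomposition $f=\sum_k\sum_{Q^d\in\mathscr D^d_k}s_{Q^d}a_{Q^d}$ from Theorem \ref{thm1- atom Besov - associated to L}. Using Hölder's inequality and the coefficient bound $[\sum_k\delta^{-ksq}(\sum_{Q^d}|s_{Q^d}|^p)^{q/p}]^{1/q}\lesssim\|f\|_{\dot B^{s,L}_{p,q}(dw)}$, together with the geometric decay of $\Theta_k$ summed against the spatial weight of $g$, I obtain
$$|\langle f,g\rangle|\lesssim \|f\|_{\dot B^{s,L}_{p,q}(dw)}\|g\|_{\mathring{\mathcal G}(\beta,\gamma)}.$$
This shows $f$ extends to a continuous linear functional on $\mathring{\mathcal G}^\epsilon(\beta,\gamma)$, hence belongs to $(\mathring{\mathcal G}^\epsilon(\beta,\gamma))'$. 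The index hypotheses $|s|<\beta\wedge\gamma$ and $p>\N/(\N+s+\beta)$ (respectively $p\wedge q>\N/(\N+s+\beta)$ in the TL case) are precisely what is needed to make the sum over scales $k$ converge geometrically.

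For the reverse inclusion, given $f\in(\mathring{\mathcal G}^\epsilon(\beta,\gamma))'$ with $\|f\|_{\dot B^{s,L}_{p,q}(dw)}<\infty$, I introduce truncations $f_N=\sum_{|j|\le N}\psi_j(\sqrt L)f$. The kernel estimates of Lemma \ref{lem-heat kernel estimate}, combined with the finiteness of $\|\psi_j(\sqrt L)f\|_{L^p(dw)}$, imply that each $f_N$ lies in $L^2(dw)\cap\dot B^{s,L}_{p,q}(dw)$. Dominated convergence applied to the summable sequence $\{\delta^{-jsq}\|\psi_j(\sqrt L)f\|_{L^p(dw)}^q\}_{j\in\mathbb Z}$ gives $\|f-f_N\|_{\dot B^{s,L}_{p,q}(dw)}\to 0$, so $f$ lies in the completion. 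The principal obstacle throughout is the pairing estimate in the first step: the exponents in $\Theta_k(Q^d)$ must be carefully tracked through the Calderón reproducing formula so that the atomic sum converges in $(\mathring{\mathcal G}^\epsilon(\beta,\gamma))'$ under the sharp index conditions stated in the theorem.
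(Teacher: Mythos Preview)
Your atomic approach contains a genuine gap in the pairing estimate. In the identity
\[
\langle a_{Q^d}, g\rangle = c_\varphi\int_0^\infty t^{-2M}\langle b_{Q^d},(t^2L)^M\varphi(t\sqrt L)g\rangle\frac{dt}{t},
\]
the factor $t^{-2M}$ cannot be absorbed for small $t$: Lemma~\ref{lem-estimate for varphi t f x} only yields $|(t^2L)^M\varphi(t\sqrt L)g(\x)|\lesssim t^\beta P_\gamma(0,\x;1)\|g\|$ for $t\in(0,1)$, and since $\beta<1\le 2M$, the integral $\int_0^{1}t^{-2M+\beta}\,\frac{dt}{t}$ diverges. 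Taking $M$ large makes this \emph{worse}, not better. A correct version would split the $t$-integral at $t\sim\ell(Q^d)$ and, for small $t$, pair directly with $a_{Q^d}=L^Mb_{Q^d}$ (which satisfies $|a_{Q^d}|\le w(Q^d)^{-1/p}$) rather than shifting $L^M$ onto the kernel; only the large-$t$ part exploits the gain $(\ell(Q^d)/t)^{2M}$.

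There is also a structural issue. Theorem~\ref{thm1- atom Besov - associated to L} decomposes $f\in\dot B^{s,L}_{p,q}(dw)\cap(\mathring{\mathcal G}^\epsilon(\beta,\gamma))'$, so you cannot invoke it for an abstract element of the completion before knowing it is a distribution. Conversely, your ``reverse inclusion'' is vacuous: the set $S=\{f\in(\mathring{\mathcal G}^\epsilon(\beta,\gamma))':\|f\|<\infty\}$ embeds in its completion by definition. The real task is to show that $S$ is \emph{complete}: once the pairing estimate $|\langle f,g\rangle|\lesssim\|f\|_{\dot B^{s,L}_{p,q}}\|g\|_{\mathring{\mathcal G}(\beta,\gamma)}$ holds for $f\in S$, any Cauchy sequence in $S$ is Cauchy in $(\mathring{\mathcal G}^\epsilon(\beta,\gamma))'$, and one must then verify that the distributional limit lies in $S$ and is the Besov-norm limit --- a step you do not address.

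The paper bypasses atoms entirely. It applies the discrete Calder\'on formula (Theorem~\ref{thm - Calderon discrete version}) directly to $f\in S$, writes $\langle f,g\rangle=\sum_j\int\psi_j(\sqrt L)f\cdot\varphi_j(\sqrt L)g\,dw$, and bounds $|\varphi_j(\sqrt L)g|$ pointwise via Lemma~\ref{lem-estimate for varphi t f x}. For $p\ge1$ H\"older's inequality closes the sum; for $p<1$ one discretizes over dyadic cubes, replaces $|\psi_j(\sqrt L)f|$ by the Peetre maximal function $\psi^*_{j,\lambda}(\sqrt L)f$, and invokes Proposition~\ref{prop - equivalence of varphi*}. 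This yields the pairing estimate in one pass, after which completeness follows by showing $\psi_j(\sqrt L)f_n(\x)\to\psi_j(\sqrt L)f(\x)$ pointwise (via Proposition~\ref{prop-prop1 kernel is a test function}) and identifying the limit in $\ell^q(L^p)$.
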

\begin{proof}
	We only prove the statement for the Besov spaces. The case of Triebel-Lizorkin spaces can be done similarly.
	
	With the same indices as in the statement of the theorem, it suffices to prove that the set 
	\[
	\B^{s,  L}_{p,q}(dw)=\Big\{f\in (\mathring{\mathcal G}^\epsilon(\beta,\gamma))': \|f\|_{\B^{s, L}_{p,q}(dw)}<\vc\Big\}.
	\]
	is complete.
	
	To do this, we first prove that if $f\in \B^{s,  L}_{p,q}(dw)\cap (\mathring{\mathcal G}^\epsilon(\beta,\gamma))'$, then for any $g\in \mathring{\mathcal G}^\epsilon(\beta,\gamma)$,
	\begin{equation}\label{eq1-thm 3.11}
		|\langle f,g \rangle| \lesi \|g\|_{\mathring{\mathcal G}( \beta, \gamma)}\|f\|_{\B^{s, L}_{p,q}(dw)}.
	\end{equation}
	Indeed, let $\psi$ be a partition of unity.  It is well-known that there exists  an even function $\varphi \in \mathscr{S}(\mathbb{R})$ with  ${\rm supp}\, \varphi\subset [-c,-b]\cup[b,c]$ for some $c>b>0$ so that 
	\[
	\sum_{j=-\infty}^\infty\psi_j(\lambda)\varphi_j(\lambda)=1, \ \ \forall \lambda\neq 0.
	\]
	See for example \cite{ST}.
	
	Let $g\in \mathring{\mathcal G}(\epsilon,\epsilon)$. By Theorem \ref{thm - Calderon discrete version},
	\[
	f=\sum_{j\in \Z} \psi_j(\sqrt L)\varphi_j(\sqrt L)f 
	\]
	\text{in $(\mathring{\mathcal G}^\epsilon(\beta,\gamma))'$}.
	
	Consequently,
	\[
	\langle f,g \rangle = \Big\langle \sum_{j\in \Z} \psi_j(\sqrt L)\varphi_j(\sqrt L)f, g \Big\rangle,
	\]
	which follows that 
	\[
	\begin{aligned}
		|\langle f,g \rangle|&\le  \sum_{j\in \Z} \Big|\int \psi_j(\sqrt L)f(\x) \varphi_j(\sqrt L)g(\x)dw(\x)\Big|\\
		&\le  \sum_{j\in \Z} \int |\psi_j(\sqrt L)f(\x)| |\varphi_j(\sqrt L)g(\x)|dw(\x).		
	\end{aligned}
	\]
	Applying Lemma \ref{lem-estimate for varphi t f x},
	\[
	\begin{aligned}
		|\langle f,g \rangle| &\lesi \|g\|_{\mathring{\mathcal G}( \beta, \gamma)}\sum_{j\ge 0} \delta^{j\beta}\int |\psi_j(\sqrt L)f(\x)|P_\gamma(0,\x;1)  dw(\x)\\
		& \ \ \ +\|g\|_{\mathring{\mathcal G}( \beta, \gamma)}\sum_{j< 0} \delta^{-j\gamma}\int |\psi_j(\sqrt L)f(\x)|P_{\beta\wedge \gamma}(0,\x;\delta^{j})  dw(\x).
	\end{aligned}
	\]

	\textbf{Case 1: $p\ge 1$}

	Since $w(B(0, \|\x\|+1)), w(B(0, \|\x\|+\delta^j)) \gtrsim 1$  (due to \eqref{eq-volume formula}) for $j<0$,  by H\"older's inequality we have
	\[
	\begin{aligned}
		\int |\psi_j(\sqrt L)f(\x)|P_\gamma(0,\x;1)  dw(\x)&\le \|\psi_j(\sqrt L)f\|_{L^p(dw)}\Big(\int_{\RN}P_\gamma(0,\x;1)^{p'}  dw(\x) \Big)^{1/p'}\\
		&\lesi  \|\psi_j(\sqrt L)f\|_{L^p(dw)}\Big(\int_{\RN}P_{\gamma p'}(0,\x;1)  dw(\x) \Big)^{1/p'}\\
		&\lesi  \|\psi_j(\sqrt L)f\|_{L^p(dw)}
	\end{aligned}
	\]
	and for $j<0$,
	\[
	\begin{aligned}
		\int |\psi_j(\sqrt L)f(\x)|P_\gamma(0,\x;1)  dw(\x)&\le \|\psi_j(\sqrt L)f\|_{L^p(dw)}\Big(\int_{\RN}P_\gamma(0,\x;\delta^j)^{p'}  dw(\x) \Big)^{1/p'}\\
		&\lesi  \|\psi_j(\sqrt L)f\|_{L^p(dw)}\Big(\int_{\RN}P_{\gamma p'}(0,\x;\delta^j)  dw(\x) \Big)^{1/p'}\\
		&\lesi  \|\psi_j(\sqrt L)f\|_{L^p(dw)}.
	\end{aligned}
	\]
	Consequently,
	\[
	\begin{aligned}
		|\langle f,g \rangle| &\lesi \|g\|_{\mathring{\mathcal G}( \beta, \gamma)}\sum_{j\ge 0} \delta^{j\beta}\|\psi_j(\sqrt L)f\|_{L^p(dw)} +\|g\|_{\mathring{\mathcal G}( \beta, \gamma)}\sum_{j< 0} \delta^{-j\gamma}\|\psi_j(\sqrt L)f\|_{L^p(dw)}\\
		&\lesi \|g\|_{\mathring{\mathcal G}( \beta, \gamma)}\Big\{\sum_{j\in \mathbb{Z}}\left(\delta^{-js}\|\psi_{j}(\sqrt{L})f\|_{L^p(dw)}\right)^q\Big\}^{1/q}\\
		&\lesi \|g\|_{\mathring{\mathcal G}( \beta, \gamma)}\|f\|_{\B^{s, L}_{p,q}(dw)}
	\end{aligned}
	\]
	as along as $|s|<\min\{\beta, \gamma\}$.

	\textbf{Case 2: $0<p<1$}
	
	In this situation, we have
	\[
	\begin{aligned}
		|\langle f,g \rangle|		&\lesi \|g\|_{\mathring{\mathcal G}( \beta, \gamma)}\sum_{j\ge 0} \delta^{j\beta}\sum_{Q^d\in\mathscr{D}^d_j}\int_{Q^d} |\psi_j(\sqrt L)f(\x)|P_\gamma(0,\x;1)  dw(\x)\\
		& \ \ \ +\|g\|_{\mathring{\mathcal G}( \beta, \gamma)}\sum_{j< 0} \delta^{-j\gamma}\sum_{Q^d\in\mathscr{D}^d_j}\int_{Q^d} |\psi_j(\sqrt L)f(\x)|P_{\beta\wedge \gamma}(0,\x;\delta^{j})  dw(\x).
	\end{aligned}
	\]
	It is easy to see that for $Q^d\in \mathscr{D}_j^d$ and $\lambda>\N/p$, 
	\[
	|\psi_j(\sqrt L)f(\x)|\lesi \inf_{\z\in Q^d}\psi^*_{j,\lambda}(\sqrt L)f(\z), \ \ \ \text{for all $\x\in Q^d$}. 
	\]
	Therefore,
	\[
	\begin{aligned}
		|\langle f,g \rangle|&\lesi \|g\|_{\mathring{\mathcal G}( \beta, \gamma)}\sum_{j\ge 0} \delta^{j\beta}\sum_{Q^d\in\mathscr{D}^d_j}\inf_{\z\in Q^d}\psi^*_{j,\lambda}(\sqrt L)f(\z)\int_{Q^d} P_\gamma(0,\x;1)  dw(\x)\\
		& \ \ \ +\|g\|_{\mathring{\mathcal G}( \beta, \gamma)}\sum_{j< 0} \delta^{-j\gamma}\sum_{Q^d\in\mathscr{D}^d_j}\inf_{\z\in Q^d}\psi^*_{j,\lambda}(\sqrt L)f(\z)\int_{Q^d}  P_{\beta\wedge \gamma}(0,\x;\delta^{j})  dw(\x).
	\end{aligned}
	\]
	Since $P_\gamma(0,\x;1)\lesi 1$, we have, for $Q^d\in \mathscr{D}^d_j$ with $j\ge 0$,
	\[
	\begin{aligned}
		\int_{Q^d} P_\gamma(0,\x;1)  dw(\x)&\lesi w(Q^d)\\
		&\lesi w(Q^d)^{1/p}w(Q^d)^{1-1/p}\\
		&\lesi \delta^{j \N(1-1/p)}w(Q^d)^{1/p},
	\end{aligned}
	\]
	where in the last inequality we used \eqref{eq-volume formula}.
	
	For $Q^d\in \mathscr{D}^d_j$ with $j< 0$, by using \eqref{eq-volume formula},
	\[
	\begin{aligned}
		\int_{Q^d}  P_{\beta\wedge \gamma}(0,\x;\delta^{j})  dw(\x)&\lesi 1\\
		&\lesi w(Q^d)^{1/p}.
	\end{aligned}
	\]
	Therefore, 
	\[
	\begin{aligned}
		|\langle f,g \rangle|&\lesi \|g\|_{\mathring{\mathcal G}( \beta, \gamma)}\sum_{j\ge 0} \delta^{j(\beta+\N(1-1/p))}\sum_{Q^d\in\mathscr{D}^d_j}w(Q^d)^{1/p}\inf_{\z\in Q^d} \psi^*_{j,\lambda}(\sqrt L)f(\z)  \\
		& \ \ \ +\|g\|_{\mathring{\mathcal G}( \beta, \gamma)}\sum_{j< 0} \delta^{-j\gamma}\sum_{Q^d\in\mathscr{D}^d_j}w(Q^d)^{1/p}\inf_{\z\in Q^d}\psi^*_{j,\lambda}(\sqrt L)f(\z)\\
		&\lesi \|g\|_{\mathring{\mathcal G}( \beta, \gamma)}\sum_{j\ge 0} \delta^{j(\beta+\N(1-1/p))}\Big[\sum_{Q^d\in\mathscr{D}^d_j}w(Q^d)\inf_{\z\in Q^d} |\psi^*_{j,\lambda}(\sqrt L)f(\z)|^p\Big]^{1/p}  \\
		& \ \ \ +\|g\|_{\mathring{\mathcal G}( \beta, \gamma)}\sum_{j< 0} \delta^{-j\gamma}\Big[\sum_{Q^d\in\mathscr{D}^d_j}w(Q^d)\inf_{\z\in Q^d} |\psi^*_{j,\lambda}(\sqrt L)f(\z)|^p\Big]^{1/p}\\
		&\lesi \|g\|_{\mathring{\mathcal G}( \beta, \gamma)}\sum_{j\ge 0} \delta^{j(\beta+\N(1-1/p))}\Big[\sum_{Q^d\in\mathscr{D}^d_j} \int_{Q^d}|\psi^*_{j,\lambda}(\sqrt L)f(\z)|^pdw(\z)\Big]^{1/p}  \\
		& \ \ \ +\|g\|_{\mathring{\mathcal G}( \beta, \gamma)}\sum_{j< 0} \delta^{-j\gamma}\Big[\sum_{Q^d\in\mathscr{D}^d_j}\int_{Q^d} |\psi^*_{j,\lambda}(\sqrt L)f(\z)|^pdw(\z)\Big]^{1/p}\\
		&\lesi \|g\|_{\mathring{\mathcal G}( \beta, \gamma)}\Big(\sum_{j\ge 0} \delta^{j(\beta+\N(1-1/p))}\|\psi^*_{j,\lambda}(\sqrt L)f\|_{L^p(dw)} +\sum_{j\ge 0} \delta^{-j\gamma}\|\psi^*_{j,\lambda}(\sqrt L)f\|_{L^p(dw)} \Big).
	\end{aligned}
	\]
	Since $p>\f{\N}{\N+\beta+s}$, $\beta+s+\N(1-1/p)>0$. This, along with $|s|<\gamma$ and Proposition \ref{prop - equivalence of varphi*}, yields
	\[
	\begin{aligned}
		|\langle f,g \rangle|& \lesi  \|g\|_{\mathring{\mathcal G}( \beta, \gamma)}\Big\{\sum_{j\in \mathbb{Z}}\left(\delta^{-js}\|\psi_{j}(\sqrt{L})f\|_{L^p(dw)}\right)^q\Big\}^{1/q}\\
		&\lesi \|g\|_{\mathring{\mathcal G}( \beta, \gamma)}\|f\|_{\B^{s, L}_{p,q}(dw)}.
	\end{aligned}
	\]
	We have proved that for any $g\in \mathring{\mathcal G}( \epsilon, \epsilon)$,
	\begin{equation}\label{eq- f g besov and test functions}
		|\langle f,g \rangle|\lesi \|g\|_{\mathring{\mathcal G}( \beta, \gamma)}\|f\|_{\B^{s, L}_{p,q}(dw)}.
	\end{equation}
	For $g\in \mathring{\mathcal G}^\epsilon( \beta, \gamma)$, there exists $\{g_n\}\subset \mathring{\mathcal G}( \epsilon, \epsilon)$ such that $\lim_{n}\|g_n-g\|_{\mathring{\mathcal G}( \beta, \gamma)}=0$. Then from \eqref{eq- f g besov and test functions}, 
	\[
	|\langle f,g_m-g_n \rangle|\lesi \|g_m-g_n\|_{\mathring{\mathcal G}( \beta, \gamma)}\|f\|_{\B^{s, L}_{p,q}(dw)}.
	\]
	It follows that the limit $\lim \langle f,g_n \rangle$ exists and hence we can define
	\[
	\langle f,g \rangle = \lim\langle f,g_n \rangle.
	\]
	Therefore,
	\[
	\begin{aligned}
		|\langle f,g \rangle|&\lesi \lim \|g_n\|_{\mathring{\mathcal G}( \beta, \gamma)}\|f\|_{\B^{s, L}_{p,q}(dw)}\\&\lesi \|g\|_{\mathring{\mathcal G}( \beta, \gamma)}\|f\|_{\B^{s, L}_{p,q}(dw)}.
	\end{aligned}	
	\]
	This ensures \eqref{eq1-thm 3.11}.
	
	We now turn to the proof. For  any Cauchy sequence $\{f_n\}\subset \B^{s, L}_{p,q}(dw)\cap (\mathring{\mathcal G}^\epsilon(\beta,\gamma))'$ in $\B^{s, L}_{p,q}(dw)$, by  \eqref{eq1-thm 3.11},
	\[
	|\langle f_m-f_n,g \rangle|\lesi \|g\|_{\mathring{\mathcal G}( \beta, \gamma)}\|f_m-f_n\|_{\B^{s, L}_{p,q}(dw)}.
	\]	
	for any $g\in \mathring{\mathcal G}^\epsilon( \beta, \gamma)$.
	
	It follows that $\{f_n\}$ is also a Cauchy sequence in $(\mathring{\mathcal G}^\epsilon( \beta, \gamma))'$ and hence there exists $f\in (\mathring{\mathcal G}^\epsilon( \beta, \gamma))'$ such that $\lim_{n} f_n = f$ in $(\mathring{\mathcal G}^\epsilon( \beta, \gamma))'$. This, along with Proposition \ref{prop-prop1 kernel is a test function}, yields
	\[
	\lim_{n\to \vc }\psi_j(\sqrt L) f_n(\x)=\psi_j(\sqrt L) f(\x), \ \ \ \x\in \RN,
	\]
	or equivalently,
	\begin{equation}\label{eq2-thm311}
		\lim_{n\to \vc }2^{-js}\psi_j(\sqrt L) f_n(\x)=2^{-js}\psi_j(\sqrt L) f(\x), \ \ \ \x\in \RN.
	\end{equation}
	On the other hand, since $\{f_n\}\subset \B^{s, L}_{p,q}(dw)\cap (\mathring{\mathcal G}^\epsilon(\beta,\gamma))'$ is a Cauchy sequence, $2^{-js}\psi_j(\sqrt L) f_n(\x)$ can be viewed as a Cauchy sequence in the space $\ell^p(L^p(\RN))$. It is well-known that $\ell^p(L^p(\RN))$ is complete. Hence, there exists $F(j,\x)\in \ell^p(L^p(\RN))$ such that 
	\[
	\lim_{n\to \vc}\| 2^{-js}\psi_j(\sqrt L) f_n(\x) - F(j,\x)\|_{\ell^p(L^p(\RN))}=0,
	\]
	which implies 
	\[
	\lim_{n\to \vc }2^{-js}\psi_j(\sqrt L) f_n(\x) = F(j,\x) \ \ \text{for a.e. $\x\in \RN$}.
	\]
	From this and \eqref{eq2-thm311},
	\[
	2^{-js}\psi_j(\sqrt L) f(\x) = F(j,\x) \ \ \text{for a.e. $\x\in \RN$},
	\]
	and hence,
	\[
	\lim_{n\to \vc}\| 2^{-js}\psi_j(\sqrt L) f_n(\x) - 2^{-js}\psi_j(\sqrt L) f(\x)\|_{\ell^p(L^p(\RN))}=0,
	\]
	i.e., $f_n \to f$ in $\B^{s, L}_{p,q}(dw)$.
	
	This completes our proof.
\end{proof}

\subsubsection{\textbf{Atomic decomposition}}

In this section we will establish a new atomic decomposition for the Besov spaces $\B^{s,  L}_{p,q}(dw)$ and Triebel-Lizorkin spaces $\F^{s,  L}_{p,q}(dw)$. The atomic decomposition is different from those in Theorem \ref{thm1- atom Besov - associated to L} and Theorem \ref{thm1- atom TL spaces- associated to L}, since the new atoms in this section are independent of the operator $L$.
\begin{defn}\label{def:7.1}
	Let $ 0<p \leq \infty$. A function $a$ is said to be a $(p,d)$-atom if there exists a dyadic cube $Q \in \mathscr{D}_{k}$ such that \\
	\begin{enumerate}[\rm (i)]
		\item $\operatorname{supp}\,a \subset  \mathcal O(6Q)$;
		\item For any $\x \in \RN$,
		$$\displaystyle |a(\x)| \leq  \Big(1+\f{\|\x-\x_Q\|}{\ell_Q}\Big)^{-2} w(Q)^{-1/p};$$
		
		\item For any $\x,\x'\in\RN$,
		$$|a(\x)-a(\x')| \leq w(Q)^{-1/p} \dfrac{\|\x-\x'\|}{\ell_{Q}} ;
		$$
		\item $\displaystyle \int a(\x)d w(\x)=0$.
	\end{enumerate}
\end{defn}

	
\begin{thm}\label{thm1- atom Besov-new atom}
	Let $s\in (-1,1)$ and $p\in (p(s,1),\vc]$, $q\in (0,\vc]$. If $f\in \B^{s,  L}_{p,q}(dw)$, then   there exist a sequence of $(p,d)$-atoms $\{a_{Q}\}_{Q\in \mathscr{D}_\nu, \nu\in \mathbb{Z}}$ and a sequence of coefficients  $\{s_{Q}\}_{Q\in \mathscr{D}_\nu, \nu\in\mathbb{Z}}$ such that for every $\epsilon, \beta, \gamma$ satisfying  $|s|<\beta,\gamma<\epsilon\le 1$ and $p>p(s,\beta)$ 
	$$
	f=\sum_{k\in\mathbb{Z}}\sum_{Q\in \mathscr{D}_k}s_{Q}a_{Q} \ \ \text{in $(\mathring{\mathcal G}^\epsilon(\beta,\gamma))'$}.
	$$
	Moreover,
	$$
	\Big[\sum_{k\in\mathbb{Z}}\delta^{-s k q}\Big(\sum_{Q\in \mathscr{D}_k}|s_{Q}|^p\Big)^{q/p}\Big]^{1/q}\lesi \|f\|_{\B^{s,L}_{p,q}(dw)}.
	$$
\end{thm}

\begin{proof}
	Let  $\Phi$ be a function as in Lemma \ref{lem:finite propagation}. Then there exists an even function $\psi \in \mathscr{S}(\mathbb{R})$ with  ${\rm supp}\, \psi\subset [-c,-b]\cup[b,c]$ for some $c>b>0$ so that 
	\[
	\sum_{k=-\infty}^\infty\lambda^2\Phi_k(\lambda)\psi_k(\lambda)=1, \ \ \forall \lambda\neq 0.
	\]
	See for example \cite{ST}. Due to Theorem \ref{thm - Calderon discrete version}, for $f\in (\mathring{\mathcal G}^\epsilon(\beta,\gamma))'$ we have
	\[
	f=\sum_{k=-\infty}^\infty(\delta^k\sqrt L)^2\Phi_k(\sqrt L)\psi_k(\sqrt L)f
	\]
	in $(\mathring{\mathcal G}^\epsilon(\beta,\gamma))'$.
	
	As a consequence, we have
	\begin{equation}
		\label{eq1-atom}
		\begin{aligned}
			f =\sum_{k=-\infty}^\infty\sum_{Q\in \mathscr{D}_k}(\delta^k\sqrt L)^2\Phi_k(\sqrt L)[\psi_k(\sqrt L)f\cdot \chi_{Q}].
		\end{aligned}
	\end{equation}

	For each $k\in \mathbb{Z}$ and $Q\in \mathscr{D}_k$, we set
	$$
	s_{Q}=  w(Q)^{1/p} \sup_{\y\in  Q}|\psi_k(\sqrt L)f(\y)|,
	$$
	and  
	\begin{equation}\label{eq-bQ}
		a_{Q}=\f{1}{s_{Q}}  (\delta^k\sqrt L)^2\Phi_k(\sqrt L)[\psi_k(\sqrt L)f\cdot \chi_{Q}].
	\end{equation}
	Obviously,  we deduce from \eqref{eq1-atom} that
	$$
	f=\sum_{k\in\mathbb{Z}}\sum_{Q\in \mathscr{D}_k}s_{Q}a_{Q} \ \ \text{in $(\mathring{\mathcal G}^\epsilon(\beta,\gamma))'$}.
	$$
	
	Using  Lemma \ref{lem:finite propagation} we can see that
	\[
	{\rm supp}\,a_{Q}\subset 6 \mathcal O(Q), \ \ \int a_{Q}(\x) dw(\x) = 0.
	\]
	and
	$$
	\begin{aligned}
		|a_{Q}(\x)|&\lesi \f{1}{s_{Q}}  \int_{Q} \Big(1+\f{\|\x-\x_Q\|}{\ell_Q}\Big)^{-2}\f{1}{w(B(\y,\delta^k))}|\psi_k(\sqrt L)f(\y)|dw(\y)\\
		&\lesi w(Q)^{-1/p} \Big(1+\f{\|\x- \x_Q\|}{\ell_Q}\Big)^{-2};
	\end{aligned}
	$$
	moreover, for $\x,\x'\in \RN$,
	\[
	\begin{aligned}
		|a_{Q^d}(\x)-a_{Q^d}(\x')|&\lesi \f{1}{s_{Q^d}}  \f{\|\x-\x'\|}{ \delta^k}\int_{Q^d} \f{1}{w(B(\y,\delta^k))}|\psi_k(\sqrt L)f(\y)|dw(\y)\\
		&\lesi \f{\|\x-\x'\|}{ \delta^k}w(Q^d)^{-1/p}.
	\end{aligned}
	\]
	It follows that $a_{Q^d}$ is (a multiple of) a $(p,d)$-atom.
	
	It remains to show that 
	$$
	\Big[\sum_{k\in\mathbb{Z}}\delta^{-s k q}\Big(\sum_{Q\in \mathscr{D}_k}|s_{Q}|^p\Big)^{q/p}\Big]^{1/q}\lesi \|f\|_{\B^{s,L}_{p,q}(dw)}.
	$$
	
	Indeed, for $\lambda>\N/p$ is is easy to see that 
	\[
	\begin{aligned}
		s_{Q}&=  w(Q)^{1/p} \sup_{\y\in  Q}|\psi_k(\sqrt L)f(\y)|\\
		&\lesi w(Q)^{1/p} \inf_{\z\in Q}\psi^*_{k, \lambda}(\sqrt L)f(\z)
	\end{aligned}
	\]
	which implies
	\[
	\begin{aligned}
		\sum_{Q\in \mathscr{D}_k}|s_{Q}|^p&\lesi \sum_{Q\in \mathscr{D}_k} w(Q) \inf_{\z\in Q}\psi^*_{k, \lambda}(\sqrt L)f(\z)^p\\
		&\lesi \sum_{Q\in \mathscr{D}_k} \int_{Q}|\psi^*_{k, \lambda}(\sqrt L)f(\z)|^p dw(\z)\\
		&\simeq \int_{\RN}|\psi^*_{k, \lambda}(\sqrt L)f(\z)|^p dw(\z).
	\end{aligned}
	\]
	This, along with Proposition \ref{prop - equivalence of varphi*}, implies that 
	\[
	\begin{aligned}
		\Big[\sum_{k\in\mathbb{Z}}\delta^{-s k q}\Big(\sum_{Q\in \mathscr{D}_k}|s_{Q}|^p\Big)^{q/p}\Big]^{1/q}&\lesi \Big[\sum_{k\in\mathbb{Z}}\Big(\sum_{Q\in \mathscr{D}_k}\delta^{-s k }\|\psi^*_{k, \lambda}(\sqrt L)f\|_{L^p(w)} \Big)^{q/p}\Big]^{1/q}\\
		&\lesi \|f\|_{\B^{s,L}_{p,q}(dw)}.
	\end{aligned}
	\]
	
	This completes our proof.

\end{proof}

As a counter part of Theorem \ref{thm1- atom Besov-new atom}, we have the following atomic decomposition for the Triebel-Lizorkin spaces $\FF^{s,  L}_{p,q}(dw)$.  
\begin{thm}\label{thm1- atom TL-new atom}
	Let $s\in (-1,1)$ and $p\in (p(s,1),\vc)$, $q\in (p(s,1),\vc]$. If $f\in \B^{s,  L}_{p,q}(dw)$, then   there exist a sequence of $(p,d)$-atoms $\{a_{Q}\}_{Q\in \mathscr{D}_\nu, \nu\in \mathbb{Z}}$ and a sequence of coefficients  $\{s_{Q}\}_{Q\in \mathscr{D}_\nu, \nu\in\mathbb{Z}}$ such that for every $\epsilon,\beta, \gamma$ satisfying  $|s|<\beta,\gamma<\epsilon\le 1$ and $p\wedge q>p(s,\beta)$ 
	$$
	f=\sum_{k\in\mathbb{Z}}\sum_{Q\in \mathscr{D}_k}s_{Q}a_{Q} \ \ \text{in $(\mathring{\mathcal G}^\epsilon(\beta,\gamma))'$}.
	$$
	Moreover,
	$$
	\Big\|\Big[\sum_{k\in\mathbb{Z}}2^{-\delta s q}\Big(\sum_{Q\in \mathscr{D}_k}w(Q)^{-1/p}|s_{Q}|\chi_{Q}\Big)^q\Big]^{1/q}\Big\|_{L^p(dw)}\lesi \|f\|_{\B^{s,L}_{p,q}(dw)}.
	$$
\end{thm}
\begin{proof}
	The proof of the theorem is similar to that of Theorem \ref{thm1- atom Besov-new atom} and hence we omit the details.
\end{proof}

\begin{rem}
	The reverse directions in Theorems \ref{thm1- atom Besov-new atom} and \ref{thm1- atom TL-new atom} can be established similarly to the proof of Theorem \ref{thm-coincidence Besov and TL spaces} below. I do not pursue this here and leave it to the interested reader. 
\end{rem}

\subsubsection{\textbf{Proof of Theorem \ref{thm-coincidence Besov and TL spaces}}} This section is devoted to proving Theorem \ref{thm-coincidence Besov and TL spaces}. It is worth noting that Theorem \ref{thm-coincidence Besov and TL spaces} will follow from the following technical lemmas.

\begin{lem}\label{lem2-thm2 atom Besov-Dk}
	Let $S_k$ be an ATI with bounded support and $D_k = S_k-S_{k-1}$. Then for any $M>0$ there exits $C>0$ such that for every $k\in \mathbb Z$ and every $(p,d)$-atom $a_{Q}$ associated with some $Q\in \mathscr{D}_\nu, \nu \in \Z$,
	\begin{equation}\label{eq- Dk atom k greater than nu}
		|D_k a_{Q}(\x)|\le C\delta^{k-\nu}w(Q)^{-1/p} \Big(1+\f{d(\x,\x_Q)}{\delta^{\nu}}\Big)^{-M}, \ \  k\ge \nu,
	\end{equation}
	and
	\begin{equation}\label{eq- Dk atom k less than nu}
		|D_k a_{Q}(\x)|\le C\delta^{\nu-k}\f{w(Q)}{w(B(\x_{Q},\delta^k))}w(Q)^{-1/p} \Big(1+\f{d(\x,\x_Q)}{\delta^{k}}\Big)^{-M}, \ \  k< \nu.
	\end{equation}
\end{lem}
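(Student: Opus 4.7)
The proof will follow the template of Lemma \ref{lem2-thm2 atom Besov}, replacing the functional-calculus kernel estimates for $\psi_k(\sqrt{L})$ by the axiomatic ATI properties of $D_k$ listed in Definition \ref{defn- Sk}. The four ingredients I will rely on are: the bounded Euclidean support (ii), the size bound (iii), the H\"older regularity in Euclidean norm (iv), and the cancellation $\int D_k(\x,\y)\,dw(\y)=0$, which follows from (vi) applied to $S_k$ and $S_{k-1}$. I split the argument into two cases according to whether the kernel scale $\delta^k$ is finer or coarser than the atom scale $\delta^\nu=\ell(Q^d)$.

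For Case 1 ($k\ge \nu$), the regularity of the atom at the larger scale is the right tool. Using the cancellation of $D_k$, I would write
\[
D_k a_{Q^d}(\x)=\int_{\RN} D_k(\x,\y)\,[a_{Q^d}(\y)-a_{Q^d}(\x)]\,dw(\y),
\]
then combine the global Lipschitz estimate (iii) of the $(p,d)$-atom with the size bound and bounded Euclidean support of $D_k$ to obtain, by doubling,
\[
|D_k a_{Q^d}(\x)|\lesi w(Q^d)^{-1/p}\,\frac{\delta^{k-1}}{\delta^{\nu}}\,\frac{w(B(\x,A_1\delta^{k-1}))}{w(B(\x,\delta^k))}\lesi \delta^{k-\nu}\,w(Q^d)^{-1/p}.
\]
Finally, if $D_k a_{Q^d}(\x)\ne 0$ then there exists $\y\in 6Q^d$ with $\|\x-\y\|\le A_1\delta^{k-1}$; combined with $d(\y,\x_{Q^d})\le 12\delta^\nu$ (from Lemma \ref{lem-dyadic cube} (v)) and $k\ge\nu$, this forces $d(\x,\x_{Q^d})\lesi \delta^\nu$, so $(1+d(\x,\x_{Q^d})/\delta^\nu)^{-M}\simeq 1$ on the effective support and may be inserted for free, yielding \eqref{eq- Dk atom k greater than nu}.

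For Case 2 ($k<\nu$), I plan to exploit the cancellation of the atom against the Euclidean smoothness of $D_k$. The main obstacle is that $a_{Q^d}$ is supported in the orbit set $6Q^d$, which is generally a union of up to $|G|$ disjoint Euclidean pieces, while property (iv) of an ATI is Euclidean. To handle this I will borrow the splitting device from Case 2 of Lemma \ref{lem2-thm2 atom Besov}: fix $B=B(\x_{Q^d},12\delta^\nu)$, so that $\mathcal O(B)\supset 6Q^d$, and decompose $a_{Q^d}=\sum_{\sigma\in G} a_{B,\sigma}$ with $\supp a_{B,\sigma}\subset \sigma(B)$, $\int a_{B,\sigma}\,dw=0$ and $\|a_{B,\sigma}\|_\infty\lesi w(B)^{-1/p}$. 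For each $\sigma$, the cancellation of $a_{B,\sigma}$ and the H\"older estimate (iv) applied at scale $\|\y-\sigma(\x_B)\|\le 12\delta^\nu\le A_2\delta^k$ (legitimate for the ATI constructions of \cite{HMY, THHLL} since $\nu>k$ and $\delta$ is small; otherwise one iterates over subintervals of length $\le A_2\delta^k$) give, on the effective support $\|\x-\sigma(\x_B)\|\lesi \delta^k$,
\[
|D_k a_{B,\sigma}(\x)|\lesi \delta^{\nu-k}\,\frac{w(\sigma(B))}{w(B(\sigma(\x_B),\delta^k))}\,w(Q^d)^{-1/p}.
\]
Summing over the finite group $G$, using $G$-invariance of $w$ (so $w(\sigma(B))=w(B)\simeq w(Q^d)$ and $w(B(\sigma(\x_B),\delta^k))\simeq w(B(\x_{Q^d},\delta^k))\simeq w(B^d(\x_{Q^d},\delta^k))$), and observing once more that $d(\x,\x_{Q^d})\lesi \delta^k$ on the support so that the $d$-decay factor is free, yields \eqref{eq- Dk atom k less than nu}. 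The only step that goes beyond the classical (commutative) setting is the reconciliation between the Euclidean regularity of $D_k$ and the $d$-geometry of the atom via this decomposition, and the harmless finite factor $|G|$ it produces.
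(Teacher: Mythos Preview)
Your proposal is correct and follows essentially the same route as the paper's proof: reduce the decay factor to a triviality via the bounded Euclidean support of $D_k$, then in Case~1 use the cancellation of $D_k$ against the Lipschitz regularity of the atom, and in Case~2 perform the orbit splitting $a_{Q^d}=\sum_{\sigma\in G}a_{B,\sigma}$ and use the Euclidean H\"older regularity (iv) of $D_k$ against the cancellation of each $a_{B,\sigma}$. Your extra care in checking the applicability condition $\|\y-\sigma(\x_B)\|\le A_2\delta^k$ for (iv) is a point the paper leaves implicit (it follows since $\nu>k$ and $\delta\le 1/24$).
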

\begin{proof}
	
	We now consider two cases: $k\ge \nu$ and $k<\nu$.

	\noindent{\bf Case 1: $k\ge \nu$, i.e., $\delta^{k}< \delta^{\nu}$.}

	Due to the bounded support condition of $D_k$, we have $D_k a_{Q}(\x)=0$, whenever $d(\x,\x_Q) \ge \Lambda \delta^\nu$. Hence, we need only to prove the estimate for $\x$ with $d(\x,\x_Q) < \Lambda \delta^\nu$, i.e., for $M=0$. Since $\int D_k(\x,\y)dw(\y)=0$, we have
	\[
	\begin{aligned}
		D_k a_{Q}(\x)&=\int_{\mathcal O(Q)}D_k(\x,\y)a_{Q}(\y)dw(\y)\\
		&=\int_{\RN}D_k(\x,\y)(a_{Q}(\y)-a_{Q}(\x))dw(\y).
	\end{aligned}
	\]

	On the other hand, 
	\[
	|a_{Q}(\y)-a_{Q}(\x)|\lesi w({Q})^{-1/p}\f{\|\y-\x\|}{\ell({Q})}.
	\]
	
	This, along with the bound of $D_k(\x,\y)$, yields
	$$
	\begin{aligned}
		|D_k a_{Q}(\x)|&\lesi w(Q)^{-1/p}\int_{\{\y: \|\x-\y\|<A_1\delta^k\}} \f{1}{w(B(\x,\delta^k))}\f{\|\y-\x\|}{\ell({Q})} dw(\y)\\
		&\lesi \delta^{k-\nu}w({Q})^{-1/p}.
	\end{aligned}
	$$
	This proves \eqref{eq- Dk atom k greater than nu}.	
	\medskip
	
	\noindent{\bf Case 2: $k< \nu$, i.e., $\delta^{k}> \delta^{\nu}$.}

	Due to the bounded support condition of $D_k$, we have $D_k a_{Q}(\x)=0$, whenever $d(\x,\x_{Q})\ge \Lambda \delta^{k}$. Hence, we need only to prove the estimate for $\x$ with $d(\x,\x_{Q})< \Lambda \delta^{k}$, i.e., for $M=0$.
	
	Using the cancellation property $\int a_Q(\x)dw(\x)=0$, property (iv) in Definition \ref{defn- Sk} and the property (ii) in Definition \ref{def:7.1}, we have	
	\[
	\begin{aligned}
		D_k a_{Q}(\x)&=\int_{\mathcal O(Q)}D_k(\x,\y)a_{Q}(\y)dw(\y)\\
		&=\int_{\mathcal O(Q)}[D_k(\x,\y)-D_k(\x,\x_Q)]a_{Q}(\y)dw(\y)\\
		&\lesi  \int_{\mathcal O(Q)} \f{\|y- \x_Q\|}{\delta^{k}} \f{\ell_Q}{\|\y-\x_Q\|} \f{1}{w(B(\x,\delta^{k}))}w(Q)^{-1/p}dw(\y)\\
		&\lesi  \delta^{\nu-k}\f{w(Q)}{w(B(\x_{Q},\delta^k))}w(Q)^{-1/p}.
	\end{aligned}
	\]

	Hence \eqref{eq- Dk atom k less than nu} follows.
	
	This completes our proof.
\end{proof}

\begin{lem}\label{lem3-atom Besov}
	Let $\psi$ be a partition of unity. Then for any $M>0$ there exists $C>0$ such that for any $k\in \Z$ and any  $p$-atom  $a_Q$ associated with some $Q\in \mathscr{D}_\nu, \nu \in Z$,
	\begin{equation*}
		|\psi_k(\sqrt{L}) a_Q(\x)|\le C\delta^{k-\nu}w(Q)^{-1/p} \Big(1+\f{d(\x,\x_{Q})}{\delta^{\nu}}\Big)^{-M}, \ \  k\ge \nu,
	\end{equation*}
	and
	\begin{equation*}
		|\psi_k(\sqrt{L}) a_Q(\x)|\le C\delta^{\nu-k}\f{w(Q)}{w(B(\x_{Q},\delta^k))}w(Q)^{-1/p} \Big(1+\f{d(\x,\x_{Q})}{\delta^{k}}\Big)^{-M}, \ \  k< \nu.
	\end{equation*}
	
\end{lem}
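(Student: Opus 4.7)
The proof follows the strategy of Lemma \ref{lem2-thm2 atom Besov}, with the key simplification that $a_Q$ is supported in a single Euclidean ball $\Lambda Q\subset B(\x_Q,2\Lambda\delta^\nu)$ rather than in a $G$-orbit of balls, thereby eliminating the need for the $\sigma$-decomposition $a_{Q^d}=\sum_\sigma a_{B,\sigma}$ used there. Throughout I rely on the three key features from Lemma \ref{lem-heat kernel estimate}: arbitrary-order decay in the orbit distance $d$, the extra quadratic Euclidean decay $(1+\|\x-\y\|/\delta^k)^{-2}$, and the vanishing $\int \psi_k(\sqrt L)(\x,\cdot)\,dw=0$ (since $\psi(0)=0$).

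For Case 1 ($k\ge\nu$, so $\delta^k\le\delta^\nu$), I would use the cancellation of the kernel to write
\[
\psi_k(\sqrt L)a_Q(\x)=\int_{\Lambda Q}\psi_k(\sqrt L)(\x,\y)[a_Q(\y)-a_Q(\x)]\,dw(\y),
\]
and insert the Lipschitz bound $|a_Q(\y)-a_Q(\x)|\le w(Q)^{-1/p}\|\y-\x\|/\delta^\nu=\delta^{k-\nu}w(Q)^{-1/p}(\|\y-\x\|/\delta^k)$. The factor $\|\y-\x\|/\delta^k$ is absorbed by one power of the Euclidean decay $(1+\|\y-\x\|/\delta^k)^{-2}$, producing the gain $\delta^{k-\nu}$. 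Since $\|\y-\x_Q\|\le 2\Lambda\delta^\nu$ for $\y\in\Lambda Q$ and $\delta^k\le\delta^\nu$, one has $(1+d(\x,\y)/\delta^k)^{-M'}\le(1+d(\x,\y)/\delta^\nu)^{-M'}\simeq(1+d(\x,\x_Q)/\delta^\nu)^{-M'}$, with $M'$ chosen as large as desired. The residual kernel integrates to $O(1)$ against $\chi_{\Lambda Q}$, yielding the target bound.

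For Case 2 ($k<\nu$), I would use the cancellation of the atom:
\[
\psi_k(\sqrt L)a_Q(\x)=\int_{\Lambda Q}\bigl[\psi_k(\sqrt L)(\x,\y)-\psi_k(\sqrt L)(\x,\x_Q)\bigr]a_Q(\y)\,dw(\y).
\]
Because $\|\y-\x_Q\|\le 2\Lambda\delta^\nu\le 2\Lambda\delta^k$, Lemma \ref{lem-heat kernel estimate}(b) applies and yields the gain $\|\y-\x_Q\|/\delta^k\lesssim\delta^{\nu-k}$. Combining with $\|a_Q\|_\infty\le w(Q)^{-1/p}$, $d(\x,\y)\simeq d(\x,\x_Q)$ on $\Lambda Q$, and $w(\Lambda Q)\simeq w(Q)$ gives
\[
|\psi_k(\sqrt L)a_Q(\x)|\lesssim\delta^{\nu-k}w(Q)^{-1/p}\f{w(Q)}{w(B(\x,\delta^k+d(\x,\x_Q)))}\Big(\f{\delta^k}{\delta^k+d(\x,\x_Q)}\Big)^M.
\]
The final step is to replace $w(B(\x,\delta^k+d(\x,\x_Q)))^{-1}$ by $w(B(\x_Q,\delta^k))^{-1}$: choose $\sigma_0\in G$ realizing $d(\x,\x_Q)=\|\x-\sigma_0(\x_Q)\|$; then by inclusion and doubling $w(B(\x,\delta^k+d(\x,\x_Q)))\simeq w(B(\sigma_0(\x_Q),\delta^k+d(\x,\x_Q)))\ge w(B(\sigma_0(\x_Q),\delta^k))=w(B(\x_Q,\delta^k))$, where the last equality uses the $G$-invariance of $dw$.

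The main technical subtlety is precisely this last volume transfer, which is delicate whenever $d(\x,\x_Q)\ll\|\x-\x_Q\|$, i.e., when $\x$ sits close to a non-identity $G$-translate of $\x_Q$. In Lemma \ref{lem2-thm2 atom Besov} this was handled pre-emptively by decomposing the atom along the orbit, so that the correct $G$-representative appears from the start; here I must identify it a posteriori through the minimizer $\sigma_0$ and then invoke the $G$-invariance of the weight $dw$, after which the argument closes as in Lemma \ref{lem2-thm2 atom Besov}.
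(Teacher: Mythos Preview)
Your proposal is correct and follows essentially the same two-case strategy as the paper: kernel cancellation plus atom Lipschitz regularity in Case~1, atom cancellation plus kernel H\"older regularity in Case~2. The one place where you work harder than necessary is the volume transfer at the end of Case~2. The paper avoids your $\sigma_0$ argument entirely by writing the kernel bound with the volume centered at the second variable, i.e.\ $\frac{1}{w(B(\y,\delta^k))}$ rather than $\frac{1}{w(B(\x,\delta^k+d(\x,\x_Q)))}$ (this is available by self-adjointness of $L$, or equivalently by symmetrizing Lemma~\ref{lem-heat kernel estimate}). Since $\y\in\Lambda Q$ is Euclidean-close to $\x_Q$ at scale $\delta^\nu\le\delta^k$, ordinary doubling gives $w(B(\y,\delta^k))\simeq w(B(\x_Q,\delta^k))$ immediately, with no need to invoke $G$-invariance of $dw$ or to locate the minimizer $\sigma_0$. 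Your route is valid, just longer.
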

\begin{proof}
	
	We now consider two cases: $k\ge \nu$ and $k<\nu$.

	\noindent{\bf Case 1: $k\ge \nu$, i.e., $\delta^{k}<\delta^\nu$}

	For $M>0$ and $K>\N$, using Lemma \ref{lem-heat kernel estimate} and the definition of the atoms we have
	$$
	\begin{aligned}
		|\psi_k(\sqrt{L}) a_Q(\x)|&= \Big|\int_{\Lambda Q} \psi_k(\sqrt{L})(\x,\y)(a_Q(\y)-a_Q(\x)) d\mu(\y)\Big|\\
		&\lesi w(Q)^{-1/p}\int_{\Lambda Q} \Big(1+\f{\|\x-\y\|}{\delta^k}\Big)^{-1} \f{\|\x-\y\|}{\ell_Q} \f{1}{w(B(\x,\delta^k))}\Big(1+\f{d(\x,\y)}{\delta^k}\Big)^{-M-K} dw(\y)\\
		&\lesi w(Q)^{-1/p}\f{\delta^k}{\ell_Q}\int_{\Lambda Q} \f{1}{w(B(\x,\delta^k))}\Big(1+\f{d(\x,\y)}{\delta^k}\Big)^{-M-K} dw(\y)
	\end{aligned}
	$$
	For $\y\in \Lambda Q$ and $k\ge \nu$, we have
	\[
	\Big(1+\f{d(\x,\y)}{\delta^k}\Big)^{-M}\simeq \Big(1+\f{d(\x,\x_{Q^d})}{\delta^k}\Big)^{-M}\le \Big(1+\f{d(\x,\x_{Q^d})}{\delta^\nu}\Big)^{-M}.
	\]
	Therefore,
	$$
	\begin{aligned}
		|\psi_k(\sqrt{L}) a_Q(\x)|
		&\lesi  \f{\delta^k}{\ell_Q} w(Q)^{-1/p}\Big(1+\f{d(\x,\x_{Q^d})}{\delta^\nu}\Big)^{-M}\int_{\Lambda Q}  \f{1}{w(B(\x,\delta^k))}\Big(1+\f{d(\x,\y)}{\delta^k}\Big)^{-K} dw(\y)\\
		&\lesi  \f{\delta^k}{\ell_Q} w(Q)^{-1/p}\Big(1+\f{d(\x,\x_{Q^d})}{\delta^\nu}\Big)^{-M}.
	\end{aligned}
	$$
	
	\medskip

	\noindent{\bf Case 2: $k< \nu$, i.e., $\delta^{k}>\delta^\nu$}

	We have
	$$
	\begin{aligned}
		\psi_k(\sqrt{L}) a_Q(\x)&= \int_{\Lambda Q} \big[\psi_k(\sqrt{L})(\x,\y)-\psi_k(\sqrt{L})(\x,\x_Q)\big]b_Q(\y)  dw(\y).
	\end{aligned}
	$$
	This, along with Lemma \ref{lem:finite propagation}, implies that
	
	\[
	\begin{aligned}
		\psi_k(\sqrt{L}) a_Q(\x)&\lesi w(Q)^{-1/p}\int_{\Lambda Q} \f{\|\y-\x_Q\|}{\delta^k} \f{1}{w(B(\y,\delta^k))}\Big(1+\f{d(\x,\y)}{\delta^k}\Big)^{-M} dw(\y)\\
		&\lesi  \f{\ell_Q}{\delta^k} \f{w(Q)}{w(B(\x_{Q},\delta^k))} w(Q)^{-1/p}\Big(1+\f{d(\x,\x_{Q})}{\delta^k}\Big)^{-M}.
	\end{aligned}
	\]
	This completes our proof.	
\end{proof}

\begin{proof}
	[Proof of Theorem \ref{thm-coincidence Besov and TL spaces}:] The proof follows directly from Lemma \ref{lem2-thm2 atom Besov-Dk}, Lemma \ref{lem3-atom Besov} and atomic decomposition theorem. Hence, we just sketch the main ideas.
	
	With notations and indices as in Theorem \ref{thm-coincidence Besov and TL spaces}, for  $f\in \B^s_{p,q}(dw)$, by Theorem \ref{thm1-Besov atomic}, we can write 
	\[
	f=\sum_{\nu\in \Z} \sum_{Q\in \mathscr D_\nu} s_Qa_Q 
	\]
	where $\{a_Q\}$ is a sequence of $p$-atoms and $\{s_Q\}$ is a sequence of numbers as in Theorem \ref{thm1-Besov atomic}.
	
	Let $\psi$ be a partition of unity. Then we have    
	\[
	\psi_k(\sqrt L)f=\sum_{\nu\in \Z} \sum_{Q\in \mathscr D_\nu} s_Q\psi_k(\sqrt L) a_Q. 
	\]
	At this stage, using Lemma \ref{lem3-atom Besov} and arguing similarly to the proof of the reverse direction of Theorem \ref{thm1-TL atomic} we come up with $f\in \B^{s,L}_{p,q}(dw)$, which implies $\B^s_{p,q}(dw)\hookrightarrow \B^{s,L}_{p,q}(dw)$. Similarly, by using Theorem \ref{thm1- atom Besov-new atom} and Lemma \ref{lem2-thm2 atom Besov-Dk}, we obtain $\B^{s,L}_{p,q}(dw)\hookrightarrow \B^{s}_{p,q}(dw)$ and hence 
	\[
	\B^{s,L}_{p,q}(dw)\equiv \B^{s}_{p,q}(dw).
	\]
	
	Arguing similarly,
	\[
	\FF^{s,L}_{p,q}(dw)\equiv \FF^{s}_{p,q}(dw).
	\]
	This completes our proof.
	
\end{proof}
From Theorem \ref{thm-coincidence Besov and TL spaces}, Theorem \ref{thm 1} and Corollary 3.8 in \cite{BBD}, we have the following characterizations of the Besov and Triebel-Lizorkin spaces via heat semigroups of the Dunkle Laplacian operator. 

\begin{cor}\label{cor - heat kernel characterization}
	Let $s\in (-1,1)$ and $p\in (p(s,1),\vc)$, $q\in (0,\vc)$. For any $\epsilon \in (0,1]$ and $\beta,\gamma\in (0,\epsilon)$ such that $|s|<\beta,\gamma$ and $p>p(s,\epsilon)$, we have
	\[
	\B^{s}_{p,q}(dw)=\Big\{f\in (\mathring{\mathcal G}^\epsilon(\beta,\gamma))': \Big[\int_0^\vc \big[t^{-s/2}\|tLe^{-tL} f\|_{L^p(dw)}\big]^q\f{dt}{t}\Big]^{1/q}<\vc\Big\};
	\]
	moreover,
	\[
	\|f\|_{\B^{s}_{p,q}(dw)}\simeq \Big[\int_0^\vc \big[t^{-s/2}\|tLe^{-tL} f\|_{L^p(dw)}\big]^q\f{dt}{t}\Big]^{1/q}.
	\]
	
	Similarly, let $s\in (-1,1)$ and $p,q\in (p(s,1),\vc)$. For any $\epsilon \in (0,1]$ and $\beta,\gamma\in (0,\epsilon)$ such that $|s|<\beta,\gamma$ and $p\wedge q>p(s,\epsilon)$, we have
	\[
	\FF^{s}_{p,q}(dw)=\Big\{f\in (\mathring{\mathcal G}^\epsilon(\beta,\gamma))': \Big\|\Big[\int_0^\vc \big[t^{-s/2} tLe^{-tL} f\big]^q\f{dt}{t}\Big]^{1/q}\Big\|_{L^p(dw)}<\vc\Big\};
	\]
	moreover,
	\[
	\|f\|_{\FF^{s}_{p,q}(dw)}\simeq \Big\|\Big[\int_0^\vc \big[t^{-s/2} tLe^{-tL} f\big]^q\f{dt}{t}\Big]^{1/q}\Big\|_{L^p(dw)}.
	\]
\end{cor}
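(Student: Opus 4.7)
The plan is to transfer the definition of the $L$-adapted Besov/Triebel--Lizorkin norms, which is formulated in terms of a discrete dyadic partition of unity $\psi$ of the spectrum of $\sqrt L$, into its continuous heat semigroup form, and then identify everything with the space-of-homogeneous-type side via the main coincidence theorem.

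First, by Theorem \ref{thm-coincidence Besov and TL spaces} we have the identifications $\dot B^{s}_{p,q}(dw)\equiv \dot B^{s,L}_{p,q}(dw)$ and $\dot F^{s}_{p,q}(dw)\equiv \dot F^{s,L}_{p,q}(dw)$ with equivalent norms, in the ranges of $s,p,q$ stated. Theorem \ref{thm 1} guarantees that under the hypotheses on $\epsilon,\beta,\gamma$ the $L$-adapted spaces are described as the subset of $(\mathring{\mathcal G}^\epsilon(\beta,\gamma))'$ on which the norm is finite, so that there is no ambiguity in taking distributions from this class. Hence it suffices to show, for every $f\in(\mathring{\mathcal G}^\epsilon(\beta,\gamma))'$, that
\begin{equation*}
\|f\|_{\dot B^{s,L}_{p,q}(dw)}\simeq\Big[\int_0^\infty \big[t^{-s/2}\|tLe^{-tL}f\|_{L^p(dw)}\big]^q\,\frac{dt}{t}\Big]^{1/q}
\end{equation*}
and the analogous Triebel--Lizorkin identity.

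Next, I invoke the general Littlewood--Paley characterization for $L$-adapted Besov/Triebel--Lizorkin spaces proved in \cite[Corollary 3.8]{BBD}. That result asserts: for any even function $\varphi\in\mathscr S(\mathbb R)$ with $\varphi(0)=0$ and $\varphi$ non-trivial (say $\int_0^\infty |\varphi(t)|^2\,dt/t\ne 0$, which is what \cite{BBD} actually needs via the Calder\'on reproducing formula), the continuous square functions
\begin{equation*}
\Big[\int_0^\infty(t^{-s}\|\varphi(t\sqrt L)f\|_{L^p(dw)})^q\,\frac{dt}{t}\Big]^{1/q}\quad\text{and}\quad\Big\|\Big[\int_0^\infty(t^{-s}|\varphi(t\sqrt L)f|)^q\,\frac{dt}{t}\Big]^{1/q}\Big\|_{L^p(dw)}
\end{equation*}
are equivalent to the discrete norms based on the partition of unity $\psi$ that define $\dot B^{s,L}_{p,q}(dw)$ and $\dot F^{s,L}_{p,q}(dw)$ respectively. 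All of the ingredients needed for \cite[Corollary 3.8]{BBD} in the Dunkl setting are already available: the heat kernel estimates of Lemma \ref{lem-heat kernel estimate for semigroups}, the functional calculus bounds of Lemma \ref{lem-heat kernel estimate}, the Peetre maximal function comparison (Proposition \ref{prop - equivalence of varphi*}), and the Calder\'on reproducing formulae (Theorem \ref{Calderon reproducing} and Theorem \ref{thm - Calderon discrete version}).

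Finally, I specialize $\varphi(\lambda)=\lambda^2 e^{-\lambda^2}$, which is even, Schwartz, and satisfies $\varphi(0)=0$. Then $\varphi(t\sqrt L)=t^2Le^{-t^2L}$, and the substitution $u=t^2$, $du/u=2\,dt/t$, turns
\begin{equation*}
\int_0^\infty\bigl(t^{-s}\|t^2Le^{-t^2L}f\|_{L^p(dw)}\bigr)^q\,\frac{dt}{t}
\end{equation*}
into a constant multiple of
\begin{equation*}
\int_0^\infty\bigl(u^{-s/2}\|uLe^{-uL}f\|_{L^p(dw)}\bigr)^q\,\frac{du}{u},
\end{equation*}
which is the Besov expression in the statement; the Triebel--Lizorkin identity follows from the same substitution inside the $L^p$-norm. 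Combining this with the two reductions above completes the proof. The only non-routine step is the citation to \cite[Corollary 3.8]{BBD}; the remaining work is a change of variables and invocation of Theorems \ref{thm-coincidence Besov and TL spaces} and \ref{thm 1}.
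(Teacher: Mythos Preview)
Your proof is correct and follows essentially the same route as the paper, which simply records that the corollary is a consequence of Theorem \ref{thm-coincidence Besov and TL spaces}, Theorem \ref{thm 1}, and \cite[Corollary 3.8]{BBD}. You have supplied the change of variables and the choice $\varphi(\lambda)=\lambda^2 e^{-\lambda^2}$ explicitly, which the paper leaves implicit.
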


\section{Multiplier theorem for the Dunkl transform}
Recall that for a bounded function $m$ on $\RN$,  define
\[
T_mf = \F^{-1}(m\F f).
\]
From \eqref{eq-convolution defn}, $T_m$ has the associated kernel $T_m(\x,\y)$ defined by
\begin{equation}\label{eq- kernel formula Tm}
	T_m(\x,\y):=\tau_{-\y}\F^{-1}m(\x).
\end{equation}

Recall that  the operator $L$ is a non-negative and  self-adjoint operator on $L^2(dw)$ and generates a semigroup $e^{-tL}$ whose kernel $h_t(\x,\y)$ is defined by
\[
h_t(\x,\y) = c_k^{-1}(2t)^{-\N/2} \exp\Big(-\f{\|\x\|^2+\|\y\|^2}{4t}\Big)E\Big(\f{\x}{\sqrt{2t}},\f{\y}{\sqrt{2t}}\Big).
\]
On the other hand, 
\begin{equation}\label{eq- heat kernel via Fourier transform}
	h_t(\x,\y)=\tau_\x h_t(-\y)=\tau_{-\y}h_t(\x),
\end{equation}
where $h_t(\x)=\F^{-1}(e^{-t\|\xi\|^2})(\x)=c_k^{-1}(2t)^{-\N/2}e^{-\|\x\|^2/(4t)}$. See for example \cite{AH, R}.

We now give a relationship between the Dunkl multiplier and the spectral multiplier of the Dunkl Laplacian operator $L$. Since $\F(Lf)(\xi) =\|\xi\|^2 \F f(\xi)$, i.e., $Lf(\x)=\F^{-1}(\|\xi\|^2 \F f)(\x)$, by the spectral theory, for any Borel bounded function $F: [0,\vc)\to \mathbb C$,
\begin{equation}\label{eq-spectral multiplier and Tm}
	F(L)f = T_{\widetilde F} f,
\end{equation}
where $\widetilde F(\xi) = F(\|\xi\|^2)$ for $\xi \in \RN$.

\medskip
The main aim of this section is to prove the following result.

\begin{thm}\label{mainthm-spectralmultipliers-space adapted to L} 	Let $\alpha>\f{\N}{2}$ and let   $s\in \mathbb{R}$ and $0<p,q<\vc$. Let $\psi$ be a smooth radial function defined on $\RN$ such that supp $\psi\subset \{\xi: 1/4\le \|\xi\|\le 4\}$ and $\psi\equiv 1$ on $\{\xi: 1/2\le \|\xi\|\le 2\}$. If $m$  satisfies \eqref{smoothness condition}, then we have:
	\begin{enumerate}[\rm (a)]
		\item the  multiplier $T_m$ is bounded on $\FF^{s, L}_{p,q}(dw)$ provided that $s\in \mathbb{R}$, $0<p,q <\vc$ and $\alpha> \N(\f{1}{1\wedge p\wedge  q}-\f{1}{2}) $, i.e.,
		\[
		\|T_m\|_{\FF^{s,L}_{p,q}(dw)\to \FF^{s,L}_{p,q}(dw)}\lesi |m(0)|+\sup_{t>0}\|\psi(\cdot)m(t\cdot)\|_{W^{\vc}_\alpha(\RN)};
		\]
		\item the multiplier $T_m$ is bounded on $\B^{s,L}_{p,q}(dw)$ provided that $s\in \mathbb{R}$, $0<p<\vc, 0<q<\vc$ and $\alpha> \N(\f{1}{1\wedge p\wedge  q}-\f{1}{2}) $, i.e.,
		$$
		\|T_m\|_{\B^{s,L}_{p,q}(dw)\to \B^{s,L}_{p,q}(dw)}\lesi |m(0)| +\sup_{t>0}\|\psi(\cdot)m(t\cdot)\|_{W^{\vc}_\alpha(\RN)}.
		$$		
	\end{enumerate}
	Moreover, if  \eqref{eq-L1 uniform} holds true,  the estimates (a) and (b)  hold true with $W^{2}_\alpha(\RN)$ taking place of $W^{\vc}_\alpha(\RN)$ in \eqref{smoothness condition}.
\end{thm}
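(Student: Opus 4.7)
The strategy rests on three pillars: commutativity of Dunkl multipliers, a band-limited kernel estimate for $T_m$, and the interpolation--duality machinery of \cite{BBD,BD}. The starting observation is that the Dunkl transform is an algebra homomorphism, and that $\psi_k(\sqrt L)$ is itself the Dunkl multiplier with radial symbol $\xi\mapsto\psi(\delta^k\|\xi\|)$; hence
\[
\psi_k(\sqrt L)\,T_m \;=\; T_m\,\psi_k(\sqrt L).
\]
Fixing an even bump $\widetilde\psi\in C_c^\infty$ with $\widetilde\psi\psi=\psi$ and writing $T_m\psi_k(\sqrt L)=\bigl(T_m\widetilde\psi_k(\sqrt L)\bigr)\psi_k(\sqrt L)$, the Besov and Triebel--Lizorkin quasi-norms of $T_m f$ reduce to estimating the operator $T_m\widetilde\psi_k(\sqrt L)$ acting on the Littlewood--Paley piece $\psi_k(\sqrt L)f$.

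The technical heart is the band-limited kernel estimate: for every $\lambda<\alpha$ (with a harmless loss), the kernel $K_k$ of $T_m\widetilde\psi_k(\sqrt L)$ should satisfy
\[
|K_k(\x,\y)|\;\lesi\;\|m\|_\sharp\,\f{1}{w(B(\x,\delta^k))}\Bigl(1+\tfrac{d(\x,\y)}{\delta^k}\Bigr)^{-\lambda},
\]
where $\|m\|_\sharp$ denotes the supremum in \eqref{smoothness condition}. Since $T_\mu(\x,\y)=\tau_{-\y}\F^{-1}\mu(\x)$, after rescaling by $\delta^{-k}$ this reduces to a fixed-scale estimate in which the $W^\infty_\alpha$ norm of $\psi(\cdot)m(\delta^{-k}\cdot)$, bounded uniformly in $k$ by hypothesis, is transferred into orbit-distance decay via integration by parts in $\xi$ together with the Dunkl-translation bounds of \cite{DH}; the extra Euclidean factor $(1+\|\x-\y\|/\delta^k)^{-2}$ inherent in Lemma \ref{lem-heat kernel estimate} is inherited through the composition with $\widetilde\psi_k(\sqrt L)$. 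Under the additional assumption \eqref{eq-L1 uniform}, the uniform $L^1$-boundedness of the Dunkl translations opens the Plancherel--Cauchy--Schwarz route, allowing $W^\infty_\alpha$ to be replaced by the weaker $W^2_\alpha$.

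Granted the kernel estimate, choose $r<1\wedge p\wedge q$ with $\lambda>\mathfrak N/r$; the pointwise majorization
\[
|T_m\psi_k(\sqrt L)f(\x)|\;\lesi\;\mathcal M^d_r\bigl(\psi_k(\sqrt L)f\bigr)(\x)
\]
then holds uniformly in $k$. Inserting this into the $\FF^{s,L}_{p,q}(dw)$ quasi-norm and invoking the vector-valued Fefferman--Stein inequality \eqref{FSIn} (and Minkowski's inequality for $\B^{s,L}_{p,q}(dw)$) delivers boundedness at the threshold $\alpha>\mathfrak N\bigl(1/(1\wedge p\wedge q)-1/2\bigr)$. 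To secure sharpness in the Hardy-space range, I would separately establish $T_m:H^p_L(dw)\to H^p_L(dw)$ for $0<p\le1$ whenever $\alpha>\mathfrak N(1/p-1/2)$: using $H^p_L\equiv\FF^{0,L}_{p,2}$ (Theorem \ref{equiv-Hardy}) and the atomic decomposition of Theorem \ref{thm1- atom TL spaces- associated to L}, the kernel estimate shows that $T_m$ sends $(p,2,M,L)$-atoms to $H^p_L$-molecules. Combining this with the $L^2$ bound, the complex and real interpolation results (Proposition \ref{prop-comple interpolation}, Theorem \ref{mainthm-Interpolation}) and duality (Proposition \ref{prop-duality}) then fills in all admissible $(s,p,q)$ at the claimed threshold.

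The main obstacle is the band-limited kernel estimate for a \emph{non-radial} symbol: because $T_m\widetilde\psi_k(\sqrt L)$ is not a spectral multiplier of $L$, the finite-speed-of-propagation argument behind Lemma \ref{lem:finite propagation} cannot be invoked directly, and one must combine the explicit Dunkl-convolution representation of the kernel with the extra Euclidean-decay feature specific to the Dunkl heat semigroup (Lemma \ref{lem-heat kernel estimate}) to convert smoothness of $m$ into orbit-distance decay with the correct number of derivatives.
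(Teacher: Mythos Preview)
Your overall architecture—unsharp boundedness on all $\FF^{s,L}_{p,q}$ via a band-limited kernel estimate and maximal-function domination, sharp boundedness on $H^p_L$ via atoms, then interpolation and duality to propagate sharpness—matches the paper's strategy. However, there is a miscount in the first step: the pointwise kernel decay $|K_k(\x,\y)|\lesi w(B(\x,\delta^k))^{-1}(1+d(\x,\y)/\delta^k)^{-\lambda}$ with $\lambda<\alpha$ (which is exactly what the paper's Proposition~\ref{prop-pointwise estimate kernel of spectral multiplier} delivers under $W^\infty_\alpha$), combined with $\mathcal M^d_r$-domination for $r>\N/\lambda$ and Fefferman--Stein with $r<1\wedge p\wedge q$, yields only the threshold $\alpha>\N/(1\wedge p\wedge q)$, \emph{not} the sharp $\alpha>\N\bigl(\tfrac{1}{1\wedge p\wedge q}-\tfrac12\bigr)$. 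The paper explicitly records this step as giving the unsharp bound $\alpha>\N/(1\wedge p\wedge q)+\N/2$; the gain of $\N/2$ you are claiming does not come from the maximal-function argument but only from the subsequent Hardy-space estimate plus complex/real interpolation and duality, exactly as in \cite{BD}. Your plan is correct, but that sentence misassigns where the sharpness enters.

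On the kernel estimate itself: ``integration by parts in $\xi$'' is not what the paper does, and it is unclear that route would succeed for a non-radial symbol in the Dunkl setting. The paper's Proposition~\ref{prop-kernel of spectral multiplier} instead performs a dyadic decomposition of $\F^{-1}(\widetilde m_\ell\, e^{\|\xi\|^2})$ and of the heat profile $h_1$ in \emph{physical} space, exploits the support property of Dunkl translations (Lemma~\ref{lem - support condition DH}) to localize each convolution piece to an orbit-ball, applies the $L^2$ bound on translations of radial functions (Proposition~\ref{prop - L2 estimate for the translation}), and then uses the Mauceri--Meda interpolation trick \cite{MM,DOS} against the trivial $L^2$ bound of Proposition~\ref{prop-L2 norm via Linfty} to pass from $W^2_{\alpha+\N/2+\epsilon}$ to $W^\infty_{\alpha+\epsilon}$. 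The pointwise estimate then follows by composing with the heat kernel. This is the genuinely Dunkl-specific ingredient, and you should be aware that the orbit-distance decay is obtained not by differentiating the symbol but by space-side localization of the Dunkl convolution.
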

Once Theorem \ref{mainthm-spectralmultipliers-space adapted to L} has been proved, Theorem \ref{mainthm-spectralmultipliers-homogeneous spaces} follows immediately from Theorem \ref{mainthm-spectralmultipliers-space adapted to L} and Theorem \ref{thm-coincidence Besov and TL spaces}. In order to prove Theorem \ref{mainthm-spectralmultipliers-space adapted to L}, we need to establish a number of technical results.

\begin{prop}\label{prop-L2 norm via Linfty}
	If $m$ is a bounded function supported in $B(0,1/t)\subset \RN$, then we have
	\[
	\|T_m(\cdot, \y)\|_{L^2(dw)}\lesi \|m\|_\vc w(B(\y,t))^{-1/2}.
	\]
\end{prop}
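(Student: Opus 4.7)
The plan is to factor the multiplier through a Gaussian in order to reduce the desired bound to an on-diagonal heat kernel estimate combined with Plancherel's theorem for the Dunkl transform. Concretely, set $\phi(\xi)=e^{-t^{2}\|\xi\|^{2}}$ and define
\[
q(\xi):=\frac{m(\xi)}{\phi(\xi)}\chi_{\supp m}(\xi),
\]
so that $m=q\,\phi$ pointwise. Since $\supp m\subset B(0,1/t)$, one has $t^{2}\|\xi\|^{2}\le 1$ on $\supp m$, hence $\phi(\xi)\ge e^{-1}$ there and $\|q\|_{\infty}\le e\|m\|_{\infty}\lesssim\|m\|_{\infty}$.

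From $\mathcal{F}(T_{g_{1}}T_{g_{2}}f)=g_{1}g_{2}\mathcal{F}f$ we get $T_{m}=T_{q}\circ T_{\phi}=T_{q}\circ e^{-t^{2}L}$; in particular, by \eqref{eq- heat kernel via Fourier transform} the kernel of $T_{\phi}$ is $h_{t^{2}}(\z,\y)$. Composing kernels gives
\[
T_{m}(\x,\y)=\int_{\RN}T_{q}(\x,\z)\,h_{t^{2}}(\z,\y)\,dw(\z)=T_{q}\!\left[h_{t^{2}}(\cdot,\y)\right]\!(\x),
\]
so taking $L^{2}(dw)$-norms in $\x$ and using that $T_{q}$ is a Dunkl multiplier of $L^{\infty}$-symbol, hence (by Plancherel for $\mathcal F$) bounded on $L^{2}(dw)$ with operator norm at most $\|q\|_{\infty}$, I obtain
\[
\|T_{m}(\cdot,\y)\|_{L^{2}(dw)}\le \|q\|_{\infty}\,\|h_{t^{2}}(\cdot,\y)\|_{L^{2}(dw)}\lesssim \|m\|_{\infty}\,\|h_{t^{2}}(\cdot,\y)\|_{L^{2}(dw)}.
\]

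The remaining piece is the on-diagonal heat kernel bound. Using the symmetry $h_{s}(\x,\y)=h_{s}(\y,\x)$ and the semigroup law,
\[
\|h_{t^{2}}(\cdot,\y)\|_{L^{2}(dw)}^{2}=\int_{\RN}h_{t^{2}}(\y,\x)\,h_{t^{2}}(\x,\y)\,dw(\x)=h_{2t^{2}}(\y,\y),
\]
and Lemma \ref{lem-heat kernel estimate for semigroups}(b) evaluated at $\x=\y$ (with $s=2t^{2}$) yields $h_{2t^{2}}(\y,\y)\lesssim 1/w(B(\y,\sqrt{2}\,t))\simeq 1/w(B(\y,t))$ by the doubling property \eqref{eq-doubling}. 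Combining these three displays finishes the proof. There is no real obstacle here: once the factorization $m=q\phi$ is in hand, the rest is Plancherel plus a standard Gaussian on-diagonal estimate. The only conceptual point is that a crude application of Plancherel to $\|\tau_{-\y}\F^{-1}m\|_{L^{2}(dw)}$ (which one might first try) would produce the weaker bound $\|m\|_{\infty}w(B(0,1/t))^{1/2}$; inserting the Gaussian cutoff is precisely what converts the volume at the origin into the volume at $\y$.
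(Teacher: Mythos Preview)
Your proof is correct and follows essentially the same approach as the paper: both factor $m(\xi)=\big(m(\xi)e^{t^{2}\|\xi\|^{2}}\big)e^{-t^{2}\|\xi\|^{2}}$, use the $L^{2}$-boundedness of $T_{q}$ with norm $\|q\|_{\infty}\lesssim\|m\|_{\infty}$, and reduce to $\|h_{t^{2}}(\cdot,\y)\|_{L^{2}(dw)}\lesssim w(B(\y,t))^{-1/2}$. The only cosmetic differences are that the paper manipulates translations directly (writing $\tau_{-\y}\F^{-1}m=T_{\widetilde m}[\tau_{-\y}h_{t^{2}}]$) rather than composing kernels, and it appeals to the Gaussian upper bound of Lemma~\ref{lem-heat kernel estimate for semigroups} for the heat kernel $L^{2}$-norm whereas you use the semigroup identity $\|h_{t^{2}}(\cdot,\y)\|_{L^{2}}^{2}=h_{2t^{2}}(\y,\y)$ together with the on-diagonal bound; these are equivalent. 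One small point: the intermediate display $T_{m}(\x,\y)=\int T_{q}(\x,\z)h_{t^{2}}(\z,\y)\,dw(\z)$ presumes $T_{q}$ has an integral kernel, which need not hold for a merely bounded symbol, but your actual argument only uses the operator identity $T_{m}(\cdot,\y)=T_{q}[h_{t^{2}}(\cdot,\y)]$, which is fine.
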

\begin{proof}
	We write
	\[
	m(\xi) = m(\xi) e^{t^2\|\xi\|^2}e^{-t^2\|\xi\|^2} =:\widetilde{ m}(\xi)e^{-t^2\|\xi\|^2}.  
	\]
	It follows that 
	\[
	\begin{aligned}
		\F^{-1}m(\x) &=  \F^{-1}\widetilde{ m}\ast \F^{-1}(e^{-t^2\|\xi\|^2})(\x)\\
		&=\int_{\RN} \F^{-1}\widetilde{ m}(\z)\tau_{-\z}\F^{-1}(e^{-t^2\|\xi\|^2})(\x)dw(\z).
	\end{aligned}
	\]
	Therefore,
	\[
	\begin{aligned}
		\tau_{-\y}\F^{-1}m(\x) 
		&=\int_{\RN} \F^{-1}\widetilde{ m}(\z)\tau_{-\y}\tau_{-\z}\F^{-1}(e^{-t^2\|\xi\|^2})(\x)dw(\z).
	\end{aligned}
	\]
	On the other hand,
	\[
	\begin{aligned}
		\tau_{-\y}\tau_{-\z}\F^{-1}(e^{-t^2\|\xi\|^2})(\x)
		&=c_k^{-1}\int_{\RN} E(i\xi, -\z)E(i\xi, -\y)E(i\xi, \x) e^{-t^2\|\xi\|^2}dw(\xi)\\
		&=\tau_{-\z}\tau_{-\y}\F^{-1}(e^{-t^2\|\xi\|^2})(\x).
	\end{aligned}
	\]
	Consequently,
	\[
	\begin{aligned}
		\tau_{-\y}\F^{-1}m(\x)&=\int_{\RN} \F^{-1}\widetilde{ m}(\z)\tau_{-\z}\tau_{-\y}\F^{-1}(e^{-t^2\|\xi\|^2})(\x)dw(\z)\\
		&=T_{\widetilde m}\Big(\tau_{-\y}\F^{-1}(e^{-t^2\|\xi\|^2})\Big)(\x).
	\end{aligned}
	\]
	Hence,
	\[
	\begin{aligned}
		\|T_m(\cdot,\y)\|_{L^2(dw)}:=\|\tau_{-\y}\F^{-1}m\|_{L^2(dw)}&\le \|\widetilde m\|_\vc \Big\|\tau_{-\y}\F^{-1}(e^{-t^2\|\xi\|^2})\Big\|_{L^2(dw)}\\
		&\le \|\widetilde m\|_\vc \|h_t(\cdot, \y)\|_{L^2(dw)}\\
		&\lesi \|m\|_\vc w(B(\y,t))^{-1/2},
	\end{aligned}
	\]
	where in the second line we used \eqref{eq- heat kernel via Fourier transform}, and the last inequality is just a direct consequent of the Gaussian upper bound of $h_t(\x,\y)$ in Lemma \ref{lem-heat kernel estimate for semigroups}.
	
	This completes our proof.
\end{proof}

The following estimate is taken from Proposition 4.4 in \cite{DH}.
\begin{prop}\label{prop - L2 estimate for the translation}
	Let $\phi\in C_c(B(0,r))\subset \RN$ be a radial function with $r>0$.  Then, for any $\y\in \RN$, we have
	\[
	\|\tau_{-\y}\phi\|_{L^2(dw)} \lesi \f{r^{\mathfrak{N}}}{w(B(\y,r))^{1/2}}\|\phi\|_{L^\vc}.
	\]
\end{prop}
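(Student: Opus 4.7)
The plan is to combine a pointwise Gaussian domination of $\phi$ with R\"osler's positivity of the Dunkl translation for non-negative radial functions, and then reduce the problem to an $L^2$ estimate for the Dunkl heat kernel provided by Lemma \ref{lem-heat kernel estimate for semigroups}. First I would introduce the scalar Gaussian $h_t(\x)=c_k^{-1}(2t)^{-\mathfrak{N}/2}e^{-\|\x\|^2/(4t)}$ and recall from \eqref{eq- heat kernel via Fourier transform} that $\tau_{-\y}h_t(\x)=h_t(\x,\y)$ coincides with the heat kernel of $L$. Choosing $t=r^2/c$ for a suitably small absolute constant $c>0$, we have $h_{r^2/c}(\x)\gtrsim r^{-\mathfrak{N}}$ on $B(0,r)$, so the pointwise domination
\begin{equation*}
	|\phi(\x)|\le \|\phi\|_{L^\vc}\chi_{B(0,r)}(\x)\lesi \|\phi\|_{L^\vc}\,r^{\mathfrak{N}}\,h_{r^2/c}(\x)
\end{equation*}
holds with a radial, non-negative majorant.

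Next I would invoke R\"osler's integral representation for the Dunkl translation of a radial function: for $\Phi$ radial and non-negative, $\tau_{-\y}\Phi(\x)$ is expressible as an integral of $\Phi$ evaluated against a probability measure supported in the convex hull of the $G$-orbit of $\x$. This structural result implies that radial translations are positivity-preserving and, crucially, that a pointwise domination by a non-negative radial function is transferred through $\tau_{-\y}$. Applying it to the estimate above yields
\begin{equation*}
	|\tau_{-\y}\phi(\x)|\lesi \|\phi\|_{L^\vc}\,r^{\mathfrak{N}}\,\tau_{-\y}h_{r^2/c}(\x)=\|\phi\|_{L^\vc}\,r^{\mathfrak{N}}\,h_{r^2/c}(\x,\y).
\end{equation*}

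Finally I would estimate the $L^2$-norm of the heat kernel. Squaring, integrating, and applying the Gaussian upper bound of Lemma \ref{lem-heat kernel estimate for semigroups} with respect to the orbit distance $d$, together with the doubling property \eqref{eq-doubling}, one obtains the standard bound
\begin{equation*}
	\int_{\RN}|h_{r^2/c}(\x,\y)|^2\,dw(\x)\lesi \frac{1}{w(B(\y,r))}.
\end{equation*}
Combining the last two displays and taking square roots delivers the desired estimate
\begin{equation*}
	\|\tau_{-\y}\phi\|_{L^2(dw)}\lesi \frac{r^{\mathfrak{N}}}{w(B(\y,r))^{1/2}}\,\|\phi\|_{L^\vc}.
\end{equation*}
The main obstacle is the transfer step: since Dunkl translations are not positivity-preserving in general, one must exploit the radiality of $\phi$ and invoke R\"osler's representation, a non-trivial structural ingredient of Dunkl theory. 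The rest is either pointwise comparison with a Gaussian or a routine $L^2$-Gaussian estimate, both immediate from Lemma \ref{lem-heat kernel estimate for semigroups} and \eqref{eq-doubling}.
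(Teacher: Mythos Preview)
Your argument is correct. Note, however, that the paper itself does not give a proof of this proposition: it simply quotes the estimate from Proposition~4.4 of \cite{DH}. Your proof is in fact essentially the argument used there --- domination of the radial bump by the scalar Dunkl heat profile, transfer of the pointwise bound through $\tau_{-\y}$ via R\"osler's integral representation for translations of radial functions, and then the $L^2$-bound $\|h_t(\cdot,\y)\|_{L^2(dw)}^2=h_{2t}(\y,\y)\lesi w(B(\y,\sqrt{t}))^{-1}$.

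One small clarification worth making explicit in your write-up: R\"osler's formula applies to \emph{all} continuous radial functions, not only non-negative ones, and expresses $\tau_{-\y}\phi(\x)$ as $\int \phi_0(A(\x,\y,\eta))\,d\mu_\x(\eta)$ with $\mu_\x$ a probability measure. From this one first gets $|\tau_{-\y}\phi(\x)|\le \tau_{-\y}|\phi|(\x)$ for radial $\phi$ (taking absolute values inside the integral), and then the monotonicity of the same formula transfers the domination $|\phi|\lesi \|\phi\|_{L^\vc}r^{\mathfrak N}h_{r^2/c}$ to the translated functions. You have this implicitly, but since positivity of Dunkl translations fails in general, it is worth spelling out that radiality is used twice: once to bound $|\tau_{-\y}\phi|$ by $\tau_{-\y}|\phi|$, and once to transfer the Gaussian majorant.
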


The following result regarding supports of translations is taken from Proposition 1.5 in \cite{DH}.
\begin{lem}
	\label{lem - support condition DH}
	Let $f\in L^2(dw)$, $\supp f\subset B(0,r)\subset \RN$ with $r>0$ and $\x\in \RN$. Then
	\[
	\supp \tau_\x f(-\cdot)\subset \mathcal O(B(\x,r)).
	\]
\end{lem}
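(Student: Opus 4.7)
The strategy is to combine finite propagation speed of the Dunkl wave operator, which is recorded in \eqref{finitepropagation}, with a spectral representation of the Dunkl translation together with a density argument. First, by continuity it suffices to prove the statement for smooth compactly supported $f$. Indeed, if $f\in L^2(dw)$ with $\supp f\subset B(0,r)$, pick smooth $f_n\in C_c^{\vc}(B(0,r+1/n))$ with $f_n\to f$ in $L^2(dw)$; since $\tau_\x$ is bounded on $L^2(dw)$, we get $\tau_\x f_n(-\cdot)\to \tau_\x f(-\cdot)$ in $L^2(dw)$, and after passing to an a.e.-convergent subsequence and taking intersections, the support containments pass to the limit because $\bigcap_n \mathcal O(B(\x,r+1/n))=\mathcal O(B(\x,r))$ (up to a set of $w$-measure zero).

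For smooth $f$, the natural starting point is the Fourier inversion formula
\[
\tau_\x f(-\y)=c_k^{-1}\int_{\RN} E(i\xi,\x)\,\overline{E(i\xi,\y)}\,\F f(\xi)\,dw(\xi).
\]
The main task is to rewrite the right-hand side in a form to which \eqref{finitepropagation} can be applied. The plan is to invoke R\"osler's integral representation of the Dunkl kernel $E(i\xi,\x)$ as a signed-measure superposition of plane waves $e^{i\langle \xi,\z\rangle}$ with $\z$ in the convex hull of the orbit $\mathcal O(\x)$, then use a cosine expansion in $\|\xi\|$ to express the resulting integrand in terms of operators of the form $\cos(s\sqrt L)$ acting on $f$. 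Applying \eqref{finitepropagation}, whose kernel is supported in $\{(\y,\z):d(\y,\z)\le s\}$, together with the assumption $\supp f\subset B(0,r)$, one then tracks the effective propagation time and the orbit structure of $\x$ to conclude that $\tau_\x f(-\y)=0$ whenever $d(\y,\x)>r$.

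The main obstacle is exactly the conversion step: the bilinear combination $E(i\xi,\x)\overline{E(i\xi,\y)}$ is not radial in $\xi$ and is therefore not directly a function of $L$, so one cannot apply \eqref{finitepropagation} naively. Passing through R\"osler's representation is what recasts the Dunkl translation as a superposition of genuine spectral multipliers of $L$ evaluated at points lying in the convex hull of $\mathcal O(\x)$; this is where the Weyl-orbit appearing in the conclusion enters. A parallel (and perhaps more elementary) variant of the argument is to use the heat-kernel identity \eqref{eq- heat kernel via Fourier transform}, approximate $f$ by $e^{-\varepsilon L}f$ via the Gaussian heat semigroup to gain analyticity, exploit the fact that $h_\varepsilon(\x,\y)$ decays off the orbit distance $d(\x,\y)$, and let $\varepsilon\to 0^+$; in the limit the Gaussian profile concentrates on the set $\{d(\y,\x)\le r\}$, producing the claimed support.
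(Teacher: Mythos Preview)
The paper does not prove this lemma at all; it simply quotes it as Proposition~1.5 of \cite{DH}. So you are attempting to supply an argument that the paper delegates to the literature. That is fine, but the sketch you give does not close.

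The specific gap is the step you flag yourself. After inserting R\"osler's representation
\[
E(i\xi,\x)=\int_{\mathrm{co}(\mathcal O(\x))} e^{i\langle\xi,\z\rangle}\,d\mu_\x(\z),
\]
the inner integral becomes
\[
c_k^{-1}\int_{\RN} e^{i\langle\xi,\z\rangle}\,E(-i\xi,\y)\,\F f(\xi)\,dw(\xi).
\]
For fixed $\z$, the phase $\xi\mapsto e^{i\langle\xi,\z\rangle}$ is \emph{not} radial in $\xi$; it cannot be written as a function of $\|\xi\|$, nor as an integral $\int e^{is\|\xi\|}\,d\nu(s)$ over a single scalar parameter. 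Hence this is not a spectral multiplier of $L$, and \eqref{finitepropagation} for $\cos(s\sqrt L)$ does not apply to it. Your assertion that R\"osler's formula ``recasts the Dunkl translation as a superposition of genuine spectral multipliers of $L$'' is exactly the step that fails. Applying R\"osler to the second factor $E(-i\xi,\y)$ as well only trades one non-radial phase for two, and leaves an integral $\int e^{i\langle\xi,\z-\w\rangle}\F f(\xi)\,dw(\xi)$ that mixes Euclidean characters with the Dunkl measure $dw$ and has no obvious support property.

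The heat-kernel ``alternative'' is also not a proof: $h_\varepsilon(\x,\y)$ has full support for every $\varepsilon>0$, and the Gaussian upper bound in Lemma~\ref{lem-heat kernel estimate for semigroups} gives decay, not vanishing, off the set $\{d(\x,\y)\le r\}$; letting $\varepsilon\to 0$ recovers $f$ in $L^2(dw)$ but yields no hard support statement for $\tau_\x f(-\cdot)$.

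The argument in \cite{DH} proceeds differently: it uses R\"osler's explicit formula for the translation of radial functions, $\tau_\x g(-\y)=\int \tilde g\big(A(\x,\y,\eta)\big)\,d\mu_\x(\eta)$ with $A(\x,\y,\eta)^2=\|\x\|^2+\|\y\|^2-2\langle\y,\eta\rangle$, together with the elementary inequality $A(\x,\y,\eta)\ge d(\x,\y)$ for all $\eta\in\mathrm{co}(\mathcal O(\x))$, and then extends to general $f\in L^2(dw)$. No detour through $\cos(t\sqrt L)$ is needed.
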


In Proposition \ref{prop-kernel of spectral multiplier} , Proposition \ref{prop- sharp W2 base Lr norm} and Propositin \ref{prop-pointwise estimate kernel of spectral multiplier} below, we will assume that $m$ is a bounded function on $\RN$ and  $\psi$ is a smooth radial function defined on $\RN$ such that supp $\psi\subset \{\xi: 1/4\le \|\xi\|\le 4\}$ and $\psi\equiv 1$ on $\{\xi: 1/2\le \|\xi\|\le 2\}$. For each $\ell\in \mathbb Z$, define
\[
\widetilde m_{\ell}(\xi) = m(2^{\ell}\xi)\psi(\xi),
\]
and
\[
m_{\ell}(\xi) = m(\xi)\psi(2^{-\ell}\xi).
\]
We now have the following estimates on the kernel of $T_{m_\ell}$.

\begin{prop}\label{prop-kernel of spectral multiplier} For each $\ell$ and $\alpha, \epsilon>0$, there exists $C=C(\alpha,\epsilon)$ such that 
	\begin{equation}\label{eq- L2 norm Tm x y W infrty} 
		\int_{\RN} |T_{m_{\ell}}(\x,\y)|^2(1+2^\ell d(\x,\y))^{2\alpha} dw(\x) \le  \f{C}{w(B(\y,2^{-\ell}))}\|m(2^{\ell}\cdot)\psi\|_{W^\vc_{\alpha+\epsilon}(\RN)}
	\end{equation}
	for any $\y\in \RN$.
\end{prop}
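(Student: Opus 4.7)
My plan is to prove the estimate in three stages: a scaling reduction to the unit scale, the unweighted base case, and an integer-power weighted bound followed by interpolation.

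First, by the homogeneity of the Dunkl transform and translation, one checks that $T_{m_\ell}(\x,\y) = 2^{\ell\N}T_{\widetilde m}(2^\ell\x, 2^\ell\y)$ with $\widetilde m(\xi) = m(2^\ell\xi)\psi(\xi)$ supported in the fixed annulus $A=\{1/4\le\|\xi\|\le 4\}$. Together with $d(2^\ell\x, 2^\ell\y) = 2^\ell d(\x,\y)$ and $w(B(\y, 2^{-\ell})) = 2^{-\ell\N}w(B(2^\ell\y, 1))$, a change of variables reduces the problem to the $\ell=0$ statement
\begin{equation*}
\int |T_{\widetilde m}(\x,\y)|^2 (1+d(\x,\y))^{2\alpha}\,dw(\x) \lesi \f{1}{w(B(\y,1))}\|\widetilde m\|_{W^\infty_{\alpha+\epsilon}(\RN)}^{2},
\end{equation*}
uniformly in $\y \in \RN$ and $\widetilde m$ supported in $A$. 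The case $\alpha=0$ is immediate from Proposition \ref{prop-L2 norm via Linfty} applied to $\widetilde m$ (supported in $B(0,4)$), combined with the Sobolev embedding $\|\widetilde m\|_\infty\lesi\|\widetilde m\|_{W^\infty_\epsilon(\RN)}$.

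For general $\alpha$ I would first establish the weighted bound at a large integer $\alpha=n$ and then interpolate. Using $d(\x,\y)\le\|\x-\sigma(\y)\|$ for every $\sigma\in G$ gives $d(\x,\y)^{2n}\le|G|^{-1}\sum_{\sigma\in G}\|\x-\sigma(\y)\|^{2n}$, so the task reduces to controlling $\int\|\x-\sigma(\y)\|^{2n}|T_{\widetilde m}(\x,\y)|^2\,dw(\x)$ for each $\sigma$. The key transfer identity is $\|\x\|^{2k}\F^{-1}f(\x) = \F^{-1}[L_\xi^k f](\x)$, a consequence of $T_j^{(\xi)}E(i\xi,\x)=i\x_j E(i\xi,\x)$ and integration by parts in $\xi$ against $dw(\xi)$. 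Expanding $\|\x-\sigma(\y)\|^2 = \|\x\|^2 - 2\langle\x,\sigma(\y)\rangle + \|\y\|^2$ and iterating this transfer $n$ times converts the weight into a sum of Dunkl-operator products applied to $E(-i\xi,\y)\widetilde m(\xi)$; the Dunkl product rule for $T_j$ bounds each resulting term pointwise by a constant times $\|\widetilde m\|_{W^\infty_{2n}(\RN)}$, and Plancherel then controls the $L^2$ norm. The factor $w(B(\y,1))^{-1}$ is recovered by the heat-semigroup factorization $T_{\widetilde m}(\cdot,\y) = e^{-L}T_{\widetilde m_1}(\cdot,\y)$ with $\widetilde m_1(\xi) = e^{\|\xi\|^2}\widetilde m(\xi)$, together with the pointwise bound $\|h_1(\cdot,\y)\|_{L^2(dw)}\simeq w(B(\y,1))^{-1/2}$, exactly as in the proof of Proposition \ref{prop-L2 norm via Linfty}. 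Stein's complex interpolation between $\alpha=0$ and $\alpha=n$ then yields the case of general $\alpha$.

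The main obstacle is the lack of a direct Fourier-side counterpart to the orbit distance $d$. The symmetrization $d(\x,\y)\le\min_\sigma\|\x-\sigma(\y)\|$ converts the problem to $|G|$ Euclidean-distance estimates, but the Dunkl product rule for $T_j$ contains reflection-difference terms, so that $L_\xi^n[E(-i\xi,\y)\widetilde m(\xi)]$ expands into a large family of terms involving values of $\widetilde m$ at the orbit points $\sigma_\nu\xi$ along with Euclidean derivatives of order up to $2n$. Keeping these bounded uniformly in $\y\in\RN$ is what forces the $L^\infty$-based Sobolev norm $W^\infty_{\alpha+\epsilon}$ rather than a $W^2$-norm: the $L^2$ version would require controlling $|E(-i\cdot,\y)|$-weights across the $G$-orbit of $\xi$ uniformly in $\y$, which is not available; pointwise control of the derivatives of $\widetilde m$ allows one to sum the orbit contributions cleanly. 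The extra $\epsilon$ absorbs the loss in the dyadic interpolation step.
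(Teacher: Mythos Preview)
Your approach has a genuine gap at the symmetrization step. The inequality $d(\x,\y)^{2n}\le |G|^{-1}\sum_{\sigma}\|\x-\sigma(\y)\|^{2n}$ is correct, but the resulting integrals
\[
\int_{\RN}\|\x-\sigma(\y)\|^{2n}\,|T_{\widetilde m}(\x,\y)|^2\,dw(\x)
\]
are \emph{not} uniformly bounded in $\y$ when $\sigma\neq e$. The kernel $T_{\widetilde m}(\cdot,\y)$ has mass of order $w(B(\y,1))^{-1}$ on a set where $d(\x,\y)\lesi 1$; in particular near $\x=\y$ one has $\|\x-\sigma(\y)\|\simeq\|\y-\sigma(\y)\|$, which can be of order $\|\y\|$. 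Thus that single region already contributes $\sim\|\y-\sigma(\y)\|^{2n}w(B(\y,1))^{-1}$, which blows up as $\|\y\|\to\infty$. On the Fourier side this is visible too: $T_j^{(\xi)}E(-i\xi,\y)=-i\y_jE(-i\xi,\y)$ brings down $\y_j$, so $(\x_j-\sigma(\y)_j)$ does not cancel for $\sigma\neq e$ and leaves factors of $\y_j-\sigma(\y)_j$. Restricting to $\sigma=e$ (i.e.\ using only $d\le\|\x-\y\|$) avoids this, but then the direct Plancherel bound on the reflection–difference terms gives an estimate of size $O(1)$, not $O(w(B(\y,1))^{-1})$; your heat–semigroup factorization does not repair this in the \emph{weighted} setting, since commuting the weight $(1+d(\cdot,\y))^{2\alpha}$ through $e^{-L}$ is exactly the unresolved issue.

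The paper avoids Fourier-side differentiation altogether. It writes $T_{\widetilde m_\ell}(\cdot,\y)=\F^{-1}(m^\ell)\ast\tau_{-\y}h_1$, decomposes both factors by radial dyadic cutoffs in \emph{physical} space, and uses the support property of Dunkl translations (Lemma~\ref{lem - support condition DH}) to read off the $d$-localization directly, together with Proposition~\ref{prop - L2 estimate for the translation} applied to the radial pieces $h_{1,k}$ to extract the factor $w(B(\y,1))^{-1/2}$. This produces the rough bound with $\|\widetilde m_\ell\|_{W^2_{\alpha+\N/2+\epsilon}}$; the final step is the Mauceri--Meda/DOS interpolation against the unweighted $L^\infty$ estimate of Proposition~\ref{prop-L2 norm via Linfty}, which trades the extra $\N/2$ derivatives for passing from $W^2$ to $W^\infty$ and yields $W^\infty_{\alpha+\epsilon}$. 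The orbit distance is thus handled entirely through support and translation estimates rather than through any Fourier identity.
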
	
\begin{proof}

	We have
	\[
	\widetilde m_{\ell}(\xi) =\widetilde m_{\ell}(\xi)e^{\|\xi\|^2} e^{-\|\xi\|^2}=:m^{\ell}(\xi)e^{\|\xi\|^2}.
	\]	
	Recall that $h_t(\x)=\F^{-1}(e^{-t\|\xi\|^2})(\x)$. Then,
	\[
	\begin{aligned}
		T_{\widetilde m_{\ell}}(\x,\y)&=\tau_{-\y}\F^{-1}\widetilde m_{\ell}(\x)\\
		&= \tau_{-\y}\big[\F^{-1}(m^{\ell})\ast h_{1}\big](\x).
	\end{aligned}
	\]
	Since $\tau_{-\y}\tau_{\x} h_t(\z) =\tau_{\x}\tau_{-\y} h_t(\z)$ for $\x,\y,\z\in \RN$, we further obtain
	\begin{equation}\label{eq1- proof of kernel}
		\begin{aligned}
			T_{\widetilde m_{\ell}}(\x,\y)&= \big[\F^{-1}(m^{\ell})\ast \tau_{-\y}h_{1}\big](\x).
		\end{aligned}
	\end{equation}
	Let $\Psi_0\in C^\vc((-1,1))$ and $\Psi\in C^\vc(1/4,4)$ such that 
	\begin{equation*}
		\Psi_0(\|\x\|) + \sum_{j\ge 1}\Psi_k(\|\x\|) =1, \ \ \x\in \RN.
	\end{equation*}
	For $j\ge 0$, denote
	\[
	{M}_{\ell, j} (\xi) =  \F^{-1}(m^{\ell})(\xi)\Psi_j(\|\xi\|) \ \ \ \text{and} \ \ h_{1, j}(\xi)=h_{1}(\xi)\Psi_j(\|\xi\|).
	\]
	This, along with \eqref{eq1- proof of kernel}, implies that 
	\begin{equation*}
		\begin{aligned}
			T_{\widetilde m_{\ell}}(\x,\y)&=\tau_{-\y}\F^{-1}\widetilde m_{\ell}(\x)\\
			&=\sum_{j, k \in \mathbb N} \big[ M_{\ell,j}\ast \tau_{-\y}h_{1,k}\big](\x).
		\end{aligned}
	\end{equation*}
	Since 
	$$
	\big[ M_{\ell,j}\ast \tau_{-\y}h_{1,k}\big](\x)= \tau_{-\y}\big[ M_{\ell,j}\ast h_{1,k}\big](\x),
	$$
	using Lemma \ref{lem - support condition DH} we imply that $\big[ M_{\ell,j}\ast \tau_{-\y}h_{1,k}\big](\x)=0$ whenever $d(\x,\y)>2^j+2^k$.
	
	Consequently,
	\begin{equation}\label{eq2-proof of L2 Tm(x,y)}
		\begin{aligned}
			\Big(\int_{\RN} |T_{\widetilde m_{\ell}}(\x,\y)|^2(1+ d(\x,\y))^{2\alpha} dw(\x)\Big)^{1/2}&\le \sum_{j, k \in \mathbb N} \Big(\int_{\RN}\Big|\big[ M_{\ell,j}\ast \tau_{-\y}h_{1,k}\big](\x)\Big|^2(1+ d(\x,\y))^{2\alpha}dw(\x)\Big)^{1/2}\\
			&\lesi \sum_{j,k\in \mathbb N}(2^j+2^k)^\alpha \|M_{\ell,j}\ast \tau_{-\y}h_{1,k}\|_{L^2(dw)}\\
			&\lesi \sum_{j,k\in \mathbb N}(2^j+2^k)^\alpha \|M_{\ell,j}\|_{L^1(dw)}\|\tau_{-\y}h_{1,k}\|_{L^2(dw)}\\
			&\lesi \sum_{j,k\in \mathbb N}2^{j\alpha}2^{k\alpha} \|M_{\ell,j}\|_{L^1(dw)}\|\tau_{-\y}h_{1,k}\|_{L^2(dw)}.
		\end{aligned}
	\end{equation}
	Applying Proposition \ref{prop - L2 estimate for the translation} and using the fact that $h_{t}(\x)=c_k^{-1}(2t)^{-\N/2}e^{-\|\x\|^2/(4t)}$, we have, for $k\in \mathbb N$ and $\y\in \RN$,
	\[
	\begin{aligned}
		\|\tau_{-\y}h_{1,k}\|_{L^2(dw)}&\lesi \f{2^{k\mathfrak{N}}}{w(B(\y, 2^k))^{1/2}}\|h_{1,k}\|_{L^\vc}\\
		&\lesi \f{2^{k\mathfrak{N}}}{w(B(\y, 1))^{1/2}}\|h_{1,k}\|_{L^\vc}\\
		&\lesi \f{2^{k\mathfrak{N}}}{w(B(\y, 1))^{1/2}}e^{-c2^{2k}}.
	\end{aligned}
	\]
	This, along with \eqref{eq2-proof of L2 Tm(x,y)}, yields that
	\[
	\begin{aligned}
		\Big(\int_{\RN} |T_{\widetilde m_{\ell}}(\x,\y)|^2(1+ d(\x,\y))^{2\alpha} dw(\x)\Big)^{1/2}
		&\lesi \f{1}{w(B(\y, 1))^{1/2}}\sum_{j\in \mathbb N}2^{j\alpha}  \|M_{\ell,j}\|_{L^1(dw)}
	\end{aligned}
	\]
	for all $\y\in \RN$.

	On the other hand, for any $\epsilon>0$,
	\[
	\begin{aligned}
		\sum_{j\in \mathbb N}2^{j\alpha}  \|M_{\ell,j}\|_{L^1(dw)}&\simeq \sum_{j\in \mathbb N}   \|\F^{-1}(m^{\ell})(\xi)(1+\|\x\|)^s\Psi_j(\|\xi\|) \|_{L^1(dw)}\\
		&\simeq  \|\F^{-1}(m^{\ell})(\xi)(1+\|\x\|)^\alpha \|_{L^1(dw)}\\
		&\lesi  \|\F^{-1}(m^{\ell})(\xi)(1+\|\x\|)^{\alpha+\N/2+\epsilon} \|_{L^2(dw)}\|(1+\|\x\|)^{-\N/2-\epsilon}\|_{L^2(dw)}\\
		&\lesi  \|\F^{-1}(m^{\ell})(\xi)(1+\|\x\|)^{\alpha+\N/2+\epsilon} \|_{L^2(dw)}\\
		&\lesi \|m^{\ell}\|_{W^2_{\alpha+\mathfrak{N}/2+\epsilon}(\RN)}\simeq \|\widetilde m_{\ell}\|_{W^2_{\alpha+\mathfrak{N}/2+\epsilon}(\RN)},
	\end{aligned}
	\]
	where in the last inequality we used Proposition 5.3 in \cite{A etal}.
	
	It follows that for $\y\in \RN$,
	\begin{equation}
		\label{eq- T widetilde m}
		\Big(\int_{\RN} |T_{\widetilde m_{\ell}}(\x,\y)|^2(1+ d(\x,\y))^{2\alpha} dw(\x)\Big)^{1/2}
		\lesi \f{1}{w(B(\y, 1))^{1/2}}\|\widetilde m_{\ell}\|_{W^2_{\alpha+\mathfrak{N}/2+\epsilon}(\RN)}.
	\end{equation}

	By homogeneity,
	\[
	T_{m_\ell}(\x,\y)=2^{\mathfrak{N}\ell}T_{\widetilde m_\ell}(2^\ell\x,2^\ell\y).
	\]
	Therefore, for $\y\in \RN$,
	\[
	\begin{aligned}
		\int_{\RN} |T_{m_{\ell}}(\x,\y)|^2(1+2^\ell d(\x,\y))^{2\alpha} dw(\x) &=\int_{\RN} |T_{\widetilde m_\ell}(2^\ell\x,2^\ell\y)|^2(1+2^\ell d(\x,\y))^{2\alpha} 2^{2\mathfrak{N}\ell} dw(\x)\\
		& =2^{\mathfrak{N}\ell} \int_{\RN} |T_{\widetilde m_\ell}(2^\ell\x,2^\ell\y)|^2(1+ d(2^\ell\x,2^\ell\y))^{2\alpha} dw(2^{\ell}\x).
	\end{aligned}
	\]
	Applying \eqref{eq- T widetilde m} and \eqref{eq-homgoenous measure}, for $\y\in \RN$,
	\begin{equation}\label{eq-rough W2 base}
		\begin{aligned}
			\int_{\RN} |T_{m_{\ell}}(\x,\y)|^2(1+2^\ell d(\x,\y))^{2\alpha} dw(\x)&\lesi \f{2^{\mathfrak{N}\ell} }{w(B(2^{\ell}\y, 1))}\|\widetilde m_{\ell}\|^2_{W^2_{\alpha+\mathfrak{N}/2+\epsilon}(\RN)}\\
			&\simeq \f{1}{w(B(\y, 2^{-\ell}))}\|\widetilde m_{\ell}\|^2_{W^2_{\alpha+\mathfrak{N}/2+\epsilon}(\RN)}.
		\end{aligned}
	\end{equation}
	This, together with Proposition \ref{prop-L2 norm via Linfty} and the interpolation trick used in \cite{MM, DOS}, implies the desired estimate.

	This completes our proof.
\end{proof}

\begin{prop}
	\label{prop- sharp W2 base Lr norm}
	Assume the condition \eqref{eq-L1 uniform} holds true. Then for each $\ell \in \mathbb Z$, $r\in (1,2)$ and $\alpha> \alpha'>\mathfrak{N}/2$, there exists $C=C(r, \alpha,\alpha')$ such that  
	\begin{equation}\label{eq- Lr estimate on the kernel}
		\Big[\int_{\RN} |T_{m_{\ell}}(\x,\y)|^r(1+2^{\ell}d(\x,\y))^{r(\alpha'-\mathfrak{N}/2)} dw(\x)\Big]^{1/r} \le  \f{C}{w(B(\y, 2^{-\ell}))^{1-1/r}}\|\widetilde m_{\ell}\|_{W^2_{\alpha}(\RN)}
	\end{equation}
	for $\y\in \RN$.
\end{prop}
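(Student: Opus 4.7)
The strategy is to interpolate between an $L^1$-type kernel bound (which is where condition \eqref{eq-L1 uniform} enters) and the $L^2$-type bound \eqref{eq- T widetilde m} already established in the proof of Proposition \ref{prop-kernel of spectral multiplier}. As in that proof, we first reduce by scaling to the case $\ell=0$. The homogeneity relation $T_{m_\ell}(\x,\y)=2^{\mathfrak{N}\ell}T_{\widetilde m_\ell}(2^\ell\x,2^\ell\y)$ together with \eqref{eq-homgoenous measure} shows that \eqref{eq- Lr estimate on the kernel} follows once we prove
\[
\Big[\int_{\RN}|T_{\widetilde m_\ell}(\x,\y)|^{r}(1+d(\x,\y))^{r(\alpha'-\mathfrak{N}/2)}dw(\x)\Big]^{1/r}\lesi w(B(\y,1))^{-(1-1/r)}\,\|\widetilde m_\ell\|_{W^{2}_\alpha(\RN)}.
\]

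\textbf{The $L^{1}$ estimate (new ingredient).} Using the dyadic decomposition $\F^{-1}\widetilde m_\ell=\sum_{j\ge 0}f_j$ with $f_j=\F^{-1}\widetilde m_\ell\cdot \Psi_j(\|\cdot\|)$ (as in the proof of Proposition \ref{prop-kernel of spectral multiplier}), I apply the identity $T_{\widetilde m_\ell}(\x,\y)=\tau_{-\y}\F^{-1}\widetilde m_\ell(\x)=\tau_\x\F^{-1}\widetilde m_\ell(-\y)$ together with Lemma \ref{lem - support condition DH} to conclude that, as a function of $\x$ for fixed $\y$, the piece $\tau_{-\y}f_j$ is supported in $\{d(\x,\y)\le 2^{j+1}\}$. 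Then \eqref{eq-L1 uniform} gives $\|\tau_{-\y}f_j\|_{L^{1}(dw)}\lesi\|f_j\|_{L^{1}(dw)}$, so that for any $\beta_{1}>0$,
\[
\int_{\RN}|T_{\widetilde m_\ell}(\x,\y)|(1+d(\x,\y))^{\beta_{1}}dw(\x)\lesi \sum_{j\ge 0}2^{j\beta_{1}}\|f_j\|_{L^{1}(dw)}\simeq \big\|\F^{-1}\widetilde m_\ell(\cdot)(1+\|\cdot\|)^{\beta_{1}}\big\|_{L^{1}(dw)}.
\]
Cauchy--Schwarz (inserting and removing the weight $(1+\|\cdot\|)^{-\mathfrak{N}/2-\epsilon}$, which lies in $L^{2}(dw)$ by \eqref{eq-volume formula}) and Proposition 5.3 of \cite{A etal} then bound the last expression by $\|\widetilde m_\ell\|_{W^{2}_{\beta_{1}+\mathfrak{N}/2+\epsilon}(\RN)}$. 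This is the $L^{1}$-analogue of \eqref{eq- T widetilde m} and is the only place where \eqref{eq-L1 uniform} is really needed.

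\textbf{Interpolation and rescaling.} With $\theta=(2-r)/r\in(0,1)$ so that $r\theta=2-r$ and $r(1-\theta)=2(r-1)$, I write $|T|^{r}(1+d)^{r\gamma}=[|T|(1+d)^{\beta_{1}}]^{\,2-r}\cdot[|T|^{2}(1+d)^{2\beta_{2}}]^{\,r-1}$ with the balance condition $(2-r)\beta_{1}+2(r-1)\beta_{2}=r\gamma$. Hölder's inequality with conjugate exponents $1/(2-r)$ and $1/(r-1)$ then gives
\[
\int|T_{\widetilde m_\ell}|^{r}(1+d)^{r\gamma}dw\le\Big[\!\int|T_{\widetilde m_\ell}|(1+d)^{\beta_{1}}dw\Big]^{2-r}\Big[\!\int|T_{\widetilde m_\ell}|^{2}(1+d)^{2\beta_{2}}dw\Big]^{r-1}.
\]
Plugging in the $L^{1}$ estimate above and the $L^{2}$ estimate \eqref{eq- T widetilde m}, and choosing $\beta_{1}=\beta_{2}=\alpha-\mathfrak{N}/2-\epsilon$ with $\epsilon=\alpha-\alpha'>0$ (which automatically forces $\gamma=\alpha'-\mathfrak{N}/2$ via the balance condition), the Sobolev orders on the right collapse to the single exponent $\alpha$ and the volume factors combine into $w(B(\y,1))^{-(r-1)}$. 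Taking the $1/r$-power and undoing the scaling (using $w(B(2^\ell\y,1))=2^{\mathfrak{N}\ell}w(B(\y,2^{-\ell}))$ and the change of variables performed in the proof of Proposition \ref{prop-kernel of spectral multiplier}) yields \eqref{eq- Lr estimate on the kernel}.

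\textbf{Main obstacle.} Everything other than Step~2 is bookkeeping. The subtle step is the $L^{1}$ bound on the kernel: without \eqref{eq-L1 uniform} the Dunkl translation $\tau_{-\y}$ is \emph{not} uniformly $L^{1}$-bounded, and this is precisely why Theorem \ref{mainthm-spectralmultipliers-space adapted to L} requires the hypothesis \eqref{eq-L1 uniform} in order to drop the extra $\mathfrak{N}/2$ derivatives that appear in Proposition \ref{prop-kernel of spectral multiplier}. Coupling the dyadic-annular decomposition of $\F^{-1}\widetilde m_\ell$ with Lemma \ref{lem - support condition DH} so as to keep the $(1+d(\x,\y))^{\beta_1}$-weight under control—that is, converting a bound stated in the Euclidean norm on the spectral side into the orbit distance $d$ on the spatial side—is the one point where some care is required.
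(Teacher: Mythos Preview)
Your proposal is correct and follows the same route as the paper: interpolate the weighted $L^1$ kernel bound (available under \eqref{eq-L1 uniform}) against the weighted $L^2$ bound \eqref{eq- T widetilde m}, then rescale. The paper simply quotes the $L^1$ estimate from \cite{DH}, whereas you reprove it via the dyadic decomposition of $\F^{-1}\widetilde m_\ell$, Lemma~\ref{lem - support condition DH}, and \eqref{eq-L1 uniform}; this is precisely the argument in \cite{DH}, so the two proofs coincide in substance.
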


\begin{proof}
	In \cite{DH}, it was proved that under the condition \eqref{eq-L1 uniform}, we have, for $0<\alpha'<\alpha$ with $\alpha>\mathfrak{N}/2$,
	\begin{equation*}
		\int_{\RN} |T_{\widetilde m_{\ell}}(\x,\y)|(1+ d(\x,\y))^{\alpha'-\mathfrak{N}/2} dw(\x) 
		\lesi \|\widetilde m_{\ell}\|_{W^2_{\alpha}(\RN)}.
	\end{equation*}
	This, along with the fact homogeneity
	\[
	T_{m_\ell}(\x,\y)=2^{\mathfrak{N}\ell}T_{\widetilde m_\ell}(2^\ell\x,2^\ell\y),
	\]
	implies that 
	\begin{equation}\label{eq-L2 base for Tm x y}
		\int_{\RN} |T_{m_{\ell}}(\x,\y)|(1+ 2^{\ell}d(\x,\y))^{\alpha'-\mathfrak{N}/2} dw(\x) 
		\lesi \|\widetilde m_{\ell}\|_{W^2_{\alpha}(\RN)}
	\end{equation}
	for all $\y\in \RN$.

	On the other hand, we have proved in \eqref{eq-rough W2 base} that for $0<\alpha'<\alpha$ and $\alpha>\mathfrak{N}/2$,
	\begin{equation*} 
		\begin{aligned}
			\int_{\RN} |T_{m_{\ell}}(\x,\y)|^2(1+2^{\ell}d(\x,\y))^{2(\alpha'-\mathfrak{N}/2)} dw(\x)			&\simeq \f{1}{w(B(\y, 2^{-\ell}))}\|\widetilde m_{\ell}\|^2_{W^2_{\alpha}(\RN)}
		\end{aligned}
	\end{equation*}
	for all $\y\in \RN$.
	
	Then interpolating these two estimates above, we derive the estimate \eqref{eq- Lr estimate on the kernel}.
	
	This completes our proof.
\end{proof}

\begin{prop}\label{prop-pointwise estimate kernel of spectral multiplier} 
	For every $\ell\in \Z$ and $\alpha>\alpha'>0$, there exists $C=C(\alpha,\alpha')$
	\begin{equation}\label{eq1- pointwise Tm}
		|T_{m_{\ell}}(\x,\y)|  \le C \f{(2^{\ell} d(\x,\y))^{-\alpha'}}{ w(B(\y,2^{-\ell}))}\|m(2^{\ell}\cdot)\psi\|_{W^\vc_{\alpha}(\RN)}, \ \ \ \x,\y\in \RN.
	\end{equation}
	Moreover, if \eqref{eq-L1 uniform} holds, then for every $\ell\in \Z$ and $\alpha>\alpha'>\N/2$, there exists $C=C(\alpha,\alpha')$ such that 
	\begin{equation}\label{eq2- pointwise Tm}
		|T_{m_{\ell}}(\x,\y)|  \le  C\f{(2^\ell d(\x,\y))^{-(\alpha'-\N/2)}}{ w(B(\y,2^{-\ell}))}\|m(2^{\ell}\cdot)\psi\|_{W^2_{\alpha}(\RN)}, \ \ \ \x,\y\in \RN.
	\end{equation}
\end{prop}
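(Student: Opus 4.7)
Both pointwise bounds will be deduced from the weighted $L^2$ (resp.\ weighted $L^r$) estimates of Propositions~\ref{prop-kernel of spectral multiplier} and~\ref{prop- sharp W2 base Lr norm} via a convolution-and-Cauchy--Schwarz argument. Fix a smooth radial function $\phi\in C_c^\vc(\RN)$ that equals $1$ on a neighbourhood of $\supp\psi$ and is supported in a slightly larger annulus, and set $\phi_\ell(\xi):=\phi(2^{-\ell}\xi)$. Since $m_\ell\cdot\phi_\ell=m_\ell$, the composition identity $T_{m_\ell}=T_{m_\ell}\circ T_{\phi_\ell}$ yields the reproducing formula
\[
T_{m_\ell}(\x,\y)=\int_{\RN}T_{m_\ell}(\x,\z)\,T_{\phi_\ell}(\z,\y)\,dw(\z).
\]

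\textbf{Proof of \eqref{eq1- pointwise Tm}.} Pick an auxiliary parameter $\alpha_0>0$ (to be optimised at the end in terms of $\alpha$, $\alpha'$, and $\mathfrak{N}$) and apply Cauchy--Schwarz with weight $(1+2^\ell d(\x,\z))^{2\alpha_0}$ to split $|T_{m_\ell}(\x,\y)|^2\le A\cdot B$, where $A=\int|T_{m_\ell}(\x,\z)|^2(1+2^\ell d(\x,\z))^{2\alpha_0}\,dw(\z)$ and $B=\int|T_{\phi_\ell}(\z,\y)|^2(1+2^\ell d(\x,\z))^{-2\alpha_0}\,dw(\z)$. Using the self-adjoint identity $|T_{m_\ell}(\x,\z)|=|T_{\bar m_\ell}(\z,\x)|$ to swap the role of the two variables, Proposition~\ref{prop-kernel of spectral multiplier} yields $A\lesi \|m(2^\ell\cdot)\psi\|^2_{W^\vc_{\alpha_0+\epsilon}(\RN)}/w(B(\x,2^{-\ell}))$. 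For $B$, I will split the integration according to whether $d(\z,\y)\le d(\x,\y)/2$ or not: on the near region the triangle inequality forces $d(\x,\z)\gtrsim d(\x,\y)$, so the weight $(1+2^\ell d(\x,\z))^{-2\alpha_0}$ pulls out as the decay factor $(1+2^\ell d(\x,\y))^{-2\alpha_0}$ and the remaining unweighted integral is handled by Proposition~\ref{prop-L2 norm via Linfty}; on the far region $(1+2^\ell d(\z,\y))^{2\alpha_0}\gtrsim(1+2^\ell d(\x,\y))^{2\alpha_0}$, and a minor variant of Proposition~\ref{prop-kernel of spectral multiplier} applied to the fixed cutoff $\phi_\ell$ (whose rescaled smoothness norm is uniformly bounded in $\ell$) supplies the same decay. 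Both pieces yield $B\lesi (1+2^\ell d(\x,\y))^{-2\alpha_0}/w(B(\y,2^{-\ell}))$.

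Combining $A$ and $B$, taking square roots, and using the doubling inequality $w(B(\x,2^{-\ell}))^{-1/2}\lesi w(B(\y,2^{-\ell}))^{-1/2}(1+2^\ell d(\x,\y))^{\mathfrak{N}/2}$ to collapse the geometric-mean volume factor into $w(B(\y,2^{-\ell}))^{-1}$, we arrive at
\[
|T_{m_\ell}(\x,\y)|\lesi \frac{\|m(2^\ell\cdot)\psi\|_{W^\vc_{\alpha_0+\epsilon}(\RN)}(1+2^\ell d(\x,\y))^{-\alpha_0+\mathfrak{N}/2}}{w(B(\y,2^{-\ell}))}.
\]
For $d(\x,\y)\gtrsim 2^{-\ell}$ the factor $(1+2^\ell d(\x,\y))$ is comparable to $2^\ell d(\x,\y)$, while the near regime $d(\x,\y)\lesi 2^{-\ell}$ is automatic because $(2^\ell d(\x,\y))^{-\alpha'}\gtrsim 1$ there and a trivial $L^\vc$ estimate suffices. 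Choosing $\alpha_0$ large enough (so that $\alpha_0-\mathfrak{N}/2\ge\alpha'$ and $\alpha_0+\epsilon<\alpha$, with the constant $C=C(\alpha,\alpha')$ absorbing the resulting dependence on $\mathfrak{N}$ and $\epsilon$) yields~\eqref{eq1- pointwise Tm}.

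\textbf{Proof of \eqref{eq2- pointwise Tm} and the main obstacle.} The second estimate follows from the same scheme with H\"older's inequality (with exponents $r\in(1,2)$ and $r'$) replacing Cauchy--Schwarz, and Proposition~\ref{prop- sharp W2 base Lr norm} (available under the hypothesis~\eqref{eq-L1 uniform}) controlling the principal factor in place of Proposition~\ref{prop-kernel of spectral multiplier}. The weaker weight exponent $\alpha'-\mathfrak{N}/2$ featured in Proposition~\ref{prop- sharp W2 base Lr norm} propagates through the combination to give precisely the diminished pointwise decay $(2^\ell d(\x,\y))^{-(\alpha'-\mathfrak{N}/2)}$ announced in~\eqref{eq2- pointwise Tm}. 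The principal technical hurdle in both cases is the control of the auxiliary integral $B$: the decay weight is anchored at $\x$ whereas the kernel $T_{\phi_\ell}(\z,\y)$ is concentrated near $\y$, and reconciling these two centres forces the two-region subdivision together with the joint use of the crude $L^2$ bound (Proposition~\ref{prop-L2 norm via Linfty}) on the near region and a weighted $L^2$ bound for the fixed cutoff $\phi_\ell$ on the far region.
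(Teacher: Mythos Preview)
Your approach works, but the paper's is more direct. Instead of the smooth cutoff $\phi_\ell$, the paper factors through the heat semigroup: writing $m_\ell(\xi)=\big(m_\ell(\xi)e^{2^{-2\ell}\|\xi\|^2}\big)e^{-2^{-2\ell}\|\xi\|^2}$ gives
\[
T_{m_\ell}(\x,\y)=\int_{\RN}T_{m_\ell e^{2^{-2\ell}\|\cdot\|^2}}(\x,\z)\,h_{2^{-2\ell}}(\z,\y)\,dw(\z),
\]
and the second factor $h_{2^{-2\ell}}(\z,\y)$ already enjoys \emph{pointwise} Gaussian decay (Lemma~\ref{lem-heat kernel estimate for semigroups}). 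For \eqref{eq2- pointwise Tm} this allows a simple $L^1\times L^\infty$ split: after inserting the weight via $(2^\ell d(\x,\y))^{\beta}\lesi(1+2^\ell d(\x,\z))^{\beta}(1+2^\ell d(\z,\y))^{\beta}$, the first factor is controlled by the weighted $L^1$ bound \eqref{eq-L2 base for Tm x y} (available under \eqref{eq-L1 uniform}, and sharper than the $L^r$ estimate of Proposition~\ref{prop- sharp W2 base Lr norm} you invoke), while the weight on the second factor is absorbed into the Gaussian and the supremum in $\z$ gives exactly $w(B(\y,2^{-\ell}))^{-1}$. No near/far subdivision, no re-application of Propositions~\ref{prop-L2 norm via Linfty} or~\ref{prop-kernel of spectral multiplier} to the auxiliary kernel. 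Estimate \eqref{eq1- pointwise Tm} is then obtained analogously with the weighted $L^2$ bound \eqref{eq- L2 norm Tm x y W infrty} in place of \eqref{eq-L2 base for Tm x y}. Your route reaches the same destination but spends effort recovering the decay of $T_{\phi_\ell}(\z,\y)$; since $\phi$ is radial, $T_{\phi_\ell}$ is in fact a spectral multiplier of $L$ and Lemma~\ref{lem-heat kernel estimate} already supplies arbitrary pointwise decay, which would also eliminate your two-region split.

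One remark: your parameter choice ``$\alpha_0-\N/2\ge\alpha'$ and $\alpha_0+\epsilon<\alpha$'' forces $\alpha>\alpha'+\N/2$, stronger than the stated hypothesis $\alpha>\alpha'$. The paper's $L^2\times L^2$ variant for \eqref{eq1- pointwise Tm} incurs the same $\N/2$ cost when the symmetric volume factor $[w(B(\x,2^{-\ell}))w(B(\y,2^{-\ell}))]^{-1/2}$ is converted to $w(B(\y,2^{-\ell}))^{-1}$ via doubling, so this is a discrepancy with the statement rather than with your method; for the downstream application only the unsharp bound is needed and $\alpha$ may be taken as large as one likes relative to $\alpha'$.
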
	
\begin{proof}
	We prove \eqref{eq2- pointwise Tm} first. We first write
	\[
	m_\ell(\xi) = m_\ell(\xi) e^{2^{-2\ell}\|\xi\|^2}e^{-2^{-2\ell}\|\xi\|^2},
	\]
	which implies 
	\[
	\begin{aligned}
		T_{m_\ell}(\x,\y) = \int_{\RN} T_{m_\ell(\xi) e^{2^{-2\ell}\|\xi\|^2}}(\x,\z)h_{2^{-2\ell}}(\z,\y)dw(\z).
	\end{aligned}
	\]
	Hence,
	\[
	\begin{aligned}
		(2^\ell d(\x,\y))^{\alpha'-\N/2}|T_{m_\ell}(\x,\y)| &\le \int_{\RN} |T_{m_\ell(\xi) e^{2^{-2\ell}\|\xi\|^2}}(\x,\z)|(1+2^\ell d(\x,\z))^{\alpha'-\N/2}dw(z)\\
		& \ \ \ \times  \sup_{z} h_{2^{-2\ell}}(\z,\y)(1+2^\ell d(\z,\y))^{\alpha'-\N/2}.
	\end{aligned}
	\]
	By \eqref{eq-L2 base for Tm x y} and Lemma \ref{lem-heat kernel estimate for semigroups},
	\[
	\begin{aligned}
		(2^\ell d(\x,\y))^{\alpha'-\N/2}|T_{m_\ell}(\x,\y)| &\lesi \f{1}{w(B(\y, 2^\ell))}\|\widetilde m_\ell\|_{W^2_{\alpha}(\RN)}.
	\end{aligned}
	\]
	It follows that
	\[
	T_{m_\ell}(\x,\y)| \lesi \f{(2^\ell d(\x,\y))^{-(\alpha'-\N/2)}}{w(B(\y, 2^\ell))}\|\widetilde m_\ell\|_{W^2_{\alpha}(\RN)}.
	\]
	The proof of \eqref{eq1- pointwise Tm} is similar by using \eqref{eq- L2 norm Tm x y W infrty} instead of \eqref{eq-L2 base for Tm x y} and hence we omit the details.
	
	This completes our proof.
\end{proof}

We now prove the following result.
\begin{thm}
	\label{thm-bounded on Hardy spaces}
	Let $p\in (0,1]$ and $\alpha>\mathfrak{N}(1/p-1/2)$. If \eqref{smoothness condition} holds true, then $T_m$ is bounded on $H^p_L(dw)$.
	
	Moreover, if \eqref{eq-L1 uniform} holds true, then $T_m$ is bounded on $H^p_L(dw)$ provided that \eqref{smoothness condition} holds true with $W^2_\alpha(\RN)$ taking place of $W^\vc_\alpha(\RN)$.
\end{thm}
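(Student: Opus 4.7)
\begin{prf}[Proof proposal]
The plan is to use the atomic decomposition of $H^p_L(dw)$ together with the kernel estimates for the dyadic frequency pieces $T_{m_\ell}$ established in Propositions \ref{prop-kernel of spectral multiplier}, \ref{prop- sharp W2 base Lr norm} and \ref{prop-pointwise estimate kernel of spectral multiplier}. By Theorem \ref{equiv-Hardy} (a), $H^p_L(dw) \equiv \FF^{0,L}_{p,2}(dw)$, and by Theorem \ref{thm1- atom TL spaces- associated to L} (with $s=0$, $q=2$) every $f \in H^p_L(dw) \cap L^2(dw)$ admits an atomic decomposition $f = \sum_j \lambda_j a_j$ in $L^2(dw)$, with $a_j$ being $(L,M,p)$-atoms and $\sum_j |\lambda_j|^p \lesssim \|f\|_{H^p_L(dw)}^p$, provided $M$ is chosen large (to be fixed later). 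Since $T_m$ is bounded on $L^2(dw)$, a standard Fatou-type argument reduces the problem to showing $\|T_m a\|_{H^p_L(dw)}^p \lesssim C$ uniformly for every $(L,M,p)$-atom $a = L^M b$ with $\supp L^k b \subset 6 Q^d_0$ for some $Q^d_0 \in \mathscr D^d_{k_0}$.

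Next, fix a smooth partition of unity $\widetilde\psi$ with $\supp \widetilde\psi \subset \{1/2 \le \|\xi\|\le 2\}$ and $\sum_{\ell \in \Z}\widetilde\psi(2^{-\ell}\xi) = 1$ for $\xi \ne 0$, and write $m(\xi) = m(0) + \sum_{\ell \in \Z} m_\ell(\xi)$ with $m_\ell(\xi) = m(\xi)\widetilde\psi(2^{-\ell}\xi)$. Then $T_m a = m(0)\,a + \sum_{\ell \in \Z} T_{m_\ell} a$; the term $m(0)\,a$ is manifestly controlled by $|m(0)|$ in $H^p_L(dw)$. For the remaining sum, I plan to treat each $T_{m_\ell} a$ as a sum of molecules. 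Specifically, set $r_\ell := \ell(Q^d_0) \vee 2^{-\ell} = \delta^{k_0} \vee 2^{-\ell}$, and decompose space into annuli $U_j^\ell = B^d(x_{Q^d_0}, 2^{j+1} r_\ell) \setminus B^d(x_{Q^d_0}, 2^j r_\ell)$ for $j \ge 1$ and $U_0^\ell = B^d(x_{Q^d_0}, 2 r_\ell)$. The piece $A_{j,\ell} := \chi_{U_j^\ell}\, T_{m_\ell} a$ will be recognized (after dividing by a suitable normalizing coefficient $c_{j,\ell}$) as a $(L,M',p)$-molecule, where the cancellation $a = L^M b$ together with $L^M \circ T_{m_\ell} = T_{m_\ell} \circ L^M$ and the fact that $m_\ell$ is localized at frequency $2^\ell$ give the required factorization $A_{j,\ell} = L^{M'} B_{j,\ell}$ for any $M' \le M$.

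The key estimates feeding into $c_{j,\ell}$ come from Proposition \ref{prop-kernel of spectral multiplier} in the general case, and from Proposition \ref{prop- sharp W2 base Lr norm} under assumption \eqref{eq-L1 uniform}. Using the weighted $L^2$ bound
\[
\Big(\int_{\RN} |T_{m_\ell}(\x,\y)|^2 (1+ 2^\ell d(\x,\y))^{2\alpha}\, dw(\x)\Big)^{1/2} \lesssim w(B(\y, 2^{-\ell}))^{-1/2}\, \|\widetilde m_\ell\|_{W^\vc_{\alpha+\epsilon}(\RN)}
\]
together with Cauchy--Schwarz on $U_j^\ell$ and the doubling property \eqref{eq-doubling}, one obtains
\[
\|A_{j,\ell}\|_{L^2(dw)} \lesssim (1 + 2^\ell r_\ell)^{-\alpha}\, 2^{-j\alpha}\, w(U_j^\ell)^{1/2}\, w(Q^d_0)^{1/2 - 1/p}\, \sup_{t>0}\|\psi(\cdot)m(t\cdot)\|_{W^\vc_{\alpha+\epsilon}(\RN)}.
\]
A parallel estimate for $L^{M'} B_{j,\ell} = A_{j,\ell}$ with $B_{j,\ell} = L^{M-M'}b$-type pieces is obtained by commuting $L^M$ into $b$ and repeating the argument with gain $(2^\ell \delta^{k_0})^{2(M-M')}$ when $\ell < k_0$. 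Choosing $M$ with $M > \mathfrak{N}(1/p - 1/2)/2$ then gives the molecular cancellation and ensures the double sum $\sum_{\ell,j}$ of the $c_{j,\ell}$ in $\ell^p$ converges geometrically, with total bound $\lesssim |m(0)| + \sup_{t>0}\|\psi(\cdot)m(t\cdot)\|_{W^\vc_\alpha(\RN)}$.

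The main obstacle will be the careful bookkeeping in the two regimes $\ell \le k_0$ and $\ell > k_0$. When $\ell > k_0$ (high frequency relative to the atom scale) the kernel decay $(2^\ell d(\x,\y))^{-\alpha}$ from Proposition \ref{prop-pointwise estimate kernel of spectral multiplier} must be balanced against the loss $w(U_j^\ell)^{1/p - 1/2} \simeq 2^{j\mathfrak{N}(1/p-1/2)}(2^{-\ell})^{\mathfrak{N}(1/p-1/2)}\,w(B(x_{Q^d_0},1))^{1/p - 1/2}$; summability forces exactly $\alpha > \mathfrak{N}(1/p - 1/2)$, matching the hypothesis. When $\ell \le k_0$, using $a = L^M b$ inside $T_{m_\ell}$ yields an extra factor $(2^\ell \delta^{k_0})^{2M}$ which, together with a suitably large $M$, kills the growth from $w(U_j^\ell)^{1/p-1/2}$ on scales larger than $\delta^{k_0}$. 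Finally, under assumption \eqref{eq-L1 uniform}, one replaces the use of Proposition \ref{prop-kernel of spectral multiplier} by Proposition \ref{prop- sharp W2 base Lr norm}, obtaining the same tail bounds but with $W^2_\alpha$ in place of $W^\vc_\alpha$; the extra $\N/2$ derivatives needed in the general case are precisely those absorbed when passing from the $L^r$ ($r<2$) kernel bound with $W^2$ norm to the $L^2$ kernel bound with $W^\vc$ norm.
\end{prf}
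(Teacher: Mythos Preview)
Your high-level strategy (atomic decomposition of $H^p_L$ plus a Littlewood--Paley decomposition $m=\sum_\ell m_\ell$, then annular kernel estimates from Propositions \ref{prop-kernel of spectral multiplier}--\ref{prop-pointwise estimate kernel of spectral multiplier}) is indeed the paper's strategy. But your molecular step contains a genuine error: the cutoff pieces $A_{j,\ell}=\chi_{U_j^\ell}T_{m_\ell}a$ are \emph{not} of the form $L^{M'}B_{j,\ell}$, because multiplication by $\chi_{U_j^\ell}$ does not commute with $L^{M'}$. The factorization $T_{m_\ell}a=L^M(T_{m_\ell}b)$ lives on the \emph{global} function, not on the annular pieces. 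To salvage a molecular argument you would have to verify, for each $k=0,\dots,M$, the annular size bounds $\|L^k(T_{m_\ell}b)\|_{L^2(U_j^\ell)}$ required by the molecular definition in \cite{HLMMY,BCKYY1}; this is doable from the kernel estimates, but it is not what you wrote.

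The paper bypasses this difficulty. Rather than building molecules, it estimates the square function $S_{L,\varphi}(T_m a)$ directly in $L^p$: on $4\mathcal O(B)$ by H\"older and the $L^r$-boundedness of $T_m$, and on each far annulus $S_j(\mathcal O(B))$ by the pointwise bound $(\sum_\ell|\cdot|^2)^{p/2}\le\sum_\ell|\cdot|^p$ (valid since $p\le 2$). The low-frequency decay you obtain from writing $a=L^Mb$ is generated instead by the identity
\[
I=(I-e^{-r_B^2L})^M+\sum_{k=1}^M(-1)^{k+1}\binom{M}{k}e^{-kr_B^2L},
\]
so that the frequency pieces become $m_{\ell,r_B}(\xi)=\widetilde\varphi(2^{-\ell}\xi)(1-e^{-r_B^2\|\xi\|^2})^M m(\xi)$ and automatically carry the damping factor $\min\{1,(2^\ell r_B)^{2M}\}$ inside the $W^q_\alpha$ norm. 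This device makes the double sum over $j$ and $\ell$ fall out cleanly without ever localizing $T_{m_\ell}a$ to annuli, and the condition $\alpha>\mathfrak N(1/p-1/2)$ appears exactly as you anticipated, from balancing $(2^{\ell+j}r_B)^{-(\alpha'-\mathfrak N/2)}$ against $2^{j\mathfrak N(1-p/r)}$ after passing through H\"older with an exponent $r\in(1,2)$.
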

\begin{proof}
	We will only prove the theorem under the condition 	\eqref{smoothness condition}  with respect to the norm on  $W^2_\alpha(\RN)$ instead of  $W^\vc_\alpha(\RN)$. The proof under the norm $W^\vc_\alpha(\RN)$ is similar and even easier.
	
	Let $0< p \le 1$ and $\alpha>\alpha'>\mathfrak{N}(1/p-1/2)$. Then $\alpha'-\mathfrak{N}/2 > \mathfrak{N}(1/p-1)$. Fix $r\in (0,1)$ such that $\alpha'-\mathfrak{N}/2 > \mathfrak{N}(1/p-1/r)$.
	
	Let $\varphi \in C_0^\infty(0, \infty)$
	such that $\supp \varphi \subset [1/4, 4]$ and
	\begin{align*}
		\sum_{\ell \in \mathbb{Z}}\varphi(2^{-\ell}\lambda) =1 \quad \mbox{for all }  \lambda \in (0, \infty).
	\end{align*}
	By Theorem \ref{equiv-Hardy}, it suffices to show that there exists a constant $C$ such that for any $(p, r, M, L)$-atom $a$,
	\begin{align*}
		\big\|S_{L, \varphi}( T_ma)\big\|_{L^p(dw)} \leq C,
	\end{align*}
	where $S_{L,\varphi}$ is the Littlewood--Paley operator defined by
	\begin{align*}
		S_{L, \varphi}f:= \left(\sum_{\ell \in \mathbb{Z}} |\varphi(2^{-\ell}\sqrt L)f|^2 \right)^{1/2}.
	\end{align*}

	Fix $M>\max\Big\{\f{\N}{2}(\f{1}{p}-\f{2}{\N r}), (\alpha'-\N/2)/2 \Big\}$. Let $a$ be an arbitrary $(p,r,M,L)$-atom associated to some ball $\mathcal O(B)$ with  $B=  B(x_B, r_B)$, and let $b$ be the corresponding function such that
	$a = L^M b$. By the spectral theory, we write
	\begin{align} \label{identity}
		I = (I -e^{-r_B^2 L})^M +\sum_{k =1}^{M} (-1)^{k+1}C^M_k e^{-kr_B^2 L} =: (I -e^{-r_B^2 L})^M + P(r_B^2 L),
	\end{align}
	where $P(\cdot)$ is a function on $[0,\infty)$ given by
	\begin{align*}
		P(\lambda) = \sum_{k =1}^{M} (-1)^{k+1}C^M_k e^{-k\lambda}, \quad \lambda \in [0, \infty).
	\end{align*}
	Using the identity \eqref{identity},
	\begin{align*}
		\big\|S_{L, \varphi}\big(T_m a \big)\|_{L^p(\RN)}^p &=
		\left\| S_{L, \varphi} \big[ (I -e^{-r_B^2 L})^M T_ma\big]\right\|_{L^p(dw)}^p
		+ \left\| S_{L, \varphi} \big[ (r_B^2 L)^M P(r_B^2 L)T_m  r_B^{-2M}b \big]\right\|_{L^p(dw)}^p\\
		& = :  H_{1} +   H_2.
	\end{align*}
	Hence it suffices to prove
	\begin{align} \label{e1k}
		H_{1} + H_2 \lesssim 1.
	\end{align}

	\medskip
	Since the estimates of $H_1$ and $H_2$ share some similarities, it suffices to estimate $H_1$. To do this, we write
	\begin{align*}
		H_1 &= \left\|S_{L, \varphi} \big[ (I -e^{-r_B^2 L})^M T_ma\big] \right\|_{L^p(4\mathcal O(B), dw)}^p +
		\left\|S_{L, \varphi} \big[ (I -e^{-r_B^2 L})^M T_ma\big] \right\|_{L^p(\RN\backslash 4\mathcal O(B), dw)}^p \\
		& =: H_{11} + H_{12}.
	\end{align*}
	By H\"{o}lder's inequality, the $L^r$ boundedness of $S_{L, \varphi}$ (see Theorem \ref{equiv-Hardy}), $(I -e^{-r_B^2 L})^M$ and $T_m$, the properties of atoms, 
	we have
	\begin{equation} \label{e11}
		\begin{split}
			H_{11} & \lesssim w(4 \mathcal O(B))^{1-p/2} \big\|S_{L,\varphi}(I -e^{-r_B^2L})^M T_ma\big\|_{L^r(4\mathcal O(B),dw)}^p \\
			&\lesssim  w(4\mathcal O(B))^{1-p/r}  \|a\|_{L^r(dw)}^p \\
			&  \lesssim w(\mathcal O(B))^{1-p/r} w(\mathcal O(B))^{p/r -1} \\
			&\simeq 1.
		\end{split}
	\end{equation}
	
	Now we estimate $H_{12}$. Define
	\begin{align*}
		m_{\ell, r_B} (\xi) := \widetilde\varphi(2^{-\ell}\xi) (1-e^{-r_B^2 \|\xi\|^2})^M m(\xi),
	\end{align*}
	where $\widetilde\varphi(\xi)=\varphi(\|\xi\|)$.
	Then, by \eqref{eq-spectral multiplier and Tm}, we have
	\begin{equation} \label{decom}
		\begin{split}
			H_{12}&\lesssim \sum_{ \ell \in \Z}  \big\|T_{m_{\ell, r_B}}a\big\|_{L^p(\RN\backslash  4\mathcal O(B), dw)}^p\\
			& \lesi  \sum_{\ell \in \Z}  \sum_{j \geq 2} \big\|T_{m_{\ell, r_B}}a\big\|_{L^p (S_j(\mathcal O(B)), dw)}^p,
		\end{split}
	\end{equation}
	where $S_j(\mathcal O(B))=2^{j+1}\mathcal O(B)\backslash 2^{j}\mathcal O(B)$ for $j\ge 2$.

	Using H\"{o}lder's inequality, the doubling property \eqref{eq-ratios on volumes of balls} and the properties of atoms,
	\begin{equation} \label{msn}
		\begin{aligned}
			\big\|T_{m_{\ell, r_B}}a\big\|_{L^p (S_j(\mathcal O(B)), dw)}^p & \lesssim
			\sum_{\ell \in \Z} w(\mathcal O(2^jB))^{1-p/r}\big\|T_{m_{\ell,r_B}}a\big\|_{L^r (S_j(\mathcal O(B)), dw) }^p\\
			& \lesssim
			\sum_{\ell \in \Z} 2^{j\mathfrak{N}(1-p/r)}w(\mathcal O(B))^{1-p/r}\big\|T_{m_{\ell,r_B}}a\big\|_{L^r (S_j(\mathcal O(B)), dw) }^p
		\end{aligned}
	\end{equation}	
	We have 
	\begin{equation*}
		\begin{aligned}
			\big\|T_{m_{\ell,r_B}}a\big\|_{L^r (S_j(\mathcal O(B)), dw) } &=\Big(\int_{S_j(\mathcal O(B))}\Big|\int_{\mathcal O(B)} T_{m_{\ell,r_B}}(\x,\y)a(\y)dw(\y) \Big|dw(\x)\Big)^{1/r}\\
			&\lesi  \int_{\mathcal O(B)} \Big(\int_{S_j(\mathcal O(B))}|T_{m_{\ell,r_B}}(\x,\y)|^r dw(\x)\Big)^{1/r}|a(\y)|dw(\y). 
		\end{aligned}
	\end{equation*}
	For $\x \in \mathcal O(B)$, by Proposition \ref{prop- sharp W2 base Lr norm},
	\begin{equation*}
		\begin{aligned}
			\Big(\int_{S_j(\mathcal O(B))}&|T_{m_{\ell,r_B}}(\x,\y)|^r dw(\x)\Big)^{1/r}\\
			&\simeq (1+2^{\ell+j}r_B)^{-(\alpha'-\mathfrak{N}/2)}\Big(\int_{S_j(\mathcal O(B))}|T_{m_{\ell,r_B}}(\x,\y)|^r (1+2^\ell d(\x,\y))^{r(\alpha'-\mathfrak{N}/2)} dw(\x)\Big)^{1/r}\\
			& \lesi \sup_{\y\in \mathcal O(B)}\f{(1+2^{\ell+j}r_B)^{-(\alpha'-\mathfrak{N}/2)}}{w(B(\y,2^{-\ell}))^{1-1/r}}\|\widetilde\varphi \delta_{2^\ell}(m(\cdot)(1-e^{r_B^2\|\cdot\|^2}))^M\|_{W^2_\alpha(\RN)}.
		\end{aligned}
	\end{equation*}
	It can be verified that 
	\[
	\|\widetilde\varphi \delta_{2^\ell}(m(\cdot)(1-e^{r_B^2\|\cdot\|^2}))^M\|_{W^2_\alpha(\RN)} \lesi \min\{1, (2^\ell r_B)^{2M}\}.
	\]
	Therefore,
	\begin{equation*}
		\begin{aligned}
			\big\|T_{m_{\ell,r_B}}a\big\|_{L^r (S_j(\mathcal O(B)), dw) } &\lesi \|a\|_{L^1(dw)} \sup_{\y\in \mathcal O(B)}\f{(1+2^{\ell+j}r_B)^{-(\alpha'-\mathfrak{N}/2)}}{w(B(\y,2^{-\ell}))^{1-1/r}} \min\{1, (2^\ell r_B)^{2M}\}\\
			&\lesi w(\mathcal O(B))^{1-1/p} \sup_{\y\in \mathcal O(B)}\f{(1+2^{\ell+j}r_B)^{-(\alpha'-\mathfrak{N}/2)}}{w(B(\y,2^{-\ell}))^{1-1/r}} \min\{1, (2^\ell r_B)^{2M}\}
		\end{aligned}
	\end{equation*}
	Inserting this into \eqref{msn},
	\[
	\begin{aligned}
		\big\|T_{m_{\ell, r_B}}a\big\|_{L^p (S_j(\mathcal O(B)), dw)}^p\lesi \sum_{\ell\in \Z} 2^{j\mathfrak{N}(1-p/r)}w(\mathcal O(B))^{p-p/r}\sup_{\y\in \mathcal O(B)}\f{(1+2^{\ell+j}r_B)^{-p(\alpha'-\mathfrak{N}/2)}}{w(B(\y,2^{-\ell}))^{p-p/r}} \min\{1, (2^\ell r_B)^{2Mp}\}.
	\end{aligned}
	\]
	On the other hand, by the doubling property \eqref{eq-ratios on volumes of balls},
	\[
	\sup_{\y\in \mathcal O(B)}\f{w(\mathcal O(B))}{w(B(\y,2^{-\ell}))}\lesi (1 + 2^\ell r_B)^{\mathfrak{N}}
	\]
	Consequently,
	\[
	\begin{aligned}
		\big\|T_{m_{\ell, r_B}}a\big\|_{L^p (S_j(\mathcal O(B)), dw)}^p&\lesi \sum_{\ell\in \Z}   2^{j\mathfrak{N}(1-p/r)}(1+2^{\ell+j}r_B)^{-p(\alpha'-\mathfrak{N}/2)}(1 + 2^\ell r_B)^{p\mathfrak{N}(1-1/r)}\min\{1, (2^\ell r_B)^{2Mp}\}\\
		&\lesi \sum_{\ell\in \Z}   2^{-jp[(\alpha'-\mathfrak{N}/2)-\N(1/p-1/r)]}( 2^{\ell}r_B)^{-p(\alpha'-\mathfrak{N}/2)}(1 + 2^\ell r_B)^{p\mathfrak{N}(1-1/r)}\min\{1, (2^\ell r_B)^{2Mp}\}\\
		&\lesi 2^{-jp[(\alpha'-\mathfrak{N}/2)-\N(1/p-1/r)]},
	\end{aligned}
	\]
	provided that $2M > (\alpha'-\mathfrak{N}/2)$.	
	
	As a result,
	\[
	H_{12}\lesi \sum_{j\ge 2}2^{-jp[(\alpha'-\mathfrak{N}/2)-\N(1/p-1/r)]}\lesi 1,
	\]
	as long as $\alpha'-\mathfrak{N}/2 > \mathfrak{N}(1/p-1/r)$.
	
	Taking this and \eqref{e11} into account, we come up with 
	\[
	H_1\lesi 1.
	\]

	Hence, this completes our proof.
	
\end{proof}

We are ready to give the proof for Theorem \ref{mainthm-spectralmultipliers-space adapted to L} .
\begin{proof}
	The proof of (b) is similar to that of (a). Hence, it suffices to prove (a). To do this, we first note that from Theorem \ref{equiv-Hardy} and Theorem \ref{thm-bounded on Hardy spaces}, for $p\in (0,1]$,
	\begin{equation}\label{eq- Hardy bounded Tm}
		\|T_m\|_{\FF^{0, L}_{p,2}(dw)\to \FF^{0, L}_{p,2}(dw)}\lesi |m(0)|+\sup_{t>0}\|\psi(\cdot)m(t\cdot)\|_{W^{2}_\alpha(\RN)},
	\end{equation}
	provided that $\alpha >\N(\f{1}{p}-\f{1}{2})$.
	
	On the other hand, it was proved in \cite{DH} that for $p\in (1,\vc)$,
	\begin{equation}\label{eq- Lp bounded Tm}
		\|T_m\|_{L^p(dw)\to L^p(dw)}\lesi |m(0)|+\sup_{t>0}\|\psi(\cdot)m(t\cdot)\|_{W^{\vc}_\alpha(\RN)},
	\end{equation}
	provided that $\alpha >\f{\N}{2}$, which, along with Theorem \ref{equiv-Hardy}, implies
	\begin{equation*}
		\|T_m\|_{\FF^{0, L}_{p,2}(dw)\to \FF^{0, L}_{p,2}(dw)}\lesi |m(0)|+\sup_{t>0}\|\psi(\cdot)m(t\cdot)\|_{W^{2}_\alpha(\RN)},
	\end{equation*}
	provided that $\alpha >\f{\N}{2}$ and $p\in (1,\vc)$.
	
	Let $\psi$ be a partition of unity. By Proposition \ref{prop - equivalence of varphi*}, Theorem \ref{equiv-Hardy} and the fact $L^{s/2}\circ T_m = T_m \circ L^{s/2}$, we have, for $s\in \mathbb R$ and $p\in (0,1)$,
	\[
	\begin{aligned}
		\|T_mf\|_{\FF^{s, L}_{p,2}(dw)} &\simeq \Big\|\Big[\sum_{j\in \mathbb{Z}}(\delta^{-js}|(\delta^j \sqrt L)^s\psi_{j}(\sqrt{L})T_mf|)^q\Big]^{1/q}\Big\|_{L^p(dw)}\\
		& \simeq \Big\|\Big[\sum_{j\in \mathbb{Z}}(|\psi_{j}(\sqrt{L})T_m(L^{s/2}f)|)^q\Big]^{1/q}\Big\|_{L^p(dw)}\\
		&\simeq \|T_m(L^{s/2}f)\|_{L^p(dw)}
	\end{aligned}
	\]
	Applying \eqref{eq- Lp bounded Tm} and Theorem \ref{equiv-Hardy},
	\[
	\begin{aligned}
		\|T_mf\|_{\FF^{s, L}_{p,2}(dw)} &\lesi  \| L^{s/2}f\|_{L^p(dw)}\\
		&\simeq \|f\|_{\FF^{s, L}_{p,2}(dw)},	
	\end{aligned}
	\]
	provided that $s\in \mathbb R$, $\alpha >\f{\N}{2}$ and $p\in (1,\vc)$.
	
	This, along with 	\eqref{eq- Hardy bounded Tm} and the interpolation in Proposition \ref{prop-comple interpolation},  yields
	\begin{equation}\label{eq- Fsp2 bounded Tm}
		\|T_m\|_{\FF^{s, L}_{p,2}(dw)\to \FF^{s, L}_{p,2}(dw)}\lesi |m(0)|+\sup_{t>0}\|\psi(\cdot)m(t\cdot)\|_{W^{2}_\alpha(\RN)},
	\end{equation}
	provided that $s\in \mathbb R$, $p\in (0,\vc)$ and $\alpha >\N(\f{1}{1\wedge p }-\f{1}{2})$.
	
	From Proposition \ref{prop-pointwise estimate kernel of spectral multiplier}, using the similar argument used in the proof of Theorem 4.1 (b) in \cite{BD} we can show that
	\[
	\|T_m\|_{\FF^{s, L}_{p,q}(dw)\to \FF^{s, L}_{p,q}(dw)}\lesi |m(0)|+\sup_{t>0}\|\psi(\cdot)m(t\cdot)\|_{W^{2}_\alpha(\RN)},
	\] 
	provided that $s\in \mathbb R, 0<p,q<\vc$ with $\alpha> \f{\N}{1\wedge p\wedge q}+\f{\N}{2}$.
	
	At this stage, using this, \eqref{eq- Fsp2 bounded Tm}, Proposition \ref{prop-duality}, Proposition \ref{prop-comple interpolation} and Proposition \ref{mainthm-Interpolation}, and by the argument used in the proof of Theorem 1.2 in \cite{BD}, we will come up with
	\[
	\|T_m\|_{\FF^{s, L}_{p,q}(dw)\to \FF^{s, L}_{p,q}(dw)}\lesi |m(0)|+\sup_{t>0}\|\psi(\cdot)m(t\cdot)\|_{W^{2}_\alpha(\RN)},
	\] 
	provided that $s\in \mathbb R, 0<p,q<\vc$ with $\alpha> \N(\f{1}{1\wedge p\wedge q}-\f{1}{2})$.
	
	This completes our proof.
\end{proof}

\vskip1cm

\noindent \textbf{Funding}

The author was supported by the Australian Research Council via the grant ARC DP220100285. 

\medskip
\noindent \textbf{Declaration of competing interest}

The author has no any conflicts of interest/competing interests in the manuscript.

\medskip

\noindent{\bf Acknowledgment}  

The author was supported by the Australian Research Council via the grant ARC DP220100285. The author would like to thank the referee for his/her useful comments which helped to improve the paper. He would also like to thank Ji Li and Dachun Yang for useful discussions on Besov and Triebel-Lizorkin spaces on the space of homogeneous type.

\bigskip

\end{document}